\titleformat{\chapter}[block]{\bfseries\centering\Large}{\chaptertitlename\space\thechapter}{15pt}{}
\titleformat{\section}[hang]{\bfseries\large}{\thesection}{10pt}{}
\newcounter{thecounter}
\numberwithin{thecounter}{chapter}
\newtheorem{lemma}[thecounter]{Lemma}
\newtheorem{proposition}[thecounter]{Proposition}
\newtheorem{theorem}[thecounter]{Theorem}
\newtheorem{corollary}[thecounter]{Corollary}
\newtheorem{definition}[thecounter]{Definition}
\newtheorem{conjecture}[thecounter]{Conjecture}
\newtheorem{rem}[thecounter]{Remark}
\newcommand{\ds}{\displaystyle}
\newcommand{\cB}{{\mathcal B}}
\newcommand{\cH}{{\mathcal H}}
\newcommand{\cI}{{\mathcal I}}
\newcommand{\cJ}{{\mathcal J}}
\newcommand{\LL}{{\mathcal L}}
\newcommand{\cM}{{\mathcal M}}
\newcommand{\cO}{{\mathcal O}}
\newcommand{\cS}{{\mathcal S}}
\newcommand{\cT}{{\mathcal T}}
\newcommand{\cU}{{\mathcal U}}
\newcommand{\ovP}{{\overline{P}_1}}
\newcommand{\vac}{{\mathbbm{1}}}
\newcommand{\ox}{{\otimes}}
\newcommand{\vect}[1]{\boldsymbol{#1}}
\newcommand{\ovect}[1]{\overline{\boldsymbol{#1}}}
\newcommand{\bgd}{\begin{displaymath}}
\newcommand{\edd}{\end{displaymath}}
\newcommand{\dnote}[1]{{\bf [Diego: {#1}]}}
\newcommand{\N}{{\mathbb{N}}}
\newcommand{\Z}{{\mathbb{Z}}}
\newcommand{\R}{{\mathbb{R}}}
\newcommand{\C}{{\mathbb{C}}}
\newcommand{\F}{{\mathbb{F}}}
\newcommand{\FF}{{\mathcal{F}}}
\newcommand{\Aff}{{\mathcal{A}\mathit{ff}}}
\newcommand{\Fib}{{\mathcal{F}ib}}
\newcommand{\HH}{{\mathfrak{h}}}
\newcommand{\la}{{\langle}}
\newcommand{\ra}{{\rangle}}
\def\bm{\bmatrix}
\def\ebm{\endbmatrix}
\begin{document}


\chapter*{}
\vspace{-40pt}
{\openup 1em
\centerline{DECOMPOSITION OF THE RANK 3 KAC-MOODY LIE ALGEBRA $\FF$}
\centerline{WITH RESPECT TO}
\centerline{THE RANK 2 HYPERBOLIC SUBALGEBRA $\Fib$}}
\vskip 170pt

\thispagestyle{empty}

\centerline{BY}
\vskip 10pt

\centerline{DIEGO A. PENTA}
\vskip 10pt

\centerline{BS, Massachusetts Institute of Technology, 1996}
\centerline{MS, Rutgers University, 2010}

\vskip 170pt

\centerline{DISSERTATION}
\vskip 7pt

\centerline{Submitted in partial fulfillment of the requirements for}
\centerline{the degree of Doctor of Philosophy in Mathematical Sciences}
\centerline{in the Graduate School of}
\centerline{Binghamton University}
\centerline{State University of New York}
\centerline{2016}

\newpage

\thispagestyle{empty}
\centerline{}
\vskip 594pt

\centerline{\copyright\ Copyright by Diego A. Penta 2016}

\vspace{10pt}
\centerline{All Rights Reserved}

\newpage

{\baselineskip = 10pt

\vbox to 2.0truein{}
\vskip 280pt

\thispagestyle{plain}

\centerline{Accepted in partial fulfillment of the requirements for}
\centerline{the degree of Doctor of Philosophy in Mathematical Sciences}
\centerline{in the Graduate School of}
\centerline{Binghamton University}
\centerline{State University of New York}
\centerline{2016}
\vskip 12pt

\centerline{Defense Date:   May 10, 2016}
\vskip 12pt

\centerline{Alex J. Feingold, Chair and Faculty Advisor}
\centerline{Department of Mathematical Sciences, Binghamton University}
\vskip 12pt

\centerline{Fernando Guzman, Member}
\centerline{Department of Mathematical Sciences, Binghamton University}
\vskip 12pt

\centerline{Leslie C. Lander, Outside Examiner}
\centerline{Department of Computer Science, Binghamton University}
\vskip 12pt

\centerline{Marcin Mazur, Member}
\centerline{Department of Mathematical Sciences, Binghamton University}
\vskip 12pt

\newpage
\fontsize{11}{20pt} \selectfont

\chapter*{Abstract}
In 1983 Feingold-Frenkel studied the structure of a rank 3 hyperbolic Kac-Moody algebra $\FF$ containing the affine KM algebra $A^{(1)}_1$. In 2004 Feingold-Nicolai showed that $\FF$ contains all rank 2 hyperbolic KM algebras with symmetric Cartan matrices, $A=\bm 2 & -a \\ -a & 2 \ebm, a\geq 3$. The case when $a=3$ is called $\Fib$ because of its connection with the Fibonacci numbers (Feingold 1980). Some important structural results about $\FF$ come from the decomposition with respect to its affine subalgebra $A^{(1)}_1$. Here we study the decomposition of $\FF$ with respect to its subalgebra $\Fib$. We find that $\FF$ has a grading by $\Fib$-level, and prove that each graded piece, $\Fib(m)$ for $m\in\Z$, is an integrable $\Fib$-module. We show that for $|m|>2$, $\Fib(m)$ completely reduces as a direct sum of highest- and lowest-weight modules, and for $|m|\leq 2$, $\Fib(m)$ contains one irreducible non-standard quotient module $V^{\Lambda_m}=V(m)\Big/U(m)$. We then prove that the quotient $\Fib(m)\Big/V(m)$ completely reduces as a direct sum of one trivial module (on level 0), and standard modules. We give an algorithm for determining the inner multiplicities of any irreducible $\Fib$-module, in particular the non-standard modules on levels $|m|\leq 2$. We show that multiplicities of non-standard modules on levels $|m|=1,2$ do not follow the Kac-Peterson recursion (as does the non-standard adjoint representation on level 0), but instead appear to follow a recursion similar to Racah-Speiser, the recursion associated to standard modules. We also give an algorithm for finding outer multiplicities in the decompositions of all levels. We then use results of Borcherds and Frenkel-Lepowsky-Meurman and construct vertex algebras from the root lattices of $\FF$ and $\Fib$, and study the decomposition within this setting. We find a representation $\pi_\FF$ of $\FF$ in $\ovP$, a quotient of physical-1 space by a suitable subalgebra prescribed by Borcherds. We then define an action of $\Fib$ on $\ovP$ and we find extremal vectors for $\Fib$ in $\ovP$ which are not in $\pi_\FF(\FF)$. We conjecture the existence of a ``recognition algorithm'' on the Schur polynomials of vectors in $\ovP$ which will allow one to determine which vectors in $\ovP$ are extremal with respect to $\Fib$.

\fontsize{11}{20pt} \selectfont
\chapter*{Acknowledgments}
First of all I am infinitely grateful to my PhD advisor Professor Alex Feingold for his constant support and guidance throughout the duration of this project. I thank him for being so generous with his time, and for teaching me about Kac Moody algebras and vertex operator theory. His dedication to mathematics, his love of teaching, and his devotion to his sculptures, his ice skating, and his family always inspired me to achieve. The knowledge I gained from his expertise in math and in life is immeasurable. 

I am also grateful to Dr. Axel Kleinschmidt of the Max Planck Institute for Gravitational Physics for creating in Mathematica the beautiful three-dimensional images of the root system for $\FF$ and its intersections by $\Fib$-planes. His programs allowed me to explore the $\Fib$-levels in great detail and gave me a deeper understanding and appreciation of the problem. I am grateful to him for his permission to use these images in my thesis.

I also wish to thank my committee members, Professors Fernando Guzman and Marcin Mazur of the Math Department, and Leslie Lander of the Computer Science Department.

I also owe my gratitude to my Masters advisor Lisa Carbone of Rutgers University for first introducing me to many of the topics discussed in the present work, and for having faith in me and in my ability to be a mathematician.

I thank my PhD cohorts, Quincy Loney and Christopher Mauriello, who always leant their kind support and advice. I also thank the math community and the graduate student community at large at Binghamton University, all of whose warmth, friendship, and support helped to make Binghamton feel like a second home. In particular, my good friends Alex Schaeffer, Odie Santiago, Nick Devin, Rachel Skipper, Simon Joyce, Joe Brennan, Melissa Fuentes, Mauricio Bustamante, Lorena Campuzano, and my girlfriend Gangotryi Sorcar, who was always there for me.

Lastly, I am indebted to my mother who has dedicated her life to my success, and without whose love and support I would not be here today. 

\newpage
\fontsize{11}{18pt} \selectfont
\tableofcontents

\newpage
{\listoftables}  
\addcontentsline{toc}{chapter}{List of Tables}

\newpage

{\listoffigures}	
\addcontentsline{toc}{chapter}{List of Figures}

\newpage


\pagenumbering{arabic}
\addcontentsline{toc}{chapter}{Introduction}


\chapter*{Introduction}\label{cht:intro}
\pagenumbering{arabic}
\setcounter{page}{1}
The theory of Kac-Moody Lie algebras has a rich history. Since their discovery in 1968 by Victor Kac and Robert Moody (independently), they have been shown to exhibit deep connections to a wide range of fields, from classical mathematics to theoretical physics. In particular, the affine Kac-Moody algebras were among the first examples to be studied, and the results in this field showed striking relationships to known combinatorial identities. Kac \cite{K1} and Moody \cite{M} showed that the Weyl denominator formulas for affine root systems are given by Macdonald identities. For example, the Weyl denominator formula for the affine algebra $A_1^{(1)}$, whose Cartan matrix is 
$$\ds\left(\begin{array}{cc}
	2 & -2 \\
	-2 & 2 \\
		\end{array}\right),$$
is the Jacobi triple product.  Throughout the late 1970s and early 1980s, further investigation into the affine algebras resulted in their explicit realization and showed more connections to power series identities. In particular, Feingold and Lepowsky \cite{FL} showed that the values of the classical partition function are exactly the weight multiplicities in the fundamental modules for two affine algebras, one of which is $A_1^{(1)}$. Results such as these helped to make the class of affine algebras better understood than those of more general Kac-Moody algebras.

A natural question that arose is whether or not similar identities exist for Kac-Moody algebras of indefinite type, the easiest examples of which (in the sense of complexity) are the hyperbolic Kac-Moody algebras. Since the late 1970s a substantial amount of research has been devoted to this question, and as of yet no satisfactory closed formula for root multiplicities for the hyperbolic algebras has been found. 

In 1983, the rank 3 hyperbolic extension of $A_1^{(1)}$, referred to by $HA_1^{(1)}$ and $\FF$ in the literature and whose Cartan matrix is
$$\ds\left(\begin{array}{ccc}
	2 & -2 & 0 \\
	-2 & 2 & -1\\
	0 & -1 & 2\\
		\end{array}\right),$$
was studied by Feingold and Frenkel \cite{FF}. The authors constructed a subalgebra $\Aff\subset\FF$ isomorphic to $A_1^{(1)}$, and showed that $\FF$ can be expressed as a direct sum of $\Aff$-modules, graded by level:
$$\FF=\bigoplus_{m\in\Z} \Aff(m).$$
Decomposing $\FF$ in this way reduced the problem to finding weight multiplicity formulas for each level with respect to $\Aff$, which for $|m|\geq 1$ decomposed as a direct sum of irreducible integrable highest or lowest-weight (i.e., standard) $\Aff$-modules. For example, Kac had already shown that the multiplicities of roots on level 0 of $\FF$ with respect to $\Aff$ are all 1, and the result of \cite{FL} gave the multiplicities for the basic representation in level 1.  In \cite{FF} the authors gave formulas for level 2 which involve a `modified' partition function. Subsequent works by Kang (e.g., \cite{Ka1}, \cite{Ka2}) gave root multiplicities up to level 5.

In 2004, Feingold and Nicolai \cite{FN} proved that every symmetric rank 2 hyperbolic KM Lie algebra $\mathcal{H}(n)$, whose Cartan matrices are
$$\ds\left(\begin{array}{cc} 2&-n\\-n&2\end{array}\right), \ \  n\geq 3,$$
 is contained in $\FF$. The simplest such algebra is $\mathcal{H}(3)$, referred to as the `Fibonacci' algebra because of its relationship to the Fibonacci numbers discovered by Feingold in 1980 \cite{F1}. The purpose of this paper is to take an alternative but similar approach to \cite{FF}, by constructing a subalgebra of $\Fib\subset\FF$ isomorphic to $\mathcal{H}(3)$, and attempting to find the decomposition of $\FF$ with respect to $\Fib$,
 $$\FF=\bigoplus_{m\in\Z} \Fib(m).$$ 


\thispagestyle{plain}
In Chapter \ref{ch:bg} we give background on $\FF$ and its rank 2 subalgebras, and review the Feingold-Frenkel decomposition with respect to $\Aff$. We also find an interesting aspect of the decomposition with respect to $\Fib$ which will be investigated in more detail in Chapter \ref{ch:nonstd}, namely the emergence of non-standard $\Fib$-modules, that is, modules which are neither highest- nor lowest-weight. 

In Chapter \ref{ch:fib} we construct a $\Fib$ subalgebra in $\FF$ and set up notation. In Chapter \ref{ch:modules} we define and study the $\Fib$-modules $\Fib(m)$, $m\in\Z$, in the $\Z$-grading of $\FF$, and discuss their properties, symmetries, and weight diagrams. We analyze the decomposition of each $\Fib(m)$ into irreducible $\Fib$-modules. In particular, we will show that for $|m|>2$, $\Fib(m)$ completely reduces, and for $|m|\leq 2$, $\Fib(m)$ contains only one irreducible non-standard quotient module, one trivial module (on level 0), and standard modules.

In Chapter \ref{ch:decomp0}, we begin investigating the decomposition of $\FF$ with respect to $\Fib$ by finding $\Fib$-modules on level 0. As in \cite{FF} the adjoint representation is contained in level 0, but unlike the affine case, additional modules are found, including a trivial representation for $\Fib$ and multiple copies of highest- and lowest-weight modules generated by imaginary root vectors whose roots are in the fundamental chamber for the Weyl group of $\Fib$. Using data from Chapter 11 of \cite{K2} and the Racah-Speiser algorithm for determining inner multiplicities of highest/lowest-weight irreducible modules, we determine all summands of the level 0 decomposition involving roots of height up to $12$ (with respect to the root system of $\Fib$). 

In Chapter \ref{ch:nonstd} we investigate $\Fib$-levels $0<|m|\leq2$. As a formula for determining inner multiplicities of non-standard modules is not yet known, we present an algorithm for computing these multiplicities involving finding bases of multibrackets for their weight spaces. Data for these computations are presented in tables in Appendix \ref{appendix:C}. It is known that the root multiplicities of the adjoint representation of $\Fib$ (which occurs as a non-standard module on level 0) obey the Kac-Peterson recursion (cf. exercises 11.11-11.12, \cite{K2}), so it would seem reasonable to expect that the other four non-standard modules also obey a Kac-Peterson recursion. However, we will show that their multiplicities obey a recursion that appears to be of Racah-Speiser type, which indicates they have more in common with highest- and lowest-weight modules than with the adjoint.

In Chapter \ref{ch:vertex}, we employ methods in \cite{B} and \cite{FLM} to construct a vertex algebra $V_\FF$ from the root lattice of $\FF$, and a Lie algebra representation $\pi_\FF: \FF \rightarrow \ovP$ where $\ovP$ is a quotient of physical-1 space $P_1$ of $V_\FF$ by a suitable subalgebra, as prescribed by Borcherds. The restriction $\pi_\FF|_\Fib$ then gives a representation of $\Fib$ which is compatible with the similar construction of $V_\Fib$ from the root lattice of $\Fib$. We then investigate the decomposition of $\Fib(0)$ with respect to $\Fib$ within this setting by finding extremal vectors for $\Fib$ in one of the weight spaces of $\ovP$. Although this approach will show to be more computationally intensive than that of Chapters \ref{ch:decomp0} and \ref{ch:nonstd}, an interesting result will lead us to conjecture the existence of a ``recognition algorithm'' on the Schur polynomials in a given weight space of $\ovP$ that may give extremal vectors for $\Fib$. 
 \thispagestyle{plain}

\chapter{Background}\label{ch:bg}

\section{Lie algebras}\label{sec:finite} \
\quad We start with some basic definitions and results from the theory of finite-dimensional Lie algebras (see Humphreys \cite{H}) that will be used throughout the present work. 
\begin{definition}
A vector space $\LL$ over a field $\F$ with a bilinear operation $\LL \times \LL \rightarrow \LL$ denoted by $(x,y)\mapsto [x,y]$ (called the \textbf{bracket} of $x$ and $y$) is called a \textbf{Lie algebra} over $\F$ if the following axioms are satisfied for all $x, y, z\in \LL$:
$$[x,x]=0\qquad \text{and}\qquad[x,[y,z]] + [y,[z,x]]+[z,[x,y]]=0.$$
\end{definition}
As an example, consider the set of linear transformations $End(V)$ on a vector space $V$. We give $End(V)$ the structure of a Lie algebra by defining the bracket to be the commutator, $[x,y]=x\circ y-y\circ x$. The axioms are easily verified. This Lie algebra is called the \textit{general linear algebra} and is denoted $\mathfrak{gl}(V)$. In fact, any associative algebra may be made a Lie algebra by defining the bracket in this way.

Let $X, Y$ be subsets of a Lie algebra $\LL$. The bracket $[X,Y]$ is defined to be
$$[X,Y]=Span(\{[x,y]\mid x\in X, y \in Y \}).$$
\begin{definition}
Let $\cM$ be a vector subspace of $\LL$. Then
\begin{itemize}
\item $\cM$ is called an \textbf{ideal} of $\LL$ if $[\cM, \LL] \subseteq \cM$, and
\item $\cM$ is called a \textbf{Lie subalgebra} of $\LL$ if $[\cM, \cM] \subseteq \cM$.
\end{itemize}
\end{definition}

If $\cJ$ is an ideal of $\LL$, then the quotient vector space $\LL/\cJ$ is also a Lie algebra with bracket defined by $[\bar{x}, \bar{y}] = \overline{[x,y]}$
for all $x, y \in \LL$, where $\bar{x}=x+\cJ \in \LL/\cJ $.

\begin{definition}
The \textbf{universal enveloping algebra} $\cU(\LL)$ of a Lie algebra $\LL$ is the associative algebra $\cU(\LL) = \cT(\LL) / \cI$
where 
$\cT(\LL) = \F\oplus \LL \oplus \Big(\LL\otimes\LL\Big) \oplus \Big(\LL\otimes\LL\otimes\LL\Big) \oplus \cdots$
is the tensor algebra of the underlying vector space of $\LL$, and $\cI$ is the ideal of $\cT(\LL$) generated by elements of the form $x\otimes y - y\otimes x-[x,y]$ for $x,y\in \LL$. 
\end{definition}

\begin{definition}
Given Lie algebras $\LL$ and $\cM$ over $\F$, a linear transformation $\phi: \LL\rightarrow \cM$ is called a \textbf{Lie algebra homomorphism} if $\phi([x,y])=[\phi(x),\phi(y)]$ for all $x, y \in \LL$.
\end{definition}

\begin{definition}
A \textbf{representation} of a Lie algebra $\LL$ is a homorphism $\phi: \LL\rightarrow \mathfrak{gl}(V)$ for some vector space $V$ over $\F$. 
\end{definition}
Sometimes $V$ itself is referred to as the representation of $\LL$ in the literature.
\begin{definition}
The \textbf{adjoint representation} of a Lie algebra $\LL$ is the representation $ad: \LL\rightarrow \mathfrak{gl}(\LL)$ defined by $ad_x(y) = [x,y]$. 
\end{definition}

\begin{definition}
Given a Lie algebra $\LL$, a vector space $V$ is called an $\mathbf{\LL}$\textbf{-module} if there is a bilinar action $\LL\times V \rightarrow V$, denoted $(x,v)\mapsto x\cdot v$, such that $[x,y]\cdot v = x \cdot (y \cdot v) - y \cdot (x \cdot v)$
for $x,y\in \LL$ and $v\in V$. Also, a subspace $W$ of $V$ is called an $\mathbf{\LL}$\textbf{-submodule of} $\mathbf{V}$ if $W$ is itself an $\LL$-module, that is, $\LL\cdot W \subseteq W$.
\end{definition}
Having a representation $\phi:\LL\rightarrow \mathfrak{gl}(V)$ is equivalent to $V$ being an $\LL$-module, by
$x\cdot v = \phi(x) v.$

\begin{definition}
An $\LL$-module $V$ is called \textbf{irreducible} if its only submodules are the trivial module, $\{0\}$, and $V$ itself. 
\end{definition}

\begin{definition}
Given a collection of $\LL$-modules $\{V_i\}_{i\in I}$ where $I$ is an index set, then the \textbf{vector space direct sum} $\ds\bigoplus_{i \in I} V_i$ is also an $\LL$-module, where the action is component-wise.
\end{definition}

\begin{definition}
A module is called \textbf{completely reducible} if it is a direct sum of irreducible modules.
\end{definition}
\section{Kac-Moody Lie algebras}\label{sec:kacmoody} 
Kac and Moody generalized the theory of finite-dimensional Lie algebras to include certain other classes of infinite-dimensional Lie algebras, which will be described shortly. 

\begin{definition} An $\ell \times \ell$ integer matrix $A=(a_{ij})$ is called a \textbf{generalized Cartan matrix (GCM)} if it has the the following properties:
$$a_{ii}=2 \text{ for }1\leq i \leq \ell, \quad a_{ij}\leq 0 \text{ if } i\not= j, \quad \text{ and } \ a_{ij}=0 \text{ implies }a_{ji}=0.$$

A GCM $A$ is called \textbf{symmetrizable} if there exists a diagonal matrix $D=diag(d_1,\ldots, d_\ell)$, $d_i>0$, and $A=DS$ for $S=(s_{ij})$ symmetric.

$A$ is called \textbf{indecomposable} if it cannot be put into block-diagonal form 
$\ds\left(\begin{array}{cc} A_1 & 0  \\ 0 & A_2 \\ \end{array}\right),$
where each $A_i$ is non-trivial, through a relabeling of the rows and columns.\end{definition}

A symmetrizable GCM $A=DS$ is of:
\begin{itemize}
\item \textbf{finite type} if $S$ is positive definite.
\item \textbf{affine type} if $S$ is positive semi-definite and has corank 1.
\item \textbf{indefinite type} if $S$ is indefinite. In addition, if every submatrix of $A$ is of either finite or affine type, then $A$ is of \textbf{hyperbolic type}.
\end{itemize}
From now on we assume all GCMs to be symmetrizable.

\begin{definition} Given a GCM $A$, a \textbf{realization of $A$ over $\F$} is a triple $(\HH, \Pi, \Pi^\vee)$ where $\HH$ is a complex vector space, $\Pi=\{\alpha_1, \ldots, \alpha_\ell\}\subset\HH^*$ (the set of \textbf{simple roots}), and $\Pi^\vee=\{\alpha_1^\vee, \ldots \alpha_\ell^\vee\}\subset\HH$ (the set of \textbf{simple coroots}), satisfying the conditions
\begin{itemize}
\item $\Pi$ and $\Pi^\vee$ are linearly independent,
\item $\alpha_i(\alpha_j^\vee)=a_{ji}$ for $1\leq i,j \leq \ell$,
\item $\ell \ - $ \emph{rank}$(A) = \dim \HH - \ell.$
\end{itemize}
\end{definition}

\begin{definition}\label{def:kml} Let $A$ be an $\ell \times \ell$ GCM with realization $(\HH, \Pi, \Pi^\vee)$. The \textbf{Kac-Moody Lie algebra}, or \textbf{KM algebra}, $\LL=\LL(A)$ over $\F$ associated to $A$ is the Lie algebra generated by $\{e_i,f_i \ | \  i=1\ldots \ell\}$ (called the \textbf{Chevalley generators}) and $\HH$, subject to the \textbf{Serre relations}, 
\begin{itemize}
  \item[-] $[h, h']=0$ for all $h,h'\in\HH$,
  \item[-] $[h,e_i]=\alpha_i(h) e_i$ \ and \ $[h,f_i]=-\alpha_i(h)f_i$, \ for all $h\in\HH^*$,
  \item[-] $[e_i,f_j]=\delta_{ij}\alpha_i^\vee, $
  \item[-] $(ad_{e_i})^{-a_{ij}+1}(e_j)=0,\ i\neq j\qquad\textrm{and}\qquad (ad_{f_i})^{-a_{ij}+1}(f_j)=0,\ i\neq j$.
\end{itemize}
The abelian subalgebra $\HH$ of $\LL$ is called the \textbf{Cartan subalgebra}. 
\end{definition}

Under the adjoint action, $ad_h: \LL\to\LL$ given by $ad_h(x)=[h,x]$, $\HH$ acts simultaneously diagonalizably on $\LL$.  The simultaneous eigenspaces 
$$\LL_{\alpha}\ =\ \{x\in \LL\mid[h,x]=\alpha(h)x,\ h\in \HH\}$$  
are labelled by certain linear functionals $\alpha\in\HH^*$. In particular, for $1\leq i \leq \ell$,
$$\LL_0=\HH, \quad\LL_{\alpha_i}=\F e_i\quad\hbox{ and }\quad\LL_{-\alpha_i}=\F f_i.$$
\begin{definition} We call $\alpha \in \HH^*$ a \textbf{root} when $\alpha\not= 0$ and $\LL_{\alpha}\not=0$, in which case $\LL_\alpha$ is called the $\mathbf{\alpha}$ \textbf{root space}, and the \textbf{multiplicity} of root $\alpha$ is
$$Mult_\LL(\alpha) =\dim (\LL_\alpha).$$
The \textbf{root lattice} is
$$Q = \sum_{i=1}^\ell \Z\alpha_i \subset \HH^*.$$
The set of all roots is denoted by $\Delta$ (so $\Delta\subset Q)$. 
\end{definition}
For roots $\alpha, \beta \in \Delta \cup \{0\}$, we have that $[\LL_\alpha, \LL_\beta] \subseteq \LL_{\alpha+\beta}$.

\begin{definition}
The \textbf{Cartan involution} of $\LL$ is an order-two automorphism $\nu:\LL\rightarrow\LL$ determined by $\nu(e_i)=-f_i, \ \nu(f_i)=-e_i$ for $1\leq i \leq \ell$, and $\nu(h)=-h$ for $h\in \HH$.
\end{definition}
For every $\alpha\in\Delta$, we have that $\nu(\LL_\alpha)= \LL_{-\alpha}$, thus $Mult(\alpha)=Mult(-\alpha)$. We also let $\nu(\alpha)=-\alpha$.

The set of roots $\Delta$ can be partitioned into  ``positive'' and ``negative'' roots,
$$\Delta = \Delta_+ \cup \Delta_-,$$
where
$$\Delta_+ = \Big\{  \sum^\ell_{i=1}n_i \alpha_i \in \Delta \mid 0\leq n_i\in\Z \Big\}$$
and $\Delta_-=-\Delta_+$.
There is a partial order on $\Delta$ defined by
\begin{equation}\label{eq:partial}\mu\leq\lambda \text{ if and only if } \lambda-\mu=\sum_{i=1}^\ell k_i\alpha_i \text{ where } k_i\in\Z_{\geq0}.\end{equation}
\begin{definition}
The \textbf{height} of a root $\alpha = \ds\sum_{i=1}^\ell n_i\alpha_i \in \Delta$ is $ht(\alpha)=\ds \sum_{i=1}^\ell n_i.$
\end{definition} 
Let $\mathfrak n_+$ (resp. $\mathfrak n_-$) be the subalgebra generated by  $\{e_i \mid 1 \leq i \leq \ell\}$ (resp. $\{f_i \ | \ 1 \leq i \leq \ell \}$). Then 
$$\mathfrak n_\pm = \bigoplus_{\alpha\in \Delta_+}\LL_{\pm\alpha}$$
and by Theorem 1.2 of \cite{K2}, $\LL$ has the triangular decomposition and root space decomposition,
$$\LL = \mathfrak n_- \oplus \HH \oplus \mathfrak n_+ \quad \text{and} \quad  \LL = \HH \oplus \bigoplus_{\alpha\in \Delta} \LL_{\alpha},$$
respectively.

We also have the following fundamental results of Kac. 
\begin{theorem}[\cite{K2}]
Let $\LL=\LL(A)$ be the KM algebra associated to a symmetrizable GCM $A$. Then 
\begin{itemize}
\item there exists a non-degenerate symmetric bilinear form $(\cdot,\cdot)$ on $\LL$ that is invariant (i.e., $([x,y],z) = (x,[y,z]) \ \text{for all } x, y\in\LL)$, and
\item the restriction $( \cdot,\cdot) |_\HH$ is non-degenerate, giving an induced non-degenerate bilinear form $(\cdot , \cdot )$ on $\HH^*$ such that $( \alpha_i, \alpha_j ) = s_{ij}$  and $d_i=\frac{2}{(\alpha_i,\alpha_i)}$ for $1\leq i,j \leq \ell$.
\end{itemize}
\end{theorem}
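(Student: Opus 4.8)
The plan is to follow Kac's original argument (Theorem~2.2 of \cite{K2}): use symmetrizability to pin down the form on $\HH$ and on the Chevalley generators, then propagate it to every root space by forcing invariance, and finally read off the consequences for $\HH^*$.

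First I would fix a symmetrization $A=DS$ with $D=\mathrm{diag}(d_1,\dots,d_\ell)$, $d_i>0$, and $S=(s_{ij})$ symmetric, and choose a vector space complement $\HH''$ to $\mathrm{span}_\F\{\alpha_1^\vee,\dots,\alpha_\ell^\vee\}$ inside $\HH$, so $\dim\HH''=\dim\HH-\ell=\ell-\mathrm{rank}(A)$. On $\HH$ I would declare $(\alpha_i^\vee,h)=d_i\,\alpha_i(h)$ for every $h\in\HH$ and pick a symmetric bilinear form on $\HH''$; the prescription $(\alpha_i^\vee,h)=d_i\alpha_i(h)$ is not a free choice but exactly what invariance will demand once $[e_i,f_i]=\alpha_i^\vee$ and the normalization $(e_i,f_i)=d_i$ are fixed. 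Symmetry of the resulting form on $\HH$ reduces to $d_i\,\alpha_i(\alpha_j^\vee)=d_j\,\alpha_j(\alpha_i^\vee)$, i.e.\ to $d_i a_{ji}=d_j a_{ij}$, which is precisely the symmetrizability of $A$; nondegeneracy of $(\cdot,\cdot)|_\HH$ can be arranged by the choice on $\HH''$, since any $h$ in its radical satisfies $\alpha_i(h)=0$ for all $i$, and then a short linear algebra argument using the linear independence of $\Pi$ and $\Pi^\vee$ forces $h=0$.

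Next I would extend the form to $\LL=\bigoplus_{n\in\Z}\LL_n$, the $\Z$-grading by height with $\LL_0=\HH$, by setting $(e_i,f_j)=\delta_{ij}d_i$, declaring (as invariance forces) that $(\LL_m,\LL_n)=0$ whenever $m+n\neq 0$, and recursing on $n\ge 1$: given the form on $\bigoplus_{|k|\le n}\LL_k$, for $x\in\LL_{n+1}$ write $x=\sum_s[e_{i_s},v_s]$ with $v_s\in\LL_n$ (possible since $\LL_{n+1}=[\LL_1,\LL_n]$), and for $y\in\LL_{-(n+1)}$ define $(x,y):=\sum_s(e_{i_s},[v_s,y])$, the right-hand side making sense because $[v_s,y]\in\LL_{-1}$; define $(y,x)$ by the mirror formula with the $f_i$. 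The hard part, and the one genuine obstacle, is well-definedness: one must show that $\sum_s[e_{i_s},v_s]=0$ forces $\sum_s(e_{i_s},[v_s,y])=0$. I would establish this simultaneously with invariance by the same induction on $n$: writing $y=[f_j,w]$ with $w\in\LL_{-n}$ (such $y$ span $\LL_{-(n+1)}$), expanding $[v_s,[f_j,w]]$ via the Jacobi identity and invoking invariance at level $n$ rewrites $\sum_s(e_{i_s},[v_s,y])$ as a sum whose leading term is $\big([\sum_s[e_{i_s},v_s],f_j],w\big)=0$ and whose remaining terms cancel \emph{exactly} because $(\alpha_j^\vee,h)=d_j\alpha_j(h)$ on $\HH$. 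Thus the symmetrizable normalization on $\HH$ is precisely the seed that makes the extension consistent; notably the Serre relations are not used in this construction step (only the graded identities $\LL_{n+1}=[\LL_1,\LL_n]$, $n\ge1$), entering only below in the nondegeneracy argument.

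Finally, the extended form is symmetric and invariant by construction, and for nondegeneracy on all of $\LL$ I would note that its radical is an ideal (by invariance), is graded (the form respects the grading), and meets $\HH$ trivially (by the first step); since $\LL(A)$ contains no nonzero graded ideal intersecting $\HH$ trivially --- a structural fact for symmetrizable $A$, see \cite{K2} --- the radical vanishes. For the claim on $\HH^*$: nondegeneracy of $(\cdot,\cdot)|_\HH$ yields an isomorphism $\nu\colon\HH\to\HH^*$, and the identity $(\alpha_i^\vee,h)=d_i\alpha_i(h)$ says exactly that $\nu(\alpha_i^\vee)=d_i\alpha_i$; transporting the form along $\nu$ gives $(\alpha_i,\alpha_j)=d_i^{-1}d_j^{-1}(\alpha_i^\vee,\alpha_j^\vee)=d_j^{-1}a_{ji}=s_{ij}$, and since $a_{ii}=2=d_i s_{ii}$ this forces $d_i=2/s_{ii}=2/(\alpha_i,\alpha_i)$, as stated.
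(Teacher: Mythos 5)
The paper does not prove this theorem itself; it is quoted directly from Kac (\cite{K2}, Theorem 2.2), and your argument is a correct and faithful rendition of precisely that proof: seed the form on $\HH$ via the symmetrization, extend along the principal grading by forcing invariance (with well-definedness as the one real issue, handled inductively), kill the radical because it is a graded ideal meeting $\HH$ trivially, and transport to $\HH^*$ to read off $(\alpha_i,\alpha_j)=s_{ij}$ and $d_i=2/(\alpha_i,\alpha_i)$. Your computations with the paper's conventions ($\alpha_i(\alpha_j^\vee)=a_{ji}$, $A=DS$) check out, so there is nothing to add beyond noting that the appeal to ``no nonzero graded ideal intersecting $\HH$ trivially'' is, for the Serre presentation used in this paper, the Gabber--Kac theorem rather than a formality.
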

From now on, $(\cdot ,\cdot)$ shall refer to the induced bilinear form on $\HH^*$.
\begin{definition} For $\alpha, \beta\in\Delta$, the \textbf{angle bracket} of $\alpha, \beta$ is given by
$$\la \alpha, \beta \ra := \frac{2( \alpha, \beta )}{( \alpha, \alpha )}.$$
\end{definition}
Note that $\la \ , \ \ra$ is linear only in the second, and that $\la \beta, \alpha \ra = \ds\frac{(\alpha, \alpha)}{(\beta, \beta)}\la \alpha, \beta \ra.$ Also, $\la \alpha_i, \alpha_j \ra = d_i s_{ij} = a_{ij}=\alpha_j(\alpha_i^\vee),$ thus
$$A=\Big( \la \alpha_i, \alpha_j  \ra \Big)_{1\leq i,j \leq \ell}.$$
The integral entries $\la \alpha_i, \alpha_j \ra$ ($1\leq i,j\leq\ell$) of the GCM are referred to as \textit{Cartan integers}. Also since the angle bracket is linear in the second, we have that $\la \alpha_i, \alpha \ra = \alpha(\alpha_i^\vee)$ for $\alpha\in\HH^*$.
\begin{definition}The \textbf{squared-length} of a root $\alpha$ is $||\alpha||^2=( \alpha,\alpha )$.
\end{definition}
For indefinite KM algebras, it is possible for roots to have negative squared-length. 

For each $1\leq i \leq \ell$, let 
$$h_i=\frac{\alpha_i^\vee}{d_i} = \frac{(\alpha_i,\alpha_i)}{2}\alpha^\vee_i\in\HH.$$

\begin{rem}\label{aij=aji}
Note that if $A$ is symmetric, then $(\alpha_i,\alpha_j)=a_{ij}$, $||\alpha_i||^2=2$, and $h_i=\alpha_i^\vee$, giving us $\alpha_i(h_j) = \alpha_i(\alpha_j^\vee) = a_{ji}=a_{ij}.$
\end{rem}

\begin{definition}\label{fund}
Given a KM algebra $\LL$ and its associated simple roots $\Pi$, the \textbf{fundamental weights} of $\LL$, $\omega_j\in\HH^*$ for $1\leq j \leq \ell$, are defined by
$$\la \alpha_i,\omega_j \ra= \omega_j(\alpha_i^\vee) = \delta_{ij}  \quad \text{or, equivalently, } \quad \la \omega_i,\alpha_j \ra= \frac{(\alpha_i,\alpha_i)}{(\omega_i,\omega_i)}\delta_{ij}.$$
The \textbf{weight lattice} $P$ of $\LL$ is the set of \textbf{integral weights},
$$P = \sum_{i=1}^\ell \Z\omega_i,$$
and the \textbf{dominant integral weights} are defined as
$$P^+ = \sum_{i=1}^\ell \Z^+\omega_i = \Big\{ \sum_{i=1}^\ell n_i\omega_i \ | \ 0 \leq n_i \in \Z\Big\} = \{\omega\in P \ | \ \la \alpha_j, \omega\ra \geq 0, 1\leq j \leq \ell\}.$$
\end{definition}
Writing a simple root $\alpha_j$ as a linear combination of the fundamental weights, 
$$\alpha_j = \sum^\ell_{k=1}c_{kj}\omega_k,\quad \Rightarrow\quad a_{ij} = \la \alpha_i, \alpha_j\ra = \sum^\ell_{k=1}c_{kj}\la \alpha_i, \omega_k\ra = \sum^\ell_{k=1}c_{kj}\delta_{ik} = c_{ij}.$$
Hence,
$$\alpha_j=\sum^\ell_{i=1}a_{ij}\omega_i.$$
Thus, the $j$-th column of the GCM is the coordinate vector of $\alpha_j$ with respect to the basis of the fundamental weights.  If $A$ is invertible, the columns of the inverse of the GCM give the coefficients of the fundamental weights in the basis of the simple roots:  
$$\omega_j=\sum^\ell_{i=1}(A^{-1})_{ij}\alpha_i.$$
Moreover, since $\la \omega_i, \alpha_j \ra = \ds\frac{(\alpha_i,\alpha_i)}{(\omega_i,\omega_i)}\delta_{ij}$ we have the following: 
$$\la \omega_i, \omega_j \ra = \la \omega_i, \sum^\ell_{i=1}(A^{-1})_{ij}\alpha_i \ra =  \frac{(\alpha_i,\alpha_i)}{(\omega_i,\omega_i)} (A^{-1})_{ij}.$$
Hence the inverse of the Cartan matrix is: 
$$\Big(A^{-1}\Big)_{1\leq i,j \leq \ell } = diag\Bigg(\frac{(\omega_i,\omega_i)}{(\alpha_i,\alpha_i)}\Bigg)_{1\leq i \leq \ell} \Big(\la \omega_i, \omega_j \ra\Big)_{1\leq i,j\leq \ell} $$

\begin{definition}
We define the function $wt: P \rightarrow \Z$ by $wt(\omega)=\ds \sum_{i=1}^\ell n_i$ if $\omega = \ds\sum_{i=1}^\ell n_i\omega_i \in P$. 
We call $\omega$ a \textbf{positive weight} if $wt(\omega)>0$ and a \textbf{negative weight} if $wt(\omega)<0$.
\end{definition}

\begin{definition}
The \textbf{Weyl group} $W=W(A)$ associated to $\LL(A)$ is the group generated by \textbf{simple reflections} $\{w_i \mid i=1\leq i \leq \ell\}$, where for all $\lambda \in\HH^*$,
$$
w_i \lambda = \lambda - \la \alpha_i, \lambda \ra  \alpha_i.$$
\end{definition}
Note that $w_i$ fixes the hyperplane $\alpha^\perp_i = \{ \lambda \in \HH_\R^*\ | \  (\lambda, \alpha_i)=0 \}$ pointwise and $w_i\alpha_i = - \alpha_i$. Thus, each $w_i$ is a reflection, so $W$ is a group of orthogonal transformations with respect to $(\cdot , \cdot)$. There is a group homomorphism $sgn: W\rightarrow \{\pm 1\}$ defined by $sgn(w_i)=-1$.

Furthermore, Kac gives us:
\begin{lemma}[\cite{K3}] Let $\LL$ be a KM algebra with Weyl group $W$, set of roots $\Delta$, simple roots $\Pi$, and root lattice $Q$.
\begin{enumerate}[a)]
\item The set of roots $\Delta$ is $W$-invariant, and $Mult(\alpha)=Mult(w(\alpha))$ for every $w\in W$ and $\alpha\in\Delta$. The set $\Delta_+ \backslash \{\alpha_i\}$ is invariant with respect to $w_i$.
\item The set $\Delta_+$ is uniquely defined by the following properties:
\begin{enumerate}[i)]
\item $\Pi\subset\Delta_+\subset Q; \ 2\alpha\notin\Delta_+$ if $\alpha\in\Pi,$
\item If $\alpha\in\Delta_+ \backslash\Pi$, then $\alpha+k\alpha_i\in\Delta_+$ if and only if $-p\leq k\leq q, k\in \Z,$ where $p, q$ are some non-negative integers satisfying $p-q=\la \alpha_i, \alpha\ra .$
\end{enumerate}
\item If $A$ is indecomposable of affine or hyperbolic type, then if $\alpha\in\Delta_+$, there exists $\beta\in\Pi$ such that $\alpha+\beta\in \Delta_+$.
\end{enumerate}
\end{lemma}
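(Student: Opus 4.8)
The three assertions are largely independent, and the plan is to deduce each from the Serre relations, treating (a), then (b), then (c). For (a) the idea is to lift each simple reflection $w_i$ to an honest automorphism of $\LL$. The first step is to check that $ad_{e_i}$ and $ad_{f_i}$ act locally nilpotently on $\LL$: on the Chevalley generators this follows from $[e_i,f_j]=\delta_{ij}\alpha_i^\vee$ together with the Serre relations $(ad_{e_i})^{-a_{ij}+1}(e_j)=0$, on $\HH$ it is immediate, and since $ad_{e_i}$ is a derivation, local nilpotence on a generating set propagates to all of $\LL$. Hence $r_i:=\exp(ad_{e_i})\exp(-ad_{f_i})\exp(ad_{e_i})$ is a well-defined automorphism of $\LL$, and a short computation shows that $r_i$ fixes $\ker\alpha_i\subset\HH$ pointwise and sends $\alpha_i^\vee\mapsto-\alpha_i^\vee$, so $r_i$ permutes weight spaces exactly by $w_i$, i.e. $r_i(\LL_\alpha)=\LL_{w_i\alpha}$. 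This yields at once that $\Delta$ is $w_i$-stable, hence $W$-stable, and that $\mathrm{Mult}(\alpha)=\dim\LL_\alpha=\dim\LL_{w_i\alpha}=\mathrm{Mult}(w_i\alpha)$, hence that $\mathrm{Mult}$ is $W$-invariant. For the final sentence of (a): every root has all of its $\alpha_j$-coordinates of one sign, while $w_i\alpha=\alpha-\la\alpha_i,\alpha\ra\alpha_i$ changes only the $\alpha_i$-coordinate; so if $\alpha\in\Delta_+\setminus\{\alpha_i\}$ it has a strictly positive coordinate at some $j\neq i$, which survives in $w_i\alpha$, forcing $w_i\alpha\in\Delta_+$, and $w_i\alpha\neq\alpha_i$ because $w_i\alpha_i=-\alpha_i$; as $w_i^2=1$, the inclusion is an equality.

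For (b) I would first verify that $\Delta_+$ has the listed properties. That $\Pi\subset\Delta_+\subset Q$ is built into the definitions, and $2\alpha_i\notin\Delta_+$ because the $2\alpha_i$-weight space of $\mathfrak{n}_+$ is spanned by iterated brackets in $e_i$ alone and $[e_i,e_i]=0$, so $\LL_{2\alpha_i}=0$. The substance is property (ii): for fixed $i$, the subalgebra $\g_{(i)}=\F e_i\oplus\F\alpha_i^\vee\oplus\F f_i$ is a copy of $\mathfrak{sl}_2$, and by the local nilpotence above the adjoint action makes $\LL$ an integrable $\g_{(i)}$-module, hence a direct sum of finite-dimensional irreducibles. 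For $\alpha\in\Delta_+\setminus\Pi$, the string $M^\alpha:=\bigoplus_{k\in\Z}\LL_{\alpha+k\alpha_i}$ is a $\g_{(i)}$-submodule on which $\alpha_i^\vee$ acts with eigenvalues $\la\alpha_i,\alpha\ra+2k$, all of one fixed parity; since the weight set of each finite-dimensional $\mathfrak{sl}_2$-module is a symmetric unbroken string and all irreducible summands occurring here have weights of that common parity, the weight set of $M^\alpha$ is itself a symmetric unbroken string. Therefore $\{k:\LL_{\alpha+k\alpha_i}\neq0\}$ is an interval $[-p,q]$, and symmetry forces $\la\alpha_i,\alpha\ra-2p=-(\la\alpha_i,\alpha\ra+2q)$, i.e. $p-q=\la\alpha_i,\alpha\ra$; moreover each $\alpha+k\alpha_i$ in this range has a positive coordinate at an index $\neq i$, so it lies in $\Delta_+$ whenever it is a root. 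For uniqueness I would argue by induction on height: if $S\subseteq\Delta$ also satisfies (i) and (ii), then a height induction — descending along $\alpha_i$-strings via (ii), invoking (c) for imaginary roots on which no $\la\alpha_i,\cdot\ra$ is positive, and ruling out negative roots (e.g. $-\alpha_i\in S$ would, by (ii), put $0\in S\subseteq\Delta$) — forces $S=\Delta_+$.

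For (c), the argument begins by supposing $\alpha\in\Delta_+$ with $\alpha+\beta\notin\Delta_+$ for every $\beta\in\Pi$. By property (ii) the top of each $\alpha_i$-string through $\alpha$ is $0$, hence $p_i=\la\alpha_i,\alpha\ra\geq0$, i.e. $(\alpha_i,\alpha)\geq0$ for all $i$, and so $(\alpha,\alpha)=\sum_i n_i(\alpha_i,\alpha)\geq0$, which already excludes $(\alpha,\alpha)<0$. If $(\alpha,\alpha)>0$ then $\alpha$ is a real root, but a dominant positive real root forces $A$ to be of finite type, contrary to hypothesis — this is precisely where ``affine or hyperbolic'' enters, since (c) genuinely fails in finite type (the highest root). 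If $(\alpha,\alpha)=0$ then $(\alpha_i,\alpha)=0$ for each $i\in\mathrm{supp}(\alpha)$; when $\mathrm{supp}(\alpha)=\Pi$ this puts $\alpha$ in the radical of the form, which is $0$ for hyperbolic $A$ (nondegenerate form) and $\Z\delta$ for affine $A$, and in the affine case $\delta+\alpha_i$ is a root for every $i$, a contradiction; when $\mathrm{supp}(\alpha)\subsetneq\Pi$, the support carries a proper principal submatrix, of finite or affine type by the hyperbolicity/affinity hypothesis, so $\alpha$ is a null imaginary root of an affine sub-diagram, and indecomposability of $A$ produces $j\notin\mathrm{supp}(\alpha)$ with $(\alpha_j,\alpha)<0$, whence the $\alpha_j$-string property gives $\alpha+\alpha_j\in\Delta_+$, again a contradiction.

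The main obstacle is the technical core of (b)(ii): proving that $ad_{e_i}$ and $ad_{f_i}$ are locally nilpotent and, from this, that the $\mathfrak{sl}_2$-strings through roots are unbroken. Once that is in hand, the exponential automorphisms in (a) and the type-by-type chamber bookkeeping in (c) are routine — the one genuinely content-bearing input for (c) being the classification of dominant positive roots, which is exactly what distinguishes the affine/hyperbolic case from the finite one.
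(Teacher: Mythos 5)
First, a point of reference: the paper does not prove this lemma at all --- it is quoted as background from Kac \cite{K3} --- so your argument can only be measured against the standard one, which it largely reproduces. Part (a) (exponentiating the locally nilpotent derivations $ad_{e_i}$, $ad_{f_i}$ to automorphisms $r_i$ with $r_i(\LL_\alpha)=\LL_{w_i\alpha}$) and the existence half of (b) (integrability of $\LL$ over the $i$-th copy of $\mathfrak{sl}_2$, hence unbroken symmetric weight strings, plus the coordinate argument that $w_i$ preserves $\Delta_+\setminus\{\alpha_i\}$) are the textbook proofs and are correct as you state them.

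There are, however, genuine gaps in (b) and (c). The uniqueness half of (b) is not proved: your one-sentence sketch invokes part (c), but (c) is available only for $A$ indecomposable of affine or hyperbolic type, whereas (b) is asserted for an arbitrary KM algebra, so the induction as described cannot be run; moreover a correct argument must establish both inclusions $S\subseteq\Delta_+$ and $\Delta_+\subseteq S$, and nothing in the sketch controls extra lattice points of $S$ (property (i) only forces $\Pi\subset S\subset Q$). The standard route is a height induction that descends from $\alpha$ to some $\alpha-\alpha_i\in\Delta_+$ (such an $i$ exists because $\mathfrak n_+$ is generated by the $e_i$, so $\LL_\alpha=\sum_i[e_i,\LL_{\alpha-\alpha_i}]$) and then uses the inductively known lower part of the $\alpha_i$-string together with $p-q=\la\alpha_i,\cdot\ra$ to force $\alpha\in S$; this needs to be written out. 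In (c), two steps are asserted rather than proved. The string property (b)(ii) is stated only for $\alpha\in\Delta_+\setminus\Pi$, so the case $\alpha\in\Pi$ needs the separate observation that indecomposability and rank $\geq 2$ supply some $j$ with $a_{ji}<0$, whence $\alpha_i+\alpha_j\in\Delta_+$. More seriously, the claim that a dominant positive real root forces $A$ to be of finite type is exactly the content of the real-root case and cannot be cited as known; it must be supplied, e.g.\ in affine type a dominant $\alpha$ with $(\alpha,\alpha)>0$ would give $(\delta,\alpha)=\sum_i a_i(\alpha_i,\alpha)>0$ where $\delta=\sum_i a_i\alpha_i$ with all $a_i>0$, contradicting $\delta$ lying in the radical of the form, while in hyperbolic type the dominant cone lies in the closed light cone (the Gram matrix of the fundamental weights is entrywise $\leq 0$), so no dominant element of the root lattice has positive norm. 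With these repairs your proof becomes the standard one.
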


If $A$ is a finite-type Cartan matrix, then $W$ and $\Delta$ are both finite,  $( \cdot, \cdot )$ is positive definite, and $W\Pi=\Delta$. Also, $\Delta$ embeds into Euclidean space $\R^\ell$, and satisfies the properties of a finite root system \cite{H}. 

If $A$ is a GCM we call $\Delta$ a \textbf{generalized root system}, and it partitions into \textit{real roots} and \textit{imaginary roots},
$$\Delta=\Delta^{re}\cup\Delta^{im},\quad \text{where}\quad 
\Delta^{re}=W\Pi \quad \text{and}\quad\Delta^{im}=\Delta \backslash \Delta^{re}.$$

\begin{definition}
Let $V$ be an $\LL$-module corresponding to representation $\phi: \LL\rightarrow End(V)$. We say that $x\in \LL$ is \textbf{locally nilpotent} on $V$ if for all $v\in V$, there exists a positive integer $n$ such that $\phi(x)^n  v = 0$. We sometimes write more briefly $x^n \cdot v$ for $\phi(x)^n v$ with an abuse of notation.
\end{definition}

\begin{definition}
For $\lambda\in\HH^*$, the $\lambda$ \textbf{weight space} $V_\lambda$ of an $\LL$-module $V$ is 
$$V_\lambda=\{v\in V \mid h\cdot v = \lambda(h)v \text{ for } h\in\HH\},$$
and $\lambda$ is a \textbf{weight} of $V$ if $V_\lambda\neq0$.
\end{definition}

\begin{definition}
An $\LL$-module $V$ is called $\HH$\textbf{-diagonalizable} if $V=\ds\bigoplus_{\lambda\in\HH^*}V_\lambda$.
\end{definition}

Given an $\HH$-diagonalizable $\LL$-module $V$, let 
$$P(V)=\{\lambda\in\HH^*\mid V_\lambda\neq 0\}\quad \text{and} \quad D(\lambda)=\{\mu\in\HH^*\mid\mu\leq \lambda\}.$$

\begin{definition}\label{cato}
\textbf{The category} $\cO$ is the category whose objects are $\HH$-diagonalizable $\LL$-modules $V$ with finite-dimensional weight spaces and such that for each $V$, there exists a finite number of elements $\lambda_1, \ldots, \lambda_s\in\HH^*$ such that 
$$P(V)\subset \bigcup_{i=1}^s D(\lambda_i).$$
The morphisms of the category $\cO$ are $\LL$-module homomorphisms.
\end{definition}

\begin{definition}\label{hwmod}
An $\LL$-module $V$ is a \textbf{highest-weight module} with \textbf{highest weight} $\lambda\in\HH^*$ if there exists a non-zero vector $v_\lambda\in V$, called a \textbf{highest-weight vector (HWV)}, with the following properties:
$$\mathfrak n_+\cdot v_\lambda = 0, \qquad h\cdot v_\lambda = \lambda(h)v_\lambda \ \text{for all }h\in\HH^*,\qquad \text{and}\qquad \cU(\mathfrak n_-)\cdot v_\lambda = V.$$
If $V$ is irreducible, we write $V=V^\lambda$. The $\mu$ weight space of $V^\lambda$ is denoted $V^\lambda_\mu.$ In particular, note that the second property implies $v_\lambda \in V^\lambda_\lambda$. 
\end{definition}

A highest-weight $\LL$-module is in the category $\cO$.


\begin{definition}\label{high}
Let $V$ be a completely reducible $\LL$-module in the category $\cO$, and let $\mu\in P(V)$. Define
$$High_V(\mu) =\{ v \in V_\mu \ | \ e_i \cdot v =0, \ 1\leq i \leq \ell \}$$
to be the subspace of highest-weight vectors in $V_\mu$. Define
$$M_V(\mu)=\dim High_V(\mu)$$
to be the \textbf{outer multiplicity of} $\mu$ \textbf{in} $V$, and define
$$P'(V) = \{ \mu \in P(V) \mid M_V(\mu) >0 \}$$
to be the set of weights whose weight spaces of $V$ contain highest-weight vectors.
\end{definition}
Let $\cB(\mu)=\{ v_{\mu,i}\in V_\mu \ | \ 1 \leq i \leq M_V(\mu) \}$ be a basis for $High_V(\mu)$. Then each $v_{\mu,i}$ generates an irreducible highest-weight $\LL$-module, $\cU(\LL)\cdot v_{\mu,i} \simeq V^\mu$. 
\begin{definition}
An $\HH$-diagonalizable module $V$ over $\LL$ is \textbf{integrable} if the generators $\{e_i, f_i \mid 1\leq i \leq \ell\}$ of $\LL$ are all locally nilpotent on $V$.
\end{definition}

\begin{lemma}[\cite{K2} 10.1]\label{dominant}
Let $\LL$ be a KM algebra and $V^\lambda$ be an irreducible highest-weight $\LL$-module. Then $V^\lambda$ is integrable if and only if $\lambda\in P^+$.
\end{lemma}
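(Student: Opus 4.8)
The plan is to prove both implications by restricting attention to the rank-one subalgebras $\g^{(i)} = \la e_i, f_i, \alpha_i^\vee \ra \cong \mathfrak{sl}_2(\F)$, running the elementary $\mathfrak{sl}_2$-module computation on the highest-weight vector $v_\lambda$, and then, for the harder direction, propagating local nilpotency from $v_\lambda$ to all of $V^\lambda$.

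First, the necessity of $\lambda \in P^+$. Assume $V^\lambda$ is integrable and fix $i$. Since $e_i \cdot v_\lambda = 0$ and $\alpha_i^\vee \cdot v_\lambda = \lambda(\alpha_i^\vee) v_\lambda$, the vector $v_\lambda$ is a $\g^{(i)}$-highest-weight vector. A straightforward induction on $k$, using only $[e_i, f_i] = \alpha_i^\vee$, $[h, f_i] = -\alpha_i(h) f_i$, and $\alpha_i(\alpha_i^\vee) = a_{ii} = 2$, yields the identity
$$e_i \cdot f_i^k \cdot v_\lambda \;=\; k\big(\lambda(\alpha_i^\vee) - k + 1\big)\, f_i^{k-1} \cdot v_\lambda .$$
Let $N \ge 1$ be minimal with $f_i^N \cdot v_\lambda = 0$; this exists by local nilpotency of $f_i$, and $N \ge 1$ since $v_\lambda \ne 0$. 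Applying the identity with $k = N$, and using $f_i^{N-1} \cdot v_\lambda \ne 0$ by minimality of $N$, forces $\lambda(\alpha_i^\vee) = N - 1 \in \Z_{\ge 0}$, i.e. $\la \alpha_i, \lambda\ra \ge 0$. As $i$ was arbitrary, $\lambda \in P^+$.

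Conversely, assume $\lambda \in P^+$ and write $n_i = \la\alpha_i,\lambda\ra = \lambda(\alpha_i^\vee) \in \Z_{\ge 0}$. I would first show $f_i^{n_i+1} \cdot v_\lambda = 0$. Set $w = f_i^{n_i+1}\cdot v_\lambda$. The relations $[e_j, f_i] = \delta_{ij}\alpha_i^\vee$ give $e_j \cdot w = f_i^{n_i+1}\cdot e_j \cdot v_\lambda = 0$ for $j \ne i$, while the displayed identity with $k = n_i + 1$ gives $e_i \cdot w = (n_i+1)(n_i - n_i)\, f_i^{n_i}\cdot v_\lambda = 0$. Thus $w$ is either $0$ or a highest-weight vector of weight $\lambda - (n_i+1)\alpha_i$, which is strictly lower than $\lambda$ in the partial order; in the latter case $\cU(\mathfrak n_-)\cdot w$ is a nonzero submodule of $V^\lambda$ missing the weight space $V^\lambda_\lambda$, contradicting irreducibility. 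Hence $w = 0$. It then remains to pass from local nilpotency at $v_\lambda$ to local nilpotency on $V^\lambda = \cU(\LL)\cdot v_\lambda$. Here I would invoke two standard structural facts: (i) each $ad_{e_i}$ and $ad_{f_i}$ is locally nilpotent on $\LL$ itself — it acts nilpotently on every Chevalley generator and on $\HH$ by the Serre relations (in particular $(ad_{e_i})^{-a_{ij}+1}(e_j)=0$ and $(ad_{f_i})^{-a_{ij}+1}(f_j)=0$), and the Leibniz identity $ad_x^{\,a+b-1}([y,z]) = 0$ whenever $ad_x^a y = 0$ and $ad_x^b z = 0$ spreads nilpotency from the generators to the whole generated algebra; and (ii) if $ad_x$ is locally nilpotent on $\LL$, then $\{v \in V : x^M\cdot v = 0 \text{ for some } M\}$ is an $\LL$-submodule of any $\LL$-module $V$, via the operator expansion $x^M(y\cdot v) = \sum_k \binom{M}{k}(ad_x^k y)\cdot(x^{M-k}\cdot v)$. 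Applying (ii) with $x = f_i$ (nilpotent on $v_\lambda$ by the previous step) and with $x = e_i$ (which annihilates $v_\lambda$) shows every $e_i, f_i$ acts locally nilpotently on $\cU(\LL)\cdot v_\lambda = V^\lambda$, so $V^\lambda$ is integrable.

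The routine parts are the $\mathfrak{sl}_2$ induction and the singular-vector argument. The main obstacle is fact (i): establishing that $ad_{e_i}$ and $ad_{f_i}$ are locally nilpotent on all of $\LL$, which is exactly where the Serre relations and the Leibniz bracket identity do the real work; once that and the submodule lemma (ii) are in hand, the rest assembles quickly.
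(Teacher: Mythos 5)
Your proof is correct and is essentially the standard argument for this result: the paper itself offers no proof, citing it directly as Lemma 10.1 of \cite{K2}, and your two steps (the $\mathfrak{sl}_2$ computation on $v_\lambda$ plus the singular-vector argument, followed by propagating local nilpotency via the locally nilpotent adjoint action of $e_i, f_i$ on $\LL$ and the binomial operator identity) are exactly the ingredients of Kac's proof (his Lemmas 3.4--3.5 and 10.1). Nothing is missing.
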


By the lemma, we have for any integrable highest-weight $\LL$-module $V$,
\begin{equation}\label{eq:p'}P'(V)\subset P^+.\end{equation}

\begin{definition}\label{string}
For fixed $\alpha\in\Delta$ and $\lambda\in P$, the set $S_\alpha(\lambda)=\{\lambda-i\alpha \mid 0\leq i \leq \la \alpha,\lambda  \ra\}$ is called the \textbf{$\alpha$-weight string through $\lambda$}. A subset $R\subset P$ of integral weights is called \textbf{saturated} if for all $\lambda\in R$, $\alpha\in \Delta$, $S_\alpha(\lambda)\subset R$. 
\end{definition}
Observe that weight strings and saturated sets of weights are $W$-invariant. If $V$ is an $\LL$-module, then $P(V)$ is a saturated set since
\begin{equation}\label{eq:saturated}\ds\bigcup_{i=1}^\ell S_{\alpha_i}(\mu)\subset P(V) \ \ \ \text{for all} \ \ \mu\in P(V)\end{equation}

\begin{theorem}[\cite{K2} 10.7]\label{reducible}
Let $\LL$ be a KM algebra and let $V$ be an $\LL$-module in the category $\cO$. Then $V$ is integrable if and only if $V$ is completely reducible, that is, $V$ has a decomposition 
$$V=\bigoplus_{\lambda\in P'(V)} M_V(\lambda)V^\lambda.$$
\end{theorem}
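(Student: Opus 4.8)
The plan is to treat the two implications separately. The reverse direction is routine: if $V$ has the stated form $V=\bigoplus_{\lambda\in P'(V)}M_V(\lambda)V^\lambda$, then each occurring $\lambda$ is dominant integral — this is \eqref{eq:p'} — so Lemma \ref{dominant} makes every summand $V^\lambda$ integrable, and since a general element of $V$ lies in a finite partial sum, local nilpotence of the $e_i,f_i$ on every summand passes to all of $V$; hence $V$ is integrable. For this to make sense one records, for use below as well, that an irreducible object of $\cO$ is automatically a highest-weight module $V^\mu$: a weight maximal in $P(V)$ exists by Definition \ref{cato}, and a nonzero vector in that weight space is killed by all $e_i$, hence is a highest-weight vector generating the module. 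So the real work is the forward implication: $V$ integrable and in $\cO$ forces the stated decomposition, with all highest weights in $P^+$.

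For the forward direction I would run the generalized-Casimir argument. First, $\mathfrak{sl}_2$-reduction: for each $i$ the triple $e_i,f_i,\alpha_i^\vee$ spans a copy of $\mathfrak{sl}_2$ acting locally finitely on $V$, so restricting and quoting finite-dimensional $\mathfrak{sl}_2$-theory shows $P(V)$ is $W$-invariant and every weight is $W$-conjugate into the dominant region; in particular any irreducible subquotient of $V$ is an integrable highest-weight module, so by Lemma \ref{dominant} its highest weight lies in $P^+$. Next, since $A$ is symmetrizable, introduce the generalized Casimir operator $\Omega$ built from the invariant form $(\cdot,\cdot)$: it is well defined and commutes with the $\LL$-action on any module of $\cO$, and it acts on a highest-weight module of highest weight $\mu$ by the Casimir eigenvalue $\chi(\mu):=(\mu+\rho,\mu+\rho)-(\rho,\rho)$, where $\rho\in\HH^*$ is chosen with $\la\alpha_i,\rho\ra=1$ for all $i$. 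Because $\Omega$ commutes with $\HH$ and the weight spaces of $V$ are finite-dimensional, $\Omega$ is locally finite on $V$, so $V=\bigoplus_{c\in\C}V^{(c)}$ is the direct sum of its generalized $\Omega$-eigenspaces, each an $\LL$-submodule; it then suffices to prove every $V^{(c)}$ is a finite direct sum of standard modules.

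Now fix $c$, and let $\lambda_1,\dots,\lambda_s$ be the bounding weights of $P(V)$ from Definition \ref{cato}. The crux is a finiteness lemma: the set $F_c:=\{\mu\in P^+:\mu\leq\lambda_t\text{ for some }t,\ \chi(\mu)=c\}$ is finite, and it contains the highest weight of every irreducible subquotient of $V^{(c)}$. Moreover $F_c$ is an antichain: if $\mu<\mu'$ with $\mu,\mu'\in P^+$ then $\mu+\mu'+2\rho$ is regular dominant, so writing $\mu'-\mu=\sum_jk_j\alpha_j$ with $k_j\geq 0$ not all zero, $\chi(\mu')-\chi(\mu)=(\mu'-\mu,\,\mu+\mu'+2\rho)=\sum_jk_j(\alpha_j,\mu+\mu'+2\rho)>0$, so $\chi(\mu)\neq\chi(\mu')$. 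Since the elements of $F_c$ are pairwise incomparable, $V^\mu$ contributes to a weight space $V^{(c)}_{\mu_0}$ (for $\mu_0\in F_c$) only when $\mu=\mu_0$; hence the multiplicity of $V^{\mu_0}$ as a subquotient equals $\dim V^{(c)}_{\mu_0}<\infty$, and as $F_c$ is finite, $V^{(c)}$ has finite length.

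It remains to see that a finite-length module with all subquotients standard and annihilated by a power of $\Omega-c$ is semisimple, for which it suffices to split every short exact sequence $0\to V^\mu\to E\to V^\lambda\to0$ with $\mu,\lambda\in P^+$ sitting inside such a module. Lift the highest-weight vector of $V^\lambda$ to $E_\lambda$: if it is killed by all $e_i$ it generates a copy of $V^\lambda$ splitting the sequence; otherwise some $e_i$ sends it into $V^\mu_{\lambda+\alpha_i}\neq 0$, forcing $\lambda<\mu$ and hence, by the inequality above, $\chi(\lambda)<\chi(\mu)$ — impossible, since $\Omega$ acts as $\chi(\lambda)$ on $V^\lambda$, as $\chi(\mu)$ on $V^\mu$, and has the single generalized eigenvalue $c$ on $E$. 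So all such sequences split, each $V^{(c)}$ and therefore $V$ is a direct sum of standard modules, and reading off outer multiplicities via Definition \ref{high} yields $V=\bigoplus_{\lambda\in P'(V)}M_V(\lambda)V^\lambda$ with $P'(V)\subset P^+$. I expect the main obstacle to be exactly this finiteness lemma for $F_c$ together with the construction of $\Omega$: checking that $\Omega$ is central on $\cO$ is the symmetrizability bookkeeping, while the finiteness of $F_c$ genuinely uses that $\chi$ is strictly increasing along the dominant cone, not merely formal properties of category $\cO$.
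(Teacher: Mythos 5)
The paper offers no proof of this statement at all: it is imported verbatim from \cite{K2} (Theorem 10.7) and used as a black box, so there is no in-house argument to compare against. What you have written is essentially Kac's own proof (Proposition 9.9 of \cite{K2} specialized to the integrable case via the generalized Casimir operator), and the decisive computation --- $\chi(\mu')-\chi(\mu)=(\mu'-\mu,\,\mu+\mu'+2\rho)>0$ for dominant $\mu<\mu'$, so that dominant weights with a common Casimir eigenvalue form an antichain --- is exactly the right lever. The reverse implication and the identification of irreducibles in $\cO$ with highest-weight modules are fine (though note the theorem as stated is slightly loose: an irreducible highest-weight module with non-dominant highest weight is ``completely reducible'' but not integrable, so the decomposition must be read with $P'(V)\subset P^+$ built in, which is what your argument actually proves).

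The one step you do not establish is the finiteness of $F_c$, hence the finite length of $V^{(c)}$. Monotonicity of $\chi$ on $P^+$ yields only the antichain property; it does not bound $F_c$. The condition $\chi(\mu)=c$ is the single quadratic equation $(\mu+\rho,\mu+\rho)=c+(\rho,\rho)$, and for an indefinite form a quadric can meet a cone of lattice points in infinitely many points --- for an affine algebra at the critical level the set $\{\mu\le\lambda: \|\mu+\rho\|^2=\|\lambda+\rho\|^2\}$ really is infinite --- so finiteness here leans on $\mu$ ranging over $P^+$ and needs a separate geometric argument that you do not supply. Fortunately the claim is dispensable: replace ``finite length plus splitting of short exact sequences'' by the socle argument, which uses only the antichain property. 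Any nonzero submodule of $V^{(c)}$ has a maximal weight $\lambda$ and a vector there killed by all $e_i$; integrability forces $\lambda\in P^+$ and $\Omega$ forces $\chi(\lambda)=c$, and the highest-weight module this vector generates is irreducible, since a nonzero proper submodule would produce a second dominant weight $\mu<\lambda$ with $\chi(\mu)=c$, contradicting the antichain property. If the sum $W$ of all irreducible submodules were proper, lift a highest-weight vector of $V^{(c)}/W$ of maximal weight $\lambda$ to $v$ with $e_iv\in W$; were some $e_iv\neq 0$ it would land in a summand $V^\mu$ of $W$ with $\lambda<\mu$, again impossible, so $v$ generates an irreducible submodule not contained in $W$. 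With that substitution your proof is complete and is, for practical purposes, the proof in \cite{K2}.
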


\begin{definition}
Let $V$ be an $\LL$-module from the category $\cO$. The \textbf{formal character of } $V$ is defined as
$$Ch(V) = \sum_{\lambda\in P(V)} \dim V_\lambda e^\lambda,$$
where $e^\lambda$ is a formal exponential satisfying $e^\lambda e^\mu = e ^{\lambda+\mu}.$
\end{definition}
If $V$ is an integrable highest-weight module and $\mu\in P(V)$, then define
\begin{equation}\label{eq:multv}Mult_V(\mu) = \dim_V(V_\mu).\end{equation}
\begin{proposition}[\cite{K2} 10.1] If $V=V^\lambda$ is an irreducible highest-weight $\LL$-module with highest weight $\lambda\in P^+$, then for all $\mu\in P(V)$ and $w\in W$,
$$Mult_V(\mu)=Mult_V(w\mu).$$
In particular, $P(V)$ is $W$-invariant.
\end{proposition}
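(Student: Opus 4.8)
The plan is to exploit integrability to lift each simple reflection $w_i\in W$ to an invertible linear operator on $V$ that permutes weight spaces exactly as $w_i$ permutes weights, and then to pass from simple reflections to an arbitrary $w\in W$ by composition. Since $\lambda\in P^+$, Lemma~\ref{dominant} says $V=V^\lambda$ is integrable, so each $e_i$ and $f_i$ acts locally nilpotently on $V$. Hence the exponentials $\exp(e_i),\exp(-e_i),\exp(f_i),\exp(-f_i)$ are well-defined linear operators on $V$ (each reduces to a finite sum on any fixed vector), and we set
$$\sigma_i := \exp(f_i)\exp(-e_i)\exp(f_i)\in GL(V),$$
whose inverse is the similarly-defined operator $\exp(-f_i)\exp(e_i)\exp(-f_i)$.

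First I would establish the key intertwining property: $\sigma_i(V_\mu)=V_{w_i\mu}$ for every $\mu\in\HH^*$. To see this, restrict the $\LL$-action to the subalgebra $\g_i:=\F e_i\oplus \F \alpha_i^\vee\oplus \F f_i\cong\mathfrak{sl}_2$. Local nilpotency of $e_i$ and $f_i$ forces $V$, viewed as a $\g_i$-module, to be a direct sum of finite-dimensional irreducible $\mathfrak{sl}_2$-modules; on each such summand $\sigma_i$ is the standard reflection operator, sending the $\alpha_i^\vee$-eigenspace of eigenvalue $n$ isomorphically onto the one of eigenvalue $-n$. On the other hand, any $h\in\HH$ with $\alpha_i(h)=0$ commutes with $e_i$ and $f_i$ (since $[h,e_i]=\alpha_i(h)e_i=0$, etc.), hence with $\sigma_i$, so $\sigma_i$ preserves $h$-eigenvalues for all such $h$. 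Since a weight $\mu$ is determined by $\mu(\alpha_i^\vee)$ together with $\mu|_{\ker\alpha_i}$, combining these two observations shows that $\sigma_i$ carries $V_\mu$ isomorphically onto $V_{\mu-\mu(\alpha_i^\vee)\alpha_i}=V_{w_i\mu}$, using $w_i\mu=\mu-\la\alpha_i,\mu\ra\alpha_i$ and $\la\alpha_i,\mu\ra=\mu(\alpha_i^\vee)$.

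Granting this, the rest is formal. Any $w\in W$ factors as $w=w_{i_1}\cdots w_{i_k}$, so $\sigma_w:=\sigma_{i_1}\cdots\sigma_{i_k}$ is a linear isomorphism $V_\mu\to V_{w\mu}$; therefore $Mult_V(\mu)=\dim V_\mu=\dim V_{w\mu}=Mult_V(w\mu)$. In particular $V_\mu\neq 0$ if and only if $V_{w\mu}\neq 0$, so $P(V)$ is $W$-invariant. (This last assertion also follows directly from the remark after Definition~\ref{string}, since $P(V)$ is saturated by \eqref{eq:saturated} and saturated sets are $W$-invariant.)

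The main obstacle is the intertwining property $\sigma_i(V_\mu)=V_{w_i\mu}$: one must verify carefully that $V$ is locally finite over $\g_i$ — this is exactly where integrability (local nilpotency of $e_i,f_i$) is used — and then track how conjugation by $\sigma_i$ transforms the action of the entire Cartan subalgebra $\HH$, not merely of $\alpha_i^\vee$. Everything else — well-definedness and invertibility of the exponential operators, and the reduction from simple reflections to general $w\in W$ — is routine once that step is in place.
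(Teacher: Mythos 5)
Your proof is correct, and it is essentially the standard argument from the cited source (Kac's Lemma 3.8 together with the integrability criterion): the paper itself gives no proof but simply cites \cite{K2}, where the result is established exactly by the operators $\exp(f_i)\exp(-e_i)\exp(f_i)$ intertwining the weight spaces as you describe. Nothing further is needed.
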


\begin{theorem}[\cite{K2} 10.4]
Let $V^\lambda$ be an irreducible $\LL$-module with highest weight $\lambda\in P^+$, and let $\rho=\ds\sum_{i=1}^\ell \omega_i.$ Then
$$Ch(V^\lambda) = \frac{\ds\sum_{w\in W}sgn(w)e^{w(\lambda+\rho)-\rho}}{\ds\prod_{\alpha\in \Delta_+}(1-e^{-\alpha})^{Mult (\alpha)}}.$$
This is known as the \textbf{Weyl-Kac character formula}.
\end{theorem}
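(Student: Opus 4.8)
The plan is to reprove this via the classical generalized-Casimir argument. First I would work inside the completed group algebra $\mathcal E$ of formal sums $\sum_\mu c_\mu e^\mu$ whose supports lie in a finite union of the sets $D(\lambda_i)$; in $\mathcal E$ the element $R=\prod_{\alpha\in\Delta_+}(1-e^{-\alpha})^{Mult(\alpha)}$ is invertible, and the Verma module $M(\mu)=\cU(\LL)\otimes_{\cU(\HH\oplus\mathfrak n_+)}\F_\mu$ satisfies $Ch(M(\mu))=e^{\mu}R^{-1}$ (by PBW applied to $\mathfrak n_-$). Since $V^\lambda$ is the irreducible quotient of $M(\lambda)$, and every object of $\cO$ has a local composition series with irreducible highest-weight subquotients, the ``multiplicity matrix'' $\big([M(\lambda):V^\mu]\big)$ is upper-triangular-unipotent with respect to $\le$; inverting it gives an expansion
$$Ch(V^\lambda)=\sum_{\mu\le\lambda}a_{\lambda\mu}\,Ch(M(\mu)),\qquad a_{\lambda\lambda}=1,\ \ a_{\lambda\mu}\in\Z.$$

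Next I would introduce the generalized Casimir operator $\Omega$, built from the invariant bilinear form of the Theorem quoted above, and use the standard fact that $\Omega$ acts on a highest-weight $\LL$-module of highest weight $\mu$ by the scalar $(\mu,\mu+2\rho)=||\mu+\rho||^2-||\rho||^2$. Comparing the $\Omega$-eigenvalues on the two sides of the expansion forces $a_{\lambda\mu}=0$ unless $||\mu+\rho||^2=||\lambda+\rho||^2$. Multiplying through by $R$ and using $R\cdot Ch(M(\mu))=e^{\mu}$ yields
$$R\cdot Ch(V^\lambda)=\sum_{\mu\le\lambda,\ ||\mu+\rho||^2=||\lambda+\rho||^2}a_{\lambda\mu}\,e^{\mu},\qquad a_{\lambda\lambda}=1.$$

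Now I would invoke symmetry. Because $\lambda\in P^+$ the module $V^\lambda$ is integrable (Lemma~\ref{dominant}), so $Ch(V^\lambda)$ is $W$-invariant by the $W$-invariance of weight multiplicities stated above. Separately, $w(e^{\rho}R)=sgn(w)\,e^{\rho}R$ for all $w\in W$: it suffices to check this on simple reflections, where $w_i\rho=\rho-\alpha_i$, $Mult(\alpha_i)=1$, and $w_i$ permutes $\Delta_+\setminus\{\alpha_i\}$ preserving multiplicities by part (a) of the Lemma of Kac quoted above, giving $w_i(e^{\rho}R)=-e^{\rho}R$. Hence $e^{\rho}R\cdot Ch(V^\lambda)=\sum_{\mu}a_{\lambda\mu}\,e^{\mu+\rho}$ is $W$-anti-invariant. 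Such a sum has zero coefficient on every $W$-singular weight, and the dominant $W$-conjugate of any weight in its support again lies in the support; since every exponent $\nu=\mu+\rho$ there satisfies $\nu\le\lambda+\rho$ and $||\nu||^2=||\lambda+\rho||^2$, the elementary lemma (write $\lambda+\rho-\nu=\sum k_i\alpha_i$ with $k_i\ge0$; then $0=(\lambda+\rho+\nu,\lambda+\rho-\nu)=\sum k_i(\alpha_i,\lambda+\rho)+\sum k_i(\alpha_i,\nu)$ with $(\alpha_i,\lambda+\rho)>0$ and $(\alpha_i,\nu)\ge0$, forcing all $k_i=0$) shows that dominant conjugate must be $\lambda+\rho$. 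Therefore the support lies in the single orbit $W(\lambda+\rho)$, on which $W$ acts freely since $\lambda+\rho$ is regular, and anti-invariance together with $a_{\lambda\lambda}=1$ pins down $e^{\rho}R\cdot Ch(V^\lambda)=\sum_{w\in W}sgn(w)\,e^{w(\lambda+\rho)}$. Dividing by $e^{\rho}$ gives the claimed identity.

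The main obstacle is the Casimir step: in the infinite-dimensional setting one must carefully define $\Omega$ (as an operator on category-$\cO$ modules, or via a suitable completion of $\cU(\LL)$) and verify that its eigenvalue on $M(\mu)$ is exactly $(\mu,\mu+2\rho)$ — this, together with the bookkeeping needed to justify the composition-series inversion and the multiplication by $R$ inside $\mathcal E$, is the genuinely Kac-Moody part of the argument. By contrast the denominator anti-invariance and the norm-versus-ordering lemma are routine, and the finite-type case of the whole argument is standard.
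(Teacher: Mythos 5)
The paper does not prove this statement: it is quoted verbatim as Theorem 10.4 of \cite{K2}, so there is no in-paper argument to compare against. Your proposal is, in outline, exactly the proof given in that reference (Chapters 9--10 of Kac): Verma characters $e^\mu R^{-1}$ via PBW, the local composition series giving a unipotent transition matrix, the generalized Casimir operator cutting the support down to the shell $||\mu+\rho||^2=||\lambda+\rho||^2$, anti-invariance of $e^\rho R$ under $W$, and the positivity lemma $(\lambda+\rho+\nu,\lambda+\rho-\nu)=0\Rightarrow\nu=\lambda+\rho$. All of these steps are correct, and your identification of the Casimir construction as the genuinely Kac--Moody part is accurate.

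One step is stated too loosely to stand as written: ``the dominant $W$-conjugate of any weight in its support again lies in the support.'' In the indefinite case not every weight possesses a dominant $W$-conjugate --- only weights in the Tits cone do --- so you cannot begin with an arbitrary exponent $\nu$ in the support and pass to ``its'' dominant conjugate. The standard repair, which costs nothing given what you have already set up, is to choose in each $W$-orbit meeting the support an exponent $\nu$ that is maximal with respect to the partial order $\le$ (such a $\nu$ exists because the whole support sits inside $\lambda+\rho-\sum_i\Z_{\geq 0}\alpha_i$, so heights are bounded above). Anti-invariance then forces $\la\alpha_i,\nu\ra\ge 0$ for every $i$, since otherwise $w_i\nu>\nu$ would lie in the support and contradict maximality; and $\la\alpha_i,\nu\ra=0$ would give $c_\nu=-c_\nu=0$. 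Hence the chosen $\nu$ is regular dominant without any appeal to the Tits cone, and your norm-versus-ordering computation applied to this $\nu$ finishes the argument. With that adjustment the proof is complete and matches the cited source.
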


Racah and Speiser gave a recursive algorithm based on the Weyl-Kac character formula (the so-called ``Racah-Speiser recursion'') for determining weight multiplicities of irreducible highest-weight modules for KM algebras. Section \ref{sec:inner} gives a detailed description of how this recursion is used in the setting of $\Fib$-modules in $\FF$. Kac and Peterson gave a separate algorithm for determining multiplicities of weights in the adjoint representation (roots) of a KM algebra \cite{K2}. This ``Kac-Peterson recursion'' is outlined briefly in Appendix \ref{appendix:A}. 

\begin{definition}\label{multinot}
Given a GCM $A$ and its associated Kac-Moody Lie algebra $\LL(A)$, we define the shorthand notation $e_{i_n \ldots i_2 i_1}$ to stand for the multibracket  $ad_{e_{i_n}} \cdots ad_{e_{i_2}} e_{i_1}$, which lies in the root space $\LL_\alpha$ for $\alpha = \sum_{j=1}^n \alpha_{i_j} \in \Delta_+$. Similarly, we write $f_{i_n \ldots i_2 i_1}$ as shorthand for the multibracket $ad_{f_{i_n}} \cdots ad_{f_{i_2}} f_{i_1}\in \LL_{-\alpha}$.
\end{definition}

For a fixed $\alpha \in\Delta_+$, the set of multibrackets $e_{i_n \ldots i_2 i_1} \in \LL_\alpha$ (where by definition, $ \sum_{j=1}^n \alpha_{i_j}=\alpha\})$ spans $\LL_\alpha$, and the set of multibrackets $f_{i_n \ldots i_2 i_1} \in \LL_{-\alpha}$ spans $\LL_{-\alpha}$ . 
Furthermore, if $e_{i_n \ldots i_2 i_1} \neq 0$, then for all $1\leq k\leq n$, $e_{i_k \ldots i_2 i_1} \neq 0$. In other words, each partial sum $\sum_{j=1}^k \alpha_{i_j}$ is also a root in $\Delta_+$.  Similarly, if $f_{i_n \ldots i_2 i_1} \neq 0$, then for all $1\leq k\leq n$, $f_{i_k \ldots i_2 i_1} \neq 0$ and $-\sum_{j=1}^k \alpha_{i_j}$ is a root in $\Delta_-$. 

\label{basis}We now briefly outline how one may determine weight multiplicities of an irreducible highest-weight $\LL$-module $V^\lambda$ by recursively computing bases for its weight spaces. (A detailed description of this process for irreducible $\Fib$-modules is given in Chapter \ref{ch:nonstd}.) First, $V^\lambda_\lambda$ has basis consisting of a single highest-weight vector $\{v_\lambda\}$. Let $\lambda\neq\mu\in P(\lambda)$ and assume for each $1\leq i \leq \ell$ we have the previously determined basis $\cB_i$ for $V^\lambda_{\mu+\alpha_i}$. Then acting with $f_i$ on each vector in $\cB_i$ for $1\leq i \leq \ell$ gives a spanning set $\cS_\mu = \{ v_1, \ldots , v_n\}$ for $V^\lambda_\mu$, where $n=\ds\sum_{i=1}^\ell |\cB_i|$. Linear dependence relations on the vectors in $\cS_\mu$ are then found by solving the homogeneous system of linear equations determined by setting
$$e_i \cdot \sum_{j=1}^n c_jv_j = \sum_{j=1}^n c_j(e_i\cdot v_j) = 0$$
for $1\leq i \leq \ell$. Since $V^\lambda$ is an irreducible module and therefore cannot contain any more highest-weight vectors, any nontrivial solutions will yield dependence relations on the vectors in $\cS_\mu$. Then choose vectors to delete from the spanning set to obtain a basis $\cB(\mu)$ for $V^\lambda_\mu$. Finally, we have $Mult_\lambda(\mu)=|\cB(\mu)|$.


\begin{definition}\label{lwmod} An $\LL$-module $V$ is a \textbf{lowest-weight module} with \textbf{lowest weight} $\lambda\in\HH^*$ if there exists a non-zero vector $v_\lambda\in V$, called a \textbf{lowest-weight vector (LWV)}, with the following properties:
$$\mathfrak{n}_- \cdot v_\lambda = 0,\qquad h\cdot v_\lambda = \lambda(h)v_\lambda,\qquad \text{and} \qquad\cU(\mathfrak n_+)\cdot v_\lambda = V.$$
If $V$ is irreducible we write $V=V^\lambda$ and denote the $\mu$-weight space of $V$ by $V^\lambda_\mu$.
\end{definition}

Given an irreducible integrable $\HH$-diagonalizable highest-weight $\LL$-module $V^\lambda$ in $\cO$ with HWV $v_\lambda$, we may view the vector space $V^\lambda$ as a different $\LL$-module in the opposite category $\cO^{op}$ under the action of $\LL$ twisted by the Cartan involution $\nu$.  This new module is denoted $V^{-\lambda}$ and has set of weights $P(V^{-\lambda})=-P(V^\lambda)$. Define the new action $\circ$ of $\LL$ on $V^\lambda$,
$$x \circ v := \nu(x) \cdot v \quad \text{for} \ x\in \LL, v\in V,$$
so that
$$[x,y]\circ v = \nu[x,y] \cdot v = [\nu(x),\nu(y)]\cdot v = \nu(x) \cdot (\nu(y)\cdot v)-\nu(y)\cdot(\nu(x)\cdot v)$$
$$= x \circ (y \circ  v) - y \circ (x \circ v). $$
Then if $v_\mu \in V^\lambda_\mu$ where $\mu\in P(V^\lambda)$, we have
$$h \circ v_\mu = -h \cdot v_\mu = -\mu(h) v_\mu \quad \text{and} \quad f_i \circ v_\lambda = -e_i \cdot v_\lambda = 0 \quad \text{for} \quad  1\leq i \leq \ell.$$ 
Then $V^{-\lambda}_{-\mu}$ is the weight space $V^\lambda_\mu$ viewed under the $\circ$ action as the $-\mu$-weight space of $V^{-\lambda}$. Thus the module $V^{-\lambda}=\ds\bigoplus_{\lambda\in-P(V^\lambda)} V^{-\lambda}_{-\mu}$ constructed in this way is called the \textit{contragredient module} to $V^\lambda$, and this construction of a lowest-weight module from a given highest-weight module is equivalent to a functor relating $\cO$ and $\cO^{op}$.

\begin{rem}
For every statement on category $\cO$ modules previously mentioned, an analogous statement also holds for modules in category $\cO^{op}$. In particular, we have
\begin{enumerate}
\item (Definition \ref{high}) For $V$ a not necessarily irreducible $\LL$-module in category $\cO^{op}$ and $\mu\in P(V)$, define $Low_V(\mu) = \{ v\in V_\mu \mid f_i\cdot v=0, \ 1\leq i\leq \ell \}$ to be the subspace of lowest-weight vectors in $V_\mu$. Then the outer multiplicity $M_V(\mu) = \dim Low_V(\mu)$. If $V=V^\lambda$ is irreducible, then $M_{V^\lambda}(\mu) = M_{V^{-\lambda}}(-\mu)$.
\item (Lemma \ref{dominant}) If $V^\lambda$ is an irreducible lowest-weight module, then $V^\lambda$ is integrable if and only if $\lambda\in P^-$.
\item (Theorem \ref{reducible}) If $V$ is a module in category $\cO^{op}$, then $V$ is integrable if and only if $V$ is completely reducible, so that $V=\ds\bigoplus_{\lambda\in P'(V)} M_V(\lambda) V^\lambda$, where $P'(V)\subset P^-$ is the set of weights whose weight spaces of $V$ contain LWV's.
\end{enumerate}
\end{rem}
\begin{definition}\label{Vpm}
Let $V$ be an $\HH$-diagonaliable $\LL$-module with weights $P(V)$. Define
$$P(V)_+ = \{\mu\in P(V) \mid wt(\mu)\geq 0\}, \qquad \text{and} \qquad P(V)_- = \{\mu\in P(V) \mid wt(\mu)< 0\},$$
so $P(V)=P(V)_-\cup P(V)_+.$ Then we define
$$V_\pm = \bigoplus_{\mu\in P(V)_\pm} V_\mu,$$
so $V= V_-\oplus V_+$.
\end{definition}

Lastly, we introduce \textit{non-standard modules}, which will be explored in Chapters \ref{ch:modules} $-$ \ref{ch:nonstd}. Our choice of properties in part $(ii)$ below was motivated by the fact that highest and lowest weights satisfy properties $(2)-(4)$, but not property $(1)$.

\begin{definition}\label{nstd}
Let $V$ be an irreducible, integrable, $\HH$-diagonalizable $\LL$-module. So for any $0\neq v \in V$, we have $V=\cU(\LL)\cdot v$.
\begin{enumerate}[i)]
\item We say $V$ is \textbf{standard} if it is a highest- or lowest-weight module. In other words, there exists a unique $\lambda\in P(V)$ that is either a highest weight (so $\lambda\in P^+$) or a lowest weight (so $\lambda\in P^-$). In either case, we write $V=V^\lambda$.
\item We say $V$ is \textbf{non-standard} if it is not a standard module, so there exists a weight $\lambda \in P(V)$ such that 1) $\lambda\notin P^\pm$, 2) $||\lambda||$ is maximal in $\{||\mu|| \mid \mu \in P(V)\}$, 3) $|wt(\lambda)|$ is minimal among those, and 4)  $V=\cU(\LL)\cdot v_\lambda$ for some $v_\lambda\in V_\lambda$. We may write $V=V^\lambda$ for any $\lambda$ that satisfies these properties. 
\end{enumerate}
\end{definition}

Although the label $\lambda$ for non-standard $V^\lambda$ is not uniquely determined, there are only finitely many weights that satisfy the properties of the definition. Note that $\LL$ itself is a non-standard module $V^{\alpha_i}$ for any $\alpha_i\in\Pi$.

Let $V^\lambda$ be either a standard or non-standard $\LL$-module. If we identify the set of fundamental weights with a basis for $\R^\ell$ we obtain a visualization of $P(V^\lambda)$ called the \textit{weight diagram} of $V^\lambda$, where each dot corresponds to a weight $\mu\in P(V^\lambda)$, and is labeled with its inner multiplicity. A weight diagram without multiplicities is called an unlabeled weight diagram. 

Since $P(V^\lambda)$ is a saturated set of weights, we have $P(V^\lambda) = R_1 \cup R_2$ where
$$R_1 =\bigcup_{w\in W} \{S_{\alpha_i}(w\lambda) \mid 1 \leq i \leq \ell \} \quad \text{and} \quad R_2 =  \bigcup_{\mu\in  R_1} S_{\alpha_i}(\mu)$$ 
(cf. \eqref{eq:saturated} above). This gives a recursive method for determining the weight diagram of $V^\lambda$. Examples of weight diagrams are shown in Figures \ref{fig:aff(1)}, \ref{subfig-1:fibmult}, \ref{subfig-2:rhomult}, \ref{subfig-2:2rho},  and \ref{fig:nonstdwts}.

\section{The hyperbolic Kac-Moody Lie algebra $\FF$}\label{sec:F} \
Consider the $3\times 3$ the generalized Cartan matrix
$$A = (a_{ij}) = \ds\left(\begin{array}{ccc}
	2 & -2 & 0 \\
	-2 & 2 & -1\\
	0 & -1 & 2\\
		\end{array}\right).$$
As outlined in section \ref{sec:kacmoody}, $A$ has realization $(\HH, \Pi=\{\alpha_i\}, \Pi^\vee=\{h_i\})_{i=1,2,3}$ (where $h_i=\alpha_i^\vee$ since $A$ is symmetric), and to $A$ we associate a KM algebra $\FF=\FF(A)$ over $\C$, generated by the elements $\{h_i,e_i,f_i \ | \  i=1,2,3\}$ subject to the relations presented in Definition \ref{def:kml}. We also have:

\begin{itemize}
\item the abelian Cartan subalgebra $\HH$ with basis $\Pi^{\vee}$
\item root lattice $Q=Q_\FF\subset \HH^*$
\item root system $\Phi\subset Q\subset \HH^*$ with basis of simple roots $\Pi$.
\item $\alpha_i(h_j)=a_{ij}$
\item root spaces $\FF_{\alpha}\ =\ \{x\in \FF\mid[h,x]=\alpha(h)x,\ h\in \HH\}$.
\item $\FF_{\alpha_i}=\C e_i$ and $\FF_{-\alpha_i}=\C f_i$
\item Cartan involution  $\nu=\nu_\FF:\FF\rightarrow\FF$ given by $\nu(e_i)=-f_i$ and $\nu(h_i)=-h_i$ for $i=1,2,3$.
\item root space and triangular decompositions, $\FF = \HH \oplus \bigoplus_{\alpha\in\Phi} \FF_{\alpha} = \HH\  \oplus\  \FF^+  \oplus\ \FF^-$
\item Weyl group $W_\FF=W(A)=\la w_i \mid 1\leq i \leq 3 \ra$
\item set of integral weights $P=\{ \omega\in\HH^* \mid \la \alpha_j, \omega \ra \in \Z , \ 1\leq j \leq 3 \}$,
\item fundamental weights $\omega_1, \omega_2, \omega_3\in P$,
\item dominant integral weights $P^+ = \{ \omega \in P \mid \la \alpha_j, \omega \ra \geq 0, 1\leq j \leq 3 \}$
\end{itemize}

Define the real vector space $\HH_\R=\R h_1 \oplus \R h_2 \oplus \R h_3$
and its dual
$\HH^*_\R=\{\beta=x\alpha_1+y\alpha_2+z\alpha_3 \in\HH^* \ | \  x, y, z \in \R\}$
with indefinite non-degenerate quadratic form of signature (2,1) determined by the GCM $A$, 
$$||\beta||^2 = [\beta]^t A [\beta] = 2(x^2-2xy+y^2-yz +z^2)$$
where $[\beta]$ is the coordinate vector of $\beta$ with respect to $\Pi$, so $\HH^*_\R \simeq \R^{(2,1)}.$  For $c\in\R$, 
\begin{equation}\label{eq:surface}S_c = \{\beta \in\HH^*_\R \ | \ ||\beta||^2=c \}\end{equation}
is a quadric surface, either a hyperboloid or a cone (see Figure \ref{fig:hyperbs} 
for examples).

If we set $a=z-y, \ b=x-y, \ c=-z,$
we obtain
\begin{equation}\label{eq:sqlength}
||\beta||^2 = 2(b^2-ac)=-2 \det \ds\left[\begin{array}{cc}
	a & b \\
	b & c \\
		\end{array}\right].
		\end{equation}
We are therefore motivated to represent any $\beta \in\HH^*_\R$ as a $2\times 2$ real symmetric matrix:		
$$\beta = \ds\left[\begin{array}{cc}
	a & b \\
	b & c \\
		\end{array}\right] = \left[\begin{array}{cc}
	z-y & x-y \\
	x-y & -z \\
		\end{array}\right] = x\left[\begin{array}{cc}
	0 & 1 \\
	1 & 0 \\
		\end{array}\right] + y\left[\begin{array}{cc}
	-1 & -1 \\
	-1 & 0 \\
		\end{array}\right] + z\left[\begin{array}{cc}
	1 & 0 \\
	0 & -1 \\
		\end{array}\right],$$
so the simple roots are:
\begin{equation}\label{eq:simple}\alpha_1 = \ds\left[\begin{array}{cc}
	0 & 1 \\
	1 & 0 \\  
		\end{array}\right], \qquad		
\alpha_2 = \ds\left[\begin{array}{cc}
	-1 & -1 \\
	-1 & 0 \\
		\end{array}\right], \qquad \textrm{and} \qquad
\alpha_3 = \ds\left[\begin{array}{cc}
	1 & 0 \\
	0 & -1 \\
		\end{array}\right].\end{equation}

By polarization of the above quadratic form, we obtain the following non-degenerate linear form (in the second entry) on $\HH^*_\R$ represented by $A$ with respect to the basis $\Pi$:
$$\ds\la \alpha, \alpha' \ra =  [\alpha]^t A [\alpha'] = \frac{1}{2}\left(||\alpha + \alpha' ||^2 - || \alpha||^2  - || \alpha'||^2\right).$$

For $\alpha=\left[\begin{array}{cc}
	a & b \\
	b & c \\
		\end{array}\right], \ \alpha'=\left[\begin{array}{cc}
	a' & b' \\
	b' & c' \\
		\end{array}\right] \in \HH^*_\R$, we have
\begin{align}
\la \alpha , \alpha' \ra &= \ds\frac{1}{2}\Big(2\big[(b+b')^2-(a+c')(c+c')\big]-2[b^2-ac]-2[b'^2-a'c']\Big) \nonumber \\ 
&= 2bb'-ac'-a'c. \label{angleform}
\end{align}
Note that for $i,j=1,2,3$, this formula gives $\la \alpha_i, \alpha_j \ra = a_{ij}=\alpha_j(h_i).$

The Weyl group $W$ has a presentation as a Coxeter group 
$$W = \la w_1, w_2, w_3 \ | \ w_i^2 = 1, \ (w_1w_3)^2=(w_2 w_3)^3 = 1 \ra,$$
which is actually a hyperbolic triangle group and matrix group,
$$W \simeq T(2, 3, \infty) \simeq PGL_2(\Z) = \{ M \in \Z^2_2 \ | \ \det M = \pm 1 \} / \{ \pm I \}.$$ 
The latter isomorphism will prove especially useful since it will allow us to realize elements of $W$ by $2\times 2$ matrices.

We have the partition of $\Phi=\Phi^{re}\cup\Phi^{im},$
where
$$\Phi^{re}=\{\alpha\in Q \ | \ ||\alpha||^2=2 \} \qquad \hbox{and}\qquad \Phi^{im}=\{\alpha\in Q \ | \ ||\alpha||^2\leq 0 \}.$$
\begin{figure}[!ht]
    \subfloat[The hyperboloid of one sheet $S_2$ and real roots $\Phi^{re}$.\label{subfig-1:hyperboloid}]{%
     \includegraphics[trim = 0cm 0cm 0cm 0cm, clip=true, width=2.9 in]{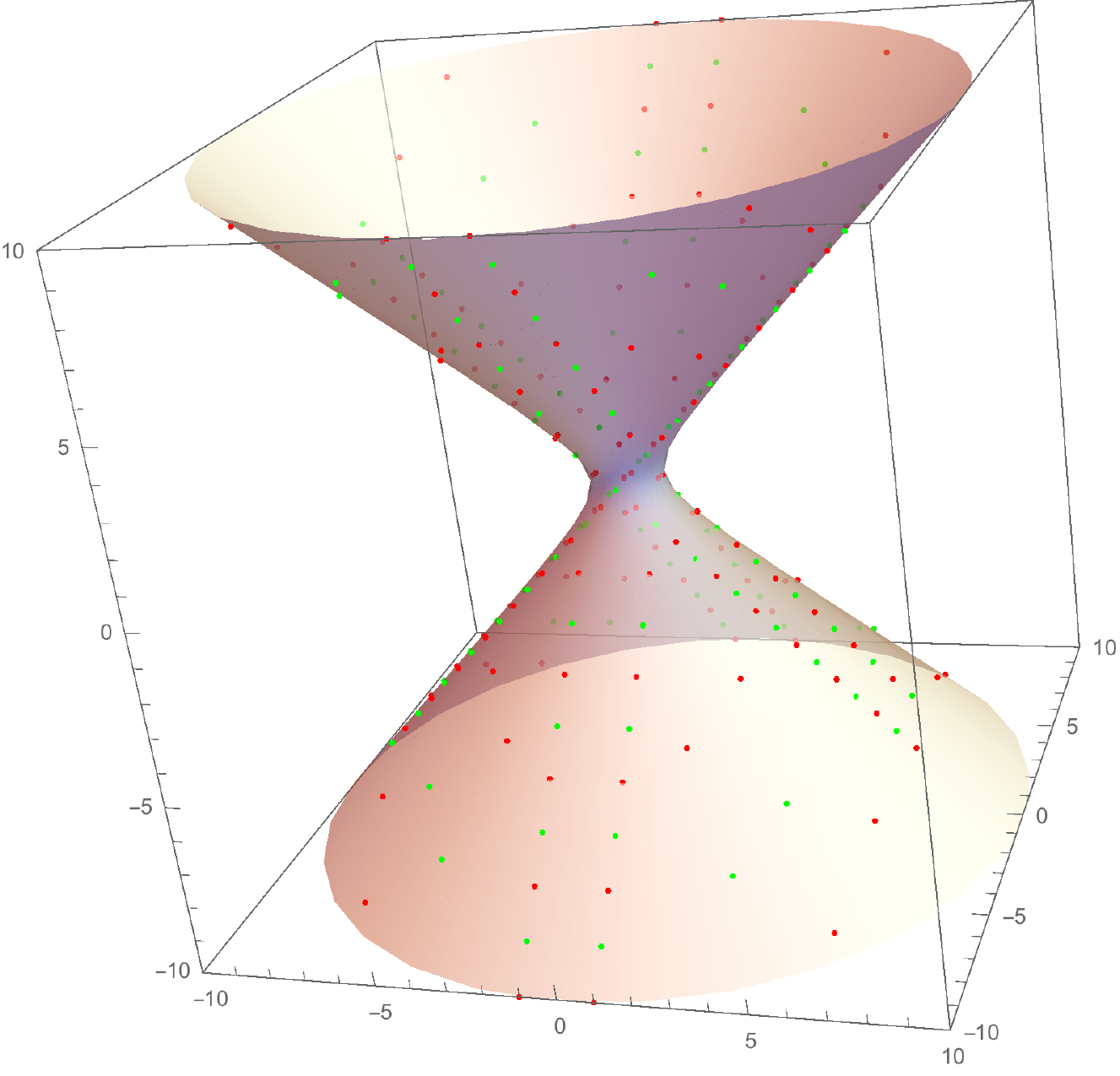}
         }
    \hfill
    \subfloat[A hyperboloid of two sheets $S_{-2}$ and its imaginary roots.\label{subfig-2:imaginaryhyp}]{%
      \includegraphics[trim = 3.3cm 11cm 2cm 2.5cm, clip=true, width=2.7 in]{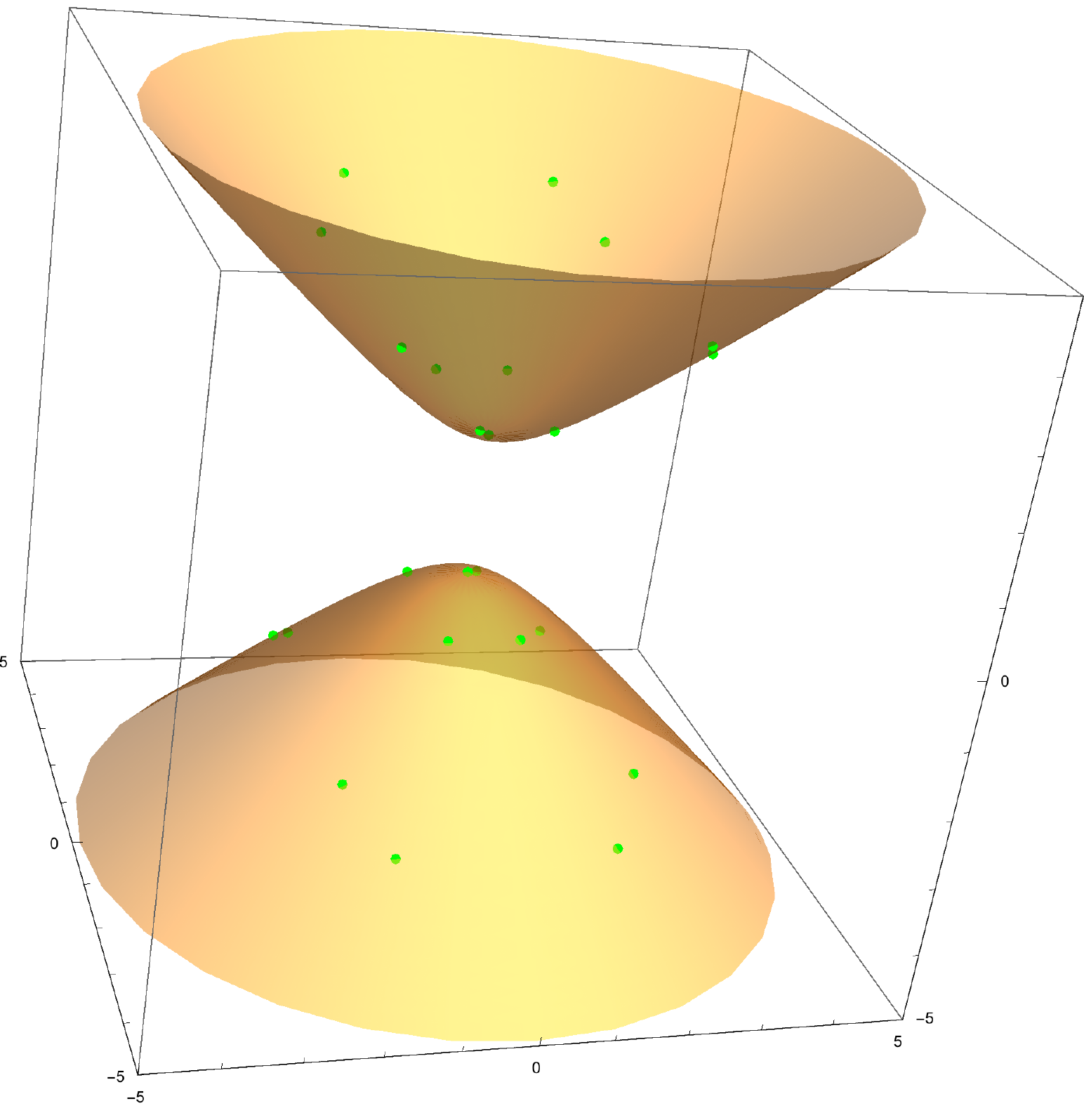}
    } 
    \caption{Quadric surfaces of constant squared-length in $\HH^*_\R$. }
    \label{fig:hyperbs}
  \end{figure}  
$\Phi^{re}$ is contained in the hyperboloid of one sheet, $S_2$, shown in Figure \ref{subfig-1:hyperboloid}.
 (Note that the roots on this hyperboloid are partitioned into two colors, corresponding to the partition of $\Phi^{re}$ into two disjoint $W$-orbits of $\Pi$ \cite{CCFP}.) Also, 
$$\Phi^{im} \subset \bigcup_{n=0}^\infty S_{-2n},$$
where $S_0$ is the \textit{null-cone} and $S_{-2n}$ for $n\geq 1$ are hyperboloids of two sheets strictly inside the \textit{light-cone}, defined to be
$$LC = \{\alpha\in\HH^*_{\R} \mid (\alpha, \alpha) \leq 0 \}.$$
If $||\alpha||^2=0$ (so $\alpha$ lies on the null-cone) then $Mult(\alpha)=1$.  A portion of $S_{-2}$ is shown in Figure \ref{subfig-2:imaginaryhyp}.  The light-cone is partitioned into the \textit{backward} and \textit{forward} light-cones,
$$LC = LC_- \cup LC_+,$$
such that $\Phi^{im}_\pm \subset LC_\pm$. For $\alpha=\left[\begin{array}{cc}
	a & b \\
	b & c \\
		\end{array}\right] \in\Phi^{im}$ with $||\alpha||^2=2(b^2-ac)<0,$ then if $c>0$, then $\alpha\in\Phi_-^{im}\subset LC_-$, and if $c<0$, then $\alpha\in\Phi_+^{im} \subset LC_+$ .

As Figure \ref{subfig-2:imaginaryhyp} indicates, for each $n> 0$, there is a ``positive'' sheet of $S_{-n}$ that lies in $LC_+$ and a ``negative'' sheet that lies in $LC_-$. Since each sheet is $W$-invariant, we have that $W(LC_\pm)=LC_\pm$, and
\begin{equation}W(LC_+)\cap W(LC_-)=\{0\}.\end{equation}\label{LCorbits}
We can also represent the real and imaginary roots as
 $$\Phi^{re}=\Big\{ \alpha \in\Phi \ \mid \det(\alpha)=-1\Big\} \ \text{  and  }  \ \Phi^{im}=\Big\{  \alpha \in\Phi \ \mid \det(\alpha)\geq 0\Big\}.$$ 

The formulas for the simple reflections determine their actions on $\alpha=\left[\begin{array}{cc}
	a & b \\
	b & c \\
		\end{array}\right]:$
$$w_1\alpha = \left[\begin{array}{cc}
	a & -b \\
	-b & c \\
		\end{array}\right], \quad 
w_2\alpha = \left[\begin{array}{cc}
	a-2b+c & c-b \\
	c-b & c \\
		\end{array}\right], \quad 
w_3\alpha = \left[\begin{array}{cc}
	c & b \\
	b & a \\
		\end{array}\right].$$
Defining the three matrices	
$$M_1=  \left[\begin{array}{cc}
	1 & 0 \\
	0 & -1 \\
		\end{array}\right], \quad \ M_2 =  \left[\begin{array}{cc}
	-1 & 1 \\
	0 & 1 \\
		\end{array}\right] , \quad \ M_3 =  \left[\begin{array}{cc}
	0 & 1 \\
	1 & 0 \\
		\end{array}\right],$$
we see that for $i=1,2,3$, $w_i\alpha=M_i\alpha M_i^t.$
If $w=w_{i_1}\cdots w_{i_s}\in W$ then there is a corresponding $M=M_{i_1}\cdots M_{i_s} \in PGL_2(\Z)$ such that $w(\alpha)=M\alpha M^t$.

Using
$$A^{-1} = \ds\left(\begin{array}{ccc}
	-\frac{3}{2} & -2 & -1 \\
	-2 & -2 & -1\\
	-1 & -1 & 0\\
		\end{array}\right),$$

we may write the fundamental weights in terms of the simple roots as follows:
$$\omega_1=-\frac{3}{2}\alpha_1 -2\alpha_2 -\alpha_3 = \left[\begin{array}{cc}
	1 & \frac{1}{2} \\
	\frac{1}{2} & 1 \\		\end{array}\right],\quad \omega_2=-2\alpha_1 -2\alpha_2 -\alpha_3 = \left[\begin{array}{cc}
	1 & 0 \\
	0 & 1 \\		\end{array}\right],$$
$$\text{and} \quad \omega_3=-\alpha_1 -\alpha_2 = \left[\begin{array}{cc}
	1 & 0 \\
	0 & 0 \\		\end{array}\right].$$
Thus for $\alpha\in\Phi$, we have
\begin{equation}\label{eq:c>0}\alpha\in\Phi_\pm \ \text{  if and only if  } \ \la  \alpha, \pm \omega_3 \ra = \mp c >0.\end{equation}
We also observe that if $\omega = \left[\begin{array}{cc}
	a & b \\
	b & c \\
		\end{array}\right] \in P^+$ then $a, 2b, c \in\Z$ and by definition \ref{fund},
$$\la\alpha_1, \omega \ra = 2b \geq 0, \quad 
\la \alpha_2, \omega \ra = -2b+c \geq 0, \quad 
\la \alpha_3, \omega \ra = a-c \geq 0,$$			
giving us $P^+$ as the intersection of $P$ with the three real half-spaces defined by the $\alpha^\perp_i$,
$$P^+= \Bigg\{ \omega = \left[\begin{array}{cc}
	a & b \\
	b & c \\		\end{array}\right] \ \Bigg| \ a, 2b, c \in\Z , \ a\ge c \ge 2b \ge 0 \Bigg\}.$$
Note that $P^+ \subset LC_-$ since for $\omega\in P^+$,
$$\det(\omega)=ac-b^2\geq (2b)(2b)-b^2=3b^2\geq0.$$
The (positive) Tits cone $T_+$ is defined to be the strict interior of the forward light-cone $LC_+$, namely,
$$T_+=\{\alpha\in LC_+ \ | \ (\alpha, \alpha ) < 0 \}.$$
Define $P^-=-P^+$. Then a result of Kac (\cite{K2}) shows that $P^-$ is a fundamental domain for the action of the Weyl group on $P \cap LC_+$, the weight lattice points which lie within the forward light-cone $LC_+$ (including points on the null cone). 
\begin{figure}[h!]
\centering
\includegraphics[trim = 0cm 0cm 0cm .5cm, clip=true, width=3.5 in]{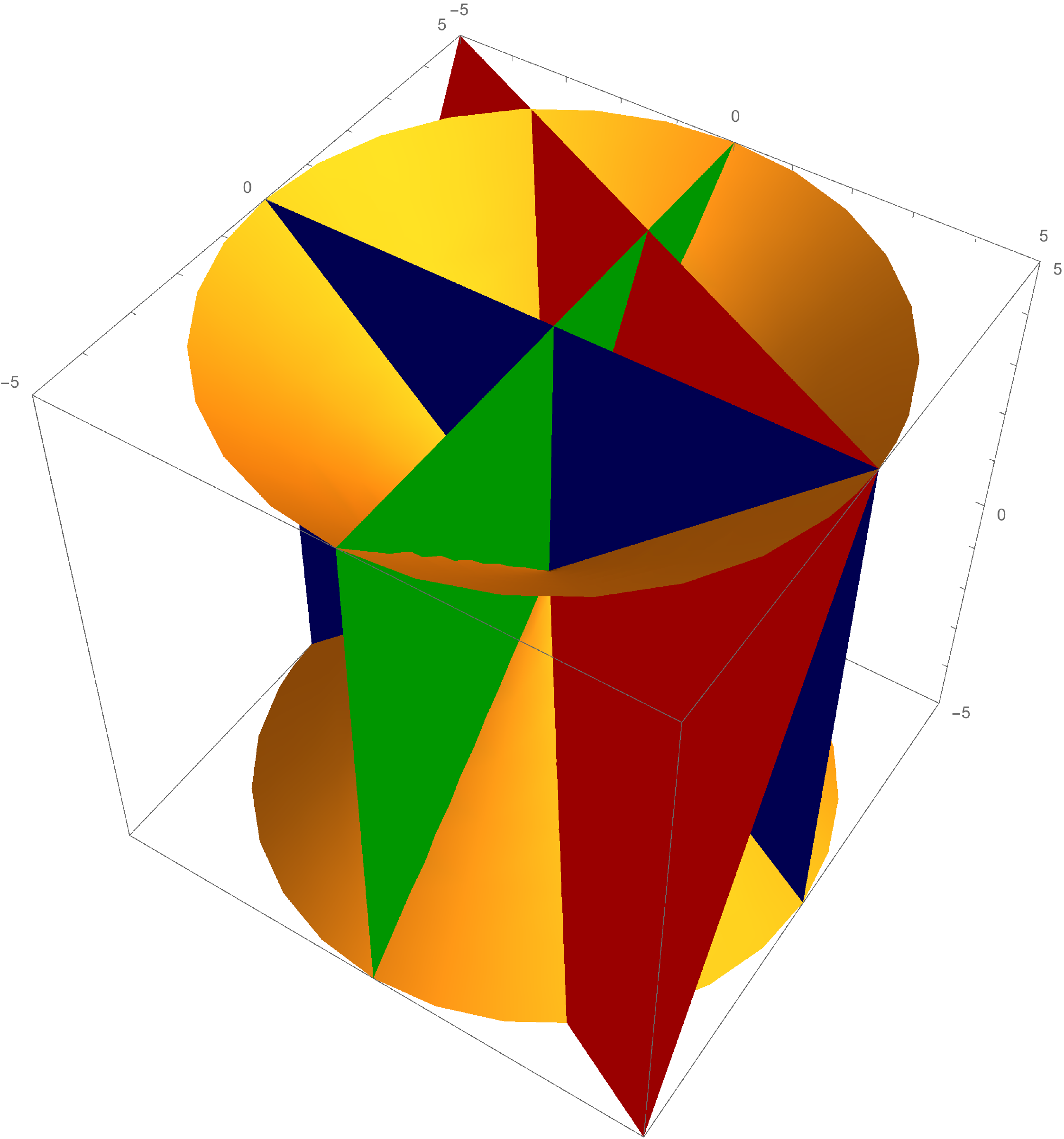}
\caption{$P^-$ is bounded by the reflecting planes $\alpha_1^{\perp}, \alpha_2^{\perp}, \alpha_3^{\perp}$. The null-cone is shown in yellow.}
\label{fig:tits}
\end{figure}

\section{Feingold-Frenkel decomposition of $\FF$ with respect to $A^{(1)}_1$}\label{sec:aff} \
In \cite{FF}, Feingold and Frenkel investigated the decomposition of $\FF$ with respect to a rank 2 affine subalgebra. Consider the generalized Cartan matrix 
$$\ds \left(\begin{array}{cc}
	2 & -2  \\
	-2 & 2 \\
		\end{array}\right),$$
and consider the subalgebra of $\FF$ generated by $\{e_i, f_i, h_i \ | \ i=1,2 \}$. The bilinear form on $\FF$ restricted to this subspace is degenerate, however, so the corresponding simple roots $\alpha_1, \alpha_2$ are no longer linearly independent. This subalgebra is isomorphic to the central extension of the loop algebra,
$$\mathfrak{sl}_2(\C)\otimes \C[t,t^{-1}]\oplus \C c,$$
where $\mathfrak{sl}_2(\C)$ is the simple finite-dimensional Lie algebra with basis $\{e, f, h\}$. To endow this algebra with a symmetric invariant nondegenerate bilinear form, the usual approach is to extend the Cartan subalgebra by adding the derivation $d=-t\frac{d}{dt}.$ The resulting \textit{affinization} of $\mathfrak{sl}_2(\C)$,
$$\widehat{\mathfrak{sl}_2(\C)}= \mathfrak{sl}_2(\C)\otimes \C[t,t^{-1}]\oplus \C c \oplus  \C d$$
is an affine KM algebra. Feingold and Frenkel then give an embedding of $\widehat{\mathfrak{sl}_2(\C)}$ in $\FF$ defined by
$$h\otimes1\mapsto h_1, \quad c \mapsto h_1+h_2, \quad d \mapsto h_1+h_2+h_3,$$
$$e\otimes1\mapsto e_1, \quad f\otimes1\mapsto f_1, \quad f\otimes t\mapsto e_2, \quad e\otimes t^{-1}\mapsto f_2.$$
We refer to this subalgebra as $\Aff$. Note the Cartans of $\Aff$ and $\FF$ are the same. 

The simple roots $\Pi_\Aff= \{\alpha_1, \alpha_2\}$ immediately give us

\begin{itemize}
\item the affine plane $\R\alpha_1\oplus\R\alpha_2 \subset \HH^*_\R$,
\item the affine root sublattice $Q_\Aff = \Z\alpha_1+ \Z\alpha_2,$
\item the affine Weyl group $W_\Aff= \la w_1, w_2 \ra < W$,
\item the affine null root $\delta=\alpha_1+\alpha_2$,
\item the affine root subsystem $\Phi_\Aff=\{n\delta \mid 0\neq n\in \Z \} \cup \{m\delta \pm \alpha_1 \mid  m\in \Z\}\subset\Phi$,
\item the affine real roots, $\Phi^{re}_\Aff=W_\Aff\Pi_\Aff=\{m\delta \pm \alpha_1 \mid  m\in \Z\}=\{\alpha\in Q_\Aff \ | \ ||\alpha||^2=2 \}, $
\item the affine imaginary roots $\Phi^{im}_\Aff=\{n\delta \mid 0\neq n\in \Z \}=\{\alpha\in Q_\Aff \ | \ ||\alpha||^2= 0 \} $,
\item the affine fundamental weights $\{\omega_1, \omega_2\}$,
\item the affine weight sublattice $P_\Aff=\Z\omega_1+\Z\omega_2\subset P$, and
\item the affine fundamental domain $P^\pm_\Aff=P^\pm\cap (\R\alpha_1\oplus\R\alpha_2)$.
\end{itemize}

Feingold and Frenkel showed that $\FF$ can be decomposed into an infinite sum of $\Aff$-modules, graded by level. The levels of $\FF$ with respect to $\Aff$ were determined as follows. The affine plane is level 0, and it contains roots of the form 
$$n_1\alpha_1 + n_2\alpha_2 = \left[\begin{array}{cc}
	-n_2 & n_1-n_2 \\
	n_1-n_2 & 0 \\
		\end{array}\right].$$ 
The non-zero levels were determined by adding multiples of the root $\gamma=\alpha_1+\alpha_2+\alpha_3=\left[\begin{array}{cc}
	0 & 0  \\
	0 & -1 \\
	\end{array}\right]$ to the affine plane, so the ``level $m$ affine plane'' is $\R\alpha_1\oplus\R\alpha_2 + m\gamma,$
and the root system is partitioned as $\Phi =\ds\bigcup_{m\in\Z} \Delta_m,$
where
$$\Delta_m=\Delta_{\Aff,m}=\Phi \cap (\R\alpha_1\oplus\R\alpha_2+ m\gamma) = \Bigg\{\beta\in\Phi \ \Bigg| \ \beta=\left[\begin{array}{cc}
	a & b  \\
	b & -m \\
	\end{array}\right], \ a, b, m\in\Z \Bigg\}$$
is the set of all affine level $m$ roots. The inner product gives a useful method for determining the level of any root in $\Phi$. Let $\delta=(\alpha_1+\alpha_2)=\left[\begin{array}{cc}
	-1 & 0  \\
	0 & 0 \\
		\end{array}\right]$. Then 
$$\la -\delta, \alpha \ra = \Bigg\langle \left[\begin{array}{cc}
	1 & 0  \\
	0 & 0 \\
		\end{array}\right], \left[\begin{array}{cc}
	a & b  \\
	b & -m \\
		\end{array}\right] \Bigg\rangle = m.$$
		
		We refer to the root spaces of level $m$ as $\Aff(m)$, thus giving the $\Z$-grading mentioned above as
$$\FF=\bigoplus_{m\in\Z} \Aff(m).$$
For each $m\in\Z$, the action of $\Aff$ preserves $\Aff(m)$, making $\Aff(m)$ an $\Aff$-module. By Kac \cite{K2}, each slice $\Aff(m)$ therefore has a decomposition into a direct sum of irreducible $\Aff$-modules. Additionally, each irreducible module $V$ has a weight space decomposition
$$V=\bigoplus_{\mu\in \Delta_m} V_\mu, \quad \text{where}\quad V_\mu=V\cap \FF_\mu.$$
The adjoint representation is the only non-standard $\Aff$-module occurring in this decomposition of $\FF$. (We will see more examples of non-standard modules in the decomposition with respect to $\Fib$ in later chapters.) Besides the adjoint representation, each irreducible integrable module $V$ of the decomposition of $\FF$ with respect to $\Aff$ is standard. 

		
Kac showed that if $\mathfrak{g}$ is an affine KM algebra of rank $l+1$ then the multiplicity of every imaginary root of $\mathfrak{g}$ is $l$ \cite{K2}.  Therefore the dimension of every root space in $\Aff(0)$ is 1. 
\begin{figure}[h!]
\centering
\includegraphics[trim = 3cm 2cm 3cm 10cm, clip=true, width=2.5 in]{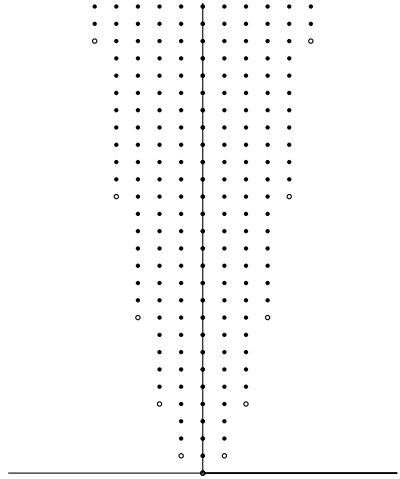}
\caption{$\Aff(1)$. Weights of the fundamental domain lie on the center line.}\label{fig:aff(1)}
\end{figure}
Dimensions of the weight spaces in $\Aff(1)$, whose dominant integral weights are of the form $\left(\begin{array}{cc}
	n & 0  \\
	0 & 1 \\
		\end{array}\right)$
where $n\geq -1$ (see Figure \ref{fig:aff(1)}), had already been discovered by Feingold and Lepowsky \cite{FL} to be 
$$ Mult\ds\left(\begin{array}{cc}
	n & 0  \\
	0 & 1 \\
		\end{array}\right) = p(n+1),$$ 	
where $p(n)$ is the classical partition function, with generating function $\ds\sum_{m\geq 0}p(m)q^m = \ds\prod_{k\geq 1}(1-q^k)^{-1}.$
The particular geometry of this module was the key to the proof, and the fact that all dominant integral weights occur on the central line simplified the Racah-Speiser recursion to Euler's recursion for the classical partition function. In addition, $\Aff(1)$ was found to be a single irreducible $\Aff$-module, thus the irreducible decompositions for levels $0$ and $\pm 1$ were completely determined.

Of course, as the level increases, so does the size of the dominant integral region and hence the complexity of the problem. In $\Aff(2)$, \cite{FF} found the fundamental domain to consist of two lines of roots, of the form $\left(\begin{array}{cc}
	n & 0  \\
	0 & 2 \\
		\end{array}\right)$ and $\left(\begin{array}{cc}
	n & 1  \\
	1 & 2 \\
		\end{array}\right)$
for $n\geq0$. Here, the vertex operator construction of $\Aff(1)$ from \cite{LW} was used to decompose the tensor product $\Aff(1)\otimes\Aff(1)$ into symmetric and antisymmetric tensors. Further analysis of the antisymmetric tensors that correspond to commutators in $[\Aff(1),\Aff(1)]$ resulted in a complete determination of $\Aff(2)$ root multiplicities,
$$\textrm{Mult}\ds\left(\begin{array}{cc}
	n & 0  \\
	0 & 2 \\
		\end{array}\right) = p'(2n+1) \qquad \textrm{and} \qquad 	\textrm{Mult}\ds\left(\begin{array}{cc}
	n & 1  \\
	1 & 2 \\
		\end{array}\right) = p'(2n)$$
where $p'(m)$ is a `modified partition function' whose generating function
		$$\sum_{m\geq 0}p'(m)q^m = (\prod_{k\geq 1}(1-q^k)^{-1})(1-q^{20}+q^{22}-\cdots )$$
agrees with the classical function for the first 20 terms.

This method of tensor decomposition of higher levels with respect to $\Aff$ was then used by Kang in several papers of the 1990s (e.g. \cite{Ka1}, \cite{Ka2}), eventually giving results up to level 5. The question for higher levels is still open, as is finding a closed form formula for the multiplicity of an arbitrary root of $\FF$.


\newpage
\chapter{The Fibonacci subalgebra $\Fib$ of $\FF$}\label{ch:fib}
As an alternative to finding a decomposition of $\FF$ with respect to $\Aff$, it is possible to decompose $\FF$ with respect to other rank 2 subalgebras inside $\FF$. We wish to explore how $\FF$ decomposes with respect to a particular rank 2 subalgebra of hyperbolic type. We will show that one of the crucial differences is the emergence of several non-standard modules, which are irreducible and integrable like the adjoint representation, but are not generated by either a highest- or lowest-weight vector. The affine decomposition of $\FF$ does not contain non-standard modules, except for the adjoint. We will discuss non-standard modules briefly in this section, and explore them in greater depth in Chapter \ref{ch:nonstd}.

Feingold and Nicolai \cite{FN} showed that all of the rank 2 hyperbolic KM Lie algebras, whose GCM are given by
$$\ds B(n)=\left(\begin{array}{cc}
	2 & -n  \\
	-n & 2 \\
		\end{array}\right)$$	
for $n\geq 3$ and which we denote by  $\mathcal{H}(n)$, are contained in $\FF$.  In other words, there exist roots $\beta_1, \beta_2 \in \Phi^{re}$ such that $\ds B(n)=(\la \beta_i, \beta_j \ra)$. Also, there exist Serre generators
$$E_i=E_i(n)\in\FF_{\beta_i}, \qquad F_i=F_i(n)\in\FF_{-\beta_i}, \qquad \textrm{and} \qquad H_i=H_i(n)=[E_i,F_i]\in\HH$$ 
of an algebra isomorphic to $\mathcal{H}(n)$, whose Cartan subalgebra is $\HH(n)=\C H_1 \oplus \C H_2$. We use the notation $\HH_\R(n)=\R H_1 \oplus \R H_2$. (Recall from the previous section that $\HH_\Aff=\HH$, while $\HH(n)\subsetneq \HH$.) Furthermore, the dual $\HH(n)_\R^*$ has inner product $( \cdot, \cdot )_n$ determined by $B(n)$ which agrees with the inner product on $\HH^*_\R$ determined by $A$. 

The simplest case is when $n=3$, giving the Cartan matrix
$$\ds B(3) = (b_{ij}) = \left(\begin{array}{cc}
	2 & -3  \\
	-3 & 2 \\
		\end{array}\right)$$
which corresponds  to $\mathcal{H}(3)$, the simplest rank 2 KM algebra of hyperbolic type. Feingold \cite{F1} discovered  that the Fibonacci numbers occur in an interesting way in the Weyl-Kac denominator formula for this algebra. We therefore refer to  $\mathcal{H}(3)$ as the \textit{Fibonacci hyperbolic}.

In order to decompose $\FF$ with respect to a subalgebra isomorphic to $\mathcal{H}(3)$, we must first find a realization of $\cH (3)$ inside $\FF$. The resulting subalgebra will be denoted by $\Fib$.  First, we find two positive real root vectors whose corresponding roots $\beta_1,\beta_2\in\Phi^{re}$ are simple roots of $\cH (3)$. Following \cite{FN} we could choose $\beta_1$ and $\beta_2$ to be
$$\left[\begin{array}{cc}
	1 & 0 \\
	0 & -1 \\
		\end{array}\right] = \alpha_3 \quad \text{and} \quad 
\left[\begin{array}{cc}
	-3 & \pm1 \\
	\pm1 & 0 \\
		\end{array}\right],$$
		respectively, so  $\beta_2= 2\alpha_1+3\alpha_2$ or $2\alpha_1+4\alpha_2.$  However, there is a choice whereby $\beta_1$ and $\beta_2$ have lower combined height, which will be helpful for multibracket and vertex algebra calculations in later sections. Similar to \cite{FN} we choose one root to be in $\Pi$. We cannot choose $\alpha_1$ since for any other $\alpha\in \Phi^{re}$ we get
$$\langle \alpha_1, \alpha \rangle = \Bigg\langle\ \left[\begin{array}{cc}
	0 & 1 \\
	1 & 0 \\
		\end{array}\right], \left[\begin{array}{cc}
	a & b \\
	b & c \\
		\end{array}\right]\Bigg\rangle = 2b\neq -3,$$
since $b\in \Z$. Choosing $\alpha_2$ gives the condition 
$$\langle \alpha_2, \alpha \rangle =\Bigg\langle \left[\begin{array}{cc}
	-1 & -1 \\
	-1 & 0 \\
		\end{array}\right], \left[\begin{array}{cc}
	a & b \\
	b & c \\
		\end{array}\right]\Bigg\rangle = -2b+c=-3,$$
and  $\alpha\in\Phi^{re}$ gives the additional condition $ac-b^2=-1$.  Setting $a=0$ and $b=1$ yields $c=-1$, hence $\alpha=\left[\begin{array}{cc}
	0 & 1 \\
	1 & -1 \\
		\end{array}\right]$ is a valid candidate for the second simple root. 
		
In the following proposition, we use the multibracket notation from Definition \ref{multinot}.

\begin{proposition} Let
$\beta_1=\left[\begin{array}{cc}
	0 & 1 \\
	1 & -1 \\
		\end{array}\right] = 2\alpha_1+\alpha_2+\alpha_3, \ \beta_2=\left[\begin{array}{cc}
	-1 & -1 \\
	-1 & 0 \\
		\end{array}\right] = \alpha_2\in\Phi.$
Then $\beta_1, \beta_2$ are simple roots of a subalgebra of $\FF$, denoted by $\Fib$, which is isomorphic to $\mathcal{H}(3)$.  The elements 
$$E_1 = \frac{1}{2}e_{1123}\in\FF_{\beta_1}, \ F_1 = -\frac{1}{2}f_{1123}\in\FF_{-\beta_1}, \  E_2=e_2\in\FF_{\beta_2}, \ F_2=f_2\in\FF_{-\beta_2},$$ 
$$H_1 = 2h_1+h_2+h_3, \ H_2=h_2\in\HH$$
are Serre generators of $\Fib$. We denote the $\C$-span of $H_1, H_2$ by $\HH_\Fib$, the $\R$-span of $H_1, H_2$ by $(\HH_\Fib)_\R$, and the $\Fib$ subroot system of $\Phi$ by $\Delta=\Delta_\Fib$. Also, $\nu$ restricts to the Cartan involution of $\Fib$, which we also denote by $\nu$, in other words, $\nu(E_i)=-F_i$ and $\nu(H_i)=-H_i$ for $i=1,2$.
\end{proposition}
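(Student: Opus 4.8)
The plan is to verify directly that the proposed elements $E_1,F_1,E_2,F_2,H_1,H_2$ satisfy the defining Serre relations (Definition \ref{def:kml}) for the GCM $B(3)=\begin{bmatrix}2 & -3 \\ -3 & 2\end{bmatrix}$, and then invoke the fact that a KM algebra has a presentation by those relations, so that the assignment extends to a homomorphism from $\mathcal{H}(3)$ whose image is the subalgebra $\Fib$; injectivity (hence the isomorphism onto $\Fib$) then follows because $\beta_1,\beta_2$ are linearly independent real roots and the form restricted to $(\HH_\Fib)_\R$ is nondegenerate of the required type, which rules out any nonzero ideal in the kernel.

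First I would pin down the roots: check $\beta_1 = 2\alpha_1+\alpha_2+\alpha_3$ and $\beta_2 = \alpha_2$ by adding the matrix forms in \eqref{eq:simple}, and confirm $\beta_1,\beta_2\in\Phi^{re}$ via $\det\beta_i = -1$. Using the angle-bracket formula \eqref{angleform}, $\langle \beta_i,\beta_i\rangle = 2$ and $\langle\beta_1,\beta_2\rangle = \langle\beta_2,\beta_1\rangle = -3$, so the Gram data matches $B(3)$. Next I would establish the Cartan data: $H_1 = 2h_1+h_2+h_3$, $H_2 = h_2$ span a $2$-dimensional $\HH_\Fib\subset\HH$, they commute (as all $h_i$ do), and one checks $\beta_i(H_j) = b_{ij}$ using $\alpha_k(h_l)=a_{kl}$ — e.g. $\beta_2(H_1) = \alpha_2(2h_1+h_2+h_3) = 2a_{21}+a_{22}+a_{23} = -4+2-1 = -3$, and similarly for the others. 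This shows $(\HH_\Fib,\{\beta_1,\beta_2\},\{H_1,H_2\})$ realizes $B(3)$ (the rank condition holds since $B(3)$ is nonsingular and $\dim\HH_\Fib = 2$).

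Then I would verify the bracket relations. The relations $[H_i,H_j]=0$ and $[H_i,E_j] = \beta_j(H_i)E_j$, $[H_i,F_j] = -\beta_j(H_i)F_j$ follow from the weight-space grading: $E_1\in\FF_{\beta_1}$, $E_2\in\FF_{\beta_2}$, etc. For $[E_i,F_j] = \delta_{ij}H_i$: the cases $i=j$ reduce to $[E_2,F_2] = [e_2,f_2] = h_2 = H_2$ and $[E_1,F_1] = -\tfrac14[e_{1123},f_{1123}]$, which one computes (using invariance of the form or a direct multibracket expansion) equals $2h_1+h_2+h_3 = H_1$; the cases $i\neq j$ require $[E_1,F_2] = 0$ and $[E_2,F_1]=0$, which hold because $\beta_1-\beta_2 = 2\alpha_1+\alpha_3$ and its negative are not roots (check $\det$ and positivity, or that no multibracket realizes them). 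Finally the Serre relations $(\mathrm{ad}\,E_1)^4 E_2 = 0$, $(\mathrm{ad}\,E_2)^4 E_1 = 0$ (and the $F$ analogues): here $-b_{12}+1 = 4$, and these follow because the relevant weights $4\beta_1+\beta_2$, $\beta_1+4\beta_2$ lie outside $\Phi$ or because the $\mathfrak{sl}_2$ generated by $E_i,F_i,H_i$ acts on the relevant root string with the string length forcing the vanishing. Lastly, $\nu(E_i) = -F_i$ and $\nu(H_i) = -H_i$: for $i=2$ this is immediate from $\nu(e_2)=-f_2$, $\nu(h_2)=-h_2$; for $i=1$ one uses that $\nu$ is an algebra automorphism, so $\nu(e_{1123}) = \nu(\mathrm{ad}_{e_1}\mathrm{ad}_{e_1}\mathrm{ad}_{e_2}e_3) = (-1)^4\mathrm{ad}_{f_1}\mathrm{ad}_{f_1}\mathrm{ad}_{f_2}f_3 = f_{1123}$, whence $\nu(E_1) = \nu(\tfrac12 e_{1123}) = \tfrac12 f_{1123} = -F_1$, and $\nu(H_1) = 2\nu(h_1)+\nu(h_2)+\nu(h_3) = -H_1$.

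The main obstacle is the two genuinely computational points: checking $[E_1,F_1] = H_1$ with the correct normalization constant (this is why the factor $\tfrac12$ appears, and why $F_1$ carries a sign), and checking the vanishing brackets $[E_1,F_2]=[E_2,F_1]=0$ together with the two higher Serre relations for $E_1,E_2$. I would handle the normalization either by computing $e_{1123}$ explicitly as an iterated bracket and pairing against $f_{1123}$ using the invariant form (so $([e_{1123},f_{1123}],h) = (e_{1123},[f_{1123},h])$ for $h\in\HH$ lets one read off the $\HH$-component), or by using that $[E_1,F_1]$ must lie in $\HH$ and be proportional to the coroot direction determined by $\beta_1$, with the scalar fixed by $\beta_1([E_1,F_1]) = 2$; the Serre-relation vanishing I would get from root-string/$\det$ arguments as above rather than brute force. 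Once all relations are checked, the universal property of $\mathcal{H}(3) = \LL(B(3))$ gives the surjective homomorphism onto $\Fib$, and nondegeneracy of the restricted form plus linear independence of $\beta_1,\beta_2$ forces it to be an isomorphism.
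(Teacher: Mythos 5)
Your proposal is correct and takes essentially the same route as the paper: one verifies the Serre relations for $B(3)$ directly, with the higher relations $(ad_{E_i})^{4}E_j=0$ following because $4\beta_1+\beta_2$ and $\beta_1+4\beta_2$ have squared length $10$ and so are not roots (exactly the paper's argument), and your non-roothood observation that $\beta_1-\beta_2=2\alpha_1+\alpha_3$ has squared length $10$ is a clean substitute for the paper's multibracket computations of $[E_1,F_2]=[E_2,F_1]=0$. One caution: of your two suggested ways to fix the normalization in $[E_1,F_1]=H_1$, the second (``the scalar is fixed by $\beta_1([E_1,F_1])=2$'') is circular, since that equality is precisely what the choice of the factor $\frac{1}{2}$ must be shown to achieve; use your first suggestion (the invariant form) or the paper's direct multibracket expansion instead.
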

\begin{proof}
We show these generators satisfy the Serre relations of $\mathcal{H}(3)$.

1) Clearly $H_1$ and $H_2$ commute.

2) We have $[E_1, F_2]=-\frac{1}{2}[f_2,e_{1123}]=0$,  $[E_2, F_1] = \frac{1}{2}[e_2, f_{1123}]=0$, and $[E_2,F_2]=[e_2,f_2]=h_2=H_2$. The next calculation shows that $[E_1,F_1]=H_1$, and makes use of the multibracket identities that follow from Theorem \ref{multibracket}, listed in Appendix \ref{appendix:B}.
\begin{align*}
[E_1&,F_1] = -\frac{1}{4}[e_{1123},f_{1123}] =-\frac{1}{4}\Big([[e_1,f_{1123}],e_{123}]+[e_1,[e_{123},f_{1123}]]\Big)
\\ &=-\frac{1}{4}\Big([2f_{123},e_{123}]+[e_1,\big[\Big[e_{123},f_1\Big], f_{123}\big]+\big[f_1,\Big[e_{123},f_{123}\Big]\big]]\Big)
\\ &=\frac{1}{4}\Big(2[e_1,[e_{23},f_{123}]] + 2[[e_1,f_{123}],e_{23}] + 2[e_1,[e_{23}, f_{123}]]  
\\ &\qquad \qquad - [e_1, \big[f_1,  \big[e_1,\Big[e_{23},f_{123}\Big]\big] + \big[\Big[e_1,f_{123}\Big],e_{23}\big]\big]]\Big)
\\ &=\frac{1}{4}\Big( 4[e_1,\big[e_2, \Big[e_3,f_{123}\Big]\big] + \big[\cancel{\Big[e_2,f_{123}\Big]},e_3\big]] -  4[e_{23},f_{23}] \\ &\qquad \qquad - [e_1, [f_1, \big[e_1, \big[e_2, \Big[e_3,f_{123}\Big]\big] + \big[\cancel{\Big[e_2,f_{123}\Big]},e_3\big]\big]]] - 2[e_1, [f_1, [f_{23},e_{23}]]] \Big)
\\ &=\frac{1}{4}\Big(-4[e_1,[e_2,f_{12}]] + 4[f_2,[f_3,e_{23}]] + 4[[f_2,e_{23}],f_3]
\\ &\qquad \qquad  + [e_1,[f_1,[e_1,[e_2,f_{12}]]]] -2[e_1,[f_1,[f_2,-e_2]+[e_3,f_3]]]\Big)  
\\ &=\frac{1}{4}\Big(-4[e_1,-2f_1] + 4[f_2,-e_2] +4[e_3,f_3]+ [e_1,[f_1,[e_1,-2f_1]]] -2[e_1,[f_1,h_2 + h_3]]\Big)
\\ &=\frac{1}{4}\Big(8h_1 + 4h_2 +4h_3+ [e_1,[f_1,-2h_1]] - 2[e_1,-2f_1 + 0]\Big)
\\ &=\frac{1}{4}\Big(8h_1 + 4h_2 +4h_3 - \cancel{[e_1,-4f_1 + 4f_1]}\Big)=2h_1 + h_2 +h_3,
\end{align*}
which is exactly $H_1$.  We also have $[E_2, F_2]=[e_2,f_2]=h_2=H_2.$

3) The following calculations show that $[H_i,E_j] = b_{ji} E_j$ for $i,j=1,2$.
$$[H_1,E_1]=[2h_1+h_2+h_3,E_1]= \beta_1(2h_1+h_2+h_3)E_1 =
(2\alpha_1+\alpha_2+\alpha_3)(2h_1+h_2+h_3)E_1 = 2E_1,$$ 
$$[H_1,E_2]=[2h_1+h_2+h_3,e_2]=\beta_2(2h_1+h_2+h_3)e_2 = \alpha_2(2h_1+h_2+h_3)e_2 = -3e_2 = -3E_2,$$
$$[H_2,E_1]= [h_2,E_1] = \beta_1(h_2)E_1 = (2\alpha_1+\alpha_2+\alpha_3)(h_2)E_1 = -3E_1, \quad \text{and} \quad [H_2,E_2]=2E_2.$$

4) It can easily be shown based on (3) above that $[H_i,F_j] = -b_{ji} F_j$.

5) We now show $(ad_{E_j})^{1-b_{ij}} E_j = 0$ for $1\leq i\neq j\leq 2$. First, we have that 
$$[E_2,E_1]\in\FF_{2\alpha_1+2\alpha_2+\alpha_3},\quad [E_2,[E_2,E_1]] \in\FF_{2\alpha_1+3\alpha_2+\alpha_3}, \quad [E_2,[E_2,[E_2,E_1]]]\in\FF_{2\alpha_1+4\alpha_2+\alpha_3}$$
are all root space vectors, since the squared-lengths of their corresponding roots are:
$$||2\alpha_1+2\alpha_2+ \alpha_3|| =2(4-8+4-2+1)=-2, \quad 
||2\alpha_1+3\alpha_2+\alpha_3|| =2(4-12+9-3+1)=-2,$$
$$\text{and}\quad ||2\alpha_1+4\alpha_2+\alpha_3|| =2(4-16+16-4+1)=2.$$ 
However, $[E_2,[E_2,[E_2,[E_2,E_1]]]]$ is not a root space vector since  $||2\alpha_1+5\alpha_2+\alpha_3|| =2(4-20+25-5+1)=10,$
therefore $2\alpha_1+5\alpha_2+\alpha_3 \notin \Delta$ and $(ad_{E_2})^{4} E_1 = 0$.

Similarly, 
$$[E_1,E_2]\in\FF_{2\alpha_1+2\alpha_2+\alpha_3},\quad[E_1,[E_1,E_2]] \in\FF_{4\alpha_1+3\alpha_2+2\alpha_3},\quad[E_1,[E_1,[E_1,E_2]]]\in\FF_{6\alpha_1+4\alpha_2+3\alpha_3}$$
are root space vectors since
$$||2\alpha_1+2\alpha_2+ \ \alpha_3|| =2(4-8+4-2+1)=-2, 
||4\alpha_1+3\alpha_2+2\alpha_3|| =2(16-24+9-6+4)=-2,$$
$$\text{and} \quad ||6\alpha_1+4\alpha_2+3\alpha_3|| =2(36-48+16-12+9)=2,$$
but $[E_1,[E_1,[E_1,[E_1,E_2]]]] \in\FF_{8\alpha_1+5\alpha_2+4\alpha_3}$ is not since  $||8\alpha_1+5\alpha_2+4\alpha_3|| =2(64-80+25-20+16)=10,$
giving us $(ad_{E_1})^{4} E_2 = 0$.

6) Multibracket calculations involving $F_1$ and $F_2$ similar to (5) above will show that $(ad_{F_i})^{-a_{ij}+1}(F_j)=0,\ i\neq j$.

The rest is obvious.
\end{proof}

The  proposition allows us to identify the dual Cartan subalgebra of $\mathcal{H}(3)$ with the dual Cartan $\HH_\Fib^*$ of $\Fib$. As before, we call $(\HH^*_\Fib)_\R = \R\beta_1\oplus \R\beta_2$ the  $\Fib$\textit{-plane}. The roots of $\Fib$, $\Delta$, lie in the $\Fib$ root lattice, $Q_\Fib=\Z\beta_1 + \Z\beta_2$.

\begin{figure}[h!]
\begin{center}
\includegraphics[trim = .5cm 2.5cm .1cm .9cm, clip=true, width=3 in]{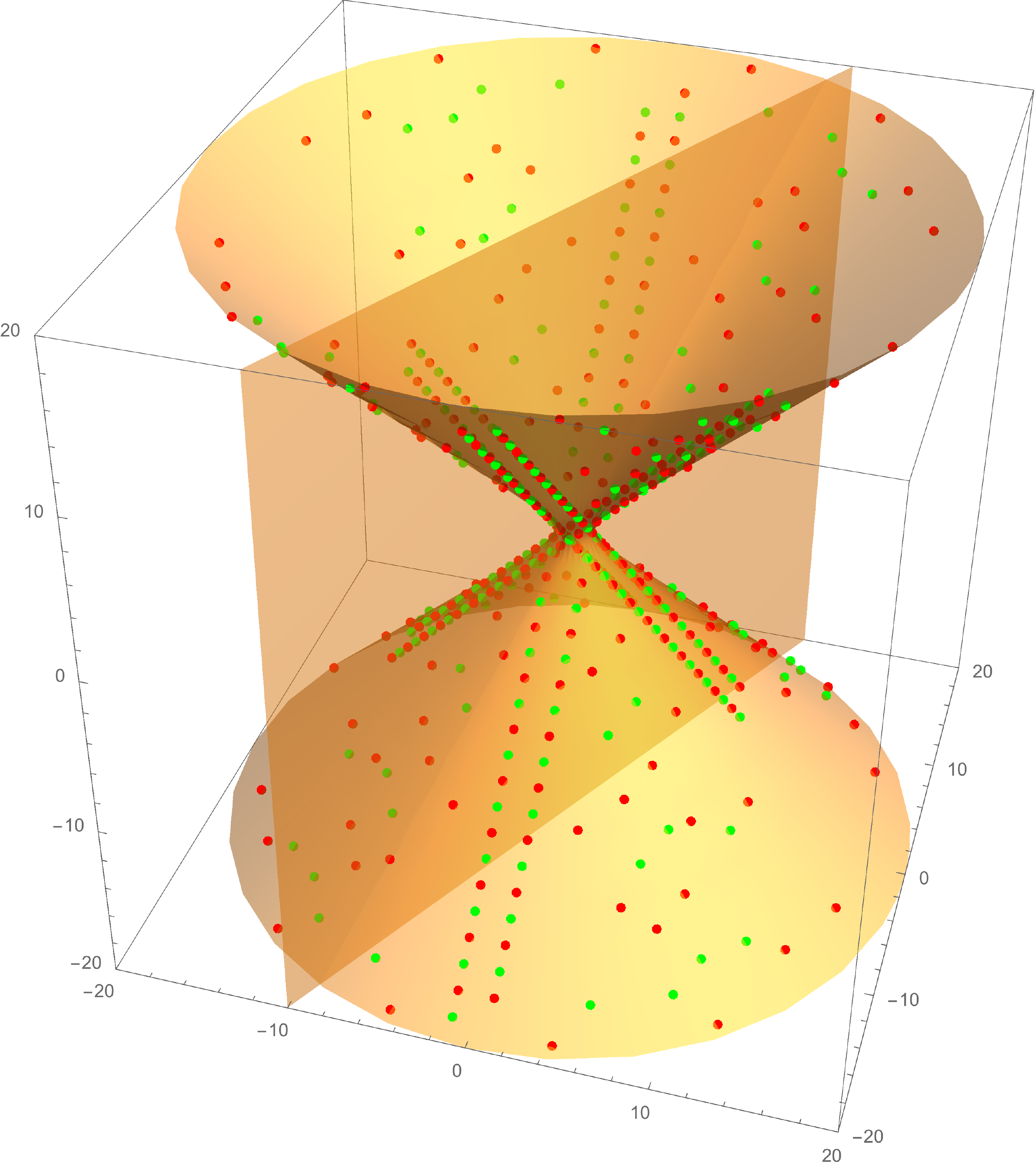} 
\caption{$\Fib$-plane $\R\beta_1\oplus\R\beta_2$ `slicing' through $S_2$, the real root hyperboloid of $\FF$. Shown are real roots of $\FF$. The two colors correspond to the two Weyl orbits of $\Phi^{re}$. }
\end{center}\end{figure}

\begin{proposition} We have $Q_\Fib=Q\cap(\R\beta_1\oplus \R\beta_2).$
\end{proposition}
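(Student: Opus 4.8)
The inclusion $Q_\Fib \subseteq Q\cap(\R\beta_1\oplus\R\beta_2)$ is immediate: the simple roots $\beta_1,\beta_2$ are integer combinations of $\alpha_1,\alpha_2,\alpha_3$, so $Q_\Fib\subseteq Q$, and by definition $Q_\Fib\subseteq\R\beta_1\oplus\R\beta_2$. The content is the reverse inclusion: if $\gamma\in Q$ lies in the real span of $\beta_1,\beta_2$, then $\gamma$ is in fact an \emph{integer} combination of $\beta_1,\beta_2$. The plan is to work in coordinates with respect to the simple roots $\Pi=\{\alpha_1,\alpha_2,\alpha_3\}$ of $\FF$. Write $\beta_1=2\alpha_1+\alpha_2+\alpha_3$ and $\beta_2=\alpha_2$, and suppose $\gamma=s\beta_1+t\beta_2$ with $s,t\in\R$; expanding gives $\gamma=2s\,\alpha_1+(s+t)\alpha_2+s\,\alpha_3$.

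Now impose $\gamma\in Q$, i.e. all three coordinates $2s$, $s+t$, $s$ are integers. From the $\alpha_3$-coordinate we get $s\in\Z$ directly (this is the key point — the coefficient of $\alpha_3$ in $\beta_1$ is $1$, and $\beta_2$ has no $\alpha_3$ component, so the $\alpha_3$-coordinate of $\gamma$ equals $s$ exactly). Then from the $\alpha_2$-coordinate, $s+t\in\Z$ together with $s\in\Z$ forces $t\in\Z$. (The $\alpha_1$-coordinate condition $2s\in\Z$ is then automatically satisfied and carries no extra information.) Hence $\gamma=s\beta_1+t\beta_2\in\Z\beta_1+\Z\beta_2=Q_\Fib$, completing the proof.

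There is essentially no obstacle here; the only thing to be careful about is that one must use the \emph{specific} choice of $\beta_1,\beta_2$ made in the preceding proposition — it is exactly the feature that $\beta_1$ has $\alpha_3$-coefficient $1$ while $\beta_2=\alpha_2$ involves no $\alpha_3$ that makes the unimodularity argument trivial. With the alternative choice $\beta_2=2\alpha_1+3\alpha_2$ or $2\alpha_1+4\alpha_2$ from \cite{FN} the same conclusion holds but requires checking that the relevant $2\times 3$ integer matrix of coordinates has a $2\times 2$ minor equal to $\pm1$; I would remark on this only if the paper needs the statement for those generators as well. One could also phrase the whole argument invariantly: the map $\Z^2\to\Z^3$, $(s,t)\mapsto(2s,s+t,s)$, has a left inverse over $\Z$ (project onto the last coordinate to recover $s$, then subtract to recover $t$), so its image is a saturated (pure) sublattice of $\Z^3$, and $Q\cap(\R\beta_1\oplus\R\beta_2)$ is precisely that saturation.
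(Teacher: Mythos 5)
Your proof is correct and is essentially identical to the paper's: both expand $s\beta_1+t\beta_2=2s\,\alpha_1+(s+t)\alpha_2+s\,\alpha_3$ and read off $s\in\Z$ from the $\alpha_3$-coordinate and then $t\in\Z$ from the $\alpha_2$-coordinate. Your closing remarks on unimodularity and saturation are a nice gloss but add nothing the paper needs.
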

\begin{proof}
The containment $\subseteq$ is obvious. For the reverse containment, let $\alpha\in Q\cap(\R\beta_1\oplus \R\beta_2)$. Then $\alpha=n_1\alpha_1+n_2\alpha_2+n_3\alpha_3=c_1\beta_1+c_2\beta_2$, where each $n_i\in\Z$ and each $c_i\in\R$. Then
$$n_1\alpha_1+n_2\alpha_2+n_3\alpha_3 = 2c_1\alpha_1+(c_1+c_2)\alpha_2+c_1\alpha_3$$
implies that $c_1,c_2\in\Z$, so $\alpha\in Q_\Fib$.
\end{proof}
Since $(\HH^*_\Fib)_\R$ inherits the bilinear form from $\HH_\R^*$, we have
$$\Delta^{re}=\{\beta\in Q_\Fib \ | \ ||\beta||^2=2 \} = \Phi^{re}\cap(\R\beta_1\oplus \R\beta_2),$$
$$\Delta^{im}=\{\beta\in Q_\Fib \ | \ ||\beta||^2\leq 0 \} = \Phi^{im}\cap(\R\beta_1\oplus \R\beta_2),$$
and
$$\Delta=\Phi\cap(\R\beta_1\oplus \R\beta_2).$$
For any root $\beta\in\Delta,$
$$\beta= n_1\beta_1 + n_2\beta_2 = \left[\begin{array}{cc}
	-n_2 & n_1-n_2 \\
	n_1-n_2 & -n_1 \\
		\end{array}\right] =\left[\begin{array}{cc}
	a & b \\
	b & c \\
		\end{array}\right]$$
for some integers $n_1, n_2$ both non-negative or non-positive, so $a = b + c$
describes the $\Fib$-plane (for $a,b,c\in \R$).

Define $\lambda_1, \lambda_2 \in \R\beta_1\oplus\R\beta_2$ to be fundamental weights for $\Fib$, in other words, 
\begin{equation}\label{eq:fund}\la \beta_i, \lambda_j \ra = \lambda_j(H_i) = \delta_{ij}.\end{equation}
Since $B(3)^{-1} = -\frac{1}{5} \ds\left(\begin{array}{cc}
	2 & 3  \\
	3 & 2 \\
	\end{array}\right),$ we have
$$\lambda_1=-\frac{1}{5}(2\beta_1+3\beta_2) = \frac{1}{5}\left[\begin{array}{cc}
	3 & 1  \\
	1 & 2 \\
	\end{array}\right] \qquad \hbox{and}\qquad \lambda_2=-\frac{1}{5}(3\beta_1+2\beta_2) = \frac{1}{5}\left[\begin{array}{cc}
	2 & -1  \\
	-1 & 3 \\
	\end{array}\right].$$
Denote the weight lattice determined by these fundamental weights by
$$P_{\Fib}=\Z\lambda_1+\Z\lambda_2.$$

We have the simple reflections $r_1=w_1w_2w_3w_2w_1$ and $r_2=w_2$, since
\begin{align*}r_1\beta_1&=w_1w_2w_3w_2w_1(2\alpha_1+\alpha_2+\alpha_3) = w_1w_2w_3w_2(\alpha_2+\alpha_3)=w_1w_2w_3(\alpha_3)\\
&=w_1w_2(-\alpha_3)=-w_1(\alpha_2+\alpha_3)=-(2\alpha_1+\alpha_2+\alpha_3) =-\beta_1 =\beta_1-\la \beta_1,\beta_1\ra \beta_1,
\\
r_1\beta_2&=w_1w_2w_3w_2w_1\alpha_2 = w_1w_2w_3w_2(2\alpha_1+\alpha_2)=w_1w_2w_3(2\alpha_1+3\alpha_2)\\
&=w_1w_2(2\alpha_1+3\alpha_2+3\alpha_3)=w_1(2\alpha_1+4\alpha_2+3\alpha_3)=(6\alpha_1+4\alpha_2+3\alpha_3)\\
&=3\beta_1+\beta_2 =\beta_2-\la \beta_1,\beta_2\ra \beta_1,
\\
r_2\beta_2&=w_2\alpha_2=-\alpha_2=-\beta_2=\beta_2-\la \beta_2,\beta_2\ra \beta_2,
\\
r_2\beta_1&=w_2(2\alpha_1+\alpha_2+\alpha_3) = 2\alpha_1+4\alpha_2+\alpha_3=\beta_1+3\beta_2 =\beta_1-\la \beta_2,\beta_1\ra \beta_2. 
\end{align*}
Thus the matrices corresponding to $r_1, r_2$ (as in section \ref{sec:F}) are
$$M_{r_1} = M_1M_2M_3M_2M_1 =  \left[\begin{array}{cc}
	-1 & 0 \\
	1 & 1 \\
		\end{array}\right], \quad  \text{ and } \quad  M_{r_2} = M_2 = \left[\begin{array}{cc}
	-1 & 1 \\
	0 & 1 \\
		\end{array}\right].$$
		

The Weyl group $W_\Fib=\la r_1, r_2 \ra < W$ permutes the roots $\Delta$ which is a subset of the $\Fib$-plane $\R\beta_1\oplus \R \beta_2\simeq \R^{(1,1)}\subset\R^{(2,1)}\simeq\HH^*_\R$. We also have that $\Delta^{re}=W_\Fib\Pi_\Fib$ and $\Delta^{im}=\Delta \backslash \Delta^{re}$.

We have the sets of real fixed points in $\HH_\R^*$ for $r_1, r_2$,
$$\beta_i^\perp = \{ \beta \in \HH^*_\R \ | \ (\beta, \beta_i) = 0 \}$$ 
whose intersection with the $\Fib$-plane are the reflecting lines for $W_\Fib$,
$$\beta_1^\perp\cap(\R\beta_1\oplus\R\beta_2)=\R\lambda_2 \quad \text{and} \quad \beta_2^\perp\cap(\R\beta_1\oplus\R\beta_2)=\R\lambda_1.$$ 
We also have that $\beta_2^\perp = \alpha_2^\perp$. The reflecting planes $\alpha_1^\perp$ and $\alpha_3^\perp$ intersect the $\Fib$-plane in the same line, 
$\R(\lambda_1+\lambda_2) =\alpha_1^\perp\cap\alpha_3^\perp$.

\begin{figure}[h!]
\begin{center}
\includegraphics[width=3.5 in]{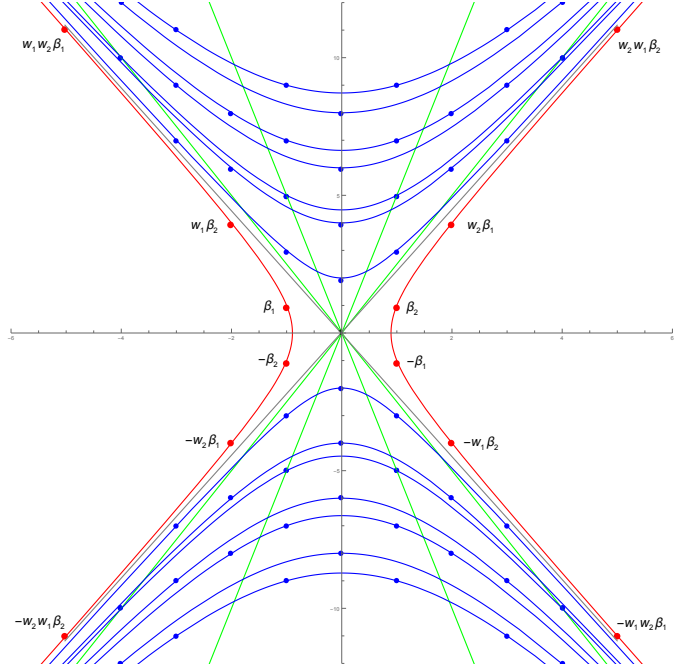}\caption{The $\Fib$ root system $\Delta$. Real roots lie on the red hyperbola, imaginary roots lie on the blue hyperbolas. The inner green lines show the reflecting planes $\R\lambda_1$, $\R\lambda_2$. The gray lines are the $\Fib$ null-cone.}
\label{fig:fibplane}
\end{center}\end{figure}

The curves of constant squared-length in the $\Fib$-plane are
$$S_{\Fib,c}=S_c\cap(\R\beta_1\oplus\R\beta_2).$$

$S_{\Fib,0}$ is the $\Fib$ \textit{null-cone}, a pair of lines that intersect at the origin. If $\mu =c_1\beta_1+c_2\beta_2 \in S_{\Fib,0}$, then $\la \mu, \mu \ra = 2c_1^2-3c_1c_2+2c_2^2 =0$. Dividing both sides by $c_1^2$ gives $(\frac{c_2}{c_1})^2 -3 (\frac{c_2}{c_1}) +1 = 0$. The solution $\frac{c_2}{c_1} = \frac{3 \pm \sqrt{5}}{2}$ shows that the null-cone lines have irrational slope, therefore no roots of $\Fib$ lie on the null-cone.

For each even $n\leq 2$, $n\neq 0$, $S_{\Fib,n}$ is a disjoint union of two branches of a hyperbola. Each branch of the \textit{real hyperbola} $S_{\Fib,2}$ contains roots of both orbits of $\Delta^{re}=W_\Fib\{\beta_1\} \cup W_\Fib\{\beta_2\}$ (\cite{F1}). We have the $\Fib$ \textit{light-cone }
$$LC_\Fib = LC\cap(\R\beta_1\oplus\R\beta_2)$$
which partitions into the \textit{forward} and \textit{backward} $\Fib$ \textit{light-cones},
$$LC_{\Fib,\pm}=LC_\pm\cap(\R\beta_1\oplus\R\beta_2),$$
and the positive $\Fib$ Tits cone,
$$T_{\Fib,+}=T_+\cap (\R\beta_1\oplus\R\beta_2).$$
Figure \ref{fig:fibplane} indicates, for each even $n> 0$, there is a ``positive'' branch of $S_{\Fib, -n}$ that lies in $LC_{\Fib,+}$ and a ``negative'' branch that lies in $LC_{\Fib,-}$. Since each branch is $W_\Fib$-invariant, we have that $W_\Fib(LC_{\Fib,\pm})=LC_{\Fib,\pm}$, and
\begin{equation}\label{eq:Winv}W_\Fib(LC_{\Fib,+})\cap W_\Fib(LC_{\Fib,-})=\{0\}.\end{equation}
\chapter{$\Fib$-modules in $\FF$}\label{ch:modules}
Our main goal is to study the decomposition of $\FF$ with respect to $\Fib$ that is analogous to the Feingold-Frenkel decomposition of $\FF$ into $\Aff$-modules outlined in Section \ref{sec:aff}. We first introduce the similar notion of \textit{Fibonacci level} by slicing $\Phi$ into planes parallel to the $\Fib$-plane, and show that $\FF$ has a decomposition into $\Fib$-modules, graded by Fibonacci level. From this point onwards ``level'' shall refer to the Fibonacci level.  The next two chapters will then explore the decompositions of certain levels in greater detail. 

\section{The level $m$ $\Fib$-module $\Fib(m)$, $m\in\Z$}\label{sec:fiblevel}

The  \textit{level }$m$ $\Fib$\textit{-plane} is $\R\beta_1\oplus\R\beta_2 + m\gamma_\Fib$
where $\gamma_\Fib=-\alpha_1-\alpha_2=\left[\begin{array}{cc}
	1 & 0  \\
	0 & 0 \\
	\end{array}\right]$. Then for each $m\in\Z$, the level $m$ roots are 
$$\Delta_m=\Delta_{m}=\Bigg\{\beta\in\Phi \ \Bigg| \ \beta=\left[\begin{array}{cc}
	b+c+m & b  \\
	b & c \\
	\end{array}\right], \ b, c, m\in\Z \Bigg\}.$$
Moreover, given any root $\alpha=\left[\begin{array}{cc}
	a & b  \\
	b & c \\
	\end{array}\right] \in \Phi$, we have $\alpha\in\Delta_{a-b-c},$
thus we have the disjoint union
$$\Phi=\bigcup_{m\in\Z}\Delta_m.$$

It is clear that if $\beta\in\Delta_m$ then $-\beta\in\Delta_{-m}$ so that $\Delta_{-m}=-\Delta_m.$

As in Section \ref{sec:aff}, we use the inner product to determine the level of any root in $\Phi$. Let $\delta_\Fib=\alpha_3-\frac{1}{2}\alpha_1=\left[\begin{array}{cc}
	1 & -\frac{1}{2}  \\
	-\frac{1}{2} & -1 \\
		\end{array}\right]\in P$. Then for any $\beta\in\Delta_m$,
$$\la \delta_\Fib, \beta \ra = \Bigg\langle \left[\begin{array}{cc}
        1 & -\frac{1}{2}  \\
	-\frac{1}{2} & -1 \\
		\end{array}\right], \left[\begin{array}{cc}
	b+c+m & b  \\
	b & c \\
		\end{array}\right] \Bigg\rangle = 2\Big(-\frac{1}{2}\Big)b - c -(-1)(b+c+m) = m.$$
We define $\Fib$\textit{-level }$m$ to be 
\begin{equation}\label{eq:fibm}\Fib(m)= \bigoplus_{\beta\in\Delta_m}\FF_\beta.\end{equation}
The levels provide a $\Z$-grading of $\FF$ according to planes parallel to the $\Fib$-plane, 
\begin{equation}\label{eq:grading}\FF=\bigoplus_{m\in\Z}  \Fib(m).\end{equation}

 \begin{figure}[!ht]
            \hfill
    \subfloat[Level -3\label{subfig-2:dummy}]{%
      \includegraphics[trim = 0.17cm 0.1cm 1.5cm 0cm, clip=true,width=0.32\textwidth]{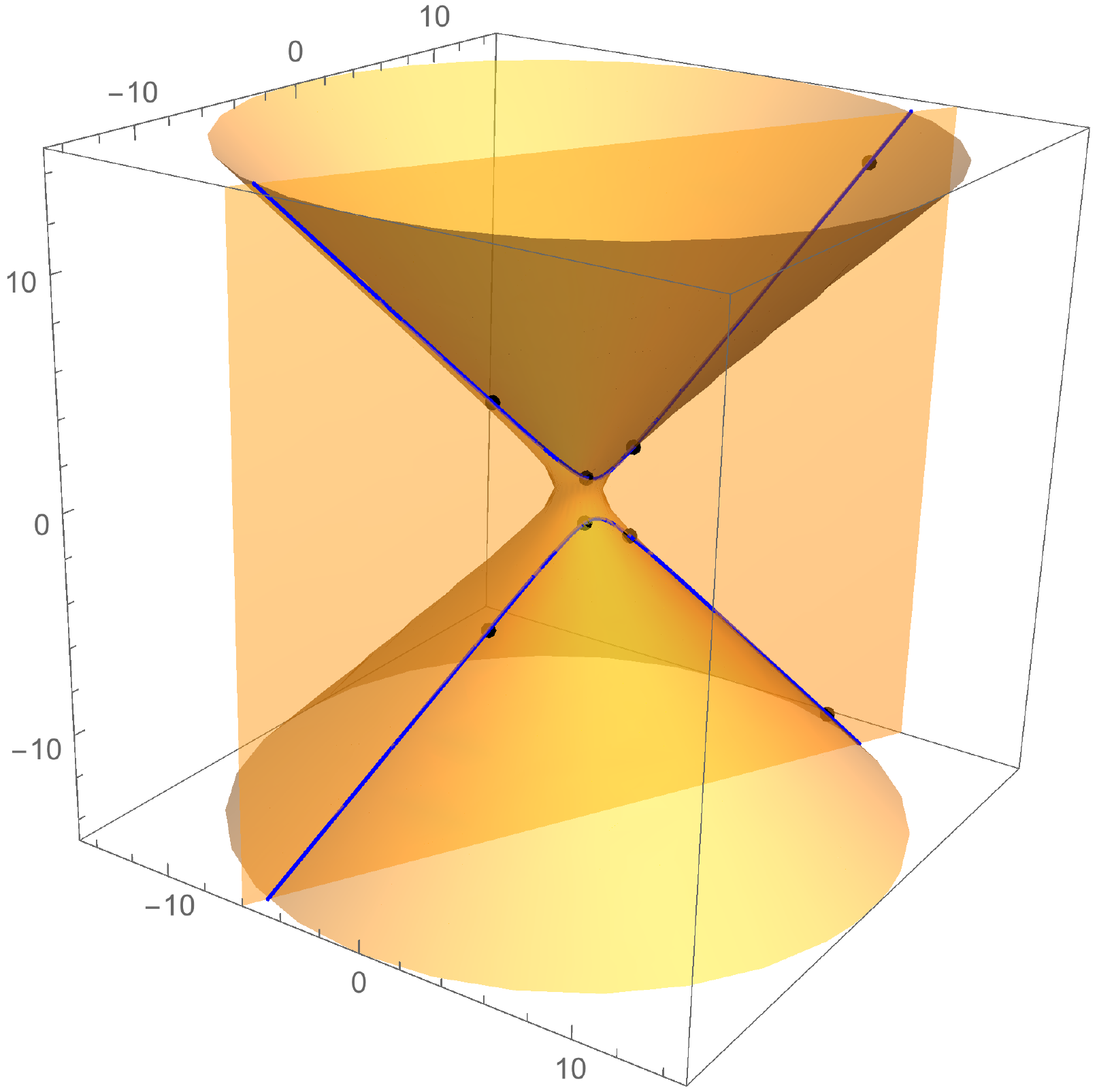}
    } 
        \hfill
    \subfloat[Level -2\label{subfig-2:}]{%
      \includegraphics[trim = 0.17cm 0.1cm 1.5cm 0cm, clip=true,width=0.32\textwidth]{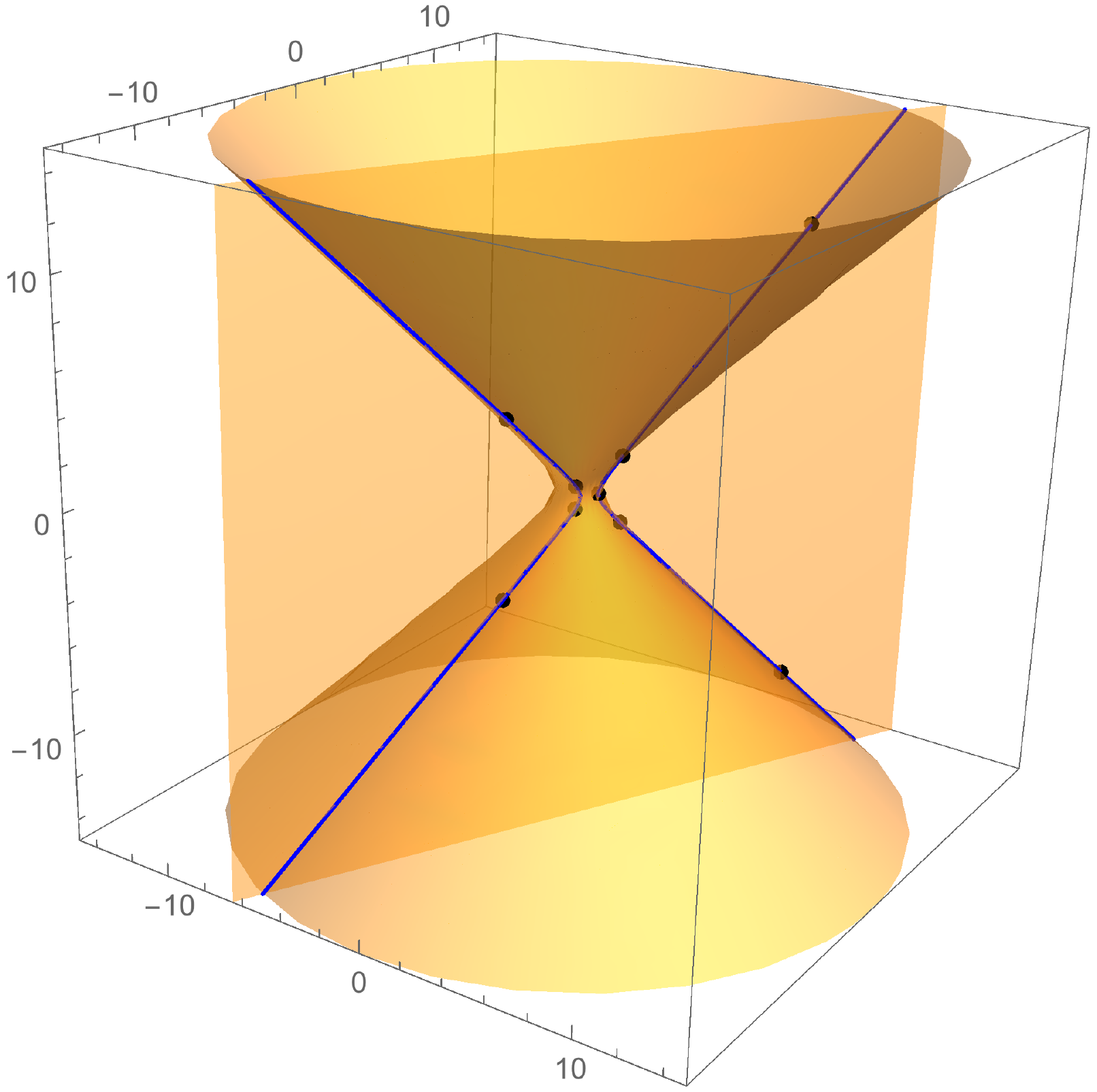}
    } 
        \hfill
    \subfloat[Level -1\label{subfig-2:slice1}]{%
      \includegraphics[trim = 0.17cm 0.1cm 1.5cm 0cm, clip=true,width=0.32\textwidth]{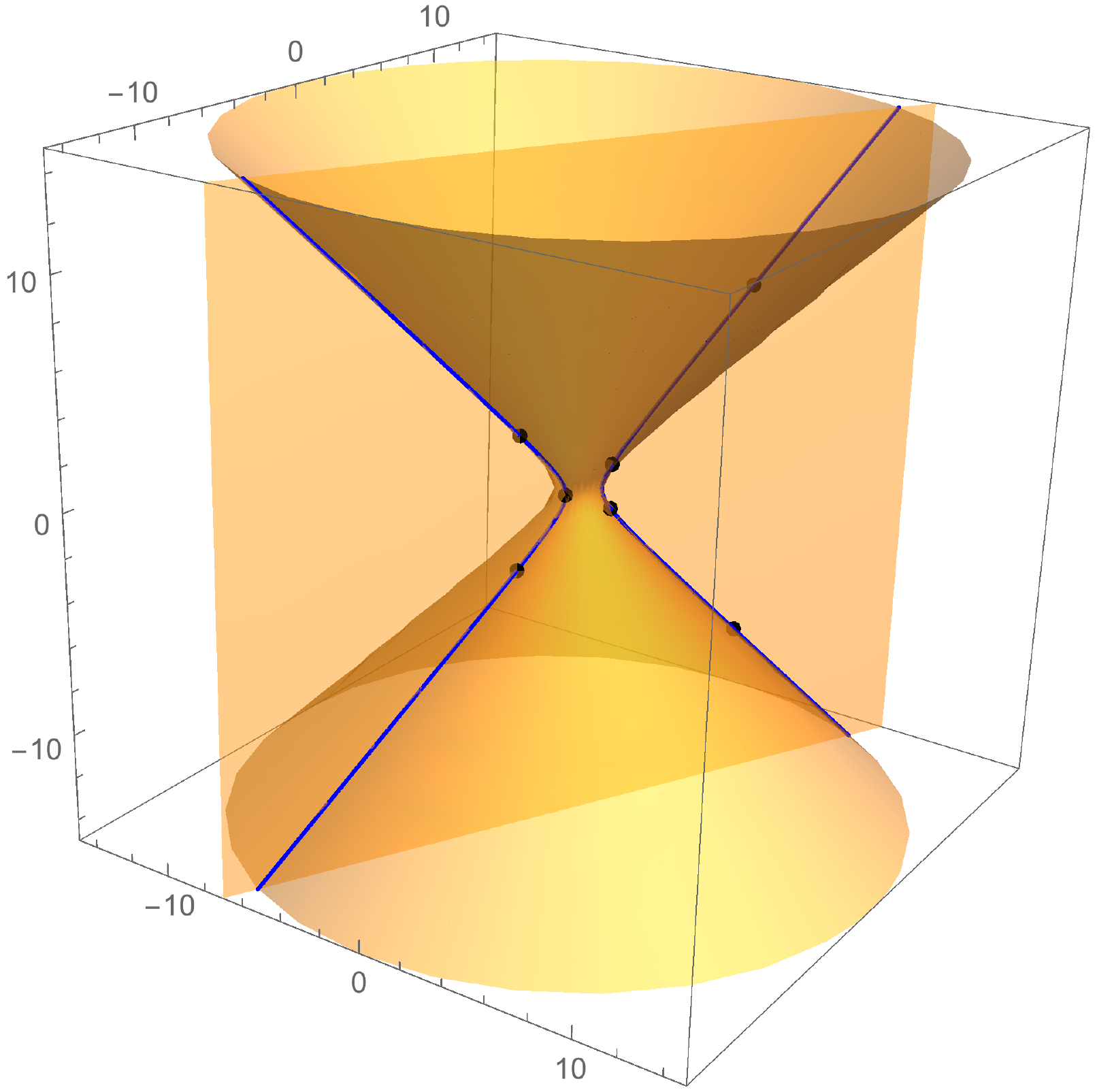}
    } 
    \\
        \hfill
    \subfloat[Level 0\label{subfig-2:slice0}]{%
      \includegraphics[trim = 0.17cm 0.1cm 1.5cm 0cm, clip=true,width=0.32\textwidth]{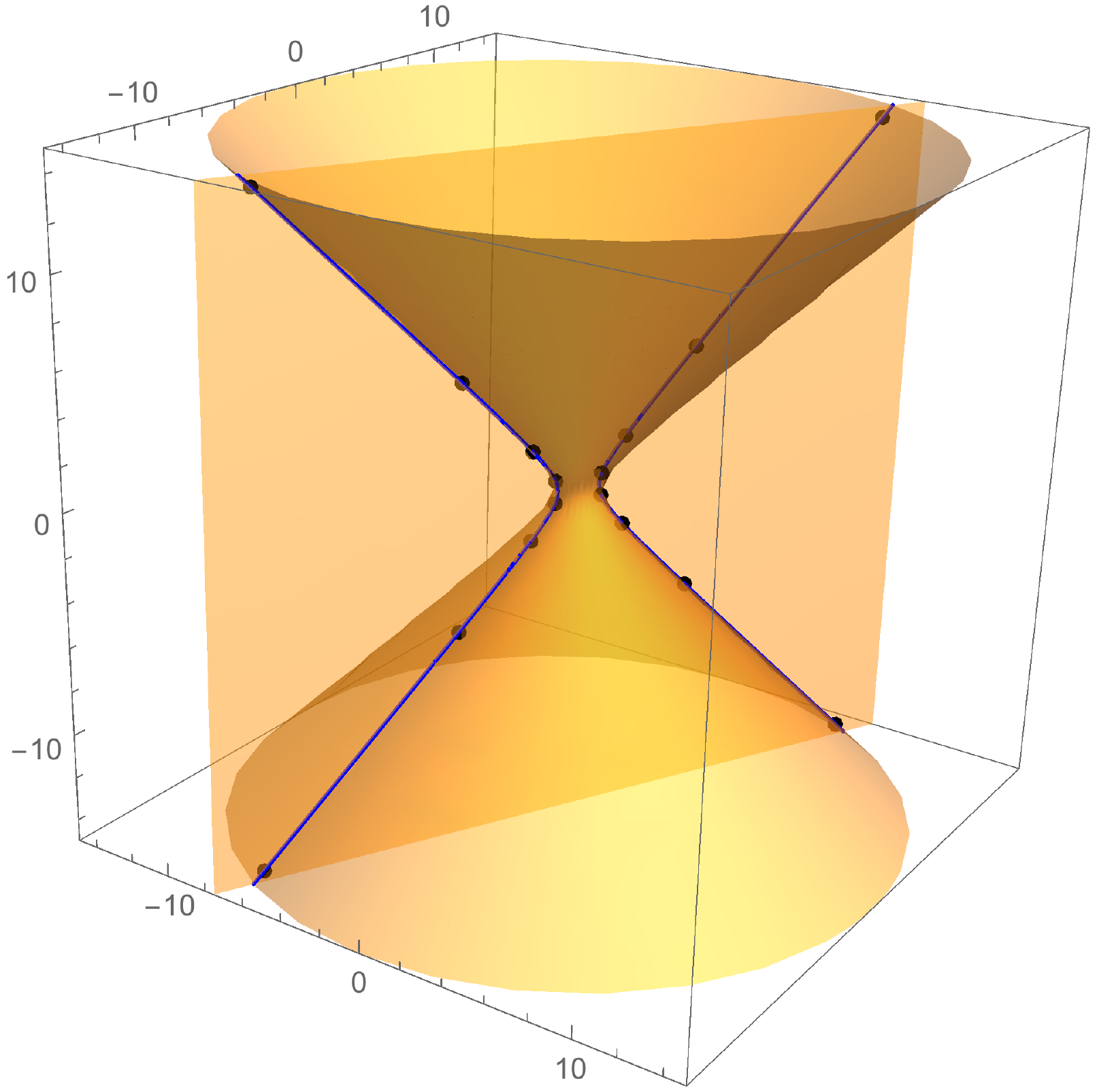}
    } 
    \hfill
    \subfloat[Level 1\label{subfig-2:slice-1}]{%
      \includegraphics[trim = 0.17cm 0.1cm 1.5cm 0cm, clip=true,width=0.32\textwidth]{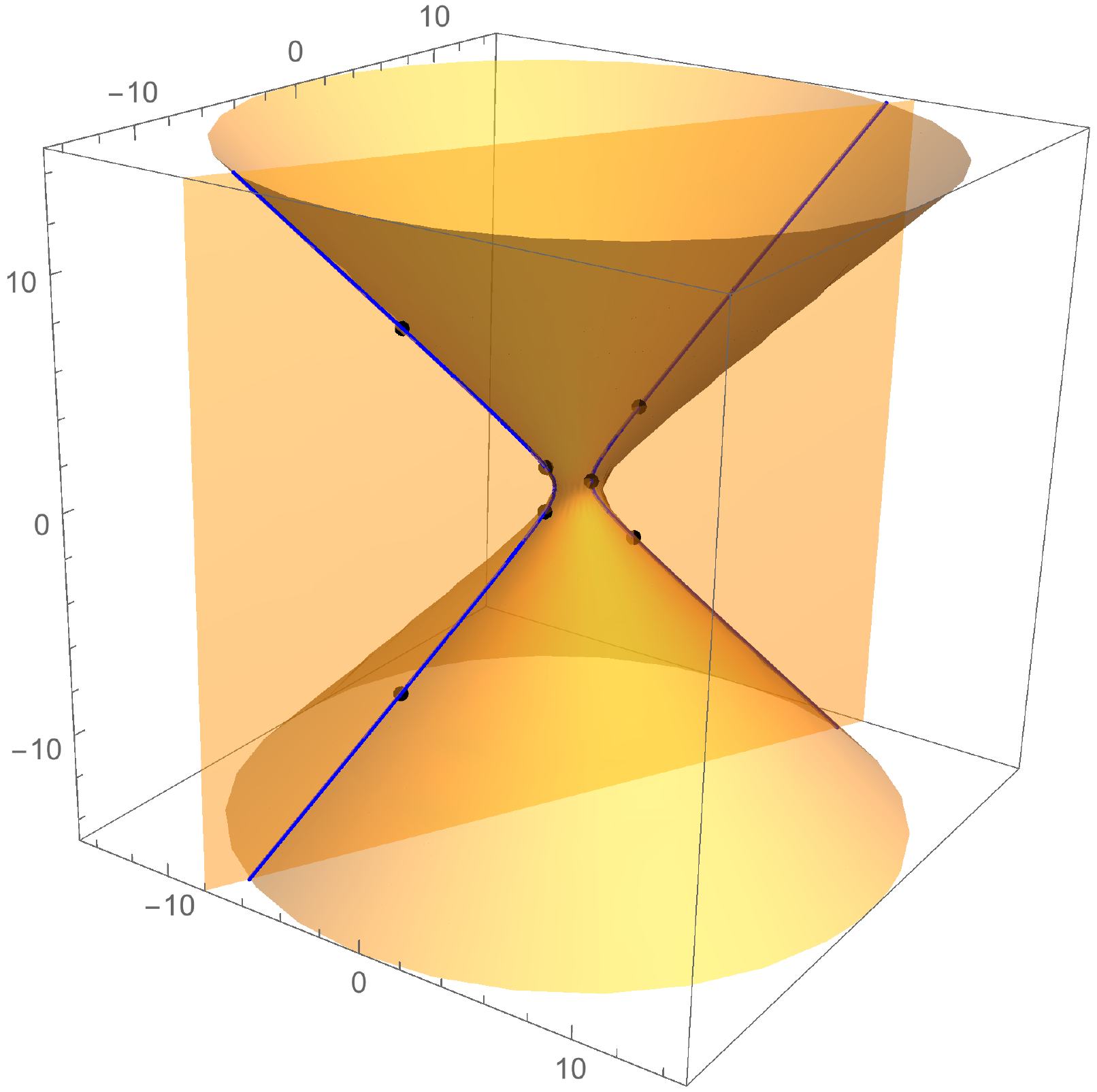}
    } 
    \subfloat[Level 2 \label{subfig-1:slice-2}]{%
      \includegraphics[trim = 0.17cm 0.1cm 1.5cm 0cm, clip=true,width=0.32\textwidth]{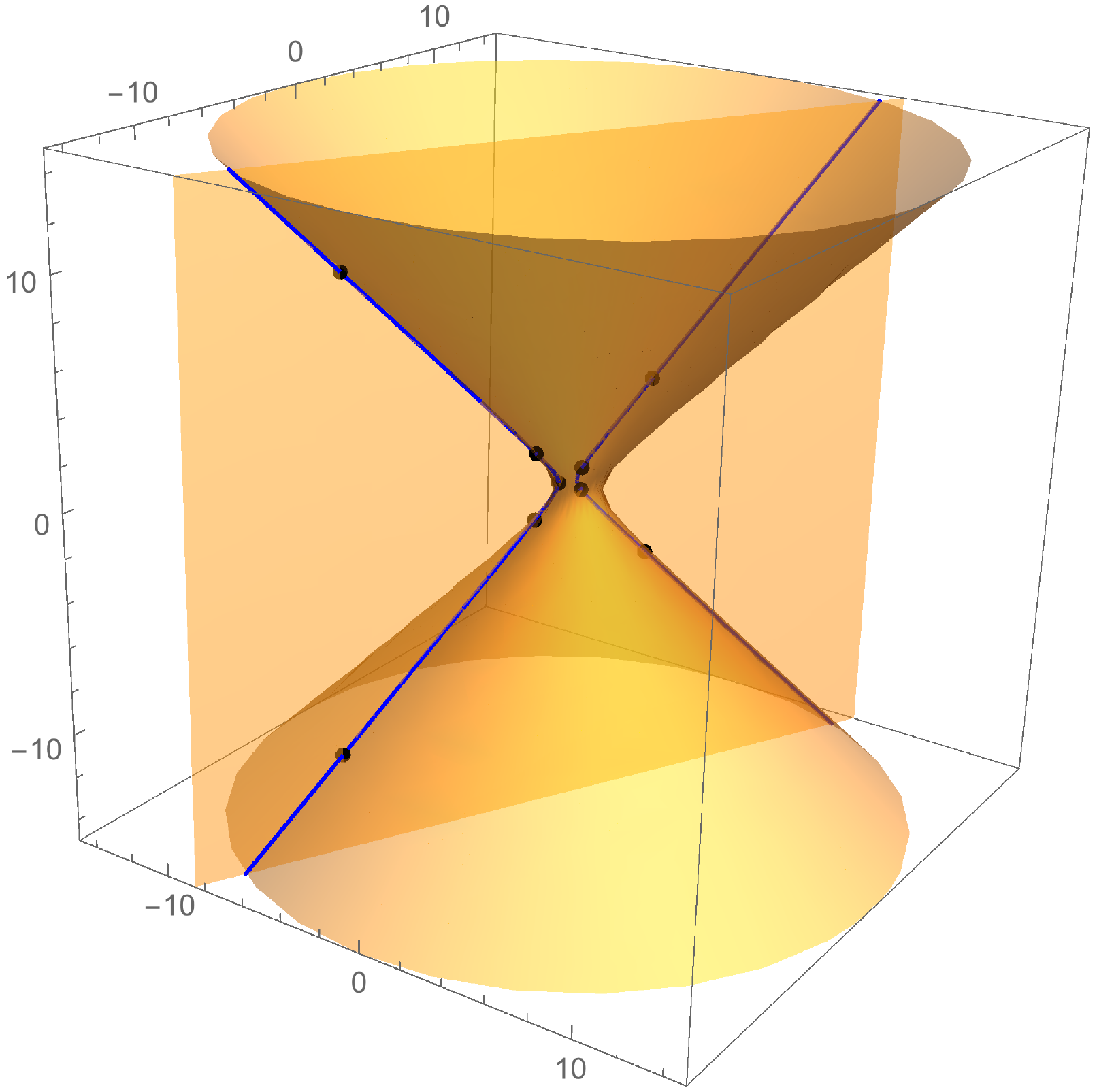}
    }
    \caption{$\Fib$ levels -3 through 2 slicing through the hyperboloid $S_2$. Imaginary roots on each level are not shown. Real roots lie on the blue hyperbolas $S_2\cap (\R\beta_1\oplus\R\beta_2+m\gamma)$.}
    \label{fig:slices}
  \end{figure}

Let $\pi:\HH_\R^*\rightarrow \R\beta_1\oplus \R\beta_2$ be the projection map given by
\begin{equation}\label{eq:proj}\pi(\mu) = \mu(H_1)\lambda_1 + \mu(H_2)\lambda_2.\end{equation}
Note that for $H\in(\HH_\Fib)_\R$, $\pi(\mu)(H)=\mu(H)$, and also, $W_\Fib$ and $\pi$ commute.

\begin{proposition}\label{fibmintegrable}  For $m\in\Z$, $\Fib(m)$ is an $\HH_\Fib$-diagonalizable, integrable $\Fib$-module where the action is the restriction to $\Fib$ of the adjoint action of $\FF$.  In particular, 
\begin{itemize}
\item[1)]  the weights of $\Fib(m)$ are $P(\Fib(m))= \pi(\Delta_m)$,
\item[2)] if $\beta\in\Delta_m$ and $x\in\FF_\beta$, then $ad_{H} x = \pi(\beta)(H) x$ for $H\in\HH_\Fib$,
\item[3)]  $\Delta_m$ and $P(\Fib(m))$ are $W_\Fib$-invariant, 
\item[4)]  $\pi(\beta_i^\perp)=\R\lambda_{3-i} = \beta_i^\perp\cap(\R\beta_1\oplus\R\beta_2)$ for $i=1,2$,
\item[5)]  $\pi(P^\pm)\subset P^\pm_\Fib$.
\end{itemize}
\end{proposition}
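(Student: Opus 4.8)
The plan is to isolate a single linear-algebra fact about the $\Fib$-Cartan and then read off all five items from it. Since $A$ is symmetric we have $h_j=\alpha_j^\vee$ and $\mu(h_j)=\la\alpha_j,\mu\ra=(\alpha_j,\mu)$ for every $\mu\in\HH^*_\R$ (Remark \ref{aij=aji}, using $||\alpha_j||^2=2$); substituting $H_1=2h_1+h_2+h_3$, $H_2=h_2$, $\beta_1=2\alpha_1+\alpha_2+\alpha_3$, $\beta_2=\alpha_2$ and using bilinearity of $(\cdot,\cdot)$ gives the key identity $\mu(H_i)=(\beta_i,\mu)$ for $i=1,2$. Since $\pi(\mu)=\mu(H_1)\lambda_1+\mu(H_2)\lambda_2$ and $\lambda_j(H_i)=\delta_{ij}$, $\pi$ restricts to the identity on the $\Fib$-plane $\R\lambda_1\oplus\R\lambda_2=\R\beta_1\oplus\R\beta_2$, so $\ker\pi$ meets that plane only in $0$ and $\HH^*_\R=(\R\beta_1\oplus\R\beta_2)\oplus\ker\pi$; moreover the key identity gives $\ker\pi=\{\mu:(\beta_1,\mu)=(\beta_2,\mu)=0\}=\beta_1^\perp\cap\beta_2^\perp$. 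Thus $\pi$ is the linear projection of $\HH^*_\R$ onto the $\Fib$-plane along $\beta_1^\perp\cap\beta_2^\perp$. I would record all of this first.

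For the module structure and items (1) and (2): since $\Fib\subset\FF$, the adjoint action of $\FF$ restricts to a $\Fib$-action; every root of $\Fib$ lies in the $\Fib$-plane, hence on level $0$, so for $x$ in a root space of $\Fib$ and $y\in\FF_\beta$ with $\beta\in\Delta_m$ the bracket $[x,y]$ lies on level $m$, and $\HH_\Fib$ preserves each $\FF_\beta$; hence $\Fib(m)$ is a $\Fib$-submodule of $\FF$. As $\FF$ is $\HH$-diagonalizable it is $\HH_\Fib$-diagonalizable, and the key identity identifies the $\HH_\Fib$-weight of $\FF_\beta$ with $\pi(\beta)$, giving (2). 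Any two roots of $\Delta_m$ differ by an element of the $\Fib$-plane, which $\ker\pi$ meets trivially, so $\pi$ is injective on $\Delta_m$; hence each weight space of $\Fib(m)$ is a single finite-dimensional $\FF_\beta$, $\Fib(m)$ is $\HH_\Fib$-diagonalizable, and $P(\Fib(m))=\pi(\Delta_m)$, which is (1). Integrability asks that $E_i,F_i$ act locally nilpotently on $\Fib(m)$: since $\beta_i\in\Phi^{re}$ one may lift a Weyl element $w$ with $w(\alpha_k)=\beta_i$ to an automorphism $\widetilde w$ of $\FF$, and because $\FF_{\beta_i}$ is one-dimensional $E_i$ is a scalar multiple of $\widetilde w(e_k)$; as $ad_{e_k}$ is locally nilpotent on $\FF$ so is $ad_{E_i}$, and likewise $ad_{F_i}$ (equivalently, invoke that real-root vectors act locally nilpotently on the adjoint module, \cite{K2}). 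A fortiori they are locally nilpotent on the submodule $\Fib(m)$.

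For items (3)--(5): (3) a direct computation gives $(\delta_\Fib,\beta_1)=(\delta_\Fib,\beta_2)=0$, so each reflection $r_i$ fixes $\delta_\Fib$ and hence so does $W_\Fib=\la r_1,r_2\ra$; since the level of $\beta\in\Phi$ is $\la\delta_\Fib,\beta\ra$ and $W_\Fib$ acts by isometries, $W_\Fib$ preserves every level plane, so $W_\Fib\Delta_m=\Delta_m$ ($\Phi$ being $W$-invariant), and since $\pi$ commutes with $W_\Fib$ we get $W_\Fib\,\pi(\Delta_m)=\pi(\Delta_m)$. (4) From $\ker\pi=\beta_1^\perp\cap\beta_2^\perp\subseteq\beta_i^\perp$ and $\dim\beta_i^\perp=2$ one gets $\beta_i^\perp=\bigl(\beta_i^\perp\cap(\R\beta_1\oplus\R\beta_2)\bigr)\oplus\ker\pi$, and the first summand equals $\R\lambda_{3-i}$ (already recorded in the text); applying $\pi$, which annihilates $\ker\pi$ and fixes $\R\lambda_{3-i}$, yields $\pi(\beta_i^\perp)=\R\lambda_{3-i}=\beta_i^\perp\cap(\R\beta_1\oplus\R\beta_2)$. (5) For $\omega\in P^+$ one has $\pi(\omega)=\omega(H_1)\lambda_1+\omega(H_2)\lambda_2$ with $\omega(H_1)=2\la\alpha_1,\omega\ra+\la\alpha_2,\omega\ra+\la\alpha_3,\omega\ra$ and $\omega(H_2)=\la\alpha_2,\omega\ra$, both nonnegative integers since $\omega\in P^+\subset P$; hence $\pi(\omega)\in\Z_{\geq0}\lambda_1+\Z_{\geq0}\lambda_2=P^+_\Fib$, and linearity of $\pi$ with $P^-=-P^+$ gives $\pi(P^-)\subseteq P^-_\Fib$.

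The one genuinely non-routine point is the integrability claim, where it is essential to use that a root vector attached to a real root acts locally nilpotently on all of $\FF$ — this is what lets one bypass the fact that $E_1=\frac12 e_{1123}$ is a length-four multibracket rather than a Chevalley generator. Everything else reduces to bookkeeping with the identity $\mu(H_i)=(\beta_i,\mu)$ and the projection $\pi$.
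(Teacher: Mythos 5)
Your proof is correct, and its overall skeleton (read the weight of $\FF_\beta$ off the identity $\mu(H_i)=(\beta_i,\mu)$, then check each item by linear algebra with $\pi$) matches the paper's. But you diverge from the paper in two places, both to your advantage. First, on integrability the paper simply asserts that $ad_{E_i}$ and $ad_{F_i}$ are locally nilpotent ``since [each] is a multibracket of the Serre generators of $\FF$, which have locally nilpotent action on all of $\FF$'' --- which is not a valid inference in general (a bracket of locally nilpotent operators need not be locally nilpotent). Your argument, conjugating a Chevalley generator by the lift of a Weyl group element taking $\alpha_k$ to the real root $\beta_i$ and using $\dim\FF_{\beta_i}=1$, is the standard correct justification and genuinely repairs this step; you are right to flag it as the one non-routine point, since $E_1=\frac12 e_{1123}$ is not itself a Chevalley generator. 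Second, for item (3) the paper verifies $r_i\beta\in\Delta_m$ by an explicit $2\times2$ matrix computation, whereas you observe that $\la\delta_\Fib,\beta_j\ra=0$ for $j=1,2$, so $W_\Fib$ fixes $\delta_\Fib$ and hence preserves the level functional $\la\delta_\Fib,\cdot\ra$; this is more conceptual and generalizes immediately. Your added remark that $\pi$ is injective on each $\Delta_m$ (because two level-$m$ roots differ by an element of the $\Fib$-plane, on which $\pi$ is the identity) is also a useful point the paper leaves implicit when identifying weight spaces of $\Fib(m)$ with single root spaces $\FF_\beta$. Items (4) and (5) are handled essentially as in the paper.
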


\begin{proof} 
Let $\beta= \left[\begin{array}{cc}
	b+c+m & b  \\
	b & c \\
		\end{array}\right]\in\Delta_m$ and $x\in\FF_\beta$. Then
$$ad_{E_i} x = [E_i, x ] \in \FF_{\beta_i+\beta}\subset \Fib(m) \quad \text{ and } \quad ad_{F_i} x = [F_i, x ] \in \FF_{-\beta_i+\beta}\subset \Fib(m),$$
and the action of each $E_i$ and $F_i$ for $i=1,2$ is locally nilpotent on $\Fib(m)$ since it is a multibracket of the Serre generators of $\FF$, which have locally nilpotent action on all of $\FF$. Moreover, $\Fib(m)$ is $\HH_\Fib$-diagonalizable since for $\beta\in\Delta_m$ and $x\in\FF_\beta$,
$$ad_{H} x = [H, x ] =\beta(H)x = \pi(\beta)(H)x \in \FF_{\beta} \quad \text{ for  } H\in\HH_\Fib,$$
which proves $1)$ and $2)$. 

We have
$$r_1\beta =  \left[\begin{array}{cc}
	-1 & 0 \\
	1 & 1 \\
		\end{array}\right] \left[\begin{array}{cc}
	b+c+m & b  \\
	b & c \\
		\end{array}\right] \left[\begin{array}{cc}
	-1 & 1 \\
	0 & 1 \\
		\end{array}\right] =  \left[\begin{array}{cc}
	b+c+m & -2b-c-m  \\
	-2b-c-m & 3b+2c+m \\
		\end{array}\right],$$
thus $r_1\beta \in \Delta_m$. Similarly, $r_2\beta\in\Delta_m$, since
$$r_2\beta =  \left[\begin{array}{cc}
	-1 & 1 \\
	0 & 1 \\
		\end{array}\right] \left[\begin{array}{cc}
	b+c+m & b  \\
	b & c \\
		\end{array}\right] \left[\begin{array}{cc}
	-1 & 0 \\
	1 & 1 \\
		\end{array}\right] =  \left[\begin{array}{cc}
	2c-b+m & c-b \\
	c-b & c \\
		\end{array}\right],$$
		and since $W_\Fib$ and $\pi$ commute, we have $W_\Fib P(\Fib(m))=P(\Fib(m))$, proving 3).

To show the projection map preserves the reflecting planes of $W_\Fib$, let $\mu\in\beta_i^\perp$ for $i=1$ or 2, so $\mu(H_i)=0$. Then $\pi(\mu) = \mu(H_1)\lambda_1 + \mu(H_2) \lambda_2 = \mu(H_{3-i})\lambda_{3-i}$ so $\pi(\mu) \in \beta^\perp_i$.

For $5)$, let $\mu\in P^-$. Then $\pi(\mu)=\mu(H_1)\lambda_1+\mu(H_2)\lambda_2 = \la \beta_1,\mu\ra \lambda_1 + \la \beta_2,\mu \ra \lambda_2$.  Since $\la \alpha_i, \mu \ra =  \frac{2(\alpha_i,\mu)}{(\alpha_i,\alpha_i)} = (\alpha_i,\mu) <0$ it follows that $\la \beta_i, \mu \ra = \frac{2(\beta_i,\mu)}{(\beta_i,\beta_i)} = (\beta_i,\mu) <0$.		
		
\end{proof}

When viewing $\Fib(m)$ as a $\Fib$-module, we write the weight-space decomposition as
\begin{equation}\label{eq:fibmdecomp}\Fib(m)=\ds\bigoplus_{\lambda\in P(\Fib(m))} \Fib(m)_\lambda.\end{equation}
We also have the multiplicity of $\lambda$ in $\Fib(m)$,
\begin{equation}\label{eq:multm}Mult_m(\lambda) = Mult_{\Fib(m)}(\lambda) = \dim_\FF (\FF_\beta),\end{equation}
where $\beta\in\Delta_m$ is such that $\pi(\beta)=\lambda$.
\begin{definition}
If $V^\lambda$ is a standard or non-standard $\Fib$-module with set of weights $P(V^\lambda)$, then for $\mu\in P(V^\lambda)$, the \textbf{inner multiplicity} of $\mu$ in $V^\lambda$ is
$$Mult_\lambda(\mu) = \dim_{V^\lambda}(V^\lambda_\mu).$$
\end{definition}



\section{Symmetries and cosets}\label{sec:symcos}
The Cartan involution of $\Fib$ is the restriction to $\Fib$ of $\nu:\FF\rightarrow\FF$ determined by $\nu(e_i)=-f_i, \ \nu(f_i)=-e_i$ for $1\leq i \leq 3$, and $\nu(h)=-h$ for $h\in \HH$.  Since for $\alpha\in\Phi$, $\nu(\FF_\alpha)= \FF_{-\alpha}$ we also let $\nu(\alpha)=-\alpha$. Note that $Mult_\FF(\alpha)=Mult_\FF(-\alpha)$.

\begin{rem}\label{contra}It is sufficient to investigate the $\Fib(m)$ decomposition for $m\geq 0$, since for all $m\in\Z,  \ \nu(\Fib(m))=\Fib(-m)$. Moreover, 
\begin{enumerate}[1)]
\item $V^\lambda\subset \Fib(m)$ is an irreducible HW $\Fib$-module with HWV $v_\lambda$ if and only if $V^{-\lambda}\subset\Fib(-m)$ is a irreducible LW $\Fib$-module with LWV $\nu(v_\lambda) = v_{-\lambda}.$ 
\item  If $V^\lambda\subset \Fib(m)$ is a non-standard irreducible $\Fib$-module with a generating vector $v_\lambda$ of weight $\lambda$, then $V^{-\lambda}\subset \Fib(-m)$ is a non-standard irreducible $\Fib$-module in $\Fib(-m)$ with generating vector $\nu(v_\lambda)$ of weight $-\lambda$.
\end{enumerate}
\end{rem}

The following lemma describes a symmetry in each $\Fib$-level that allows us to consider only positive weights in the decomposition of $\Fib(m)$ for each $m\in\Z$.

\begin{lemma}
Let $\psi=w_1w_3\nu: \Phi \rightarrow \Phi$. Then $\psi$ is an involution of $\Phi$ such that
\begin{enumerate}[1)]
\item $\psi(\Delta_m)=\Delta_m$,
\item $\psi$ fixes $\pm\alpha_1$ and $\pm \alpha_3$,
\item If $|m| \neq 1, 2$, \  $\psi(\Delta_m\cap\Phi_+) = \Delta_m\cap\Phi_-$, 
\item $\psi(\Delta_1\cap\Phi_+\backslash\{\alpha_1\}) = \Delta_1\cap\Phi_-\backslash\{-\alpha_1\}$,
\item $\psi(\Delta_2\cap\Phi_+\backslash\{\alpha_3\}) = \Delta_2\cap\Phi_-\backslash\{-\alpha_3\}$,
\item $Mult_\FF(\beta)=Mult_\FF(\psi(\beta))$,
\item $\psi$ commutes with $\pi$.
\end{enumerate}
\end{lemma}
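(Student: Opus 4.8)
The plan is to reduce the entire lemma to a single explicit computation. Combining the matrix descriptions of the simple reflections from Section~\ref{sec:F} --- $w_1$ negates the off-diagonal entry and $w_3$ interchanges the two diagonal entries --- with $\nu=-\mathrm{id}$ on $\HH^*$, one obtains, for every $\beta=\left[\begin{array}{cc}a&b\\b&c\end{array}\right]\in\HH^*_\R$,
\[
\psi(\beta)=w_1w_3\nu(\beta)=\left[\begin{array}{cc}-c&b\\b&-a\end{array}\right].
\]
Since $w_1,w_3,\nu$ are linear, this also serves as the linear extension of $\psi$ to all of $\HH^*_\R$, which is what part~(7) needs. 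Every assertion of the lemma is then a quick consequence of this description.

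Parts~(1),~(2),~(6): The maps $w_1,w_3\in W$ and $\nu$ each preserve $\Phi$, so $\psi(\Phi)=\Phi$; and $\psi^2=(w_1w_3)^2\nu^2=1$ by the Coxeter relation $(w_1w_3)^2=1$, so $\psi$ is an involution. The level $a-b-c$ of $\beta$ is left unchanged by the formula, so $\psi(\Delta_m)=\Delta_m$. Substituting the matrices of $\alpha_1,\alpha_3$ from \eqref{eq:simple} shows each is fixed, hence so are $-\alpha_1,-\alpha_3$. Part~(6) holds since $Mult_\FF$ is $W$-invariant and $\nu$-invariant. For~(7), note that $\pi$ is the orthogonal projection of $\HH^*_\R$ onto the $\Fib$-plane $\R\beta_1\oplus\R\beta_2$: it is the identity there and annihilates the orthogonal complement, because $\mu(H_i)=\la\beta_i,\mu\ra$ and the $\lambda_j$ form the dual basis \eqref{eq:fund} (cf.\ the proof of Proposition~\ref{fibmintegrable}). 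On the other hand $\psi$ is an orthogonal map of $\HH^*_\R$ (a composite of reflections and $-\mathrm{id}$) which preserves the $\Fib$-plane --- a direct check gives $\psi(\beta_1)=-\beta_2$ and $\psi(\beta_2)=-\beta_1$ --- and therefore also preserves its orthogonal complement; an orthogonal transformation preserving a subspace commutes with the orthogonal projection onto it, so $\psi\pi=\pi\psi$.

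Parts~(3)--(5): Here I pass to Weyl-group combinatorics. Recall (Section~\ref{sec:kacmoody}) that $w_i$ permutes $\Delta_+\setminus\{\alpha_i\}$ and sends $\alpha_i\mapsto-\alpha_i$, and that $a_{13}=a_{31}=0$ forces $w_1(\alpha_3)=\alpha_3$ and $w_3(\alpha_1)=\alpha_1$. Hence $w_1w_3$ permutes $\Phi_+\setminus\{\alpha_1,\alpha_3\}$ within $\Phi_+$ while sending $\alpha_1\mapsto-\alpha_1$ and $\alpha_3\mapsto-\alpha_3$; composing with $\nu$ (which swaps $\Phi_+,\Phi_-$ and commutes with $w_1w_3$) gives
\begin{align*}
\psi(\Phi_+)&=\bigl(\Phi_-\setminus\{-\alpha_1,-\alpha_3\}\bigr)\cup\{\alpha_1,\alpha_3\},\\
\psi(\Phi_-)&=\bigl(\Phi_+\setminus\{\alpha_1,\alpha_3\}\bigr)\cup\{-\alpha_1,-\alpha_3\}.
\end{align*}
Intersecting with $\Delta_m$ and using $\psi(\Delta_m)=\Delta_m$, the whole of (3)--(5) reduces to noting which of $\pm\alpha_1,\pm\alpha_3$ lies on level $m$: from their matrix forms, $\alpha_1\in\Delta_{-1}$, $-\alpha_1\in\Delta_1$, $\alpha_3\in\Delta_2$, $-\alpha_3\in\Delta_{-2}$. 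If $|m|\neq1,2$, none of these four roots meets $\Delta_m$, so $\psi(\Delta_m\cap\Phi_+)=\Delta_m\cap\Phi_-$, which is~(3). On level~$1$ only $-\alpha_1$ intervenes, so $\psi(\Delta_1\cap\Phi_+)=(\Delta_1\cap\Phi_-)\setminus\{-\alpha_1\}$, which is~(4) (the removal of $\alpha_1$ on the left being vacuous on level~$1$, and present only so that applying $\nu$ yields the level~$-1$ statement). On level~$2$ only $\alpha_3$ intervenes, so $\psi\bigl((\Delta_2\cap\Phi_+)\setminus\{\alpha_3\}\bigr)=\Delta_2\cap\Phi_-$, which is~(5).

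There is no deep difficulty; the one point requiring attention is the bookkeeping in~(3)--(5) of which of $\pm\alpha_1,\pm\alpha_3$ sits on which level, so that the excluded singletons in~(4) and~(5) land on the correct side. That no other exceptions can arise is immediate from the formula: the fixed roots of $\psi$ are exactly those with $a=-c$, which by the length constraint $||\beta||^2\le 2$ can only be $\pm\alpha_1$ and $\pm\alpha_3$, so on every other level $\psi$ genuinely interchanges $\Delta_m\cap\Phi_+$ with $\Delta_m\cap\Phi_-$.
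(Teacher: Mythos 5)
Your proof is correct, and for the substantive parts (3)--(5) it takes a genuinely different route from the paper's. The paper works entirely inside the $2\times 2$ matrix model: starting from $\psi\!\left[\begin{array}{cc}a&b\\b&c\end{array}\right]=\left[\begin{array}{cc}-c&b\\b&-a\end{array}\right]$ it runs a case analysis on $|b|$, using $\det(\beta)\geq -1$ to force $ac>0$ when $|b|\geq 2$ (so the sign of $c$ determines the sign of $-a$), and treating $b=0$ and $|b|=1$ separately to isolate $\pm\alpha_1$, $\pm\alpha_3$ and the null roots. You instead invoke the Weyl-group fact that $w_i$ permutes $\Phi_+\setminus\{\alpha_i\}$, together with $a_{13}=0$, to obtain the global identity $\psi(\Phi_+)=\bigl(\Phi_-\setminus\{-\alpha_1,-\alpha_3\}\bigr)\cup\{\alpha_1,\alpha_3\}$ and then merely intersect with $\Delta_m$; the level bookkeeping for $\pm\alpha_1,\pm\alpha_3$ does the rest. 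Your route is shorter and makes transparent why exactly these four roots are exceptional, while the paper's is more elementary and self-contained. For (7) your argument is also cleaner than the paper's: the paper appeals to $W_\Fib$ and $\nu$ commuting with $\pi$, which does not literally cover $w_1w_3$ (on the $\Fib$-plane it acts as the diagram automorphism swapping $\beta_1\leftrightarrow\beta_2$, not as an element of $W_\Fib$), whereas your observation that $\psi$ is an orthogonal map preserving the $\Fib$-plane, and hence commutes with the orthogonal projection $\pi$, closes that gap. One small caveat: your closing remark that no other exceptions arise because $\pm\alpha_1,\pm\alpha_3$ are the only fixed roots is not, on its own, a proof that $\psi$ exchanges $\Phi_+$ and $\Phi_-$ on the remaining levels (a fixed-point-free involution could still preserve positivity somewhere); but this remark is redundant, since your set identity for $\psi(\Phi_+)$ already settles (3)--(5) completely.
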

\begin{proof}
1) We have $M_{w_1w_3} = \left[\begin{array}{cc}
	1 & 0 \\
	0 & -1 \\
		\end{array}\right] \left[\begin{array}{cc}
	0 & 1 \\
	1 & 0 \\
		\end{array}\right] = \left[\begin{array}{cc}
	0 & 1 \\
	-1 & 0 \\
		\end{array}\right]$. Then
$$\psi(\beta) = -\left[\begin{array}{cc}
	0 & 1 \\
	-1 & 0 \\
		\end{array}\right] \left[\begin{array}{cc}
	a & b \\
	b & c \\
		\end{array}\right] \left[\begin{array}{cc}
	0 & -1 \\
	1 & 0 \\
		\end{array}\right] = \left[\begin{array}{cc}
	-c & b \\
	b & -a \\
		\end{array}\right]\in\Delta_{a-b-c}.$$
In particular, if $a=b+c+m$ then $\psi(\beta)\in\Delta_m$ if $\beta\in\Delta_m$.  

$2-5)$ Also, $||\beta||=ac-b^2\geq -1 \Leftrightarrow ac \geq b^2-1$, which gives us three cases: 
\begin{enumerate}[I.]
\item If $|b|\geq 2$, then $ac>0$, and we have $\beta\in\Phi_+ \Leftrightarrow c<0 \Leftrightarrow -a>0 \Leftrightarrow \psi(\beta)\in\Phi_-$.  
\item If $b=0$, then $ac\geq -1$. Assume $ac=-1$. Then $\beta=\pm \alpha_3$, and $\psi(\pm \alpha_3) = w_1w_3(\mp \alpha_3) = \pm \alpha_3$. Now assume $ac=0$. Then for some $n>0$, $\beta=\pm n(\alpha_1+\alpha_2)\in\Phi_\pm$ or $\mp n(\alpha_2+\alpha_2+\alpha_3)\in\Phi_\mp$, and an easy computation shows that $\psi$ permutes these two sets of roots. 
\item If $|b|=1$, then $ac \geq 0$. If $ac=0$, $\beta=\pm\alpha_1$, and $\psi(\beta)=w_1w_3(\mp\alpha_1)=\pm\alpha_1$. If $ac>0$ then $\beta\in \Phi^{im}$, and the argument from Case I still holds, so $\beta\in\Phi_+\Leftrightarrow \psi(\beta)\in\Phi_-$.
\end{enumerate}
$6)$ follows from $W$- and $\nu$-invariance of $Mult_\FF(\cdot)$ and 
$7)$ follows because both $W_\Fib$ and $\nu$ commute with $\pi$.

\end{proof}
\begin{rem}
For $m\in\Z$, $\psi$ corresponds to a reflection across the horizontal axis in the weight diagram for $\Fib(m)$ (when viewed as a $\Fib$-module). Moreover for each weight $\lambda\in\pi(\Delta_m)$, we have 
$$Mult_m(\lambda)=Mult_m(\psi(\lambda)).$$  
Thus, when determining the decomposition of $\Fib(m)$ into irreducible $\Fib$-modules, it is enough to consider only positive weights.
\end{rem}

The $\Fib$ root lattice $Q_{\Fib}$ is an index 5 sublattice of $P_{\Fib}$, so the quotient module $P_{\Fib}/Q_{\Fib}$ consists of 5 cosets.  We denote the $i^{th}$ coset of $P_{\Fib}/Q_{\Fib}$ by $K_i$ where $i\in \{-2,-1,0,1,2\}$. Then 
\begin{equation}\label{eq:coset}K_i=\{ \Lambda_i + n_1\beta_1 + n_2\beta_2 \mid n_1, n_2 \in \Z \},\end{equation}
where the representative weight $\Lambda_i\in \pi(\Delta_i)$ is from Table \ref{tab:cosetreps}.
 
 \begin{table}[h!]
 \begin{center}
\begin{tabular}{| c | c | c | c |}
  \hline                       
    $m$ (level) & $\Lambda_m$ & weight & projection \\
\hline
  $-2$ & $\Lambda_{-2}$ &  $-(\lambda_1-\lambda_2)$ & $=\pi(-\alpha_3)$ \\
    \hline      
  $-1$ &$\Lambda_{-1}$  &  $2(\lambda_1-\lambda_2) $ & $= \pi(\alpha_1)$ \\
    \hline      
  $0$ &  $\Lambda_0$  & $0$ & $0$ \\
      \hline      
   $1$ &$\Lambda_1$ & $-2(\lambda_1-\lambda_2) $ & $ =\pi(-\alpha_1)$ \\
     \hline      
  $2$ &$\Lambda_2$  & $\lambda_1-\lambda_2 $ & $ =\pi(\alpha_3)$\\
  \hline  
\end{tabular}
\caption{Coset representative $\Lambda_m=\pi(\beta)$ for some $\beta\in \Delta_m^{re}$,  $|m|\leq 2$.}\label{tab:cosetreps}
\end{center}
\end{table}
\begin{figure}[h!]
\begin{center}
\includegraphics[width=4 in]{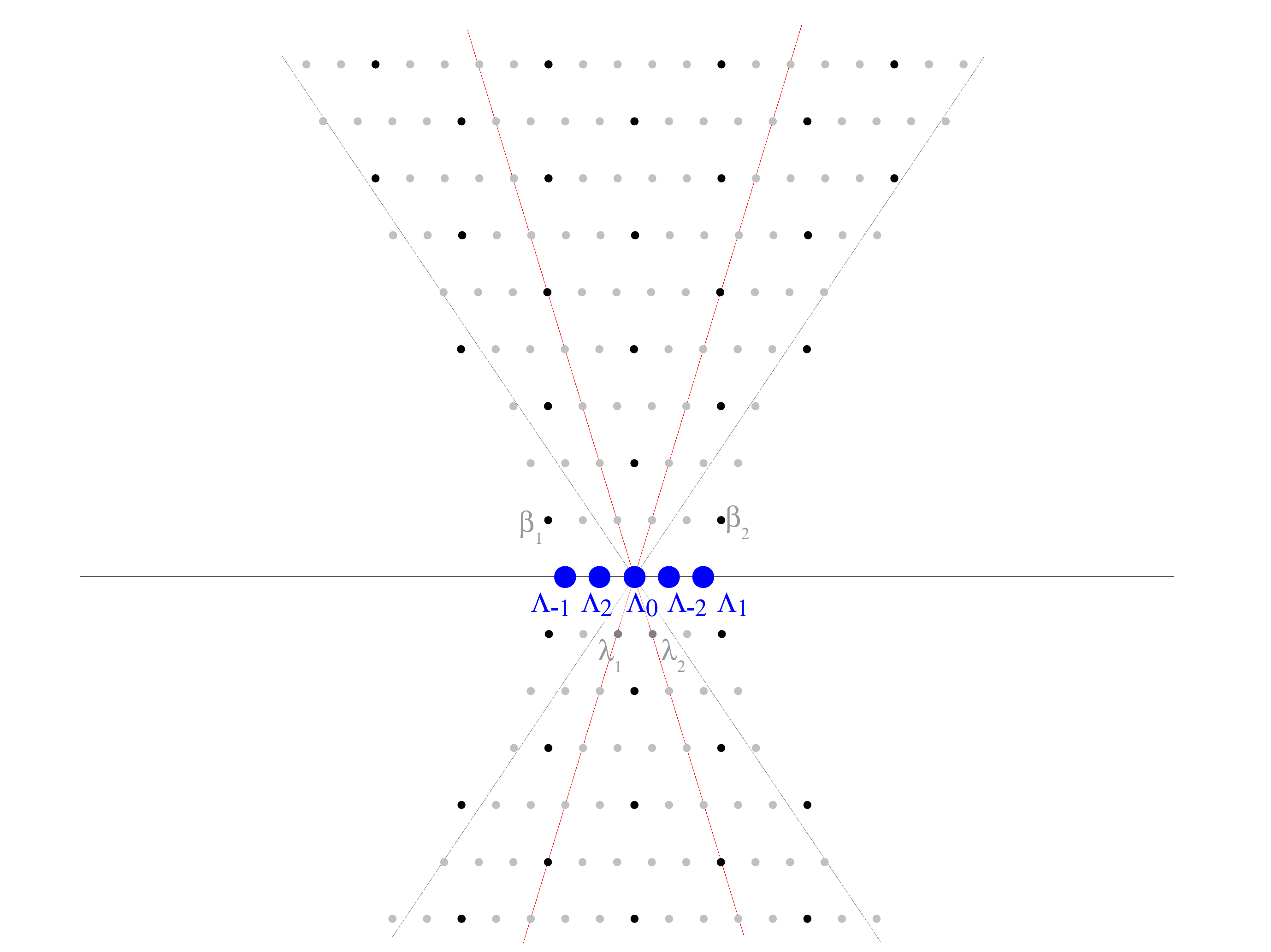}\caption{$Q_{\Fib}$ is an index 5 sublattice of $P_{\Fib}$ in the $\Fib$-plane. This figure shows only those lattice points corresponding to real or imaginary weights of $\Fib$.}
\label{fig:cosets}
\end{center}\end{figure}
Each root of $\Phi$ projects to a weight in one of the five cosets $K_i$ (each invariant under $W_\Fib$), and all roots from the same level project to weights in the same coset. In proposition \ref{mod} we use $a \ mod \ 5$ to mean the unique integer $b\in\{0, 1, 2, 3, 4\}$ such that $b\equiv a \ mod \ 5$.

\begin{proposition}\label{mod}
Let $m\in\Z$. If $\beta\in\Delta_m$, then $\pi(\beta)\in K_{(m+2)(mod \ 5)-2}$.
\end{proposition}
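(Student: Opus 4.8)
The plan is to reduce the statement to one congruence modulo $Q_\Fib$: I will show that $\pi(\beta)\equiv m\lambda_1\pmod{Q_\Fib}$ for every $\beta\in\Delta_m$, and then read off the coset index from Table \ref{tab:cosetreps}. The three ingredients are: a level-$m$ root differs from an element of $Q_\Fib$ by $m\gamma_\Fib$; the projection $\pi$ restricts to the identity on the $\Fib$-plane; and $\pi(\gamma_\Fib)=\lambda_1$.

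For the first ingredient, a root of $\Delta_m$ is $\beta=\bigl(\begin{smallmatrix} b+c+m & b\\ b & c\end{smallmatrix}\bigr)$ with $b,c\in\Z$, so $\beta-m\gamma_\Fib=\bigl(\begin{smallmatrix} b+c & b\\ b & c\end{smallmatrix}\bigr)$, which is the matrix of $-c\beta_1-(b+c)\beta_2\in Q_\Fib$; set $\mu:=\beta-m\gamma_\Fib$. For the second, $\pi(\nu)$ and $\nu$ agree on $\HH_\Fib$ for every weight $\nu$ (noted after \eqref{eq:proj}), and the pairing between the $\Fib$-plane $\R\beta_1\oplus\R\beta_2$ and $\HH_\Fib$ is nondegenerate since $\lambda_i(H_j)=\delta_{ij}$; hence if $\nu$ lies in the $\Fib$-plane then $\pi(\nu)-\nu$ lies in it too and annihilates $\HH_\Fib$, forcing $\pi(\nu)=\nu$. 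In particular $\pi(\mu)=\mu$. For the third, since $H_i$ corresponds to $\beta_i$ under the form-induced isomorphism $\HH\cong\HH^*$ and $\|\beta_i\|^2=2$, one has $\nu(H_i)=\la\beta_i,\nu\ra$ for all weights $\nu$ — and this must be invoked with care for $\nu=\gamma_\Fib$, which is \emph{not} in the $\Fib$-plane; plugging $\gamma_\Fib=\bigl(\begin{smallmatrix}1&0\\0&0\end{smallmatrix}\bigr)$ into \eqref{angleform} gives $\gamma_\Fib(H_1)=1$ and $\gamma_\Fib(H_2)=0$, so $\pi(\gamma_\Fib)=\lambda_1$. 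Putting these together and using linearity of $\pi$, $\pi(\beta)=\pi(\mu)+m\,\pi(\gamma_\Fib)=\mu+m\lambda_1\equiv m\lambda_1\pmod{Q_\Fib}$.

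Finally I identify the coset. The same three ingredients applied to the real roots in the ``projection'' column of Table \ref{tab:cosetreps} give $\Lambda_i\equiv i\lambda_1\pmod{Q_\Fib}$ for each $i\in\{-2,-1,0,1,2\}$ — e.g.\ $\Lambda_2=\lambda_1-\lambda_2=\pi(\alpha_3)$ and $2\lambda_1-(\lambda_1-\lambda_2)=\lambda_1+\lambda_2=-(\beta_1+\beta_2)\in Q_\Fib$, the other four being identical one-line checks — so $K_i=\Lambda_i+Q_\Fib=i\lambda_1+Q_\Fib$. Since $5\lambda_1=-(2\beta_1+3\beta_2)\in Q_\Fib$, the assignment $i\mapsto i\lambda_1+Q_\Fib$ has period $5$, whence $m\lambda_1+Q_\Fib=r\lambda_1+Q_\Fib$ where $r\in\{-2,-1,0,1,2\}$ is the representative of $m$ modulo $5$, i.e.\ $r=(m+2)\bmod 5-2$. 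Thus $\pi(\beta)\in K_{(m+2)\bmod 5-2}$.

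There is no real obstacle here: the claim is bookkeeping about the index-$5$ inclusion $Q_\Fib\subset P_\Fib$. The two spots that need attention are (i) confirming that $\pi$ restricts to the identity on $\R\beta_1\oplus\R\beta_2$, so that the $Q_\Fib$-component of $\beta$ is not lost under projection, and (ii) not applying the relation $\nu(H_i)=\la\beta_i,\nu\ra$ to $\gamma_\Fib$ blindly, since $\gamma_\Fib$ lies off the $\Fib$-plane. The remaining work is matrix arithmetic using \eqref{eq:simple}, \eqref{angleform}, and the given expressions for $\lambda_1,\lambda_2$.
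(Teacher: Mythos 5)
Your proof is correct and follows essentially the same route as the paper's: decompose $\beta$ as an element of $Q_\Fib$ plus $m\gamma_\Fib$, project to get $\pi(\beta)\equiv m\lambda_1\pmod{Q_\Fib}$, and use the period-$5$ behavior of $m\lambda_1+Q_\Fib$ together with Table \ref{tab:cosetreps} to name the coset. You are somewhat more careful than the paper in justifying that $\pi$ fixes $Q_\Fib$ and that $\pi(\gamma_\Fib)=\lambda_1$ (the paper conflates the matrix $\bigl(\begin{smallmatrix}m&0\\0&0\end{smallmatrix}\bigr)$ with $m\lambda_1$), and you verify $\Lambda_i\equiv i\lambda_1$ directly rather than translating by $m\beta_2$, but these are cosmetic differences.
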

\begin{proof}
We have for any $m\in\Z$,
$$\beta=\left[\begin{array}{cc}
	b+c+m & b  \\
	b & c \\
	\end{array}\right]=\left[\begin{array}{cc}
	b+c & b  \\
	b & c \\
	\end{array}\right]+\left[\begin{array}{cc}
	m & 0  \\
	0 & 0\\
	\end{array}\right] = \beta' +m\lambda_1, \quad \text{where} \ \beta' = \left[\begin{array}{cc}
	b+c & b  \\
	b & c \\
	\end{array}\right]\in\Delta,$$
giving the projection $\pi(\beta) = \beta'+m\lambda_1 \in Q_{\Fib}+m\lambda_1$. Thus $\pi(\beta)$ and $m\lambda_1$ lie in the same coset $K_{i(m)}$, where $i(m)\in\{-2,-1,0,1,2\}$.  Note also that $K_{i(m)}$ is invariant under addition by $\Z$-linear combinations of the simple roots, so that 
$$m\lambda_1+m\beta_2=-2m(\lambda_1-\lambda_2)$$ 
also lies in $K_{i(m)}$. Using Table \ref{tab:cosetreps}, we have $i(m)=(m+2)(mod \ 5)-2$. 
\end{proof}
It can be seen that the projection map partitions the levels with respect to $\Fib$. Let $\mathfrak{L}=\{ \Delta_m \ | \ m\in\Z \}$. For $|m|\leq2$, the pullback $\pi^{-1}(K_{m})=\{\Delta_j \ | \ j \equiv m(mod \ 5)\}$ is an equivalence class of levels, giving the partition
$\mathfrak{L}=\ds\bigsqcup_{|m|\leq 2} \pi^{-1}(K_{m}).$

\section{Is $\Fib(m)$ completely reducible for all $m\in\Z$?}\label{sec:comred}
We now return to the investigation of the decomposition of $\FF$ into a direct sum of irreducible $\Fib$-modules. Recall that in \cite{FF}, it was shown that $\Aff(0)$ was the single irreducible adjoint representation, and each $\Aff(m)$ for $m\neq 0$ was a direct sum of modules from either category $\cO$ (if $m>0$) or $\cO^{op}$ (if $m<0)$, and so was completely reducible by Theorem \ref{reducible}.  It is therefore not unreasonable to conjecture the following for the modules $\Fib(m)$:

\begin{conjecture}\label{conjecture}
For each $m\in\Z, \ \Fib(m)$ completely reduces into a direct sum of one trivial module (on level 0), one non-standard module (on levels $|m|\leq 2$), and standard modules (highest and lowest) on all levels. 
\end{conjecture}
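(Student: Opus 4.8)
\section*{Proof proposal for Conjecture \ref{conjecture}}

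The plan is to combine the integrability of each $\Fib(m)$ (Proposition~\ref{fibmintegrable}) with the Kac reducibility theorem (Theorem~\ref{reducible}) and, where these do not suffice, with a contravariant form inherited from the invariant form on $\FF$. By Remark~\ref{contra} it is enough to treat $m\geq 0$. If $\Fib(m)$ were in category $\cO$, Theorem~\ref{reducible} would finish everything; the obstruction is that for $|m|\leq 2$ the weight set $\pi(\Delta_m)$ is not bounded above by finitely many weights, which is precisely why the non-standard modules appear. To handle this I would use that the non-degenerate invariant symmetric bilinear form $(\cdot,\cdot)$ on $\FF$ (\cite{K2}) pairs $\FF_\beta$ with $\FF_{-\beta}$, so $(\Fib(j),\Fib(k))=0$ unless $j+k=0$; since $\FF=\bigoplus_k\Fib(k)$ and the form is non-degenerate, the restriction $(\cdot,\cdot)\colon\Fib(m)\times\Fib(-m)\to\C$ is a perfect pairing. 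As $\nu(\Fib(m))=\Fib(-m)$, setting $\la u_1,u_2\ra:=(u_1,\nu(u_2))$ for $u_1,u_2\in\Fib(m)$ gives a non-degenerate bilinear form on $\Fib(m)$ with $\la E_i\cdot u_1,u_2\ra=\la u_1,F_i\cdot u_2\ra$, that is, a non-degenerate $\Fib$-contravariant form; this is the tool for the part of the argument Theorem~\ref{reducible} does not reach.

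For $|m|\geq 3$ the argument is geometric. Writing any $\beta\in\HH^*_\R$ orthogonally as its $\Fib$-plane component (which is exactly $\pi(\beta)$) plus a multiple of the normal vector $\delta_\Fib$ (with $||\delta_\Fib||^2=\tfrac52$ and $\la\delta_\Fib,\beta\ra=m$ for $\beta\in\Delta_m$) yields $||\pi(\beta)||^2=||\beta||^2-\tfrac25 m^2$. Since $||\beta||^2\leq 2$ for every root of $\FF$, $|m|\geq 3$ forces $||\pi(\beta)||^2<0$, so every weight of $\Fib(m)$ is timelike; as $0\in\pi(\Delta_m)$ would force $m=0$, we get $\pi(\Delta_m)\subset(LC_{\Fib,+}\setminus\{0\})\sqcup(LC_{\Fib,-}\setminus\{0\})$. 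By Kac's result on the Tits cone ($P^-_\Fib$ is a fundamental domain for $W_\Fib$ on $P_\Fib\cap LC_{\Fib,+}$, and symmetrically on the $-$ side), the weights in $LC_{\Fib,-}$ are $W_\Fib$-translates of weights of $P^+_\Fib$ and the weights in $LC_{\Fib,+}$ are $W_\Fib$-translates of weights of $P^-_\Fib$; moreover a spacelike step $\pm\beta_i$ cannot carry a timelike weight from one nappe of the cone to the other (the difference of two timelike vectors in opposite nappes is timelike, while $||\beta_i||^2=2>0$). Hence $\Fib(m)^{+}:=\bigoplus\{\FF_\beta:\beta\in\Delta_m,\ \pi(\beta)\in LC_{\Fib,-}\}$ and $\Fib(m)^{-}:=\bigoplus\{\FF_\beta:\beta\in\Delta_m,\ \pi(\beta)\in LC_{\Fib,+}\}$ are $\Fib$-submodules with $\Fib(m)=\Fib(m)^{+}\oplus\Fib(m)^{-}$. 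Both are integrable, and (using that $\Delta_m$ has only finitely many roots maximal in the partial order) $\Fib(m)^{+}$ lies in $\cO$ and $\Fib(m)^{-}$ in $\cO^{op}$, so Theorem~\ref{reducible} and its $\cO^{op}$ analogue express each as a direct sum of standard modules; there is no trivial summand since $0\notin\pi(\Delta_m)$. The same identity $||\pi(\beta)||^2=||\beta||^2-\tfrac25 m^2$ shows that for $|m|\leq 2$ the real roots of $\FF$ project to spacelike weights, which is the genuine source of the ``central'' region and of the non-standard module.

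The remaining and genuinely difficult case is $|m|\leq 2$. Here, as the weight diagrams of Chapter~\ref{ch:modules} indicate, $\Fib(m)$ has a ``central'' region of spacelike weights around the coset representative $\Lambda_m$ of Table~\ref{tab:cosetreps}, and a suitable vector $v_{\Lambda_m}$ in that weight space generates a non-standard submodule $V(m):=\cU(\Fib)\cdot v_{\Lambda_m}$ in the sense of Definition~\ref{nstd}; the rest of $\Fib(m)$ consists of root spaces $\FF_\beta$ with $\pi(\beta)$ timelike (plus, when $m=0$, the one-dimensional trivial $\Fib$-module made of the elements of $\HH$ orthogonal to $\HH_\Fib$), and this part should split off exactly as in the case $|m|\geq 3$ into an $\cO$-summand and an $\cO^{op}$-summand. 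Thus the conjecture comes down to two points: (i) $V(m)$ is irreducible, i.e.\ $U(m)=0$ in the notation of the abstract; and (ii) the short exact sequence $0\to V(m)\to\Fib(m)\to\Fib(m)/V(m)\to 0$ splits. For (ii) one notes that the orthogonal complement $V(m)^{\perp}$ with respect to the contravariant form above is a $\Fib$-submodule, and the task is to show $V(m)\cap V(m)^{\perp}=0$, i.e.\ that the contravariant form remains non-degenerate on the non-standard part. For (i) the machinery of Chapter~\ref{ch:nonstd} must be brought in: compute the inner multiplicities of $V(m)$ by building bases of multibrackets for its weight spaces, and verify that no extremal vector of $\Fib(m)$ beyond the expected generators lies inside $V(m)$, so that $V(m)$ has no proper non-zero submodule and, being integrable, is irreducible. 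I expect (i)--(ii), and in particular ruling out a non-split self-extension of $V^{\Lambda_m}$ compatible with a non-degenerate contravariant form, to be the main obstacle: it is exactly the gap between the statement one can prove, ``$\Fib(m)$ contains the irreducible non-standard quotient $V^{\Lambda_m}=V(m)/U(m)$,'' and the full strength of the conjecture.
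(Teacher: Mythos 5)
First, be aware that the statement you are proving is labeled a \emph{Conjecture}, and the paper does not prove it in full either: it proves the case $|m|>2$ (Proposition \ref{fibmred}), settles $m=0$ completely (Theorem \ref{level0}), and for $0<|m|\leq 2$ reduces the claim to the assertions that $U(m)=\{0\}$ and that $V(m)$ is a direct summand of $\Fib(m)$, which it then records as a further conjecture. Your proposal arrives at essentially the same partition of the problem and is candid about which steps remain open, so there is no hidden error; the differences are worth recording. For $|m|>2$ your route is the paper's (Lemma \ref{>2} and Proposition \ref{fibmred}), though your derivation of $||\pi(\beta)||^2=||\beta||^2-\tfrac{2}{5}m^2$ from the orthogonal decomposition $\beta=\pi(\beta)+\tfrac{2m}{5}\delta_\Fib$ is cleaner than the paper's coordinate computation, and your observation that a spacelike step $\pm\beta_i$ cannot move a timelike weight between nappes is the same mechanism the paper packages as $W_\Fib(LC_{\Fib,+})\cap W_\Fib(LC_{\Fib,-})=\{0\}$. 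For $0<|m|\leq 2$ your points (i) and (ii) are exactly the content of Theorem \ref{onlyone}, Proposition \ref{Y(m)}, and the paper's residual conjecture. Your contravariant form $\la u_1,u_2\ra=(u_1,\nu(u_2))$ is a genuine addition: the paper never introduces it, it is indeed non-degenerate and contravariant on each $\Fib(m)$ (the invariant form pairs $\Fib(m)$ perfectly with $\Fib(-m)$, and $\nu$ interchanges these two levels), and it converts the splitting problem (ii) into the cleaner question of non-degeneracy of the form on $V(m)$. But, as you yourself note, it does not discharge (i) or (ii), so under your approach the statement remains a conjecture for $0<|m|\leq 2$, exactly as in the paper.

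The one concrete improvement you should make is to separate out $m=0$, which is not open. There $V(0)=\cU(\Fib)\cdot E_2=\Fib$ is the adjoint representation of the simple Lie algebra $\Fib$, so $U(0)=\{0\}$ comes for free, and the paper completes the decomposition $\Fib(0)=V^0\oplus\Fib\oplus Y(0)_-\oplus Y(0)_+$ in Theorem \ref{level0} via Proposition \ref{direct}. Your own form actually handles this case more cleanly than the paper does: the restriction of the invariant form to the simple subalgebra $\Fib$ is a nonzero invariant form, hence non-degenerate, so $\Fib\cap\Fib^{\perp}=\{0\}$ and $\Fib$ splits off as a direct summand. You should therefore claim $|m|\neq 1,2$ as proved and isolate $0<|m|\leq 2$ --- specifically the irreducibility of $V(m)$ and the non-degeneracy of your contravariant form on it --- as the genuinely open content.
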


\begin{proposition}\label{wheretrivial} The only trivial $\Fib$-modules in $\FF$ are in $\HH_\FF$.
\end{proposition}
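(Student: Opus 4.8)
The plan is to reduce the statement to the purely combinatorial fact that no nonzero root of $\FF$ vanishes on the Cartan subalgebra $\HH_\Fib$ of $\Fib$. First I would recall that a trivial $\Fib$-module is, by definition, spanned by vectors on which every element of $\Fib$ acts as $0$; in particular $H_1$ and $H_2$ annihilate such a vector, so it has $\HH_\Fib$-weight $0$. By Proposition \ref{fibmintegrable}, a vector $v\in\FF_\beta$ has $\HH_\Fib$-weight $\pi(\beta)$, so the weight-$0$ space of $\FF$ viewed as a $\Fib$-module is precisely $\bigoplus_{\beta\in\Phi\cup\{0\},\ \pi(\beta)=0}\FF_\beta$. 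Hence it is enough to prove that the only $\beta\in\Phi\cup\{0\}$ with $\pi(\beta)=0$ is $\beta=0$: granting this, every trivial $\Fib$-submodule of $\FF$ lies in $\FF_0=\HH_\FF$, which is the assertion.

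For the key step I would argue directly with the matrix model of Section \ref{sec:F}. Writing $\beta=\left[\begin{smallmatrix}a&b\\b&c\end{smallmatrix}\right]$ with $a,b,c\in\Z$, equation \eqref{eq:proj} shows $\pi(\beta)=0$ iff $\beta(H_1)=\beta(H_2)=0$. Using $H_1=2h_1+h_2+h_3$, $H_2=h_2$, and $\beta(h_j)=\langle\alpha_j,\beta\rangle$ evaluated from \eqref{angleform}, these two equations become $a+2b=0$ and $c-2b=0$, i.e.\ $a=-2b$, $c=2b$. Substituting into \eqref{eq:sqlength} gives $\|\beta\|^2=2(b^2-ac)=10b^2$. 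But a root of $\FF$ satisfies $\|\beta\|^2=2$ (real roots) or $\|\beta\|^2\le 0$ (imaginary roots); since $10b^2=2$ has no solution with $b\in\Z$, and $10b^2\le 0$ forces $b=0$ and hence $\beta=0$, no root $\beta$ can satisfy $\pi(\beta)=0$. This proves the claim, and the proposition follows.

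I do not expect a genuine obstacle here; the two points requiring a little care are the reduction in the first paragraph — namely that ``trivial $\Fib$-module'' forces $\HH_\Fib$-weight $0$ and that $\FF_\beta$ has $\HH_\Fib$-weight $\pi(\beta)$, both immediate from Proposition \ref{fibmintegrable} — and the integrality of $b$, which is exactly what eliminates the real-root possibility. If one prefers a coordinate-free version of the last step, note that the $\Fib$-plane $\R\beta_1\oplus\R\beta_2$ carries a nondegenerate form of signature $(1,1)$ (Gram matrix $B(3)$), so its orthogonal complement in $\HH_\R^*$ is a positive-definite line; $\pi(\beta)=0$ says $\beta$ lies on that line, which is spanned by the primitive lattice vector $\alpha_1-2\alpha_3$ of squared length $10$, so it meets $Q\setminus\{0\}$ only in vectors of squared length $\ge 10>2$, none of which are roots. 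Either route completes the argument, and I would take the direct computation as the main line.
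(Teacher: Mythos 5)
Your proof is correct and follows essentially the same route as the paper's: both reduce triviality to the linear system $\beta(H_1)=\beta(H_2)=0$, identify the solution line $\R(\alpha_1-2\alpha_3)$, and observe that it meets $\Phi$ only at $0$. The only difference is cosmetic (matrix coordinates $a,b,c$ versus simple-root coordinates $n_i$), plus the fact that you justify the final step — that no nonzero lattice point on this line is a root — via the squared-length computation $\|\beta\|^2=10b^2$, a detail the paper asserts without proof.
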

\begin{proof}
Let $V$ be a trivial $\Fib$-module with set of weights $P(V)$ and weight-space decomposition 
$$V=\bigoplus_{\lambda\in P(V)} V_\lambda.$$
Let $\alpha=\ds\sum_{i=1}^3 n_i\alpha_i \in \Phi\cup\{0\}$ such that $\pi(\alpha)=\lambda\in P(V)$. We will show that $\alpha$ is necessarily $0$, and therefore $P(V)=\{0\}$ and $V\subset \HH_\FF$. 

For all $X_\alpha\in\FF_\alpha$, we have $[H_i, X_\alpha]=\alpha(H_i)X_\alpha = 0$ for $i=1,2$. This gives us

\begin{center}
 \systeme{
-2n_1+2n_2-n_3=0, 
 2n_1-3n_2+n_3=0},
 \end{center}
which has solutions $\alpha=c(\alpha_1-2\alpha_3)$ where $c\in\Z$. However, if $c\neq 0$ then $\alpha\notin\Phi$, therefore $\alpha$ must be $0$. \end{proof}

The following two lemmas will help to prove that for $|m|>2$, $\Fib(m)$ is a direct sum of irreducible standard modules from both category $\cO$ and $\cO^{op}$. The cases of $|m|\leq2$ are not as clear, and will be explored afterwards.

\begin{lemma}
If $\beta\in\Delta_m$ for $m\in\Z$, then  $||\pi(\beta)||^2=||\beta||^2-\ds\frac{2m^2}{5}$.
\end{lemma}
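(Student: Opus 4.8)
The plan is to recognize the map $\pi$ of \eqref{eq:proj} as the orthogonal projection of $\HH^*_\R$ onto the $\Fib$-plane $\R\beta_1\oplus\R\beta_2$ with respect to the bilinear form $(\cdot,\cdot)$, and then invoke the Pythagorean theorem. First I would verify the orthogonality: for any $\mu\in\HH^*_\R$, the vector $\pi(\mu)=\mu(H_1)\lambda_1+\mu(H_2)\lambda_2$ lies in $\R\beta_1\oplus\R\beta_2$ since $\lambda_1,\lambda_2$ do, and using $\langle\beta_i,\lambda_j\rangle=\lambda_j(H_i)=\delta_{ij}$ from \eqref{eq:fund} one computes $\langle\beta_i,\pi(\mu)\rangle=\mu(H_i)=\langle\beta_i,\mu\rangle$ for $i=1,2$; hence $\mu-\pi(\mu)$ is orthogonal to each $\beta_i$, and therefore to all of $\R\beta_1\oplus\R\beta_2$. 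Since the form is non-degenerate on the $\Fib$-plane, this identifies $\pi$ with the orthogonal projection and gives the orthogonal decomposition $\mu=\pi(\mu)+(\mu-\pi(\mu))$.

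Next I would pin down the one-dimensional orthogonal complement of the $\Fib$-plane. A direct computation with \eqref{angleform} gives $\langle\delta_\Fib,\beta_1\rangle=\langle\delta_\Fib,\beta_2\rangle=0$, where $\delta_\Fib=\alpha_3-\tfrac12\alpha_1$, together with $\|\delta_\Fib\|^2=\tfrac52$. So $\R\delta_\Fib=(\R\beta_1\oplus\R\beta_2)^\perp$, and for $\beta\in\Delta_m$ we may write $\beta=\pi(\beta)+s\,\delta_\Fib$ for a unique $s\in\R$. Pairing this relation with $\delta_\Fib$ and using $\langle\delta_\Fib,\pi(\beta)\rangle=0$ yields $s\cdot\tfrac52=\langle\delta_\Fib,\beta\rangle$, and the computation carried out just before \eqref{eq:fibm} shows $\langle\delta_\Fib,\beta\rangle=m$; hence $s=\tfrac{2m}{5}$.

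Finally, because $\pi(\beta)\perp\delta_\Fib$, the Pythagorean theorem gives
\[
\|\beta\|^2=\|\pi(\beta)\|^2+s^2\|\delta_\Fib\|^2=\|\pi(\beta)\|^2+\frac{4m^2}{25}\cdot\frac52=\|\pi(\beta)\|^2+\frac{2m^2}{5},
\]
which rearranges to the desired identity. I do not anticipate a genuine obstacle here: the one point that needs care is the verification that $\pi$ really is the orthogonal projection onto the $\Fib$-plane, since that is exactly what legitimizes the Pythagorean step; once that is in hand everything is a short computation. If one prefers to avoid this structural remark, an equally valid route is the brute-force one: compute $\pi(\beta)$ explicitly from the matrix form of $\beta\in\Delta_m$, obtaining the symmetric matrix with diagonal entries $b+c+\tfrac{3m}{5}$, $c+\tfrac{2m}{5}$ and off-diagonal entry $b+\tfrac{m}{5}$, and substitute into $\|\cdot\|^2=2(b^2-ac)$; the linear-in-$m$ cross terms cancel and the correction $-\tfrac{2m^2}{5}$ falls out directly.
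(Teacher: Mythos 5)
Your proof is correct, but it takes a genuinely different route from the paper's. The paper's proof is a direct computation: it writes $\pi(\beta)=(3b+c+m)\lambda_1+(c-2b)\lambda_2$ and expands $||\pi(\beta)||^2$ using the inner products of the fundamental weights read off from $B(3)^{-1}$, so the factor $-\tfrac{2m^2}{5}$ emerges from the arithmetic without explanation. You instead identify $\pi$ as the orthogonal projection onto the $\Fib$-plane (which is legitimate: your check that $\langle\beta_i,\pi(\mu)\rangle=\langle\beta_i,\mu\rangle$ is exactly what is needed, and non-degeneracy of the form on the plane makes the complement a genuine direct summand), pin down the complement as $\R\delta_\Fib$ with $||\delta_\Fib||^2=\tfrac52$, and apply the Pythagorean identity, which is valid for any symmetric bilinear form regardless of signature. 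Your computations all check out: $(\delta_\Fib,\beta_1)=(\delta_\Fib,\beta_2)=0$, $(\delta_\Fib,\beta)=m$, $s=\tfrac{2m}{5}$, and $s^2\cdot\tfrac52=\tfrac{2m^2}{5}$; likewise the explicit matrix for $\pi(\beta)$ in your fallback route agrees with the paper's formula. What your argument buys is an explanation of the constant: the correction is $\frac{(\delta_\Fib,\beta)^2}{||\delta_\Fib||^2}=\frac{m^2}{5/2}$, the squared length of the component of $\beta$ transverse to the $\Fib$-plane, and this reasoning transfers verbatim to any grading of this kind (e.g., the $\Aff$-level decomposition). What the paper's version buys is self-containment — no appeal to the structure of $\pi$ beyond its defining formula — at the cost of an unilluminating cancellation.
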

\begin{proof}
Write $\beta=\left[\begin{array}{cc}
	b+c+m & b  \\
	b & c \\
	\end{array}\right]=\left[\begin{array}{cc}
	b+c & b  \\
	b & c \\
	\end{array}\right]+\left[\begin{array}{cc}
	m & 0  \\
	0 & 0\\
	\end{array}\right],$ so that $\beta=(-c)\beta_1+(-b-c)\beta_2+m(-\alpha_1-\alpha_2)$. Then the projection 
$$\pi(\beta)=\beta(H_1)\lambda_1+\beta(H_2)\lambda_2=(3b+c+m)\lambda_1+(c-2b)\lambda_2,$$
and
\begin{align*}
||\pi(\beta)||^2 &=\Big\langle (3b+c+m)\lambda_1+(c-2b)\lambda_2, (3b+c+m)\lambda_1+(c-2b)\lambda_2\Big\rangle\\
	&= -\frac{2}{5} \Big[(3b+c+m)^2 + (c-2b)^2\Big] -2\Big(\frac{3}{5}\Big)(3b+c+m)(c-2b)  \\	
	&= 2b^2-2c^2-2(b+m)c-\frac{2m^2}{5} =||\beta||^2-\frac{2m^2}{5}.
\end{align*}\end{proof}
This immediately gives the following.
\begin{lemma}\label{>2} We have:
\begin{enumerate}[a)]
\item If $|m|>2$ and $\beta\in\Delta_m$, then $||\pi(\beta)||^2<0$. In other words, $\pi(\Delta_m)\subset LC_\Fib$. 
\item If $|m|\leq 2$ and $\beta\in\Delta_m$, then $||\pi(\beta)||^2>0$ if and only if $\beta$ is real $(||\beta||^2=2)$.
\end{enumerate}
\end{lemma}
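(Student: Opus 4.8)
The plan is to obtain both parts as an immediate consequence of the preceding lemma, which states $||\pi(\beta)||^2 = ||\beta||^2 - \frac{2m^2}{5}$ for $\beta\in\Delta_m$, combined with the quantization of root squared-lengths. Recall that every $\beta\in\Phi$ lies in $Q$, and writing its matrix entries as $a,b,c\in\Z$ gives $||\beta||^2 = 2(b^2-ac)\in 2\Z$; since $\Phi^{re}=\{\beta\in Q\mid ||\beta||^2=2\}$ and $\Phi^{im}=\{\beta\in Q\mid ||\beta||^2\leq 0\}$, we have $||\beta||^2\in\{2,0,-2,-4,\dots\}$, with $||\beta||^2=2$ precisely when $\beta$ is real. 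Thus the entire argument reduces to comparing the integer $||\beta||^2$ (which is at most $2$, and equals $2$ iff $\beta$ is real) with the rational number $\frac{2m^2}{5}$.

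For part (a), if $|m|>2$ then $m^2\geq 9$, so $\frac{2m^2}{5}\geq\frac{18}{5}>2\geq ||\beta||^2$ for every root $\beta$; hence $||\pi(\beta)||^2=||\beta||^2-\frac{2m^2}{5}<0$. Since $\pi(\beta)$ lies in the $\Fib$-plane $\R\beta_1\oplus\R\beta_2$ by construction of $\pi$, having negative squared-length places it in $LC_\Fib$, giving $\pi(\Delta_m)\subset LC_\Fib$. For part (b), when $|m|\leq 2$ the subtracted term $\frac{2m^2}{5}$ takes one of the values $0,\frac{2}{5},\frac{8}{5}$, each lying in $[0,2)$. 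If $\beta$ is real then $||\pi(\beta)||^2 = 2-\frac{2m^2}{5}\geq 2-\frac{8}{5}=\frac{2}{5}>0$; if $\beta$ is imaginary then $||\beta||^2\leq 0$, so $||\pi(\beta)||^2\leq -\frac{2m^2}{5}\leq 0$, which is not positive. Combining these two implications yields $||\pi(\beta)||^2>0 \iff ||\beta||^2=2 \iff \beta$ is real, as desired.

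There is essentially no genuine obstacle in this proof; the only substantive observation is that root lengths are even integers bounded above by $2$, while the correction term scales like $m^2$, so the boundary between the two regimes falls exactly between $|m|=2$ (where $\frac{2m^2}{5}=\frac{8}{5}<2$) and $|m|=3$ (where $\frac{2m^2}{5}=\frac{18}{5}>2$) — this is precisely why the dividing line for complete reducibility of $\Fib(m)$ sits at $|m|=2$. In writing it up I would just check the degenerate case $m=0$ of (b) explicitly (there $||\pi(\beta)||^2=||\beta||^2$ and the equivalence is trivial) and note that nothing about $W_\Fib$-invariance or the coset structure is needed.
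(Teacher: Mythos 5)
Your proof is correct and is exactly the argument the paper intends: the paper derives this lemma as an immediate consequence of the preceding norm formula $||\pi(\beta)||^2=||\beta||^2-\tfrac{2m^2}{5}$, combined with the facts that $||\beta||^2=2$ for real roots and $||\beta||^2\leq 0$ for imaginary ones. You have simply written out the comparison of $\tfrac{2m^2}{5}$ with these values that the paper leaves implicit.
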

\begin{proposition}\label{fibmred}
If $|m|>2$, then $\Fib(m)$ is a sum of integrable highest-weight $\Fib$-modules in category $\cO$ (which are therefore completely reducible) and integrable lowest-weight $\Fib$-modules in category $\cO^{op}$ (which are also completely reducible), hence $\Fib(m)$ is completely reducible.
\end{proposition}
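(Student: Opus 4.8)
The plan is to show that $\Fib(m)$, already known to be an integrable $\HH_\Fib$-diagonalizable $\Fib$-module by Proposition \ref{fibmintegrable}, splits into a piece lying in category $\cO$ and a piece lying in category $\cO^{op}$, and then invoke Theorem \ref{reducible} (and its $\cO^{op}$ analogue) on each piece. The key structural input is Lemma \ref{>2}a): for $|m|>2$ every weight of $\Fib(m)$ has strictly negative squared length, so $\pi(\Delta_m)\subset LC_\Fib$, and moreover each branch of the light cone is $W_\Fib$-invariant with $W_\Fib(LC_{\Fib,+})\cap W_\Fib(LC_{\Fib,-})=\{0\}$ by \eqref{eq:Winv}. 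Since $0\notin\pi(\Delta_m)$ when $m\neq 0$ (a weight of squared length $0$ would have to lie on the null cone, and $|m|>2$ forces $||\pi(\beta)||^2<0$ strictly), every weight lies in exactly one of the two disjoint $W_\Fib$-invariant sets $LC_{\Fib,+}$ or $LC_{\Fib,-}$.

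First I would use the decomposition $\Fib(m)=\Fib(m)_-\oplus\Fib(m)_+$ into the subspaces spanned by weight spaces whose weights lie in $LC_{\Fib,-}$, respectively $LC_{\Fib,+}$ (this refines Definition \ref{Vpm}; here the splitting is by which cone-branch the weight sits in rather than by the sign of $wt$, but the two agree up to the symmetry $\psi$). Because $\Fib(m)$ is $\Fib$-stable and each $E_i,F_i$ shifts a weight $\mu$ by $\pm\beta_i$, and because a weight string $S_{\alpha}(\mu)$ through a point of $LC_{\Fib,+}$ stays in $LC_{\Fib,+}$ (the light cone branches are convex and $W_\Fib$-stable, and real-root strings connect $W_\Fib$-translates), each summand $\Fib(m)_\pm$ is itself a $\Fib$-submodule. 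Then I would check that $\Fib(m)_+$ lies in category $\cO$: it is $\HH_\Fib$-diagonalizable with finite-dimensional weight spaces (the weight multiplicities are root multiplicities of $\FF$, which are finite), and $P(\Fib(m)_+)$ is bounded above because $LC_{\Fib,+}$ meets only finitely many $W_\Fib$-orbits below any given dominant weight — concretely, $\pi(P^-)\subset P^-_\Fib$ by part 5) of Proposition \ref{fibmintegrable} gives a fundamental domain, and for each dominant $\lambda$ the set of $\mu\in P(\Fib(m)_+)$ with $\mu\leq\lambda$ is finite, so finitely many highest weights dominate everything. Symmetrically $\Fib(m)_-$ lies in $\cO^{op}$.

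Having placed $\Fib(m)_+$ in $\cO$ and verified integrability (inherited from $\Fib(m)$), Theorem \ref{reducible} gives $\Fib(m)_+=\bigoplus M(\lambda)V^\lambda$ with all $V^\lambda$ integrable highest-weight modules, hence completely reducible; the analogous statement in $\cO^{op}$ handles $\Fib(m)_-$. Taking the direct sum yields that $\Fib(m)$ is completely reducible into integrable highest- and lowest-weight $\Fib$-modules.

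I expect the main obstacle to be the verification that $P(\Fib(m)_+)$ satisfies the finiteness condition of category $\cO$ (Definition \ref{cato}) — i.e., that only finitely many dominant weights occur as potential highest weights and that their down-sets cover $P(\Fib(m)_+)$. This requires combining the light-cone geometry (Lemma \ref{>2}, \eqref{eq:Winv}) with the fact that $P^-_\Fib$ is a fundamental domain for $W_\Fib$ acting on the weight-lattice points inside $LC_{\Fib,+}$, plus the observation that within a fixed $\Fib$-level the weights lie on a single hyperbola-slice, so that the dominant representatives form a discrete set accumulating only "at infinity" inside the cone, none of which can dominate a given weight. The rest — stability of $\Fib(m)_\pm$ under $\Fib$, finite-dimensionality of weight spaces, integrability — is routine given the results already established.
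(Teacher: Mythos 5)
Your proposal is correct and follows essentially the same route as the paper's proof: split $\Fib(m)$ into $\Fib(m)_\pm$ using the fact (Lemma \ref{>2}) that for $|m|>2$ all weights lie strictly inside $LC_\Fib$, so the two pieces sit in the disjoint $W_\Fib$-invariant branches $LC_{\Fib,\mp}$ and hence in categories $\cO$ and $\cO^{op}$, then apply Theorem \ref{reducible}. If anything you are more careful than the paper, which asserts membership in category $\cO$ without spelling out the finiteness condition of Definition \ref{cato} that you rightly identify as the point needing verification.
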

\begin{proof}
We have the set of weights $P(\Fib(m))=\pi(\Delta_m)=P(\Fib(m))_-\cup P(\Fib(m))_+$, and we consider $\Fib(m)_\pm$ (as in  Definition \ref{Vpm}). By Lemma \ref{>2}, if $|m|>2$ then $P(\Fib(m))$ contains only imaginary weights inside the light-cone on level 0, so $P(\Fib(m))_\pm\subset LC_\mp$. Furthermore, since $W_\Fib(LC_+)\cap W_\Fib(LC_-) = \{0\}$, we have that $\Fib(m)_+$ and $\Fib(m)_-$ are integrable $\Fib$-modules from category $\cO$ and $\cO^{op}$, respectively. By Theorem \ref{reducible} and Remark \ref{contra}, $\Fib(m)_+$ (resp. $\Fib(m)_-$) completely reduces as a direct sum of irreducible highest-weight (resp. lowest-weight) $\Fib$-modules. Thus, $\Fib(m)=\Fib(m)_-\oplus \Fib(m)_+$ is completely reducible.
\end{proof}

\begin{lemma}\label{3.12}If $V\subseteq \Fib(m)$ is a standard highest-weight $\Fib$-module, then $P(V)\subset LC_{\Fib,-}$. If $V\subseteq \Fib(m)$ is a standard lowest-weight $\Fib$-module, then $P(V)\subset LC_{\Fib,+}$. If $V\subseteq \Fib(m)$ is a non-standard module, then a $\lambda\in P(V)$ satisfying Definition \ref{nstd} (ii) has positive norm, and $W_\Fib\lambda\subset P(V)\subseteq \{\mu\in K_m\mid ||\mu||\leq ||\lambda|| \}$. 
\end{lemma}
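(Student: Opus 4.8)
The three assertions are structurally similar: each says that the weight set of a submodule $V \subseteq \Fib(m)$ sits where the relevant combinatorics forces it. For the first two, I would argue directly from the structure of highest- and lowest-weight modules together with Lemma \ref{>2}. Let $V \subseteq \Fib(m)$ be a standard highest-weight $\Fib$-module with highest weight $\Lambda$ and HWV $v_\Lambda$; since $V$ is integrable (it is a submodule of the integrable module $\Fib(m)$, which is integrable by Proposition \ref{fibmintegrable}), Lemma \ref{dominant} gives $\Lambda \in P^+_\Fib$. Then $P(V) \subseteq D(\Lambda) = \{\mu \mid \mu \leq \Lambda\}$ and, being the weight set of an irreducible highest-weight module, $P(V)$ is $W_\Fib$-invariant. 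Now $P^+_\Fib \subset \overline{LC_{\Fib,-}}$ (by the analogue, for $\Fib$, of the computation $\det(\omega) \geq 0$ for $\omega \in P^+$ done for $\FF$ after the display for $P^+$; concretely, using $B(3)^{-1} = -\tfrac15\!\left(\begin{smallmatrix}2&3\\3&2\end{smallmatrix}\right)$, dominant integral $\Fib$-weights lie in the backward light-cone). Combined with the fact that $LC_{\Fib,-}$ is $W_\Fib$-invariant and $W_\Fib(LC_{\Fib,+}) \cap W_\Fib(LC_{\Fib,-}) = \{0\}$ (equation \eqref{eq:Winv}), every weight of $V$ lies in $\overline{LC_{\Fib,-}}$. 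The lowest-weight case is the contragredient statement (Remark \ref{contra}), obtained by applying $\nu$.

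\textbf{The non-standard case.} Suppose $V \subseteq \Fib(m)$ is non-standard and pick $\lambda \in P(V)$ satisfying Definition \ref{nstd}(ii): $\lambda \notin P^\pm_\Fib$, $\|\lambda\|$ maximal among weights of $V$, $|wt(\lambda)|$ minimal among those, and $V = \cU(\Fib)\cdot v_\lambda$. The first task is to show $\|\lambda\|^2 > 0$. By Lemma \ref{>2}(a), if $|m| > 2$ then every weight of $\Fib(m)$ has negative norm, so $\Fib(m)$ has no non-standard summand at all (this is Proposition \ref{fibmred}); hence we may assume $|m| \leq 2$. By Lemma \ref{>2}(b), for $|m| \leq 2$ the weights of $\Fib(m)$ with positive norm are exactly the projections of real roots on level $m$, and these have norm exactly $\|\lambda_1 - \lambda_2\|^2$-type values — in any case, $\|\pi(\beta)\|^2 = 2 - 2m^2/5 > 0$ for $|m| \leq 2$, which is the unique positive norm attained, while imaginary-root weights have norm $\leq -2m^2/5 \leq 0$. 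So the maximal norm attained in $P(\Fib(m))$ is $2 - 2m^2/5 > 0$, and since $\lambda$ realizes the maximal norm in $P(V) \subseteq P(\Fib(m))$, either $\|\lambda\|^2 = 2 - 2m^2/5 > 0$ (the generic expectation), or $P(V)$ contains no positive-norm weight at all. I must rule out the latter: if all weights of $V$ had norm $\leq 0$ then $P(V) \subset \overline{LC_\Fib}$, and the same $W_\Fib$-orbit-separation argument \eqref{eq:Winv} used above would split $V = V_+ \oplus V_-$ into highest- and lowest-weight pieces, contradicting irreducibility of the non-standard $V$ (a non-standard module is irreducible but not a highest- or lowest-weight module). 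Hence $\|\lambda\|^2 > 0$.

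\textbf{The weight-set sandwich.} Given $\|\lambda\|^2 > 0$: the inclusion $W_\Fib\lambda \subseteq P(V)$ is immediate because $P(V)$ is $W_\Fib$-invariant (Proposition \ref{fibmintegrable}(3) applied to the submodule $V$, or directly: $V$ is integrable so $P(V)$ is saturated, hence $W_\Fib$-invariant). For $\lambda \in K_m$: by Proposition \ref{mod}, every weight in $P(\Fib(m))$ — hence every weight of $V$, and in particular $\lambda$ — lies in the coset $K_{(m+2)\bmod 5 - 2}$, which for $|m| \leq 2$ is $K_m$. Finally $P(V) \subseteq \{\mu \in K_m \mid \|\mu\| \leq \|\lambda\|\}$: the coset containment is what we just said, and $\|\mu\| \leq \|\lambda\|$ for all $\mu \in P(V)$ is precisely the maximality of $\|\lambda\|$ in $\{\|\mu\| : \mu \in P(V)\}$ from Definition \ref{nstd}(ii).

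\textbf{Main obstacle.} The routine parts are the norm computations and the coset bookkeeping (all handled by the preceding lemmas). The one step requiring genuine care is ruling out the degenerate possibility that a non-standard $V \subseteq \Fib(m)$ has all weights of norm $\leq 0$ — i.e., showing a non-standard module must actually reach the positive-norm "shell." The clean way is the dichotomy argument above: an integrable module in category $\cO \cup \cO^{op}$ whose weights all lie in $\overline{LC_\Fib}$ must, by the orbit-separation \eqref{eq:Winv}, decompose as a direct sum of a category-$\cO$ part (weights in $W_\Fib(LC_{\Fib,-})$) and a category-$\cO^{op}$ part (weights in $W_\Fib(LC_{\Fib,+})$) plus possibly weights at $0$; irreducibility then forces $V$ to be standard (or trivial), contradicting non-standardness. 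I would state this as a short preliminary observation before the main chain of implications, so that the rest of the proof is a direct assembly of Lemmas \ref{>2}, \ref{dominant}, Propositions \ref{fibmintegrable}, \ref{mod}, and equation \eqref{eq:Winv}.
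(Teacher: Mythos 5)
Your proof is correct, and the two standard cases are handled in essentially the same spirit as the paper (the paper argues via $W_\Fib$-conjugates on hyperbola branches, you via Kac's fact that weights of an integrable highest-weight module are $W_\Fib$-conjugate to dominant ones together with $P^+_\Fib\subset LC_{\Fib,-}$; note your signs actually match the statement, whereas the paper's own proof text has $LC_{\Fib,+}$ and $LC_{\Fib,-}$ interchanged). Where you genuinely diverge is the non-standard case. The paper assumes $P(V)\subset LC_\Fib$ and derives a contradiction purely from Definition \ref{nstd}(ii): the distinguished $\lambda$ would be $W_\Fib$-conjugate to a weight $\mu\in P^\pm_\Fib$ of the same (maximal) norm but strictly smaller $|wt|$, violating clause (3). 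You instead rule out $P(V)\subset LC_\Fib$ by the complete-reducibility dichotomy — an integrable submodule with all weights inside the light cone splits via \eqref{eq:Winv} into category-$\cO$ and category-$\cO^{op}$ pieces, so an irreducible such module is standard — and then use Lemma \ref{>2}(b) to pin the norm down. Your route buys a sharper conclusion ($\|\lambda\|^2=2-2m^2/5$ exactly, i.e.\ $\lambda$ is the projection of a real root, which is what the paper needs later anyway) and is insensitive to the somewhat delicate ``minimal $|wt|$'' clause of the definition; its cost is that it leans on the same splitting claim the paper invokes in Propositions \ref{fibmred} and \ref{Y(m)} (that weight sets trapped in $LC_\Fib$ really do separate the module into $\cO$ and $\cO^{op}$ parts), which is asserted rather than proved in both places. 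The paper's route is more elementary, using only the definition and the fundamental-domain geometry of $P^\pm_\Fib$. Your justification of the final inclusion $P(V)\subseteq\{\mu\in K_m\mid \|\mu\|\leq\|\lambda\|\}$ (maximality of $\|\lambda\|$ plus Proposition \ref{mod}) is also cleaner than the paper's appeal to weight strings.
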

\begin{proof}
Let $m\in\Z$ and let $V\subseteq \Fib(m)$ be a standard module.  Assume $V$ is a highest-weight module with highest weight $\lambda\in P_\Fib^+$, and consider $\lambda\geq \mu\in P(V)$. Let $\mu'\in P_\Fib^+$ be a $W_\Fib$-conjugate of $\mu$. Then since $\mu'\leq \lambda$ and $\mu, \mu'$ lie on a branch of the hyperbola $S_{\Fib, ||\mu||^2}$ that lies below the hyperbola containing $\lambda$ (cf. Figure \ref{fig:fibplane}), we have that $||\mu||^2\leq ||\lambda||^2<0$. Since $P(V)$ is $W_\Fib$-invariant and $W_\Fib$ preserves squared-length, $P(V)\subset LC_{\Fib,+}$. Similarly, if $V$ is a lowest-weight module with lowest weight $\lambda\in P_\Fib^-$, then $P(V)\subset LC_{\Fib,-}$.

Assume $V$ is a non-standard module but that $P(V)\subset LC_\Fib$. Then by Definition \ref{nstd}  (ii) there exists $\lambda\in P(V)$ such that $\lambda\notin P_\Fib^\pm$, $||\lambda||$ is maximal in $\{||\mu||\mid \mu\in P(V)\}$, $|wt(\lambda)|$ is minimal among those, and $V=\cU(\Fib)\cdot v_\lambda$ for $0\neq v_\lambda\in V_\lambda$. Then $\lambda$ is $W_\Fib$-conjugate to a unique weight $\mu\in P_\Fib^\pm$, and since $V$ is irreducible, $V=\cU(\Fib)\cdot v_\mu$ for any $0\neq v_\mu\in V_\mu$. Moreover, since $\lambda, \mu$ lie on the same branch of a hyperbola inside the light-cone of constant squared-length $||\lambda||^2$, we have $|wt(\mu)|<|wt(\lambda)|$, contradicting the minimality of $|wt(\lambda)|$. Therefore, $P(V)$ must contain weights of positive norm, including any $\lambda$ from Definition \ref{nstd} (ii). Since $P(V)$ is $W_\Fib$-invariant and saturated (cf. Definition \ref{string}), for any $w\in W_\Fib$, $\beta\in \Delta$, $S_{\beta}(w\lambda)$ contains weights inside $LC_\Fib$. So $P(V)\subseteq \{\mu\in K_m\mid ||\mu||\leq ||\lambda|| \}$.
\end{proof}

\begin{lemma}\label{oneorbit} We have
\begin{enumerate}[i)]
\item $\pi(\Delta_0^{re}) = \Delta^{re} = W_\Fib \beta_1 \bigsqcup W_\Fib \beta_2$,
\item If $|m|=1,2, \ \pi(\Delta_m^{re}) = W_\Fib(\Lambda_m)$.
\end{enumerate}
\end{lemma}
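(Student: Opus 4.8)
The plan is to translate the statement into a concrete question about the indefinite binary quadratic form $Q(p,q)=p^{2}+3pq+q^{2}$ and then settle it by a reduction argument.

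First I would record the facts about $\pi$ that are needed. Since $\{\lambda_{1},\lambda_{2}\}$ and $\{H_{1},H_{2}\}$ are dual bases (by \eqref{eq:fund}) and $B(3)$ is invertible, $\pi$ restricted to $\R\beta_{1}\oplus\R\beta_{2}$ is the identity; consequently, writing $x\in\R\beta_{1}\oplus\R\beta_{2}+m\gamma_{\Fib}$ as $x=y+m\gamma_{\Fib}$ with $y$ in the $\Fib$-plane, we get $\pi(x)=y+m\pi(\gamma_{\Fib})$, so $\pi$ restricts to a bijection from each level-$m$ plane onto the $\Fib$-plane. Next, for a real root $\beta\in\Delta_{m}$ written as $\left[\begin{array}{cc}b+c+m&b\\ b&c\end{array}\right]$, the computation preceding Lemma~\ref{>2} gives $\pi(\beta)=p\lambda_{1}+q\lambda_{2}$ with $(p,q)=(3b+c+m,\,c-2b)$; since $p-q=5b+m$ we have $p-q\equiv m\pmod 5$, and conversely every $(p,q)$ with $p-q\equiv m\pmod 5$ arises from a unique integral pair $b,c$, the associated integer symmetric matrix lying in $Q_{\FF}$. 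Using the matrix expressions for $\lambda_{1},\lambda_{2}$ recorded just after \eqref{eq:fund} one computes $\|p\lambda_{1}+q\lambda_{2}\|^{2}=-\tfrac25 Q(p,q)$. By Lemma~\ref{>2}(b) a level-$m$ root with $|m|\le 2$ is real if and only if $\|\pi(\beta)\|^{2}>0$, and by the norm identity $\|\pi(\beta)\|^{2}=\|\beta\|^{2}-\tfrac{2m^{2}}{5}$ proved just above Lemma~\ref{>2}, such a real root has $\|\pi(\beta)\|^{2}=2-\tfrac{2m^{2}}{5}$; hence $\pi(\Delta_{m}^{re})$ is in $W_{\Fib}$-equivariant bijection with $\{(p,q)\in\Z^{2}:Q(p,q)=m^{2}-5,\ p-q\equiv m\pmod 5\}$, where the simple reflections act in $\lambda$-coordinates by $r_{1}(p,q)=(-p,\,3p+q)$ and $r_{2}(p,q)=(p+3q,\,-q)$, each preserving $Q$ and the class of $p-q$ modulo $5$ (i.e.\ the cosets $K_{m}$ of Proposition~\ref{mod}).

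Part (i) is then immediate: at level $0$ we have $\Delta_{0}=\Phi\cap(\R\beta_{1}\oplus\R\beta_{2})=\Delta$ and $\pi|_{\Delta}=\mathrm{id}$, so $\pi(\Delta_{0}^{re})=\Delta^{re}=W_{\Fib}\Pi_{\Fib}=W_{\Fib}\beta_{1}\cup W_{\Fib}\beta_{2}$; disjointness of the two orbits (equivalently $\beta_{1}\notin W_{\Fib}\beta_{2}$) is the partition of the real hyperbola $S_{\Fib,2}$ into its two colours established by Feingold~\cite{F1}, and in the $(p,q)$-model it amounts to the statement that the two minimal solutions $\pi(\beta_{1})=(2,-3)$ and $\pi(\beta_{2})=(-3,2)$ of $Q=-5$, both in $K_{0}$, lie in distinct $W_{\Fib}$-orbits. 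For part (ii) with $m\in\{\pm1,\pm2\}$, the inclusion $W_{\Fib}\Lambda_{m}\subseteq\pi(\Delta_{m}^{re})$ follows from Table~\ref{tab:cosetreps} (there $\Lambda_{m}=\pi(\beta)$ for a real root $\beta\in\Delta_{m}$, e.g.\ $\beta=-\alpha_{3},\alpha_{1},-\alpha_{1},\alpha_{3}$ for $m=-2,-1,1,2$) together with the facts that $W_{\Fib}$ preserves $\Delta_{m}^{re}$ (Proposition~\ref{fibmintegrable}(3), as it preserves squared length) and commutes with $\pi$. For the reverse inclusion I would reduce an arbitrary solution $(p,q)$ of $Q(p,q)=m^{2}-5$: since $Q<0$ forces $pq<0$, a suitable one of $r_{1},r_{2}$ strictly decreases $p^{2}+q^{2}$ whenever $(p,q)$ is not already of minimal size, so every solution is $W_{\Fib}$-equivalent to one of the finitely many minimal solutions. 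For $m=\pm2$ the equation reads $Q=-1$, with minimal solutions $(1,-1)$ and $(-1,1)$, of which exactly one lies in the relevant coset ($(1,-1)=\Lambda_{2}\in K_{2}$, $(-1,1)=\Lambda_{-2}\in K_{-2}$); for $m=\pm1$ it reads $Q=-4$, with minimal solutions $(2,-2)$ and $(-2,2)$, again exactly one per coset ($(-2,2)=\Lambda_{1}\in K_{1}$, $(2,-2)=\Lambda_{-1}\in K_{-1}$). In every case $\pi(\Delta_{m}^{re})$ is therefore the single orbit $W_{\Fib}\Lambda_{m}$.

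The main obstacle is the reduction step in part (ii): it is a finite computation, but since indefinite binary quadratic forms do not have unique reduced representatives one must argue with care both that the folding terminates and that the resulting short list of solutions inside $K_{m}$ contains only $\Lambda_{m}$. Conceptually this is precisely the content of the lemma: on level $0$ the two minimal forms $\beta_{1},\beta_{2}$ both sit in $K_{0}$ and produce two $W_{\Fib}$-orbits, whereas at levels $m=\pm1,\pm2$ the residue condition $p-q\equiv m\pmod 5$ singles out exactly one of the two minimal solutions and so collapses $\pi(\Delta_{m}^{re})$ to a single $W_{\Fib}$-orbit.
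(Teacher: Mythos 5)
Your proposal is correct, and it reaches the lemma by a genuinely different route from the paper's. The paper also runs a descent on $W_\Fib$-orbits, but its potential function is the weight $wt(\lambda)=d_1+d_2$ restricted to the negative weights $P(m)^{re}_-$ (invoking $\psi$-symmetry): using the observation \eqref{eq:branch1} that $r_i$ raises $wt$ on the branch $S(m)^i$ and lowers it on the other, it passes to the $wt$-maximal element of $W_\Fib\pi(\beta)\cap P(m)^{re}_-$, reads off the explicit candidates $\mu(m)^i$ of maximal weight from the coset picture (Figure \ref{fig:cosets}), and verifies by direct computation that one further reflection carries each $\mu(m)^i$ to $\Lambda_m$. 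You instead descend on $p^2+q^2$ over the whole orbit, recasting everything as reduction theory for the indefinite form $Q(p,q)=p^2+3pq+q^2$ on the congruence class $p-q\equiv m\pmod 5$, with $\pi(\Delta_m^{re})$ contained in the solution set of $Q=m^2-5$. Your framing buys a clean separation of the two mechanisms at work --- the finitely many reduced solutions of $Q=m^2-5$ (all lying in the band $-\tfrac{3}{2}<q/p<-\tfrac{2}{3}$) versus the residue condition that selects exactly one of them for each nonzero $m$ --- and it makes transparent why level $0$ is exceptional: both reduced solutions of $Q=-5$ lie in $K_0$, giving two orbits. The cost is the boundary caveat you flag yourself: on the rays $3p+2q=0$ and $2p+3q=0$ the reflections preserve $p^2+q^2$, so strict descent could stall there; this is harmless here because $Q$ takes only the values $-5k^2$ on those rays, so $Q=-1$ and $Q=-4$ never meet them, and your enumeration of minimal solutions ($(\pm1,\mp1)$ for $Q=-1$, $(\pm2,\mp2)$ for $Q=-4$, exactly one per coset $K_{\pm2}$, $K_{\pm1}$) is complete. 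Both your argument and the paper's defer the disjointness $W_\Fib\beta_1\cap W_\Fib\beta_2=\emptyset$ in part (i) to \cite{F1}, so the two proofs are on equal footing there.
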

\begin{proof}
Let $|m|\leq 2$. We have the ``real'' hyperbola $\pi(S_2\cap \R\beta_1\oplus \R\beta_2+m\gamma)=S(m)^1\cup S(m)^2$, where $S(m)^i$ for $i=1,2$ is a branch of a hyperbola defined by 
$$S(m)^i=\Big\{\mu \in (\HH^*_\Fib)_\R \ \mid \ ||\mu||^2 = 2-\frac{2m^2}{5}, \ \ \la \mu, \beta_i \ra >0 \ \  \text{and} \ \  \la \mu, \beta_{3-i} \ra <0\Big\}.$$
(In all weight diagrams in the present work, $S(m)^1$ is on the left and $S(m)^2$ is on the right.) If $\beta\in\Delta_m^{re}$, then $\pi(\beta)\in S(m)^i$ for $i=1$ or $i=2$. 

First we make the following general observations. If $\lambda=c_1\lambda_1+ c_2\lambda_2\in S(m)^i$, then $c_i>0, \ c_{3-i}<0$ and 
$$r_i\lambda=\lambda-\la \lambda, \beta_i\ra \beta_i = c_1\lambda_1+c_2\lambda_2-c_i(2\lambda_i-3\lambda_{3-i}),$$
and since $wt(r_i\lambda)=c_1+c_2+c_i$ and $wt(\lambda)=c_1+c_2$, we have 
$$wt(r_i\lambda)-wt(\lambda)=c_i>0 \qquad \text{and} \qquad wt(r_{3-i}\lambda)-wt(\lambda)=c_{3-i}<0.$$
This gives us
\begin{equation}\label{eq:branch1}\lambda\in S(m)^i \qquad \Rightarrow \qquad wt(r_i\lambda)>wt(\lambda)\quad \text{and} \quad wt(r_{3-i}\lambda)<wt(\lambda).\end{equation}
Note that $\pi(\Delta_m)$ partitions as
$$\pi(\Delta_m) = \ P(m)_- \ \bigcup \ \{\Lambda_m\} \ \bigcup \ P(m)_+,$$
where 
$$P(m)_-=\{\mu\in\pi(\Delta_m) \mid wt(\mu)<0\}\qquad \text{and}\qquad P(m)_+=\{\mu\in\pi(\Delta_m) \mid wt(\mu)>0\}.$$
Let $P(m)^{re}_\pm=P(m)_\pm\cap\pi(\Delta_m^{re})$, and let $\beta\in P(m)^{re}_-$  (by $\psi$-symmetry, it is enough to consider only negative weights). The set $W_\Fib\pi(\beta)\cap P(m)^{re}_-$ is graded by $wt$, is bounded above by 0, so it contains an element $\lambda(m)^i=d_1\lambda_1+d_2\lambda_2\in S(m)^i$ for $i=1$ or $i=2$ such that $wt(\lambda(m)^i)=d_1+d_2<0$ is maximal. 
By the implications \eqref{eq:branch1}, we have $wt(r_i\lambda(m)^i)>wt(\lambda(m)^i)$. But $wt(\lambda(m)^i)$ is maximal in $W_\Fib\pi(\beta)\cap P(m)^{re}_-$, so if $m\neq 0$, then either  $r_i\lambda(m)^i\in P(m)^{re}_+$ or $r_i\lambda(m)^i=\Lambda_m$, and if $m=0$, then $r_i\lambda(0)^i\in P(0)^{re}_+$ (since $\Lambda_0=0$ is not a $W_\Fib$-conjugate of the projection of a real root).

Each of the following weights $\mu(m)^i\in S(m)^i\cap P(m)^{re}_-$ satisfies $0>wt(\mu(m)^i)\geq-2$, and is of maximal $wt$ in $P(m)^{re}_-$ (see Figure \ref{fig:cosets}):
$$\mu(0)^1 = 2\lambda_1-3\lambda_2=\beta_1,  \qquad \mu(1)^1 =\pi(-\alpha_1) = 2\lambda_1-4\lambda_2, \qquad \mu(2)^2 =\pi(\alpha_3) = -2\lambda_1+\lambda_2,$$
$$\mu(0)^2 =  -3\lambda_1-2\lambda_2=\beta_2, \qquad \mu(-1)^2 = \pi(\alpha_1) =-4\lambda_1+2\lambda_2, \qquad \mu(-2)^1 = \pi(-\alpha_3) = \lambda_1-2\lambda_2.$$
Then $\mu(m)^i$ is a candidate for $\lambda(m)^i$. We see that if $m=0$, then $\pi(\beta)$ is a $W_\Fib$-conjugate of either $\beta_1$ or $\beta_2$, and if $m\neq 0$ we have:
\begin{itemize}
\item if $m=-2$, then $r_1(\mu(-2)^1)=r_1(\lambda_1-2\lambda_2) = -\lambda_1+\lambda_2=\Lambda_{-2},$
\item  if $m=-1$,   then $r_2(\mu(-1)^2)=r_2(-4\lambda_1+2\lambda_2) = 2\lambda_1-2\lambda_2=\Lambda_{-1},$
\item  if $m=1$,  then $r_1(\mu(1)^1)=r_1(2\lambda_1-4\lambda_2) = -2\lambda_1+2\lambda_2=\Lambda_{1}, $
\item  if $m=2$,  then $r_2(\mu(2)^2)=r_2(-2\lambda_1+\lambda_2) = \lambda_1-\lambda_2=\Lambda_{2},$
\end{itemize}
completing the proof.
\end{proof}

\begin{theorem}\label{onereal} Let $|m|\leq 2$, and let $V\subseteq \Fib(m)$ be a $\Fib$-submodule. Then the following are equivalent:
\begin{enumerate}[i)]
\item $\pi(\beta)\in P(V)$ for some $\beta\in\Delta^{re}_m$,
\item $\Lambda_m\in P(V)$ and $V$ is non-trivial,
\item $\pi(\Delta^{re}_m)\subset P(V),$
\item $P(V)=P(\Fib(m))$.
\end{enumerate}
\end{theorem}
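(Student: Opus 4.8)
## Proof Proposal

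The plan is to establish the chain of implications $i) \Rightarrow ii) \Rightarrow iii) \Rightarrow iv) \Rightarrow i)$, leaning heavily on Lemma \ref{oneorbit}, the $W_\Fib$-invariance and saturatedness of weight sets of $\Fib$-modules, and the previously established facts about real roots of $\Fib$.

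\textbf{Step $i) \Rightarrow ii)$.} Suppose $\pi(\beta) \in P(V)$ for some $\beta \in \Delta^{re}_m$. Since $P(V)$ is $W_\Fib$-invariant (it is a union of weight sets of irreducible submodules, each $W_\Fib$-invariant by the proposition following Lemma \ref{dominant} and its $\cO^{op}$ analogue), the whole orbit $W_\Fib \pi(\beta)$ lies in $P(V)$. By Lemma \ref{oneorbit}(ii), for $|m| = 1, 2$ we have $\pi(\Delta^{re}_m) = W_\Fib(\Lambda_m)$, so $\Lambda_m \in P(V)$ immediately. For $m = 0$, Lemma \ref{oneorbit}(i) gives $\pi(\beta) \in W_\Fib\beta_1 \sqcup W_\Fib\beta_2$; in that case $\Lambda_0 = 0$ and we must instead argue $V$ is non-trivial and that $0 \in P(V)$. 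Non-triviality is clear since $\pi(\beta) \neq 0$ lies in $P(V)$ (real weights are never zero, as $\|\pi(\beta)\|^2 = 2 - 2m^2/5 > 0$ by the lemma preceding Lemma \ref{>2} when $m=0$, while $\|0\|^2 = 0$). To get $0 \in P(V)$: pick $\beta_1$ (say) in $P(V)$; then $r_1\beta_1 = -\beta_1 \in P(V)$, and since $P(V)$ is saturated, the $\beta_1$-weight string $S_{\beta_1}(\beta_1) = \{\beta_1, \beta_1 - \beta_1\} = \{\beta_1, 0\}$ — wait, more carefully: $\la \beta_1, \beta_1\ra = 2$ so $S_{\beta_1}(\beta_1) = \{\beta_1, 0, -\beta_1\}$, hence $0 \in P(V)$. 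So in all cases $\Lambda_m \in P(V)$ and $V$ is non-trivial.

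\textbf{Step $ii) \Rightarrow iii)$.} Suppose $\Lambda_m \in P(V)$ and $V$ non-trivial. For $|m| = 1, 2$, $W_\Fib$-invariance of $P(V)$ and Lemma \ref{oneorbit}(ii) give $\pi(\Delta^{re}_m) = W_\Fib(\Lambda_m) \subset P(V)$ at once. For $m = 0$, I need to produce $\beta_1, \beta_2 \in P(V)$ from $0 \in P(V)$ together with non-triviality. This is the delicate point: having $0$ in the weight set of a non-trivial integrable module forces a nonzero weight to be present (otherwise $V$ is a direct sum of trivial modules, contradicting Proposition \ref{wheretrivial} combined with non-triviality — actually more directly, a non-trivial integrable $\Fib$-module in $\cO \cup \cO^{op}$ cannot have $P(V) = \{0\}$), and then using saturatedness/$W_\Fib$-invariance one propagates along weight strings to reach the simple roots $\beta_1, \beta_2$. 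Concretely, any nonzero $\mu \in P(V)$ is connected to $\pm\beta_i$ through a sequence of weight strings $S_{\beta_j}(\cdot)$ since $\Delta^{re}$ is a single "connected" real root system in the rank 2 hyperbolic sense; one can also invoke part c) of the Kac lemma (Lemma following Definition \ref{fund}) applied to $\Fib$: for $\alpha \in \Delta_+$ there is a simple root $\beta$ with $\alpha + \beta \in \Delta_+$, which lets one descend in height to $\beta_1$ or $\beta_2$. I expect this sub-argument to be the main obstacle and will need to be spelled out carefully.

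\textbf{Step $iii) \Rightarrow iv)$.} Suppose $\pi(\Delta^{re}_m) \subset P(V)$. I claim every $\mu \in \pi(\Delta_m) = P(\Fib(m))$ lies in $P(V)$. Any $\mu \in \pi(\Delta_m)$ with $\mu$ the projection of an imaginary root lies, after applying a suitable $w \in W_\Fib$, inside $LC_\Fib$; I want to realize $\mu$ as a point of some weight string $S_\beta(\pi(\gamma))$ with $\pi(\gamma) \in \pi(\Delta^{re}_m)$. Geometrically in the $\Fib$-plane: the real weights $\pi(\Delta^{re}_m)$ lie on the hyperbola $S(m)^1 \cup S(m)^2$ of positive norm $2 - 2m^2/5$, and every imaginary weight sits "between" or "inside" relative to a real weight along a root direction — this is essentially the standard fact that the convex hull / saturation of the real weights of the adjoint-type slice fills out all weights. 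Using that $P(V)$ is saturated (Definition \ref{string}) and that $P(\Fib(m))$ is itself saturated (equation \eqref{eq:saturated}), the inclusion $\pi(\Delta^{re}_m) \subset P(V)$ forces $P(V) \supseteq P(\Fib(m))$; the reverse inclusion $P(V) \subseteq P(\Fib(m))$ is automatic since $V \subseteq \Fib(m)$. Hence $P(V) = P(\Fib(m))$.

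\textbf{Step $iv) \Rightarrow i)$.} If $P(V) = P(\Fib(m))$, then in particular $\pi(\beta) \in P(V)$ for every $\beta \in \Delta^{re}_m$; such $\beta$ exist (the level-$m$ plane meets the real root hyperboloid $S_2$ for $|m| \le 2$, as seen in Figure \ref{fig:slices}), giving $i)$. This closes the cycle.

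The crux, and the step I expect to fight with, is the $m = 0$ part of $ii) \Rightarrow iii)$ — showing that a non-trivial integrable $\Fib$-submodule of $\Fib(0)$ containing the zero weight must contain $\beta_1$ and $\beta_2$ — and, relatedly, the geometric "filling" argument in $iii) \Rightarrow iv)$ that every imaginary weight in $\pi(\Delta_m)$ is forced in once all real weights are present. For the latter I would reduce to checking that for each imaginary $\pi(\gamma)$, after a $W_\Fib$-reduction there is a real weight $\pi(\beta)$ with $\pi(\gamma) \in S_{\beta_1}(\pi(\beta))$ or $S_{\beta_2}(\pi(\beta))$ or a short composite thereof, which can be verified using the explicit $2\times 2$-matrix description of roots and the formulas for $r_1, r_2$.
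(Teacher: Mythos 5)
Your proposal follows essentially the same route as the paper's own proof: the same cycle $i)\Rightarrow ii)\Rightarrow iii)\Rightarrow iv)\Rightarrow i)$, resting on Lemma \ref{oneorbit}, $W_\Fib$-invariance, and saturation of $P(V)$, with the $m=0$ case handled via the weight string $S_{\beta_1}(\beta_1)=\{\beta_1,0,-\beta_1\}$. The two points you flag as delicate --- propagating from $\Lambda_0$ to $\beta_1,\beta_2$ when $m=0$, and the ``filling'' of the imaginary weights from the real ones in $iii)\Rightarrow iv)$ --- are exactly the steps the paper itself treats with the same brevity, so your plan is on target.
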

\begin{proof}


Assume $\pi(\beta)\in P(V)$ for some $\beta\in\Delta^{re}_m$. By Proposition \ref{wheretrivial}, $P(V)\neq \{0\}$ so $V$ is not a trivial module. If $|m|=1,2$, then $\Lambda_m\in W_\Fib \pi(\beta)\subset P(V)$ by Lemma \ref{oneorbit}(ii) and the fact that $P(V)$ is $W_\Fib$-invariant. If $m=0$, then $\beta_i\in W_\Fib \pi(\beta)$ for $i=1$ or $i=2$ by Lemma \ref{oneorbit}(i). Since $P(V)$ is a saturated set, we have $S_{\beta_i}(\beta_i)=\{\beta_i, \Lambda_0, -\beta_i\}\subset P(V)$ (cf. \eqref{eq:saturated}), so  $(i)\Rightarrow (ii)$. 

If $|m|=1,2$, then $\pi(\Delta^{re}_m) = W_\Fib\Lambda_m\subset P(V)$ by part $(ii)$ and Lemma \ref{oneorbit}(ii). If $m=0$, we have that $P(V)$ is a saturated set that contains $\Lambda_0$, and since $V$ is not trivial, $S_{\beta_i}(\beta_i)\subset P(V)$ for $i=1$ and $i=2$, so $P(V)^{re}=W_\Fib\{\beta_1\}\bigsqcup W_\Fib\{\beta_2\}=\pi(\Delta^{re})  \subset P(V)$, so $(ii)\Rightarrow(iii)$.

Thus $P(\Fib(m))$ and $P(V)$ contain the same set of maximal-norm weights $\pi(\Delta_m^{re})$, and each is a saturated, $W_\Fib$-invariant set. Therefore, if weight strings $S_{\beta_i}(\mu)\subset P(\Fib(m))$ for some $\mu\in \pi(\Delta_m)$ and $i=1,2$, then also $S_{\beta_i}(\mu) \subset P(V)$. Thus $P(\Fib(m))=P(V)$, so $(iii)\Rightarrow(iv)$. 

Lastly, $(iv)\Rightarrow (i)$ is obviously true, so all four conditions are equivalent.

\end{proof}

If $|m|\leq2$, then Lemma \ref{>2} shows that all real roots of $\Delta_m$ project outside the $\Fib$ light-cone. Since $\Fib(m)$ is a $\Fib$-module, these projected weights are part of a $\Fib$-module which is neither in category $\cO$ nor in $\cO^{op}$. A theorem analogous to Theorem \ref{reducible} on the complete reducibility of general integrable modules has not been found in the literature. The irreducible adjoint representation for $\Fib$ is integrable but neither in category $\cO$ nor $\cO^{op}$. We write this non-standard $\Fib$-module as $V^{\pi(\beta_i)}$ for either $i=1$ or $i=2$. The next results will show that $\Fib$ is the only non-standard module on level 0. 
\begin{theorem}\label{onlyone}
Let $|m|\leq 2$. Then $\Fib(m)$ has a unique irreducible non-standard quotient whose set of weights is the same as $P(\Fib(m))$. For the case $m=0$, this quotient is the adjoint representation of $\Fib$.
\end{theorem}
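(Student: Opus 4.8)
The plan is to pin everything to the single weight $\Lambda_m$: by Table~\ref{tab:cosetreps} this is $\pi$ of a real root of $\FF$ when $|m|=1,2$ (and is $0$ when $m=0$), and combining Lemma~\ref{3.12}, Lemma~\ref{>2}(b) and Theorem~\ref{onereal} one sees that any non-standard irreducible $\Fib$-module occurring as a subquotient of $\Fib(m)$ is forced to carry $\Lambda_m$: such a module has a weight of positive norm (Lemma~\ref{3.12}), the positive-norm weights of $\Fib(m)$ are exactly $\pi(\Delta^{re}_m)$ (Lemma~\ref{>2}(b)), and since its weight set is $W_\Fib$-invariant and saturated and sits inside $\pi(\Delta_m)$, Theorem~\ref{onereal} together with Lemma~\ref{oneorbit} forces that weight set to be all of $P(\Fib(m))$, in particular to contain $\Lambda_m$. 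So there is essentially one place to look.

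Concretely, let $V(m)$ be the $\Fib$-submodule of $\Fib(m)$ generated by the real-root weight spaces $\FF_\beta$, $\beta\in\Delta^{re}_m$; by Theorem~\ref{onereal}, $P(V(m))=P(\Fib(m))$. I would treat $m=0$ first: there $\Delta^{re}_0$ is the set of real roots of the subalgebra $\Fib$, their root spaces are exactly the real-root spaces of $\Fib$ (each one-dimensional and sitting inside the same one-dimensional root space of $\FF$), and together with $H_i=[E_i,F_i]$ they generate $\Fib$; conversely $\Fib$ is a $\Fib$-submodule of $\Fib(0)$ containing them, so $V(0)=\Fib$. Since $\det B(3)=-5\neq0$, $\Fib$ has no nonzero proper ideal, so $V(0)=\Fib$ is already irreducible and non-standard, and it is the asserted module on level $0$. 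For $|m|=1,2$, $\Lambda_m=\pi(\beta)$ for a real root $\beta$ of $\FF$, so $\dim\Fib(m)_{\Lambda_m}=Mult_\FF(\beta)=1$; fixing $0\neq v$ in this line, Lemma~\ref{oneorbit}(ii) gives $V(m)=\cU(\Fib)\cdot v$.

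Next I would show $V(m)$ has a unique maximal submodule. Any proper submodule $W\subsetneq V(m)$ cannot contain the cyclic generator $v$, hence $W\cap\Fib(m)_{\Lambda_m}=0$ (the space is one-dimensional), i.e.\ $\Lambda_m\notin P(W)$; since the weight set of a sum of submodules is the union of their weight sets, the sum $U(m)$ of all proper submodules again omits $\Lambda_m$, hence omits $v$ and is proper. So $U(m)$ is the unique maximal submodule and $V^{\Lambda_m}:=V(m)/U(m)$ is irreducible. It is non-standard because $\Lambda_m\in P(V^{\Lambda_m})$ has positive norm, so by Lemma~\ref{3.12} it is neither a highest- nor a lowest-weight module; and $P(V^{\Lambda_m})=P(\Fib(m))$ because $P(V^{\Lambda_m})$ is $W_\Fib$-invariant, saturated, contained in $P(\Fib(m))$, and contains $W_\Fib\Lambda_m=\pi(\Delta^{re}_m)$ by Lemma~\ref{oneorbit}, so the saturation argument from the last paragraph of the proof of Theorem~\ref{onereal} applies unchanged. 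For uniqueness, let $S$ be any irreducible non-standard $\Fib$-module occurring as a subquotient $N/N'$ of $\Fib(m)$; by the first paragraph, $P(S)$ meets $\pi(\Delta^{re}_m)$, so some real-root vector $w\in\FF_\beta\subseteq\Fib(m)$, $\beta\in\Delta^{re}_m$, lies in $N$ but not in $N'$ (its weight space being one-dimensional), and $\cU(\Fib)\cdot w=V(m)$ (it contains a nonzero scalar multiple of the distinguished generator by $W_\Fib$-conjugacy of the one-dimensional real-root weight spaces, and lies in $V(m)$ since $\FF_\beta$ is one of its generators). Hence $S$ is an irreducible quotient of $V(m)$, so $S\cong V(m)/U(m)=V^{\Lambda_m}$.

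The step I expect to be the main obstacle is the one just above: showing $V(m)$ has a \emph{unique} maximal submodule and that the resulting quotient keeps the full weight set $P(\Fib(m))$. The multiplicity-one fact at $\Lambda_m$ is the linchpin of the uniqueness (for $m=0$ its role is played by simplicity of $\Fib$), and re-running the saturation argument of Theorem~\ref{onereal} for a quotient rather than a submodule needs the observation that the only weights that can be killed in $V(m)/U(m)$ lie strictly inside $LC_\Fib$ — the weights in $\pi(\Delta^{re}_m)$ survive because they already have multiplicity one inside $V(m)$. I would also flag that ``quotient'' here is in the sense $V^{\Lambda_m}=V(m)/U(m)$ of a quotient of the submodule $V(m)\subseteq\Fib(m)$; realizing it as a quotient of all of $\Fib(m)$ uses that $\Fib(m)/V(m)$ has weights confined to $LC_\Fib$ and is therefore completely reducible by Theorem~\ref{reducible}, which is handled elsewhere in the paper.
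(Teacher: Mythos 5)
Your proposal is correct and follows essentially the same route as the paper's proof: build the cyclic module $V(m)$ on a real-root vector, use $Mult_\FF(\beta)=1$ at the real root to obtain a unique maximal submodule and hence an irreducible non-standard quotient $V^{\Lambda_m}$, and observe that deleting $\pi(\Delta_m^{re})$ forces all remaining weights of $\Fib(m)\big/V(m)$ inside $LC_\Fib$, excluding any further non-standard constituent. The only differences are that you spell out the unique-maximal-submodule argument that the paper delegates to Borcherds and explicitly verify that the quotient retains the full weight set $P(\Fib(m))$ — details worth having, but not a different approach.
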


\begin{proof}
Let $|m|\leq2$, and define $V(m)=\cU(\Fib)\cdot v_\beta$ where $\beta$ and $v_\beta$ are chosen from Table \ref{tab:nonstd}. Since $\beta$ is a real root, we have that $Mult_\FF(\beta)=1$, $(v_\beta)$ is a basis for $\FF_\beta$, and by Theorem \ref{onereal}, $P(V(m))=P(\Fib(m))$.
\begin{table}[h!]
 \begin{center}
\begin{tabular}{| c | c | c | c |}
  \hline                       
    $m$ (level) & $\beta\in\Delta^{re}_m$ & $\pi(\beta)$ & $v_\beta\in\FF_\beta$\\
\hline
  $-2$ & $-\alpha_3$ & $\Lambda_{-2}$ & $f_3$\\
    \hline      
  $-1$   & $\alpha_1$ & $\Lambda_{-1}$ & $e_1$\\
    \hline      
  $0$  & $\beta_2$ & $\beta_2$ &  $E_2=e_2$\\
      \hline      
   $1$ & $-\alpha_1$ & $\Lambda_1$ & $f_1$\\
     \hline      
  $2$   & $\alpha_3$ & $\Lambda_2$& $e_3$\\
  \hline  
\end{tabular}
\caption{A choice of real root $\beta\in\Delta^{re}_m$ for $|m|\leq 2$ and a vector $v_\beta\in\FF_\beta$ that generates the $\Fib$-module $V(m)=\cU(\Fib)\cdot v_\beta$.}
\label{tab:nonstd}
\end{center}
\end{table}
Also, $\pi(\beta)\notin LC_{\Fib}$ so, by Lemma \ref{3.12}, $V(m)$ is not a sum of standard modules.

By Borcherds, there exists a maximal proper $\Fib$-submodule $U(m)\subset V(m)$ not containing $v_\beta$, and the quotient $V(m)\Big/U(m)$ is irreducible (\cite{B}). Since $V(m)\Big/U(m)=\cU(\Fib)\cdot\overline{v_\beta}$ and $\pi(\beta)$ meets the criteria in Definition \ref{nstd} (ii), this quotient module is non-standard, and we may write it as $V^{\beta_2}$ if $m=0$, as $V^{\Lambda_m}$ if $|m|=1,2$, or more generally as $V^{\pi(\beta)}$. 

Let  $Y(m)=\Fib(m)\Big/V(m)$. Then for each $\mu\in \pi(\Delta_m)$,
$$Mult_m(\mu)=Mult_{V(m)}(\mu)+Mult_{Y(m)}(\mu).$$
Since $Mult_m(\pi(\beta)) = Mult_{V(m)}(\pi(\beta))=1$, we have $Mult_{Y(m)}(\pi(\beta))=0$. Thus $\pi(\beta)\notin P(Y(m))$. By Theorem \ref{onereal}, $P(Y(m))\cap\pi(\Delta_m^{re})=\emptyset$, hence  $P(Y(m))\subset LC_\Fib$, so $Y(m)$ cannot contain any other non-standard modules. Therefore $V^{\Lambda_m}$ is the unique irreducible non-standard module in $\Fib(m)$.

In the case $m=0$ we have $V(0)=\cU(\Fib)\cdot E_2 = \Fib$, and since the adjoint representation of $\Fib$ contains no proper submodules, we have that $U(0)=\{0\}$, hence $V(m)=\Fib$.
\end{proof}

\begin{proposition}\label{Y(m)}
For $0<|m|\leq 2$, let $Y(m)=\Fib(m)\Big/ V(m)$ as in Proposition \ref{onlyone}. Then 
\begin{enumerate}[i)]
\item $P(Y(m))$ consists only of weights $\mu$ where $||\mu||^2< 0$, and
\item $Y(m)$ is a sum of integrable highest-weight $\Fib$-modules in category $\cO$ (which are therefore completely reducible) and integrable lowest-weight $\Fib$-modules in category $\cO^{op}$ (which are also completely reducible), hence $Y(m)$ is completely reducible.
\end{enumerate}
\end{proposition}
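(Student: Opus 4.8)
The plan is to dispatch (i) as a sharpening of a fact already isolated in the proof of Theorem~\ref{onlyone}, and then to obtain (ii) by rerunning the argument of Proposition~\ref{fibmred} with $Y(m)$ in the role that $\Fib(m)$ plays there for $|m|>2$.

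For (i), I would start from the equality $P(Y(m))\cap\pi(\Delta^{re}_m)=\emptyset$ established inside the proof of Theorem~\ref{onlyone}. Lemma~\ref{>2}(b) says that among level-$m$ roots only the real ones project to weights of positive norm, so $\pi(\Delta^{re}_m)$ and $\pi(\Delta^{im}_m)$ are disjoint: a common value $\mu$ would have to satisfy both $||\mu||^2 = 2-\frac{2m^2}{5}$ and $||\mu||^2\le -\frac{2m^2}{5}$, forcing $2\le 0$. Hence $P(Y(m))\subseteq\pi(\Delta^{im}_m)$, and then the norm identity $||\pi(\beta)||^2=||\beta||^2-\frac{2m^2}{5}$ applied to an imaginary $\beta\in\Delta_m$ (for which $||\beta||^2\le 0$) yields $||\pi(\beta)||^2\le -\frac{2m^2}{5}<0$, since $0<|m|$. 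That is exactly (i).

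For (ii), once (i) is in hand $P(Y(m))$ sits strictly inside $LC_\Fib$: it avoids the origin, and it contains no weight of $wt$-value $0$, since a weight with $wt=0$ lies on the line $\R(\lambda_1-\lambda_2)$, every point of which has nonnegative norm. I would then split $Y(m)=Y(m)_-\oplus Y(m)_+$ by the sign of $wt$ as in Definition~\ref{Vpm}. The key observation is $wt(\pi(\beta_i))=-1$ for $i=1,2$, so $E_i$ lowers $wt$ by one and $F_i$ raises it by one; since the $wt=0$ slice of $Y(m)$ is empty, neither $Y(m)_+$ nor $Y(m)_-$ can be carried out of itself, so both are $\Fib$-submodules. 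As in Proposition~\ref{fibmred}, the weights of $Y(m)_+$ then lie in $LC_{\Fib,-}$ and those of $Y(m)_-$ in $LC_{\Fib,+}$; invoking $W_\Fib(LC_{\Fib,+})\cap W_\Fib(LC_{\Fib,-})=\{0\}$ together with the fact that $Y(m)$, being a quotient of $\Fib(m)$, is $\HH_\Fib$-diagonalizable and integrable with finite-dimensional weight spaces (Proposition~\ref{fibmintegrable}), I would conclude that $Y(m)_+$ is an integrable module in category $\cO$ and $Y(m)_-$ one in category $\cO^{op}$. Theorem~\ref{reducible} (with Remark~\ref{contra} and the category-$\cO^{op}$ counterpart of Theorem~\ref{reducible}) then makes each summand a direct sum of standard highest-, resp.\ lowest-, weight $\Fib$-modules, so $Y(m)=Y(m)_-\oplus Y(m)_+$ is completely reducible.

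The hard part is the geometric input that is also used---without a detailed argument---inside Proposition~\ref{fibmred}: that a $W_\Fib$-invariant, saturated set of weights confined to one half of $LC_\Fib$ on a single $\Fib$-level is dominated by finitely many weights, which is what actually places $Y(m)_\pm$ in category $\cO$, resp.\ $\cO^{op}$. The cleanest way around it is the remark above that, by (i), the weight set of $Y(m)$ has the same shape as that of $\Fib(m')$ for $|m'|>2$ (all weights imaginary, strictly interior to $LC_\Fib$, on one level), so the verification can be quoted verbatim from Proposition~\ref{fibmred}. The only other thing to be careful about is the bookkeeping of the $wt$-shifts of $E_i,F_i$ and the matching of the sign of $wt$ with the forward/backward $\Fib$-light-cone, since that is what guarantees both that the $\pm$-pieces are submodules and that they land in the two opposite categories.
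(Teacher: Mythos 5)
Your proof is correct and follows essentially the same route as the paper's: part (i) by excluding the projections of real roots (using the multiplicity count already made in Theorem~\ref{onlyone} together with Lemma~\ref{>2}), and part (ii) by splitting $Y(m)=Y(m)_-\oplus Y(m)_+$ according to the sign of $wt$, placing the two pieces in categories $\cO^{op}$ and $\cO$ via the light-cone separation, and applying Theorem~\ref{reducible}. Your additional observations --- that the $wt=0$ slice of $Y(m)$ is empty, which is what makes $Y(m)_\pm$ genuine $\Fib$-submodules, and that the category-$\cO$ verification can be quoted from Proposition~\ref{fibmred} since $P(Y(m))$ has the same shape as $P(\Fib(m'))$ for $|m'|>2$ --- are points the paper leaves implicit, and they are correct.
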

\begin{proof}
$(i): \ $
Let $0<|m|\leq 2$. Then for all $\beta\in \Delta_m^{re}$, we have $Mult_\FF(\beta)=1$, so
$$Mult_{Y(m)}\big(\pi(\beta)\big) = Mult_{\Fib(m)}\big(\pi(\beta)\big) - Mult_{V(m)}\big(\pi(\beta)\big)=0,$$
so $\pi(\beta)\notin P(Y(m))$. By Lemma \ref{>2}, $P(Y(m))$ contains only imaginary weights that are projections of imaginary roots in $\Delta_m$.

$(ii): \ $ Since $P(Y(m))=\pi(\Delta^{im}_m)$ lies strictly inside the light-cone $LC_\Fib$ for $|m|=1,2$, we have $P(Y(m))_\pm \subset LC_{\Fib,\mp}$ (cf. Definition \ref{Vpm}), and $P(Y(m))_- = \psi\Big(P(Y(m))_+\Big).$

Since $W_\Fib(LC_{\Fib,+})\cap W_\Fib(LC_{\Fib,-})=\{0\}$, we have that $Y(m)_+$ and $Y(m)_-$ are integrable $\Fib$-modules from category $\cO$ and $\cO^{op}$, respectively, and so by Theorem \ref{reducible}, are completely reducible. Thus, $Y(m)=Y(m)_-\oplus Y(m)_+$ is also completely reducible.
\end{proof}

The case $m=0$ (excluded from the proposition) is slightly complicated by the presence of a trivial module  $V^0$ for $\Fib$ on level 0. In Chapter \ref{ch:decomp0}, we will prove the existence of $V^0$, and we will then show that
$$\Fib(0)=V^0\oplus \Fib \oplus Y(0)$$
is completely reducible. This follows from the fact that $V(m)=\Fib$ is simple, so $U(0)=\{0\}$. However, for $|m|=1,2$,  $U(m)$ is not necessarily trivial. The third isomorphism theorem gives us
$$\Big(\Fib(m)/U(m)\Big)\Big/\Big(V(m)/U(m)\Big) \cong \Fib(m)\Big/ V(m),$$
and since $V(m)/U(m)=V^{\Lambda_m} $ is irreducible, it is a direct summand of $\Fib(m)/U(m)$, so knowledge of the inner multiplicities of $V^{\Lambda_m}$ will give outer multiplicities of other irreducible modules in the decomposition of $\Fib(m)/U(m)$, not $\Fib(m)$ as desired. In the present work we assume the following conjecture is true.  
\begin{conjecture}
For $0<|m|\leq 2$, the maximal submodule $U(m)$ from Theorem \ref{onlyone} is $\{0\}$, and $V(m)=V^{\Lambda_m}$ is a direct summand of $\Fib(m)$, so $\Fib(m) = V^{\Lambda_m} \oplus Y(m)_- \oplus Y(m)_+.$
\end{conjecture}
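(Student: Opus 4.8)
The assertion has two parts: (a)~the maximal submodule $U(m)\subseteq V(m)$ of Theorem~\ref{onlyone} is zero, so that $V(m)=V^{\Lambda_m}$ is already irreducible; and (b)~$V^{\Lambda_m}$ is a direct summand of $\Fib(m)$. Given~(a), Proposition~\ref{Y(m)} already identifies the quotient $Y(m)=\Fib(m)/V(m)$ with $Y(m)_-\oplus Y(m)_+$, so the displayed decomposition follows once~(b) is known; and, as explained below, (b)~drops out of~(a) together with a contravariant form. Thus the whole problem reduces to proving that $V(m)$ is irreducible.

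\emph{The contravariant form.} The normalized invariant form $(\cdot,\cdot)$ on $\FF$ pairs $\FF_\beta$ nondegenerately with $\FF_{-\beta}$, and from the matrix description in Proposition~\ref{mod} one sees that $\pi$ is injective on $\Delta_m$, so that each weight space $\Fib(m)_\mu$ equals a single root space $\FF_\beta$. Hence $B(x,y):=(x,\nu(y))$ is a well-defined symmetric form on each $\Fib(m)$; it is $\Fib$-contravariant ($B(E_ix,y)=B(x,F_iy)$ and $B(F_ix,y)=B(x,E_iy)$, using invariance of $(\cdot,\cdot)$ and $\nu(E_i)=-F_i$), it is block diagonal over the weight spaces, and each block $B|_{\FF_\beta}$ is nondegenerate, so $B$ is nondegenerate on all of $\Fib(m)$. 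Normalize $v_\beta$ so that $B(v_\beta,v_\beta)=1$.

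\emph{Structure of proper submodules.} The first step is to show that any proper submodule $W\subsetneq V(m)$ is a direct sum of irreducible highest- and lowest-weight $\Fib$-modules whose weights lie strictly inside $LC_\Fib$. Indeed $W$ is integrable, so $\operatorname{Ch}(W)$ is $W_\Fib$-invariant; since $V(m)$ is cyclic on $v_\beta$ and $\dim V(m)_{\Lambda_m}=1$ (because $\beta$ is a real root), $v_\beta\notin W$ forces $W_{\Lambda_m}=0$ and hence $W_\mu=0$ for every $\mu\in W_\Fib\Lambda_m=\pi(\Delta_m^{re})$. Thus $P(W)\subseteq\pi(\Delta_m^{im})$, which by Lemma~\ref{>2} lies in the open timelike region and omits the origin (as $|m|\ge 1$); the splitting of an integrable module with timelike weights into a category-$\cO$ part and a category-$\cO^{op}$ part, exactly as in the proofs of Propositions~\ref{fibmred} and~\ref{Y(m)}, together with Theorem~\ref{reducible} and its $\cO^{op}$-analogue, then writes $W$ as the claimed sum. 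In particular $U(m)\ne 0$ if and only if $V(m)$ contains a standard submodule $V^\lambda$.

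\emph{The main obstacle.} Given such a $V^\lambda\subseteq V(m)$ with extremal (highest- or lowest-weight) vector $v_\lambda$ of weight $\lambda$, there is a clean dichotomy. If $B(v_\lambda,v_\lambda)\ne 0$, then the contravariant form $B$ on the irreducible module $V^\lambda$ is nonzero, hence nondegenerate, hence nondegenerate on each weight space of $V^\lambda$; since $\Lambda_m$ is spacelike it is not a weight of $V^\lambda$, so $v_\beta\in V(m)_{\Lambda_m}$ lies in the submodule $(V^\lambda)^{\perp}$, and then $V(m)=\cU(\Fib)\cdot v_\beta\subseteq(V^\lambda)^{\perp}$ forces $V^\lambda=0$, a contradiction. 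Therefore the only obstruction to $U(m)=0$ is the existence of a standard submodule of $V(m)$ whose extremal vector is \emph{isotropic} for $B$, equivalently a root vector $v_\lambda\in\FF_{\beta'}$ with $\pi(\beta')=\lambda$ and $(v_\lambda,\nu(v_\lambda))=0$ that is $\Fib$-extremal and generates a standard module inside $\cU(\Fib)\cdot v_\beta$. Ruling this out is where I expect the real work to lie: for imaginary $\beta'$ one has $\dim\FF_{\beta'}=Mult_\FF(\beta')>1$ and $(\cdot,\cdot)$ does admit isotropic vectors on $\FF_{\beta'}$, and there is no known complete-reducibility theorem for integrable modules outside $\cO\cup\cO^{op}$, so the form does not by itself close the argument. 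The most plausible route to finish is via the vertex-algebra construction of Chapter~\ref{ch:vertex}: realizing $V^{\Lambda_m}$ inside the ``physical'' subspace of $V_\FF$, where the relevant pairing is positive definite by a no-ghost-type argument, would force $B|_{V(m)}$ nondegenerate, hence $V(m)=V^{\Lambda_m}$; and then, $B$ being nondegenerate on $\Fib(m)$ and on $V(m)$, one gets $\Fib(m)=V(m)\oplus V(m)^{\perp}$ with $V(m)^{\perp}\cong Y(m)$, giving~(b) and hence $\Fib(m)=V^{\Lambda_m}\oplus Y(m)_-\oplus Y(m)_+$. Because this last step rests on the conjectured ``recognition algorithm'' on Schur polynomials, the assertion is recorded here only as a conjecture.
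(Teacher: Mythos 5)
The first thing to say is that the statement you are proving is recorded in the paper as a \emph{conjecture} and is never proved there: the author explicitly writes that the conjecture is \emph{assumed} for the rest of the work, so there is no proof of record to compare against. Your proposal is honest about the same point, and your partial reduction actually goes further than the text does. The contravariant bilinear form $B(x,y)=(x,\nu(y))$ is indeed well defined and $\Fib$-contravariant, each weight space of $\Fib(m)$ for $m\neq 0$ is a single root space $\FF_\beta$ (since $\pi$ is injective on $\Delta_m$), and your dichotomy correctly isolates the only possible failure mode: a standard submodule of $V(m)$ generated by a $B$-isotropic extremal vector sitting inside an imaginary root space $\FF_{\beta'}$ with $\dim\FF_{\beta'}\geq 2$. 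That is a genuine gap, and you are right that the bilinear form cannot close it, because over $\C$ every nondegenerate symmetric form in dimension at least $2$ has isotropic vectors; the proposal as written therefore does not establish the statement.

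The missing idea is to replace $B$ by the contravariant \emph{Hermitian} form attached to the compact involution of $\FF$ (the conjugate-linear involution with $e_i\mapsto -f_i$, $f_i\mapsto -e_i$). By the Kac--Peterson positivity theorem (Chapter 11 of \cite{K2}, valid for any symmetrizable GCM), this form is positive definite on every root space $\FF_\beta$, $\beta\in\Phi$. Since $E_i$ and $F_i$ are real multibrackets of the $e_j$ and $f_j$, the compact involution of $\FF$ restricts to that of $\Fib$, so the Hermitian form is $\Fib$-contravariant; and for $m\neq 0$ it is positive definite on all of $\Fib(m)$, because no weight space of $\Fib(m)$ meets $\HH$. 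Positive definiteness eliminates the isotropic case outright: working weight space by weight space in the finite-dimensional spaces $\Fib(m)_\mu$ one gets $V(m)=U(m)\oplus\bigl(U(m)^{\perp}\cap V(m)\bigr)$ as $\Fib$-modules, and since $U(m)_{\Lambda_m}=0$ the generator $v_\beta$ lies in the second summand, forcing $U(m)=0$; the same orthogonal-complement argument gives $\Fib(m)=V(m)\oplus V(m)^{\perp}$ with $V(m)^{\perp}\cong Y(m)=Y(m)_-\oplus Y(m)_+$ by Proposition \ref{Y(m)}. I would urge you to verify the hypotheses and exact statement of that positivity theorem carefully, because if it applies as I expect, this one substitution turns your strategy into a complete proof and makes the vertex-algebra, no-ghost detour you sketch at the end unnecessary.
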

Note that if this conjecture can be proven true, then this will complete the proof of Conjecture \ref{conjecture}. 

\section{Determining inner multiplicities of irreducible submodules}\label{sec:inner}
We now outline procedures for determining the inner multiplicities of any irreducible $\Fib$-module $U$. Since $\nu(\Fib(m))=\Fib(-m)$ for all $m\in\Z$, we may assume $m\geq 0$, and let $U\subset\Fib(m)$ be an irreducible $\Fib$-submodule. The Weyl subgroup $W_\Fib$ preserves $P(U)$, and for all $\mu\in P(U)$, 
$Mult_U(w\mu) = Mult_U(\mu)$
for each $w \in W_\Fib$. Thus, determining the inner multiplicities of weights in $P(U)$ reduces to calculating multiplicities of only those weights inside the fundamental domain.

If $U=V^\lambda$ is a standard lowest-weight module with lowest weight $\lambda\in P^-$, the weight diagram $P(U)$ labeled with inner multiplicities can be found in the following recursive way. For an example, we refer the reader to \ref{subfig-2:rhomult} in Chapter \ref{ch:decomp0}. First, we determine the weights of $P(U)$. Starting with $\lambda$, find all weight strings that result from simple $W_\Fib$ reflections of $\lambda$ (by reflecting across the lines $\R\lambda_1, \R\lambda_2$), and add all the weights from those weight strings to the diagram. Then, reflect all of the previously found weights across each reflecting line, add those weight strings to the diagram, and repeat. Next we find the inner multiplicities of weights in the diagram, which follow the Racah-Speiser recursion,
$$Mult_\lambda(\mu) = -\sum_{1 \neq w \in W_\Fib} \det(w) \ Mult_\lambda\Big(\mu +(w\rho - \rho)\Big)$$  
where $\rho=\lambda_1+\lambda_2$. This author used the geometric approach outlined by Feingold in \cite{F3}, which we now describe for $U=V^\lambda$. The weights of $P(U)$ are partially ordered by height, whereby $\mu>\lambda$ if and only if $ht(\mu-\lambda)>0$. We start with the fact that for the lowest weight $\lambda$, $Mult_\lambda(\lambda)=1$. Now let $\lambda<\mu\in P(U)$, and assume that for all $\lambda\leq\gamma<\mu,$ $Mult_\lambda(\gamma)$ is already known. Refer to Figure \ref{subfig-1:racah1} for a diagram showing the $W_\Fib$ conjugates of $\rho$. For all $1\neq w\in W_\Fib$, note that $w\rho-\rho$ is in $\Phi_{\Fib,-}$. The Racah-Speiser recursion states that the multiplicity of $\mu$ is a sum of signed multiplicities of weights in the diagram of $P(U)$ lower than $\mu$, where the sign is determined by $-\det(w)=(-1)^{\ell(w)+1}$ where $\ell:W_\Fib\longrightarrow \Z$ is the standard length function on $W_\Fib$ with respect to $r_1, r_2$. 
\begin{figure}[!ht]
    \subfloat[Transparency diagram of Racah-Speiser recursion\label{subfig-1:racah1}]{%
      \includegraphics[trim = .5cm 0cm .2cm 1cm, clip=true, width=3 in]{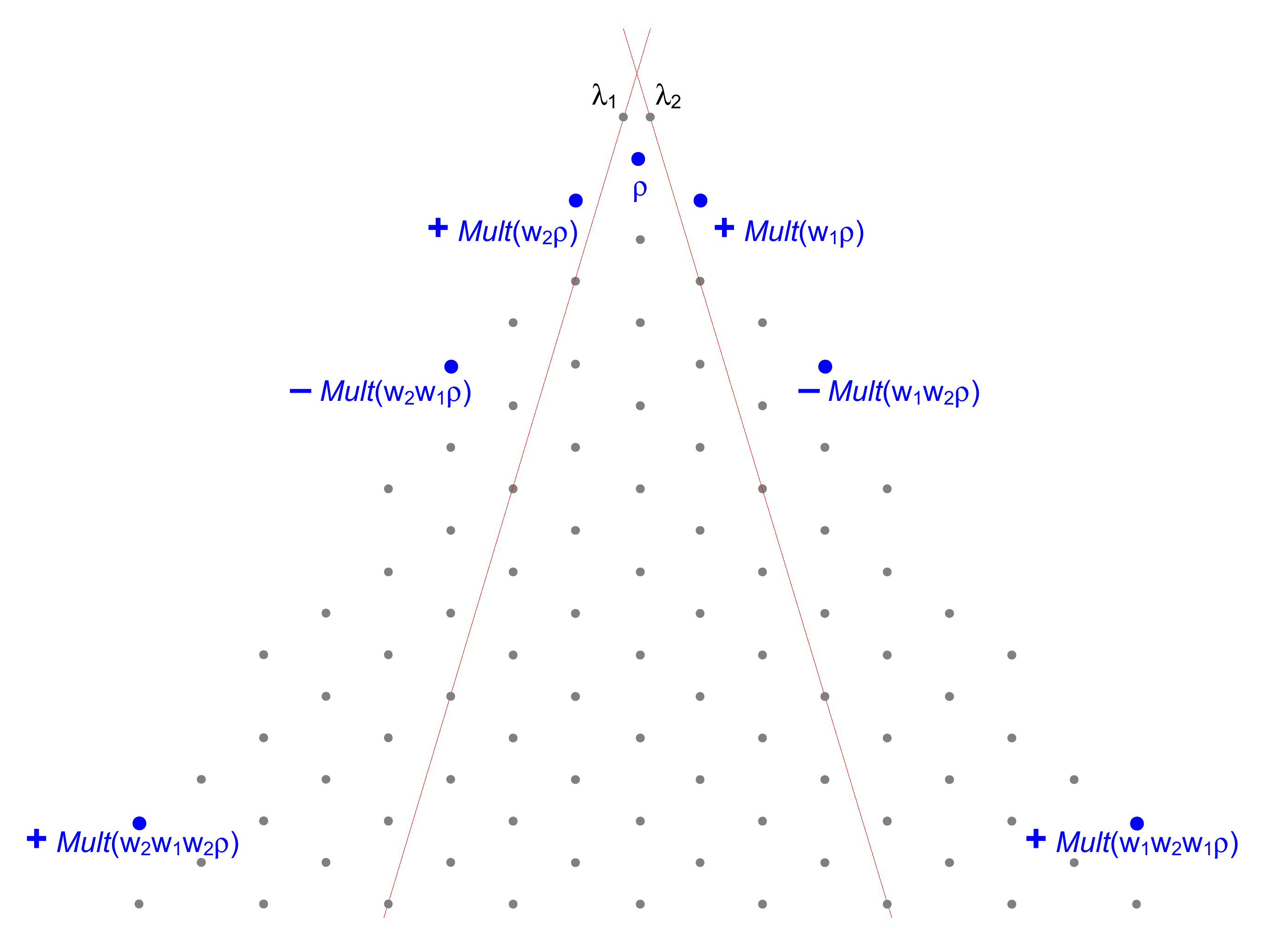}
    }
    \hfill
    \subfloat[Computation of $Mult_{-\rho}(\mu)$. $Mult(\gamma)$ for each $\gamma < \mu$ was previously determined using Racah-Speiser, and is shown in green. Signed multiplicities relevant to recursion for $\mu$ are shown in blue. \label{subfig-2:racah2}]{%
      \includegraphics[trim = 3cm 1cm 2cm 0cm, clip=true, width=2.5 in]{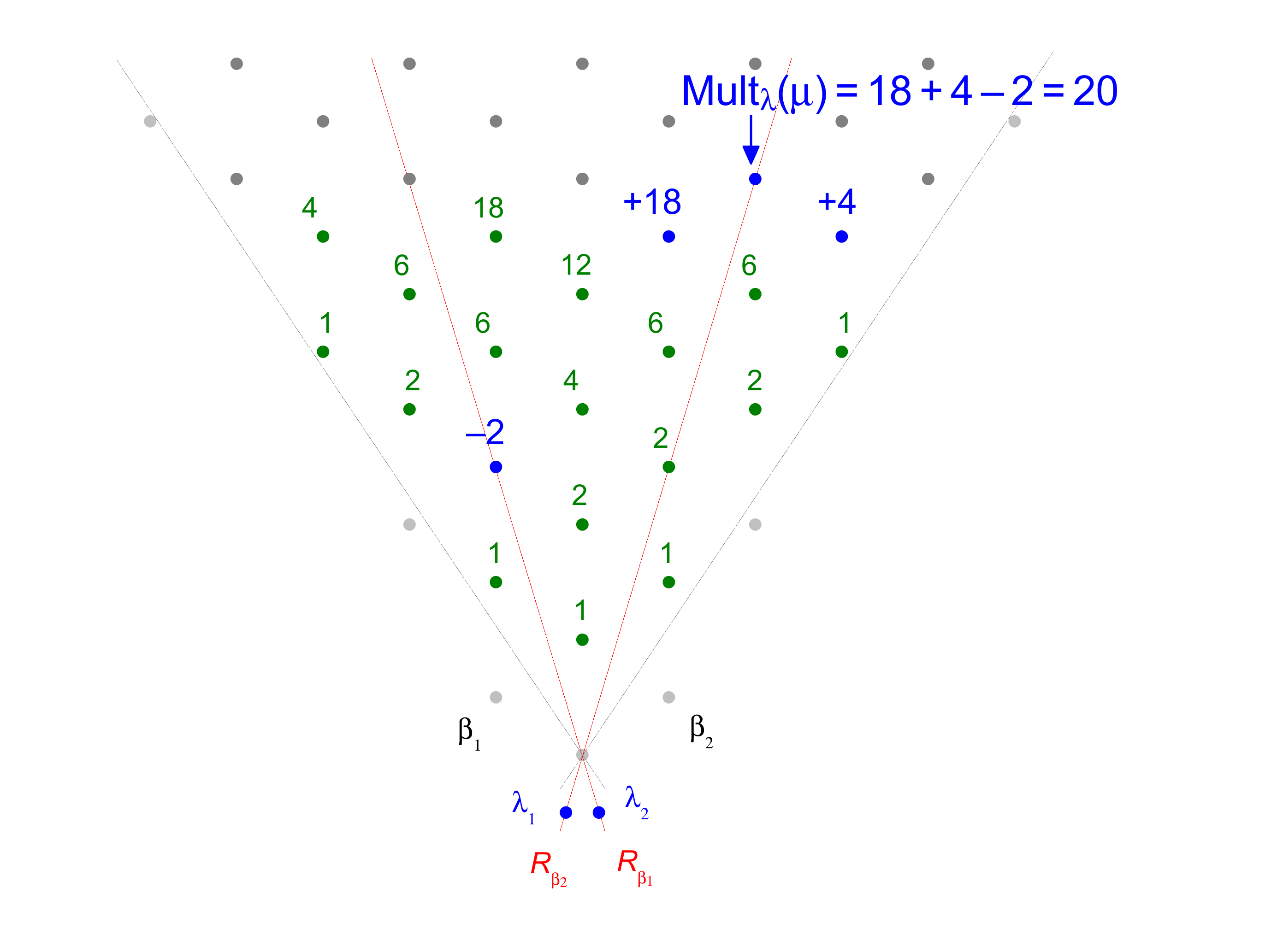}
    } 
    \caption{Explanation of Racah-Speiser recursion for the example of $V^{-\rho}\subset\Fib(0)$. The red lines are the reflecting lines $\beta_1^\perp, \beta_2^\perp$. }
    \label{fig:racah}
  \end{figure}
Feingold noted that one may make a transparency of the $W_\Fib$-conjugates of $\rho$ (with the same lattice spacing as $P(U))$, label it with signs to denote which multiplicities get added or subtracted, and overlay this transparency onto the weight diagram of $U$, being sure to line up $\mu$ with $\rho$. Then, one may quickly compute $Mult_\lambda(\mu)$ by adding the signed multiplicities of weights referred to in the Racah-Speiser formula. See Figure \ref{subfig-2:racah2} for an example (note that Figure \ref{subfig-2:rhomult} shows the completed diagram up to height 12).

If $U=\Fib$ is the adjoint representation on level 0, the weight multiplicities follow a Kac-Peterson recursion and have already been determined by Kac (Chapter 11 of \cite{K2}).

If $U$ is a non-standard module on levels $\pm1$ or $\pm 2$, there is no recursion found in the literature for determining the weight multiplicities of $U$. The approach for finding these multiplicities (which was briefly outlined in Section \ref{sec:kacmoody}) will be elaborated upon in Chapter \ref{ch:nonstd} in the cases of $\Fib(1)$ and $\Fib(2)$.

\section{Determining outer multiplicities of standard $\Fib$-submodules}\label{sec:outer}
We will use the following shorthand notation for the outer multiplicity of a highest or lowest weight $\lambda$ in $\Fib(m)$, 
$$M_m(\lambda)=M_{\Fib(m)}(\lambda).$$


For $|m|>2$, Proposition \ref{fibmred} gives us that $\Fib(m)_+$ and $\Fib(m)_-$ are integrable $\Fib$-modules from category $\cO$ and $\cO^{op}$, respectively, so
$$\Fib(m)=\Bigg(\bigoplus_{\mu\in P'(\Fib(m))_+} M_m(\mu) V^{\mu}\Bigg)\oplus \Bigg(\bigoplus_{\lambda\in P'(\Fib(m))_-} M_m(\lambda) V^\lambda \Bigg).$$
By the $\psi$-symmetry of $\Fib(m)$, it is enough to find the outer multiplicities of only positive weights, since for all $\mu\in P'(\Fib(m))_+$, $\mu=\psi(\lambda)$ for some $\lambda\in P'(\Fib(m))_-$, and moreover, $M_m(\lambda)=M_m(\psi(\lambda))=M_m(\mu)$, hence
$$\Fib(m)=\bigoplus_{\lambda\in P'(\Fib(m))_-} M_m(\lambda)\Big(V^\lambda \oplus V^{\psi(\lambda)}\Big).$$

For $0<|m|\leq 2$, Proposition \ref{Y(m)} and the assumption that $V^{\Lambda_m}$ is a direct summand of $\Fib(m)$ gives us a similar decomposition,
$$\Fib(m)=V^{\Lambda_m}\oplus \bigoplus_{\lambda\in P'(\Fib(m))_-} M_m(\lambda)\Big(V^\lambda \oplus V^{\psi(\lambda)}\Big).$$

We wish to find the outer multiplicities $M_m(\lambda)$ for all highest and lowest weights of $\Fib$ in $P(\Fib(m))$. If we know the root multiplicities of $\FF$, and if we know the inner multiplicities of 1) standard modules on each level (by Racah-Speiser), 2) the adjoint representation (by Kac-Peterson), and 3) the non-standard modules (by the recursion which will be detailed in Chapter \ref{ch:nonstd}), then the above decompositions hint at a recursive algorithm for finding  the outer multiplicities of any level. This algorithm will be detailed in the next chapter, using the example of decomposing $\Fib(0)$.

\chapter{Finding decomposition data for level 0}\label{ch:decomp0}

\section{Trivial and adjoint representations of $\Fib$}\label{sec:trivial} 
We wish to express each $\Fib(m)$ as a direct sum of irreducible $\Fib$-modules, starting with level 0. In \cite{FF}, $\Aff$-level 0 consisted only of $\Aff$. Similarly, we showed in Theorem \ref{onlyone} that the adjoint representation of $\Fib$ is the unique irreducible non-standard module in $\Fib(0)$. We now find other $\Fib$-modules in $\Fib(0)$.

First we note that the Cartan of $\Fib$ is two-dimensional while the Cartan of $\FF$ is three-dimensional.  We now find a vector in $\HH$ that commutes with all of $\Fib$, that is, the vector generates a one-dimensional trivial representation of $\Fib$. 

\begin{theorem}\label{trivial}
The element $C:=-\frac{1}{2}(h_1-2h_3)\in\HH$ generates a trivial one-dimensional $\Fib$-module $V^0=\C  C$ in $\Fib(0)$, that is, $[C,\Fib]=0$, and $\HH=\HH_\Fib\oplus V^0$. Moreover, 
$$[C, X]=mX$$
for all $X\in\Fib(m)$, so the grading of $\FF$ by $\Fib$ level,
$$\FF=\bigoplus_{m\in\Z}\Fib(m),$$
is the eigenspace decomposition of $\FF$ with respect to $ad_C$, and $[C,\Fib(0)]=0$.
\end{theorem}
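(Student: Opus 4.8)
The plan is to verify the assertions one by one; all of them reduce to short computations in the Cartan subalgebra $\HH$ and its dual. I will use freely that $\HH$ is abelian, that $E_i\in\FF_{\beta_i}$ and $F_i\in\FF_{-\beta_i}$, that $[h,X]=\beta(h)X$ for $h\in\HH$ and $X\in\FF_\beta$, and that $\alpha_i(h_j)=a_{ij}$, the entries of the (symmetric) Cartan matrix $A$. To see $[C,\Fib]=0$, observe that $ad_C$ is a derivation of $\FF$, so its kernel is a Lie subalgebra; it therefore suffices to check that $C=-\tfrac12 h_1+h_3$ commutes with the Serre generators $H_1,H_2,E_1,E_2,F_1,F_2$ of $\Fib$. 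Commutation with $H_1,H_2$ is automatic, and for the root vectors $[C,E_i]=\beta_i(C)E_i$ and $[C,F_i]=-\beta_i(C)F_i$, where I compute $\beta_1(C)$ and $\beta_2(C)$ from $\beta_1=2\alpha_1+\alpha_2+\alpha_3$, $\beta_2=\alpha_2$ and the relevant columns of $A$; I expect both to vanish, giving $\Fib\subseteq\ker(ad_C)$. In particular $\C C$ is then a one-dimensional $\Fib$-submodule of $\FF$ with trivial action, since $x\cdot C=[x,C]=0$ for $x\in\Fib$. For the splitting $\HH=\HH_\Fib\oplus V^0$, I write $H_1=2h_1+h_2+h_3$, $H_2=h_2$, $C=-\tfrac12 h_1+h_3$ in the basis $\{h_1,h_2,h_3\}$ and check that the determinant of their coordinate vectors is nonzero (I get $\tfrac52$), so $\{H_1,H_2,C\}$ is a basis of the three-dimensional space $\HH$; since $\HH_\Fib=\C H_1\oplus\C H_2$ this yields $\HH=\HH_\Fib\oplus\C C=\HH_\Fib\oplus V^0$.

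The substantive point is that $C$ realizes the $\Fib$-level grading, i.e. $[C,X]=mX$ for all $X\in\Fib(m)$. By \eqref{eq:fibm} it is enough to treat $X\in\FF_\beta$ with $\beta\in\Delta_m$ (when $m=0$ one also has the Cartan $\HH$, on which $ad_C$ acts as $0$). Writing $\beta=n_1\alpha_1+n_2\alpha_2+n_3\alpha_3$, whose matrix is $\left[\begin{smallmatrix}-n_2+n_3 & n_1-n_2\\ n_1-n_2 & -n_3\end{smallmatrix}\right]$, I get $[C,X]=\beta(C)X$ with $\beta(C)=-\tfrac12\beta(h_1)+\beta(h_3)=-\tfrac12(2n_1-2n_2)+(-n_2+2n_3)=-n_1+2n_3$; on the other hand the $\Fib$-level of $\left[\begin{smallmatrix}a&b\\ b&c\end{smallmatrix}\right]$ is $a-b-c$, which for this matrix is also $-n_1+2n_3$. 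Hence $\beta(C)=m$, so $[C,X]=mX$ on each root space, and by linearity on all of $\Fib(m)$.

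Finally, combining this with the grading $\FF=\bigoplus_{m\in\Z}\Fib(m)$ shows that $ad_C$ acts as the scalar $m$ on the summand $\Fib(m)$, so this is exactly the eigenspace decomposition of $ad_C$ on $\FF$, with $\Fib(m)$ the $m$-eigenspace; in particular $\Fib(0)=\ker(ad_C)$, i.e. $[C,\Fib(0)]=0$, and since $[C,C]=0$ we have $C\in\Fib(0)$, so $V^0=\C C\subseteq\Fib(0)$. I do not expect any real obstacle here: every step is an elementary computation, and the only thing needing care is keeping the three descriptions of a root $\beta$ — as $\sum n_i\alpha_i$, as a symmetric $2\times2$ matrix, and via its $\Fib$-level — consistent, so that the identity $\beta(C)=m$ drops out cleanly.
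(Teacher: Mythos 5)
Your proof is correct and follows essentially the same route as the paper's: the heart of both arguments is the coordinate computation showing $\beta(C)=m$ for every $\beta\in\Delta_m$ (the paper writes it as $[C,X]=-(x-2z)X$ and converts via $x=b-a-c$, $z=-c$, which agrees with your $-n_1+2n_3=a-b-c$). The only differences are cosmetic: you check $\beta_i(C)=0$ on the Serre generators directly (it also follows from $\beta_i\in\Delta_0$ once the level formula is in hand), whereas the paper instead invokes Proposition \ref{wheretrivial} and a symmetry argument on the Cartan matrices to single out the direction $h_1-2h_3$; and you make explicit the determinant check for $\HH=\HH_\Fib\oplus V^0$, which the paper leaves implicit.
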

\begin{proof}

Let $C$ be a generator of a trivial $\Fib$-module. Proposition \ref{wheretrivial} showed that $C$ must be a Cartan element, and moreover, $\Z(\alpha_1-2\alpha_3)$ are the only solutions to $\alpha(H_i)=0$ for $i=1,2$.  By Remark \ref{aij=aji}, since the Cartan matrices $A$ and $B(3)$ are both symmetric, we have for $1\leq i,j\leq 3$, $\alpha_i(h_j)=\alpha_j(h_i)$, and for $1\leq k,l\leq 2$, $\beta_k(H_l)=\beta_l(H_k)$. Extending linearly, this gives us $\alpha(H_i)=0 \Leftrightarrow \beta_i(H_\alpha) = 0,$ where $H_\alpha\in\R(h_1-2h_3)$. We choose $C=-\frac{1}{2}(h_1-2h_3)$.

Now suppose $\alpha=x\alpha_1+y\alpha_2+z\alpha_3\in\Delta_m$.  Consider $X\in\FF_\alpha$. Then we have
$$[C,X] = (x\alpha_1(-\frac{1}{2}(h_1-2h_3))+ y\alpha_2(-\frac{1}{2}(h_1-2h_3)) + z\alpha_3(-\frac{1}{2}(h_1-2h_3)))X$$
$$=-\frac{1}{2}(x(2)+y(-2-2(-1))+z(-2(2)))X =-(x-2z)X$$
Using the conversions $x=b-a-c$, $z=-c$, and $m=a-b-c$ (since $\alpha\in\Delta_m$), it follows that $[C,X] =-(b-a+c)X=mX.$\end{proof}

\section{Highest and lowest-weight modules of $\Fib$ in level 0}\label{sec:low} 
We may now ``complete'' Proposition \ref{Y(m)} to include the case $m=0$, and show that $Y(0)=\Fib(0)\Big/\Big(V^0 \oplus \Fib \Big)$ is a direct sum of highest- and lowest-weight $\Fib$-modules.

\begin{proposition}[cf. Proposition \ref{Y(m)}]\label{Y(0)}
Let $Y(0)=\Fib(0)\Big/\Big(V^0 \oplus \Fib\Big)$. Then 
\begin{enumerate}[i)]
\item $P(Y(0))$ consists only of weights $\mu$ where $||\mu||^2< 0$, and
\item $Y(0)$ is a sum of integrable highest-weight $\Fib$-modules in category $\cO$ (which are therefore completely reducible) and integrable lowest-weight $\Fib$-modules in category $\cO^{op}$ (which are also completely reducible), hence $Y(0)$ is completely reducible.
\end{enumerate}
\end{proposition}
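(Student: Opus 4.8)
The plan is to mirror the proof of Proposition~\ref{Y(m)} almost verbatim, making the single necessary adjustment that on level~$0$ we have already quotiented out not only the adjoint $\Fib$ but also the trivial module $V^0=\C C$, so that the non-standard weights $\pi(\Delta_0^{re})$ and the zero weight $\Lambda_0=0$ have been fully removed.

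\medskip

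\noindent\textbf{Proof of Proposition~\ref{Y(0)}.}

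\emph{Part (i).} First I would argue that no weight of $Y(0)$ has nonnegative norm. By Lemma~\ref{>2}(b), a weight $\pi(\beta)$ with $\beta\in\Delta_0$ has $||\pi(\beta)||^2>0$ if and only if $\beta$ is a real root, i.e.\ $||\beta||^2=2$, and it has $||\pi(\beta)||^2=0$ only if $\beta$ is imaginary with $||\beta||^2 = 0$; but the only imaginary root of $\Fib$-level~$0$ with zero $\FF$-norm that projects to the null line would be $0$ itself, and in fact $||\pi(\beta)||^2 = ||\beta||^2$ on level $0$ by the preceding lemma, so $||\pi(\beta)||^2 = 0$ forces $\beta$ to lie on the $\FF$ null-cone; since $\Lambda_0 = 0$ is the coset representative and $0\notin P(Y(0))$ (see below), and since no nonzero root of $\Fib$ lies on the $\Fib$ null-cone (irrational slope, Section~\ref{sec:fiblevel}), we get $||\mu||^2\neq 0$ for $\mu\in P(Y(0))$. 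For the real roots: for every $\beta\in\Delta_0^{re}$ we have $Mult_\FF(\beta)=1$ by Lemma~\ref{oneorbit}(i) together with $W$-invariance of root multiplicities, and in $V(0)=\Fib$ (recall from Theorem~\ref{onlyone} that $V(0)=\cU(\Fib)\cdot E_2=\Fib$) the weight $\pi(\beta)$ already has inner multiplicity $1$. Hence
$$Mult_{Y(0)}\bigl(\pi(\beta)\bigr) = Mult_{\Fib(0)}\bigl(\pi(\beta)\bigr) - Mult_{V^0}\bigl(\pi(\beta)\bigr)-Mult_{\Fib}\bigl(\pi(\beta)\bigr) = 1 - 0 - 1 = 0,$$
so $\pi(\beta)\notin P(Y(0))$. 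Likewise $0=\Lambda_0\in P(\Fib(0))$ has $Mult_{\Fib(0)}(0)=\dim\HH_\FF=3$, while $\HH_\FF = \HH_\Fib\oplus V^0$ (Theorem~\ref{trivial}) contributes $2$ to $V^0\oplus\Fib$ via $\HH_\Fib\subset\Fib$ plus $1$ from $V^0$, for a total of $3$, giving $Mult_{Y(0)}(0)=0$ as well. Thus $P(Y(0))\subseteq \pi(\Delta_0^{im})\setminus\{0\}$, and by Lemma~\ref{>2}(b) and the norm identity, every such weight has $||\mu||^2<0$.

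\emph{Part (ii).} This is now identical to the proof of Proposition~\ref{Y(m)}(ii). Since $P(Y(0))$ lies strictly inside the $\Fib$ light-cone $LC_\Fib$, it partitions as $P(Y(0)) = P(Y(0))_-\sqcup P(Y(0))_+$ with $P(Y(0))_\pm\subset LC_{\Fib,\mp}$, and by the $\psi$-symmetry of $\Fib(0)$ we have $P(Y(0))_- = \psi\bigl(P(Y(0))_+\bigr)$. Because $W_\Fib(LC_{\Fib,+})\cap W_\Fib(LC_{\Fib,-})=\{0\}$ by \eqref{eq:Winv}, the submodules $Y(0)_+ = \bigoplus_{\mu\in P(Y(0))_+} Y(0)_\mu$ and $Y(0)_-=\bigoplus_{\mu\in P(Y(0))_-}Y(0)_\mu$ are genuine $\Fib$-submodules (no weight string in $\Delta$ through a weight of $Y(0)_+$ can re-enter $P(Y(0))_-$ without passing through $0\notin P(Y(0))$), each $\HH_\Fib$-diagonalizable and integrable as a subquotient of the integrable module $\Fib(0)$. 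Then $Y(0)_+$ lies in category $\cO$ and $Y(0)_-$ in category $\cO^{op}$, so by Theorem~\ref{reducible} (and its $\cO^{op}$ analogue in the Remark following Definition~\ref{Vpm}) each completely reduces, $Y(0)_+$ into integrable highest-weight modules and $Y(0)_-$ into integrable lowest-weight modules. Hence $Y(0)=Y(0)_-\oplus Y(0)_+$ is completely reducible. \qed

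\medskip

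\noindent The main obstacle is bookkeeping rather than mathematics: one must be careful that the multiplicity count at the zero weight correctly accounts for the three-dimensional $\HH_\FF$ split as $\HH_\Fib\oplus V^0$, so that $0$ is genuinely removed from $P(Y(0))$ and the $\cO$/$\cO^{op}$ splitting argument of Proposition~\ref{Y(m)}(ii) applies without a leftover weight on the boundary between the forward and backward light-cones. Everything else transfers directly from the already-established Lemmas~\ref{>2}, \ref{oneorbit}, \ref{3.12}, Theorem~\ref{onlyone}, and Theorem~\ref{trivial}.
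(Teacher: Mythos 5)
Your proof is correct and follows essentially the same route as the paper: the paper's own argument consists of the dimension count $\dim(V^0)+\dim(\HH_\Fib)=3=\dim(\FF_0)$ to remove the zero weight, followed by the observation that the rest is identical to the proof of Proposition \ref{Y(m)} with $m=0$. You have simply written out explicitly the steps the paper delegates to that earlier proposition (the removal of the real-root projections via $Mult_\FF(\beta)=1=Mult_\Fib(\pi(\beta))$ and the $\cO$/$\cO^{op}$ splitting), so no further comparison is needed.
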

\begin{proof}
Since $V^0\cap\Fib=\{0\}$, and $\dim(V^0)+\dim(\HH_\Fib) = 3=\dim(\FF_0),$ we have that $0\notin P(Y(0))$. The rest of the proof is identical to the proof of Proposition \ref{Y(m)}, only setting $m=0$.
\end{proof}

\begin{proposition}\label{direct}$\Fib$ is a direct summand of $\Fib(0)$.
\end{proposition}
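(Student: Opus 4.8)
The plan is to exhibit an explicit $\Fib$-module complement to $\Fib$ inside $\Fib(0)$, namely the orthogonal complement of $\Fib$ with respect to the invariant symmetric bilinear form $(\cdot,\cdot)$ of $\FF$. First recall that every root of $\Delta=\Delta_\Fib$ lies in the $\Fib$-plane, so $\Delta_0=\Delta$ and $\Fib=\HH_\Fib\oplus\bigoplus_{\beta\in\Delta}\Fib_\beta$ sits inside $\Fib(0)=\HH\oplus\bigoplus_{\beta\in\Delta_0}\FF_\beta$ as a $\Fib$-submodule (the $\Fib$-action on $\Fib(0)$ being the restriction of the adjoint action of $\FF$, and $\Fib$ a subalgebra). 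Since the projection $\pi$ restricts to the identity on the $\Fib$-plane, for each nonzero $\HH_\Fib$-weight $\mu$ one has $\Fib(0)_\mu=\FF_\mu$, while $\Fib(0)_0=\HH$; in particular every weight space of $\Fib(0)$ is finite dimensional. Because $(\FF_\alpha,\FF_\gamma)=0$ unless $\gamma=-\alpha$ and $(\cdot,\cdot)|_\HH$ is non-degenerate, the form of $\FF$ pairs $\Fib(0)_\mu$ non-degenerately with $\Fib(0)_{-\mu}$ for every $\mu$.

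Next I would set $\Fib^\perp=\{x\in\Fib(0)\mid (x,y)=0 \text{ for all } y\in\Fib\}$. By invariance of the form this is a $\Fib$-submodule of $\Fib(0)$, and it is $\HH_\Fib$-graded with $(\Fib^\perp)_\mu=\{v\in\Fib(0)_\mu\mid (v,\Fib_{-\mu})=0\}$. Granting that $(\cdot,\cdot)$ restricts to a non-degenerate form on $\Fib$, I argue one weight space at a time. Since $\Fib$ is $\HH_\Fib$-graded with $(\Fib_\lambda,\Fib_\nu)=0$ unless $\lambda+\nu=0$, non-degeneracy on $\Fib$ forces the pairing $\Fib_\mu\times\Fib_{-\mu}\to\C$ to be non-degenerate, hence $\Fib_\mu\cap(\Fib^\perp)_\mu=\{v\in\Fib_\mu\mid (v,\Fib_{-\mu})=0\}=0$. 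On the other hand, the non-degenerate pairing $\Fib(0)_\mu\times\Fib(0)_{-\mu}\to\C$ gives $\dim(\Fib^\perp)_\mu=\dim\Fib(0)_\mu-\dim\Fib_{-\mu}=\dim\Fib(0)_\mu-\dim\Fib_\mu$ (using $\nu$ to see $\dim\Fib_{-\mu}=\dim\Fib_\mu$). Thus $\Fib(0)_\mu=\Fib_\mu\oplus(\Fib^\perp)_\mu$ for every $\mu$, and summing over $\mu$ yields $\Fib(0)=\Fib\oplus\Fib^\perp$ as $\Fib$-modules. For $\mu=0$ this recovers the splitting $\HH=\HH_\Fib\oplus\C C$ of Theorem~\ref{trivial}.

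The one genuinely nontrivial ingredient is the claim that $(\cdot,\cdot)$ is non-degenerate on $\Fib$ — a priori not obvious, since $\Fib$ is a large (hyperbolic) subalgebra and the form might in principle degenerate on it. I would argue that the radical $\mathfrak r=\{x\in\Fib\mid (x,\Fib)=0\}$ is an ideal of $\Fib$ by invariance of the form; since $\det B(3)=2\cdot 2-(-3)(-3)=-5\neq 0$, the algebra $\Fib\cong\mathcal{H}(3)$ is simple (equivalently, as used in the proof of Theorem~\ref{onlyone}, its adjoint representation has no proper submodules), so $\mathfrak r$ equals either $0$ or all of $\Fib$. But $\mathfrak r\cap\HH_\Fib\subseteq\{H\in\HH_\Fib\mid (H,\HH_\Fib)=0\}=0$, because the form on $(\HH_\Fib)^*_\R$ determined by the nonsingular matrix $B(3)$ is non-degenerate, hence so is the induced form on $\HH_\Fib$; as $\HH_\Fib\neq 0$ this rules out $\mathfrak r=\Fib$, leaving $\mathfrak r=0$. (Alternatively, one can note that the restriction of $(\cdot,\cdot)$ to $\Fib$ agrees with the standard invariant form of $\mathcal{H}(3)$, which is non-degenerate precisely because its Cartan matrix is nonsingular.) I expect pinning down this non-degeneracy on $\Fib$ to be the main obstacle; the rest is the routine "invariant form $\Rightarrow$ orthogonal complement is a submodule" argument, carried out weight space by weight space to accommodate the infinite-dimensionality of $\Fib(0)$.
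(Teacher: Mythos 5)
Your proof is correct, but it takes a genuinely different route from the paper's. The paper argues by contradiction: it assumes some $\Fib$-submodule $M\neq\Fib$ of $\Fib(0)$ meets $\Fib$ in a nonzero vector $v$, notes that $v$ then generates $\Fib$ (irreducibility), and uses Proposition \ref{Y(0)} to conclude that $\Fib$ would have to coincide with the trivial module $V^0$ or with a standard module $V^\lambda$, $\|\lambda\|^2<0$ --- both impossible since $P(\Fib)$ contains the real roots $\beta_i$ of positive norm. You instead exhibit an explicit complement, the orthogonal complement $\Fib^\perp$ with respect to the invariant form of $\FF$, reducing everything to the single nontrivial claim that $(\cdot,\cdot)|_{\Fib}$ is non-degenerate, which you settle by observing that the radical is an ideal of the simple algebra $\Fib\cong\cH(3)$ ($\det B(3)=-5\neq 0$) not containing $\HH_\Fib$. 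The weight-space-by-weight-space dimension count $\dim(\Fib^\perp)_\mu=\dim\Fib(0)_\mu-\dim\Fib_\mu$ and the $W$-/$\nu$-symmetry $\dim\Fib_{-\mu}=\dim\Fib_\mu$ are handled correctly, and the orthogonality $(\FF_\alpha,\FF_\gamma)=0$ for $\alpha+\gamma\neq 0$ that you invoke follows from invariance and the $\HH$-action even though the paper's background theorem does not state it explicitly. What your approach buys: it is independent of Proposition \ref{Y(0)} and of the classification of the other constituents of $\Fib(0)$, it produces a canonical complement rather than a pure existence statement, and it isolates the real content (non-degeneracy of the form on the subalgebra) in a way that would apply verbatim to any simple regular subalgebra. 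What the paper's approach buys is that it reuses machinery already established for the decomposition ($Y(0)$, irreducibility, the norm dichotomy between $\pi(\Delta^{re})$ and $LC_\Fib$) and requires no discussion of the bilinear form at all. Both are valid; yours is arguably the more robust argument.
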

\begin{proof}
If $\Fib$ is not a direct summand of $\Fib(0)$, then there necessarily exists a non-zero vector $v\in \Fib(0)$ and $\Fib$-module $\Fib\neq M\subset \Fib(0)$ such that $v\in \Fib\cap M$. Proposition \ref{Y(0)} shows that $M$ is either the trivial module or a direct sum of standard modules. Since $v\in\Fib$ and $\Fib$ is irreducible, $v$ is a generator of $\Fib$. If $v\in V^0$ then $v$ is a generator of $V^0$, but since $\Fib$ is irreducible, we have $\Fib=V^0$, a contradiction. Likewise, if $v\in M$ then it is a generator of some irreducible standard module $V^\lambda\subset M$ where $||\lambda||^2<0$, hence $\Fib=V^\lambda$, which is also a contradiction. Hence, $v$ does not exist, and $\Fib$ is a direct summand of $\Fib(0)$.
\end{proof}
Thus we have proven the following theorem, which proves the case of $m=0$ for Conjecture \ref{conjecture}:
\begin{theorem}\label{level0}
$$\Fib(0) = V^0 \oplus \Fib \oplus Y(0)_-\oplus Y(0)_+,$$
where $Y(0)_-$ and $Y(0)_+$ are direct sums of highest-weight and lowest-weight modules, respectively. 
\end{theorem}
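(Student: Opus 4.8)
The plan is to assemble Theorem~\ref{trivial}, Proposition~\ref{direct}, and Proposition~\ref{Y(0)}. By Proposition~\ref{direct} we may write $\Fib(0)=\Fib\oplus M$ for some $\Fib$-submodule $M$, and let $p\colon\Fib(0)\to\Fib$ be the associated $\Fib$-module projection. The first task is to show $V^0\subseteq M$, equivalently $p(V^0)=0$: since $p$ is a $\Fib$-module homomorphism, $p(V^0)$ is a trivial $\Fib$-submodule of $\Fib$; but $\Fib$ is irreducible and is not a trivial module (for instance $[E_1,F_1]=H_1\neq 0$), so its only trivial submodule is $\{0\}$, forcing $p(V^0)=0$. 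Next I pin down the weight-zero space of $M$: by (the proof of) Proposition~\ref{wheretrivial} the only element of $\Phi\cup\{0\}$ projecting to $0$ is $0$ itself, so the $\HH_\Fib$-weight-$0$ space of $\Fib(0)$ is exactly $\HH$; since $\HH=\HH_\Fib\oplus V^0$ by Theorem~\ref{trivial}, with $\HH_\Fib\subseteq\Fib$ and $\Fib\cap M=\{0\}$ and $V^0\subseteq M$, we conclude that the weight-$0$ space $M_0$ of $M$ equals $V^0$.

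The second task is to split $V^0$ off $M$ directly, avoiding any extension-splitting argument. Since $\Fib(0)=\Fib\oplus M$ with $V^0\subseteq M$, we have $M/V^0\cong\Fib(0)/(\Fib\oplus V^0)=Y(0)$, so by Proposition~\ref{Y(0)}(i) every nonzero weight $\mu$ of $M$ satisfies $||\mu||^2<0$; in particular $\mu\neq\pm\beta_1,\pm\beta_2$, since those have norm $2$. Hence for every nonzero weight $\mu$ of $M$ and $i=1,2$ we have $\mu\pm\beta_i\neq 0$, so $E_i$ and $F_i$ (of weights $\pm\beta_i$) carry $\bigoplus_{0\neq\mu}M_\mu$ into itself; since $\HH_\Fib$ preserves each weight space, $N:=\bigoplus_{0\neq\mu\in P(M)}M_\mu$ is a $\Fib$-submodule, and $M=V^0\oplus N$ as $\Fib$-modules.

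Finally I transport the decomposition of the quotient back: since $M=V^0\oplus N$ we have $N\cong M/V^0\cong Y(0)$, and Proposition~\ref{Y(0)}(ii) gives $Y(0)=Y(0)_-\oplus Y(0)_+$ with $Y(0)_+$ a direct sum of integrable highest-weight modules in $\cO$ and $Y(0)_-$ a direct sum of integrable lowest-weight modules in $\cO^{op}$. Transporting this splitting through the isomorphism $N\cong Y(0)$ realizes $Y(0)$, and its summands $Y(0)_{\pm}$, as $\Fib$-submodules of $\Fib(0)$, and then $\Fib(0)=\Fib\oplus M=\Fib\oplus V^0\oplus Y(0)_-\oplus Y(0)_+=V^0\oplus\Fib\oplus Y(0)_-\oplus Y(0)_+$; the assertion that $Y(0)_-$ and $Y(0)_+$ are direct sums of lowest- and highest-weight modules is inherited verbatim from Proposition~\ref{Y(0)}.

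The only subtle step is the second one, and everything there rests on Proposition~\ref{Y(0)}(i): because $V^0$ lives purely in weight $0$ while every other level-$0$ weight occurring outside $\Fib$ has strictly negative norm (hence is not $\pm\beta_i$), the raising and lowering operators $E_i,F_i$ cannot mix $V^0$ with the rest of $M$, which is exactly what lets $M_0=V^0$ peel off as a submodule rather than merely as a subquotient. One could instead split the short exact sequence $0\to V^0\to M\to Y(0)\to 0$ by lifting the extremal vectors of the irreducible summands of $Y(0)$ — they sit in nonzero, hence uniquely liftable, weight spaces, the annihilation conditions $E_iv=0$ or $F_iv=0$ persist since $\lambda\pm\beta_i\neq 0$ whenever $||\lambda||^2<0$, and Theorem~\ref{reducible} shows each lifted highest/lowest-weight submodule is irreducible — but the weight-space argument above is shorter given what has already been established.
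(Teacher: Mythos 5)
Your proof is correct, and it follows the same overall route as the paper — assembling Theorem \ref{trivial}, Proposition \ref{direct}, and Proposition \ref{Y(0)} — but it is noticeably more careful than the paper's treatment, which presents Theorem \ref{level0} as an immediate consequence of those results with no further argument. The point the paper glosses over, and which you address head-on, is why complete reducibility of the \emph{quotient} $Y(0)=\Fib(0)\big/(V^0\oplus\Fib)$ and the existence of the submodules $V^0$ and $\Fib$ actually yield a direct sum decomposition of $\Fib(0)$ itself, i.e.\ why the relevant extension splits. Your mechanism for this is sound: writing $\Fib(0)=\Fib\oplus M$ via Proposition \ref{direct}, the projection argument forces $V^0\subseteq M$; the weight-$0$ count $\HH=\HH_\Fib\oplus V^0$ pins down $M_0=V^0$; and since every nonzero weight of $M$ coincides with a weight of $Y(0)$ and hence has negative norm by Proposition \ref{Y(0)}(i), no such weight equals $\pm\beta_i$, so $E_i$ and $F_i$ cannot map $\bigoplus_{\mu\neq 0}M_\mu$ into the weight-$0$ space or vice versa, making that sum a complementary submodule $N\cong Y(0)$. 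This is exactly the kind of argument the paper would need to make Theorem \ref{level0} airtight, and your closing remark about the alternative — lifting extremal vectors of the irreducible summands of $Y(0)$, whose annihilation conditions persist because $\lambda\pm\beta_i\neq 0$ for $\|\lambda\|^2<0$ — is also a valid route. In short: same skeleton as the paper, but you supply a splitting argument the paper leaves implicit.
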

Thus finding the decomposition for $\Fib(0)$ is equivalent to finding the decomposition
$$Y(0) = \bigoplus_{\lambda\in P'(Y(0))_-}M_0(\lambda) (V^{\lambda}\oplus V^{\psi(\lambda)}),$$
where $V^\lambda$ and $V^{\psi(\lambda)}$ are irreducible lowest-weight and highest-weight modules, respectively, and 
$$P'(Y(0))_- = \{ \mu\in \Delta^{im}_+ \mid M_0(\lambda)>0 \} = \Delta\cap P^-_\Fib,$$
the set of negative dominant integral roots of $\Fib$. Thus for each $\lambda\in P'(Y(0))_-$, we must find the weight diagram for $V^\lambda$, its inner multiplicities, and outer multiplicity $M_0(\lambda)$. Section \ref{sec:inner} details how to find weight diagram and inner multiplicities for $V^\lambda$. We now focus on finding $M_0(\lambda)$.

First note that $\mu\in P'(Y(0))_-$ if and only if
$$Mult_{Y(0)_-}(\mu) = Mult_0(\mu) - Mult_\Fib(\mu)  - \delta_{0,\mu}>0,$$
and in general, 
$$Mult_0(\mu)\geq Mult_\Fib(\mu),$$
with equality only occurring when $\mu\in\Delta^{re}$. This inequality becomes evident when one compares Figures \ref{subfig-2:Fib(0)mult} and \ref{subfig-1:fibmult}, which show the multiplicities of the positive roots of $\Fib(0)$ up to height 12, and their multiplicities in $\Fib$, respectively \cite{K2}.

 \begin{figure}[!ht]
   \subfloat[Multiplicities of weights in $\Fib(0)$\label{subfig-2:Fib(0)mult}]{%
      \includegraphics[trim = 2.3cm 1cm 4cm 0cm, clip=true, width=3 in]{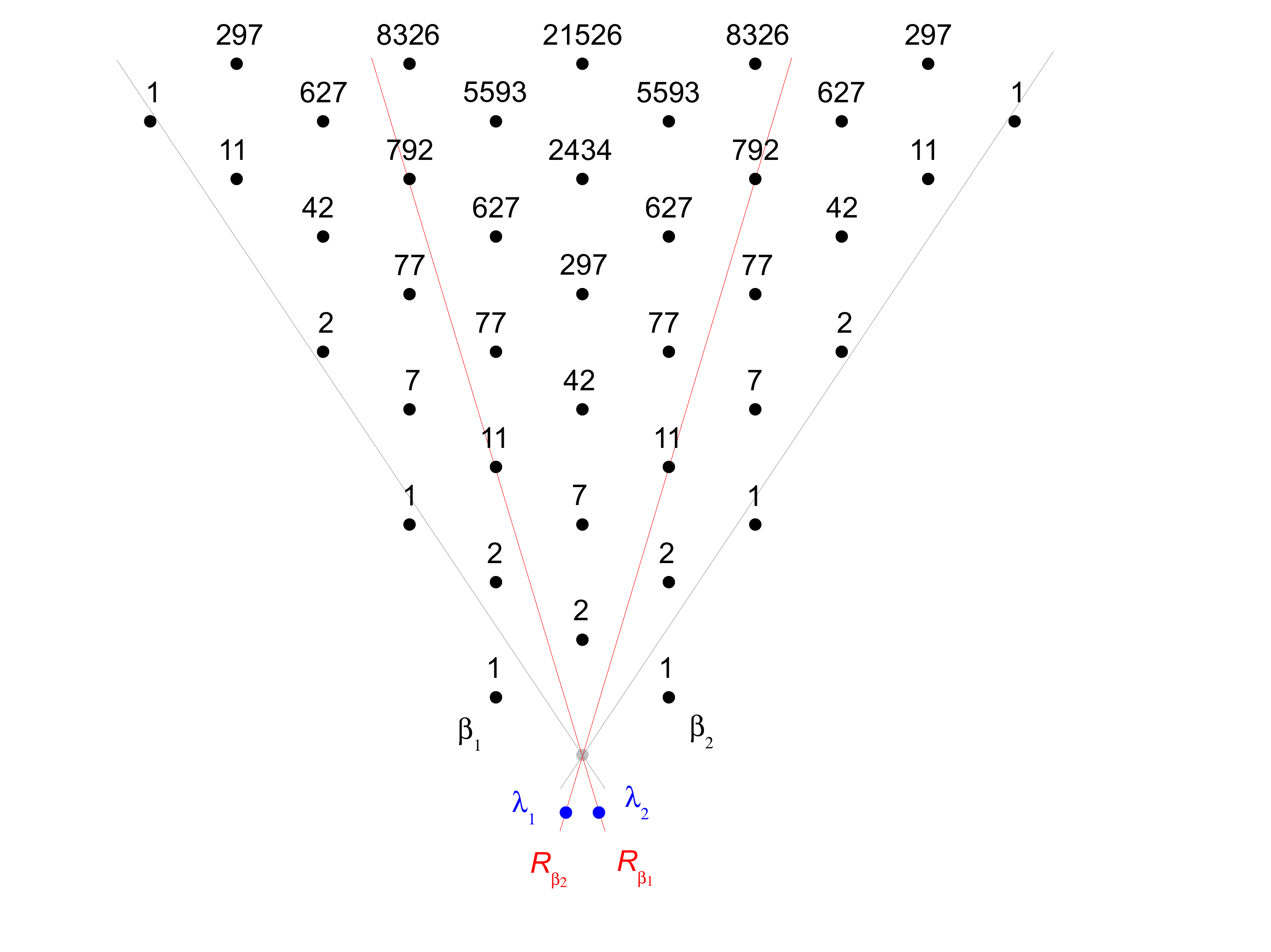}
    } 
    \hfill
     \subfloat[Multiplicities of roots in $\Fib$ \label{subfig-1:fibmult}]{%
      \includegraphics[trim = 2.3cm 1cm 4cm 0cm, clip=true, width=3 in]{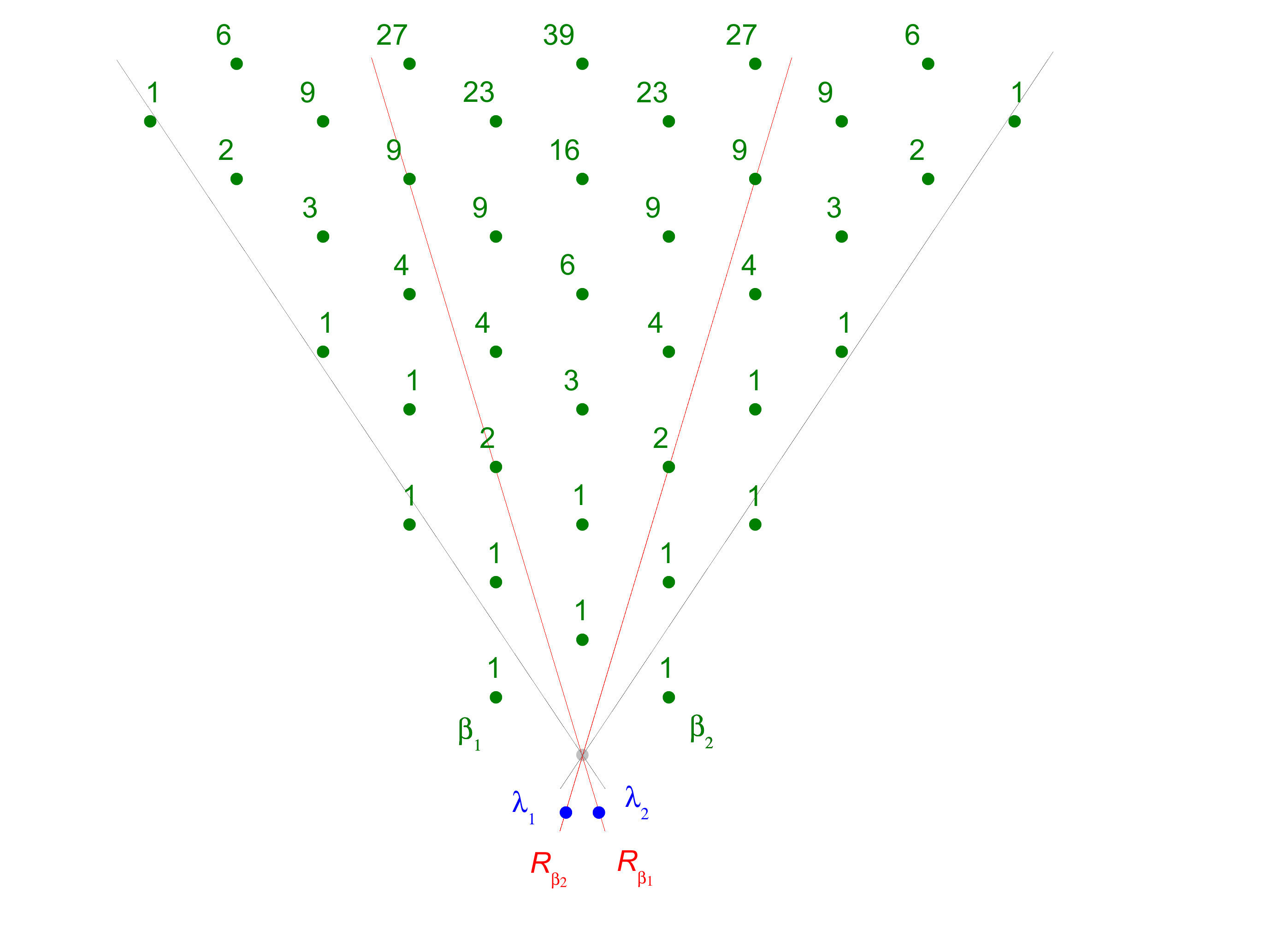}
    }    \caption{A comparison of the $\FF$- and $\Fib$-multiplicities of positive roots on level 0.}
    \label{fig:fibcompare}
  \end{figure}

Starting with the weight diagram $P(\Fib(0))=\Delta$, one may construct the weight diagram $P(Y(0))_-$ by deleting any weight $\mu$ for which $Mult_{Y(0)_-}(\mu)=0$ (i.e., the projections of real roots, and $0$). Then using the formula above one may label the remaining weights with their corresponding multiplicities in $Y(0)_-$. The result is shown in Figure \ref{subfig-1:Fib0-Fibmult}.
\begin{figure}[!ht]
    \subfloat[Multiplicities of negative weights of $Y(0)_-$ \label{subfig-1:Fib0-Fibmult}]{%
      \includegraphics[trim = 2.3cm 1cm 4cm 0cm, clip=true, width=3 in]{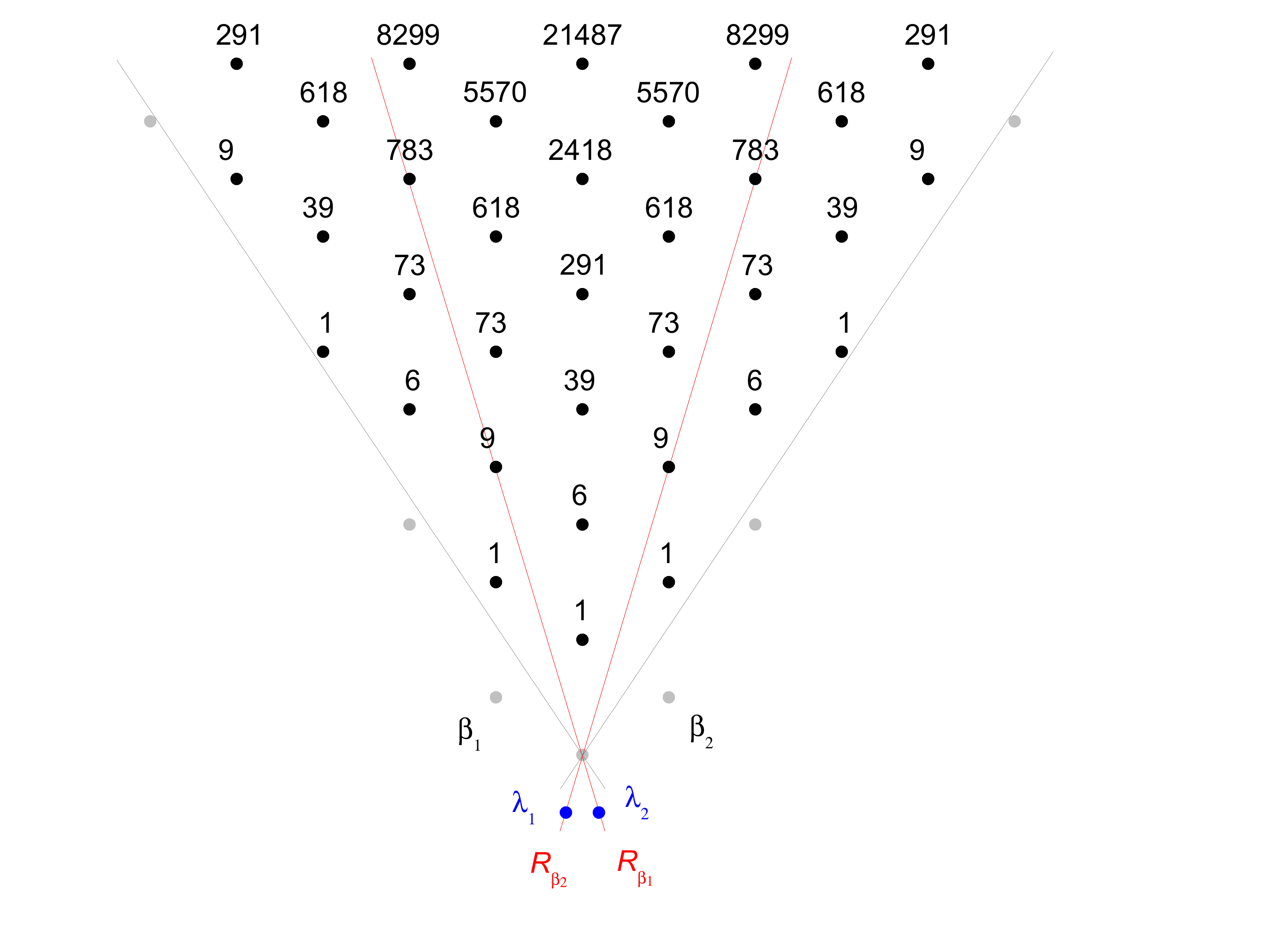} }
    \hfill
    \subfloat[Multiplicities of negative weights in $V^{-\rho}$ \label{subfig-2:rhomult}]{%
      \includegraphics[trim = 2.3cm 1cm 4cm 0cm, clip=true, width=3 in]{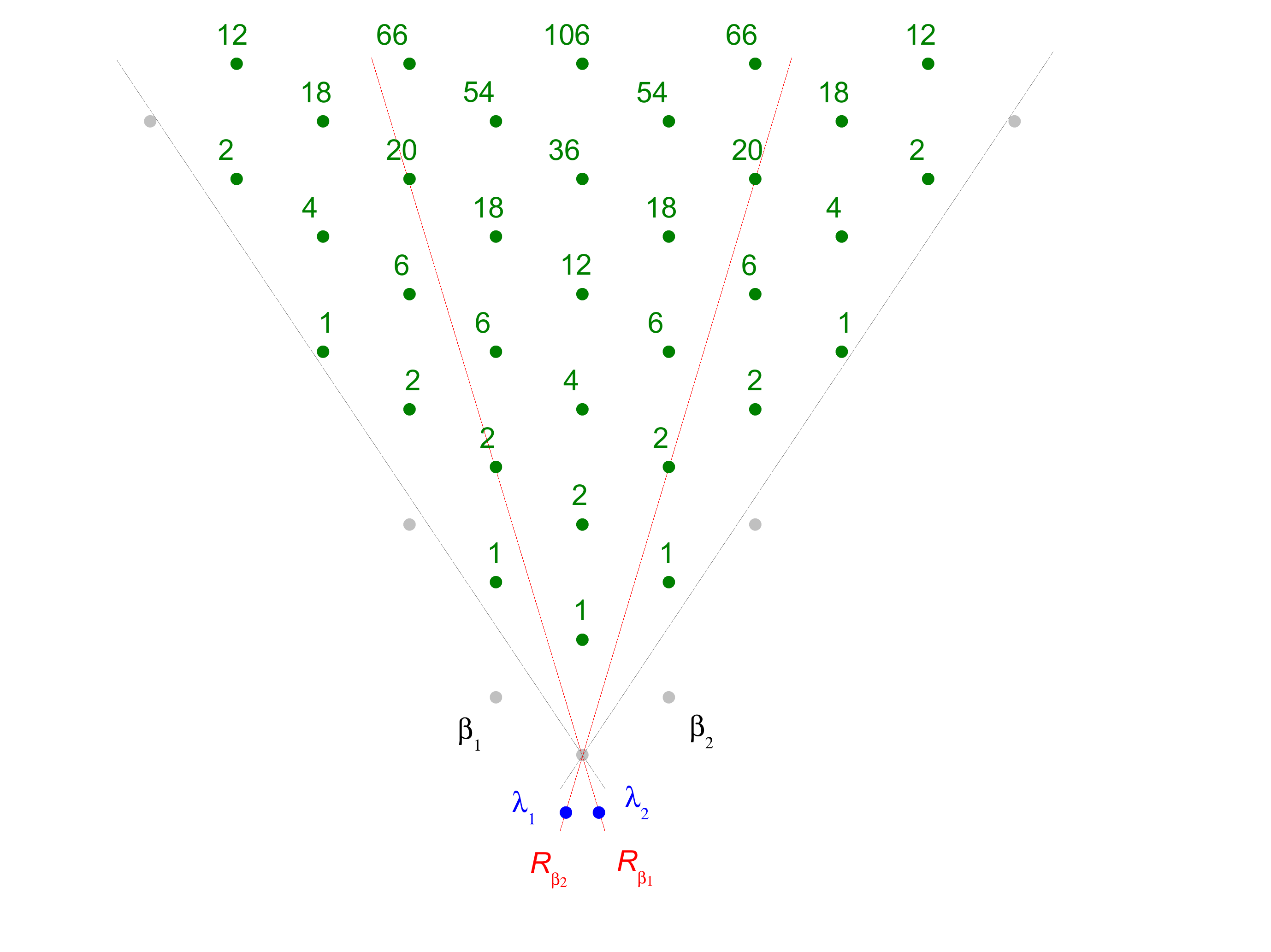}
    }
  
    \caption{A comparison of the multiplicities of negative weights of $Y(0)_-$ and $P(V^{-\rho})$.}
    \label{fig:deleteFib}
  \end{figure}

We now define a total order ``$\succ$'' on $P(Y(0))_-$ based on the partial order by height ``$>$'' in the following way.  For $\lambda=n_1\beta_1+ n_2\beta_2, \  \mu=m_1\beta_1+m_2\beta_2\in P(Y(0))_-$ (so $n_i, m_i\geq 0$ for $i=1,2$), define $\lambda\succ\mu$ to be true if and only if either 
\begin{itemize}
\item $\lambda>\mu$, or
\item $ht(\lambda-\mu)=0$ and $n_1>m_1$.
\end{itemize}
It is then clear from Figure \ref{subfig-1:Fib0-Fibmult} that
$$-\rho = -\rho_{\Fib} = -(\lambda_1+\lambda_2) = \frac{1}{5}\left[\begin{array}{cc}
	5 & 0  \\
	0 & 5 \\
	\end{array}\right]=\left[\begin{array}{cc}
	1 & 0  \\
	0 & 1 \\
	\end{array}\right]= \beta_1+\beta_2$$
is the minimal height in $P(Y(0))_-$ with respect to $\succ$, and $M_0(-\rho) = Mult_{Y(0)_-}(-\rho)=1.$

It would appear then that $\Fib_{\pm\rho}$ are the first examples of root spaces which contain extremal vectors for $\Fib$ in $\Fib(0)$. To verify this claim, we seek a basis for $Low_{\Fib(0)}(-\rho)$ that generates a $\Fib$-module $V^{-\rho}$. A basis for $\FF_{-\rho}$ is $\{[E_2, 2E_1] = e_{21123}, \ e_{12123}\}$, where the first basis element is in $\Fib$.  

\begin{theorem}\label{vrho}
The set  $\{v_{-\rho}=-3e_{12123}+e_{21123}\}$ is a basis for $Low_{\Fib(0)}(-\rho)$, and generates $V^{-\rho}\subset\Fib(0)$, with outer multiplicity $M_0(-\rho)=1$.
\end{theorem}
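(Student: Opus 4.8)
The plan is to verify $v_{-\rho}$ directly as a lowest-weight vector and then identify the module it generates. The relevant weight space is $\Fib(0)_{-\rho}=\FF_{-\rho}$, where $-\rho=\beta_1+\beta_2=2\alpha_1+2\alpha_2+\alpha_3$, a two-dimensional root space with the given basis $\{e_{12123},\,e_{21123}\}$, the second vector lying in $\Fib$. Since $F_1=-\tfrac{1}{2}f_{1123}$ lowers weight by $\beta_1$ and $F_2=f_2$ lowers weight by $\beta_2$, the operator $F_1$ maps $\FF_{-\rho}$ into $\FF_{-\rho-\beta_1}=\FF_{\alpha_2}=\C e_2$ and $F_2$ maps $\FF_{-\rho}$ into $\FF_{-\rho-\beta_2}=\FF_{\beta_1}=\C e_{1123}$. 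Hence $Low_{\Fib(0)}(-\rho)=\ker(F_1|_{\FF_{-\rho}})\cap\ker(F_2|_{\FF_{-\rho}})$, and the whole question reduces to computing the four scalars $F_i\cdot e_{12123}$ and $F_i\cdot e_{21123}$ for $i=1,2$ — precisely the procedure sketched in Section \ref{sec:inner}.

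First I would compute the $F_2$-action. Iterated use of the Jacobi identity, together with $[f_2,e_1]=0$ and the fact that $2\alpha_1+\alpha_3$ and $\alpha_1+\alpha_3$ are not roots, gives $F_2\cdot e_{12123}=e_{1123}$ and $F_2\cdot e_{21123}=-\beta_1(h_2)\,e_{1123}=3e_{1123}$. In particular $F_2$ does not vanish on $\FF_{-\rho}$, so $\ker(F_2|_{\FF_{-\rho}})$ is exactly one-dimensional; and since $F_2\cdot(-3e_{12123}+e_{21123})=(-3+3)e_{1123}=0$, this kernel is precisely $\C v_{-\rho}$. This already shows $Low_{\Fib(0)}(-\rho)\subseteq\C v_{-\rho}$, hence $\dim Low_{\Fib(0)}(-\rho)\le 1$.

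It remains to see $v_{-\rho}\in Low_{\Fib(0)}(-\rho)$, i.e. $F_1\cdot v_{-\rho}=0$. The term $F_1\cdot e_{21123}$ is easy: since $e_{21123}=2[E_2,E_1]$, the $\Fib$ Serre relations $[F_1,E_2]=0$, $[F_1,E_1]=H_1$, $[H_1,E_2]=-3E_2$ give $F_1\cdot e_{21123}=2[E_2,-H_1]=-6E_2=-6e_2$. I expect the main obstacle to be the multibracket evaluation $F_1\cdot e_{12123}=-\tfrac{1}{2}[f_{1123},e_{12123}]$: this is a bracket of a degree-four and a degree-five multibracket that must be reduced to a multiple of $e_2$ by repeated Jacobi expansions together with the multibracket identities collected in Appendix \ref{appendix:B} — the same sort of calculation, only somewhat longer, as the verification of $[E_1,F_1]=H_1$ in Chapter \ref{ch:fib}. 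It evaluates to $-2e_2$, whence $F_1\cdot(-3e_{12123}+e_{21123})=(6-6)e_2=0$. Alternatively one may bypass this computation entirely: the value $M_0(-\rho)=Mult_{Y(0)_-}(-\rho)=Mult_0(-\rho)-Mult_\Fib(-\rho)=2-1=1$ recorded before the theorem shows $Low_{\Fib(0)}(-\rho)\ne\{0\}$, which combined with the containment $Low_{\Fib(0)}(-\rho)\subseteq\C v_{-\rho}$ forces $Low_{\Fib(0)}(-\rho)=\C v_{-\rho}$ and $M_0(-\rho)=1$. Either way, $\{v_{-\rho}\}$ is a basis for $Low_{\Fib(0)}(-\rho)$ and $M_0(-\rho)=\dim Low_{\Fib(0)}(-\rho)=1$.

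Finally I would identify the module generated. The space $V^{-\rho}:=\cU(\Fib)\cdot v_{-\rho}=\cU(\mathfrak{n}_+)\cdot v_{-\rho}\subseteq\Fib(0)$ (where $\mathfrak{n}_+$ is the positive nilpotent subalgebra of $\Fib$) is a lowest-weight $\Fib$-module of lowest weight $-\rho\in P^-_\Fib$ with one-dimensional lowest weight space $\C v_{-\rho}$. Being a submodule of the integrable module $\Fib(0)$ (Proposition \ref{fibmintegrable}), it is integrable, hence completely reducible by the category-$\cO^{op}$ form of Theorem \ref{reducible}; but any nontrivial direct-sum decomposition of a cyclic lowest-weight module would split its one-dimensional lowest weight space, which is impossible, so $V^{-\rho}$ is irreducible. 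Thus $v_{-\rho}$ generates the standard (lowest-weight) module $V^{-\rho}\subseteq\Fib(0)$, with outer multiplicity $M_0(-\rho)=1$, as claimed.
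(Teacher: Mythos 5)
Your proof is correct and follows essentially the same route as the paper's: both reduce the theorem to computing the four scalars $F_i\cdot e_{12123}$ and $F_i\cdot e_{21123}$ on the two-dimensional space $\FF_{-\rho}$ and intersecting the kernels, and your values ($F_2\cdot e_{12123}=e_{1123}$, $F_2\cdot e_{21123}=3e_{1123}$, $F_1\cdot e_{21123}=-6e_2$, $F_1\cdot e_{12123}=-2e_2$) all agree with the paper's computation, which yields the single condition $a+3b=0$ from both $F_1$ and $F_2$. The one genuine difference is your proposed bypass of the long multibracket evaluation of $F_1\cdot e_{12123}$: since the $F_2$-computation alone pins down $\ker(F_2|_{\FF_{-\rho}})=\C v_{-\rho}$, and Theorem \ref{level0} together with $Mult_0(-\rho)-Mult_\Fib(-\rho)=1$ and the minimality of $-\rho$ in $P'(Y(0))_-$ forces $\dim Low_{\Fib(0)}(-\rho)=1$, the containment becomes an equality without ever touching $[f_{1123},e_{12123}]$; this is sound and not circular, because the nonvanishing of $Low_{\Fib(0)}(-\rho)$ is derived from the complete-reducibility of $Y(0)$ rather than from the theorem being proved, though it buys the conclusion at the cost of leaning on the structural results of Section \ref{sec:low} where the paper's argument is self-contained linear algebra. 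Your closing paragraph on irreducibility of $\cU(\Fib)\cdot v_{-\rho}$ is a harmless addition the paper leaves implicit.
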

\begin{proof}
We find $v=a \ e_{12123} + b \ e_{21123}\neq 0$ such that $F_1\cdot v = 0 = F_2 \cdot v.$ We have
\begin{align*}
F_2 \cdot v &= [f_2, a \ e_{12123} + b \ e_{21123} ] = a [f_2, e_{12123} ] + b [f_2, e_{21123} ] 
\\ &= a (e_{1213} + e_{1123}) + b( e_{2113}+ 3 e_{1123}) = a ( e_{1123}) + b(  3 e_{1123})
\\ &= (a+3b) e_{1123},
\end{align*}
so $F_2\cdot v=0$ if and only if $a+3b=0.$  Now,
\begin{align*}
F_1 \cdot v &= [-\frac{1}{2}f_{1123}, \ a \ e_{12123}] +  [F_1, \ b \ e_{21123} ] = -\frac{a}{2}[f_{1123}, e_{12123}] + b [F_1, [E_2, 2E_1]]
\\ &= -\frac{a}{2}  [f_{1123}, e_{12123} ] + 2b\Big( [E_1,\cancel{[E_2,F_1]}]+[E_2,-H_1]\Big)
\\&=-\frac{a}{2}\Big(\big[e_{2123},[e_1,f_{1123}]\big]+\big[e_1,\cancel{[f_{1123},e_{2123}]}\big]\Big)-6bE_2
\\&=-\frac{a}{2}[e_{2123},2f_{123}]-6bE_2 =-a\Big(\big[f_{23},\cancel{[f_1,e_{2123}]}\big]+\big[f_1,[e_{2123},f_{23}]\big]\Big)-6bE_2
\\&=-a\Big[f_1,[f_3,[f_2,e_{2123}]]+[f_2,[e_{2123},f_3]]\Big]-6bE_2
\\&=-a\Big[f_1,[f_3,e_{123}]+[f_2,e_{212}]\Big]-6bE_2 =-a([f_1,-e_{12}-2e_{21}])-6bE_2.
\\&=-2(a+3b)E_2,
\end{align*}
and once again, $F_1\cdot v=0$ if and only if $a+3b=0.$ Since the basis contains only one vector, we have $M_0(-\rho)=1$.
\end{proof}
Thus we have begun to construct the decomposition of $\Fib(0)$,
$$\Fib(0) = V^0\oplus \Fib\oplus \Big(V^{-\rho}\oplus V^\rho \Big)\oplus \bigoplus_{\substack{\mu\in P'(Y(0))_- \\ \mu\succ -\rho}}M_0(\mu) (V^{\mu}\oplus V^{\psi(\mu)}).$$
Let 
$$Y^1(0)= Y(0)\Big/ V^{-\rho} = \Fib(0)\Big/ \Big(V^0\oplus \Fib\oplus V^{-\rho}\oplus V^\rho\Big),$$
so that $P(Y^1(0))_- = P'(Y(0))_-\backslash\{-\rho\}$.  Then for all $\mu\in P'(Y^1(0))_-$,
$$Mult_{Y^1(0)_-}(\mu) = Mult_{Y(0)_-}(\mu) - Mult_{-\rho}(\mu).$$
As before, the comparison in Figure \ref{fig:deleteFib} indicates that for all $\mu\in P(Y^1(0)_-)$,
$$Mult_{Y(0)_-}(\mu) \geq Mult_{-\rho}(\mu).$$
\begin{figure}[!ht]
    \subfloat[Multiplicities of negative weights of $Y^1(0)_-$ \label{subfig-1:Y1}]{%
      \includegraphics[trim = 2.3cm 1cm 4cm 0cm, clip=true, width=3 in]{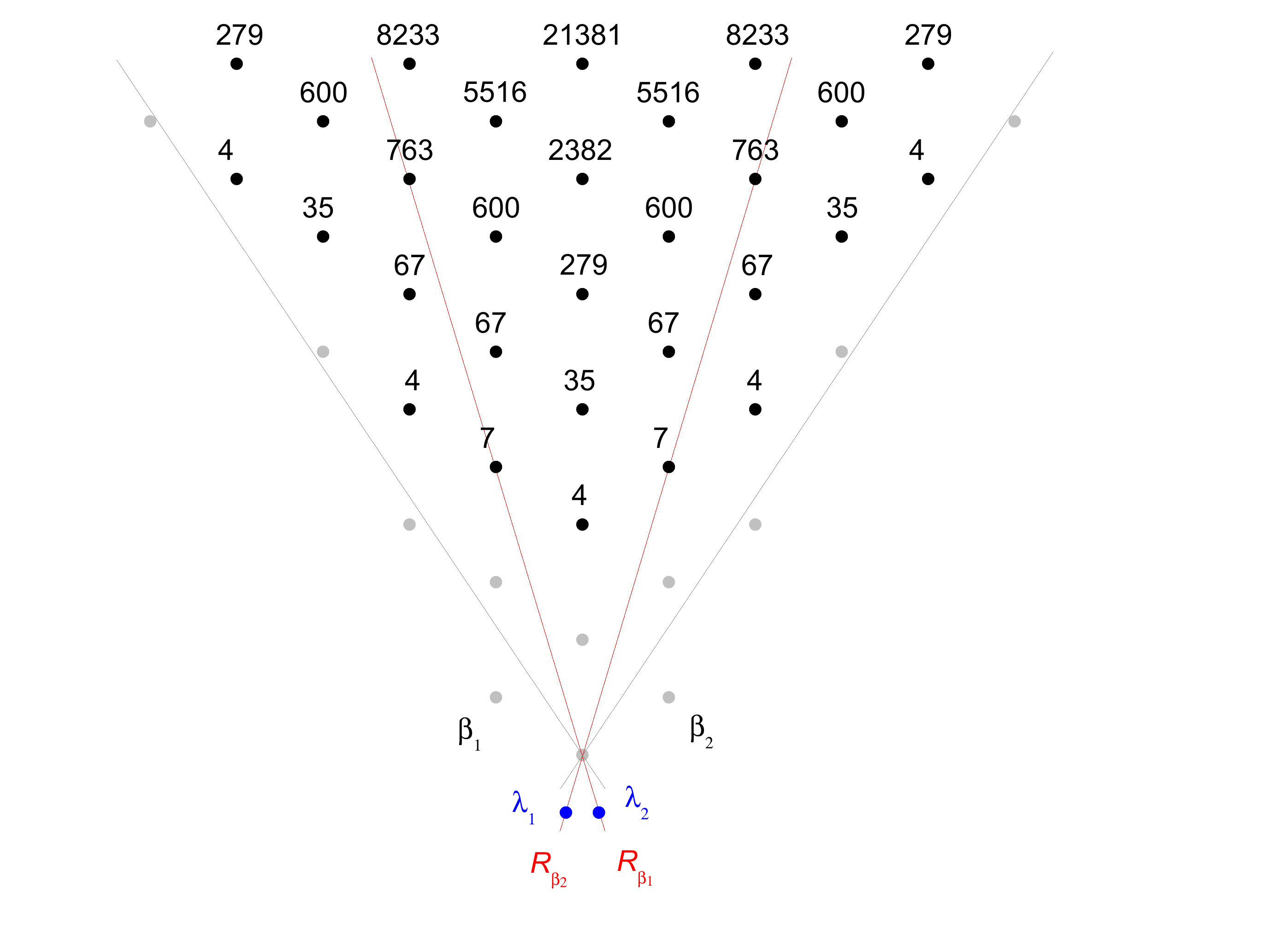} }
    \hfill
    \subfloat[Multiplicities of negative weights in $V^{-2\rho}$ \label{subfig-2:2rho}]{%
      \includegraphics[trim = 2.3cm 1cm 4cm 0cm, clip=true, width=3 in]{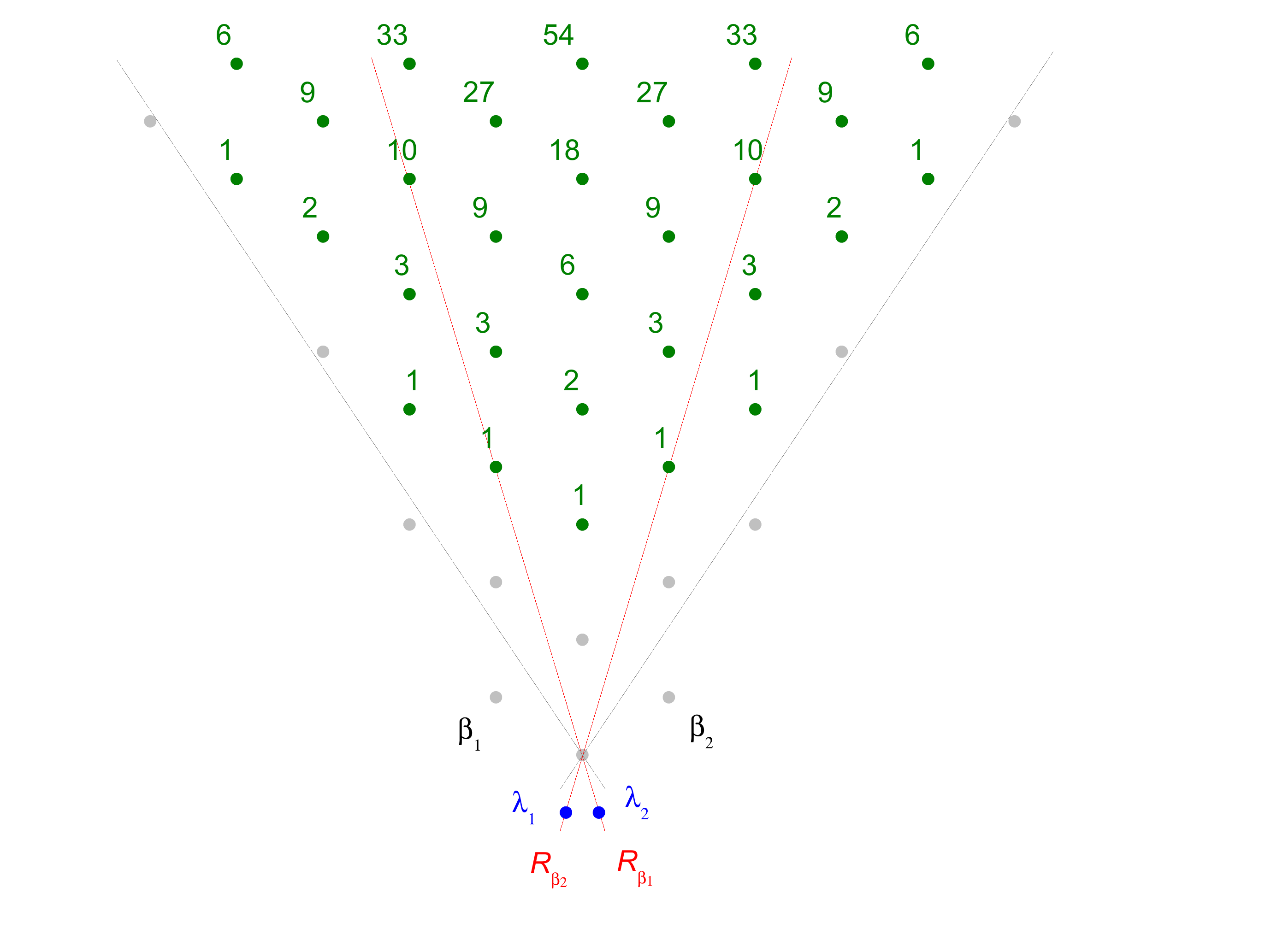}
    }
    \caption{A comparison of the multiplicities of negative weights of $Y^1(0)_-$ and $P(V^{-2\rho})$.}
    \label{fig:deleterho}
  \end{figure}
  
The weight diagram for $Y^1(0)_-$ (Figure  \ref{subfig-1:Y1}) was found by deleting any $\mu\in P(Y(0))_-$ for which equality holds (i.e., all $W_\Fib$-conjugates of $-\rho$). The next weight in $P'(Y(0))_-$ with respect to $\succ$ is $-2\rho$. Figure \ref{subfig-1:Y1} shows that $M_0(-2\rho) = Mult_{Y^1(0)_-}(-2\rho)=4,$
so
$$\Fib(0) = V^0\oplus \Fib\oplus \Big(V^{-\rho}\oplus V^\rho \Big)\oplus 4 \ \Big(V^{-2\rho}\oplus V^{2\rho} \Big) \oplus \bigoplus_{\mu\in P'(Y^2(0))_-}M_0(\mu) (V^{\mu}\oplus V^{\psi(\mu)}),$$
where  
$$Y^2(0)= Y^1(0)\Big/ 4\Big(V^{-2\rho}\oplus V^{2\rho}\Big) = \Fib(0)\Big/ \Big(V^0\oplus \Fib\oplus V^{-\rho}\oplus V^\rho\oplus 4V^{-2\rho}\oplus 4V^{2\rho}\Big),$$
so that $P(Y^2(0))_- = P'(Y^1(0)_-)\backslash\{-2\rho\}$.  Then for all $\mu\in P'(Y^2(0))_-$,
$$Mult_{Y^2(0)_-}(\mu) = Mult_{Y^1(0)_-}(\mu) - M_0(-2\rho) Mult_{-2\rho}(\mu).$$

Figure \ref{subfig-2:2rho} shows the labeled weight diagram for $P(V^{-2\rho})$.

In general, for the $n^{th}$ weight $\mu_n$ in the total ordering of $P'(Y(0))_-$, if we define
$$Y^n(0) = Y^{n-1}(0)\Big/ M_0(\mu_n)\Big(V^{\mu_n}\oplus V^{\psi(\mu_n)}\Big) = \Fib(0)\Big/ \Big(V^0\oplus \Fib\oplus \bigoplus_{i=1}^n M_0(\mu_i)\Big(V^{\mu_i}\oplus V^{\psi(\mu_i)}\Big)\Big),$$
then for $\mu\in P(Y(0))_-$,
$$Mult_{Y^n(0)_-}(\mu) = Mult_0(\mu)-\delta_{0,\mu} - Mult_\Fib(\mu) - \ds \sum_{i=1}^n M_0(\mu_i) Mult_{\mu_i}(\mu),$$
and we have the decomposition of level 0,
$$\Fib(0) = V^0 \oplus \Fib \oplus \bigoplus_{\mu \in P'(Y(0))_-} M_0(\mu) \Big(V^\mu \oplus V^{\psi(\mu)}\Big).$$
\begin {table}[h!]
\begin{center}
\begin{tabular}{ | c | c |}           
 \hline
   $\lambda \in P'(Y(0)_-)$ & $M_0(\lambda)$ \\
\hline
$-\rho$ & 1 \\
$5 \lambda_1$ and $5 \lambda_2$ & 3 \\
$-2 \rho$ & 4 \\
$-3\rho$ & 21 \\
$6\lambda_1+\lambda_2$ and $\lambda_1+6 \lambda_2$ & 28 \\
$10 \lambda_1$ and $10 \lambda_2$ & 135 \\
$ -4\rho$ & 145 \\
$ 7\lambda_1+2\lambda_2$ and $2\lambda_1+7\lambda_2$ & 254 \\
$ -5\rho$ & 1182 \\
$11\lambda_1+\lambda_2$ and $\lambda_1+11 \lambda_2$ & 2184 \\
$ 8\lambda_1+3\lambda_2$ and $3\lambda_1+8\lambda_2$  & 2375 \\
$ -6\rho$ & 10349 \\
  \hline 
\end{tabular}\caption {The sequence of outer multiplicities of irreducible LW $\Fib$-modules in level 0, ordered by increasing $M_0(\lambda)$}\label{tab:outer0}
\end{center}
\end{table}

Continuing in this fashion will lead to more data on the location of lowest-weight vectors for $\Fib$, and the dimensions of the corresponding subspaces in $\Fib(0)$. However, as the last theorem suggests, calculating actual bases of extremal vectors using linear algebra will get increasingly difficult as the dimensions increase, and in the end this method will not provide any insight into how these extremal vectors arise, nor does it seem to hint at a simpler method of generating them. Ignoring for now the determination of bases of extremal vectors, and focusing instead on locating lowest weights and quotienting the appropriate number of copies of the corresponding modules in the way described above, our investigation has produced data on outer multiplicities of highest and lowest weights on level 0 shown in Table \ref{tab:outer0}. The dimensions of the extremal weight spaces themselves appear to not follow any recognizable pattern. 

\chapter{Finding decomposition data for levels $\pm1, \pm 2$}\label{ch:nonstd}


  \begin{figure}[!ht]
    \subfloat[Level 1 \label{subfig-1:level1}]{%
    \includegraphics[trim = 5.5cm 0cm 5.5cm 0cm, clip=true, width=3 in]{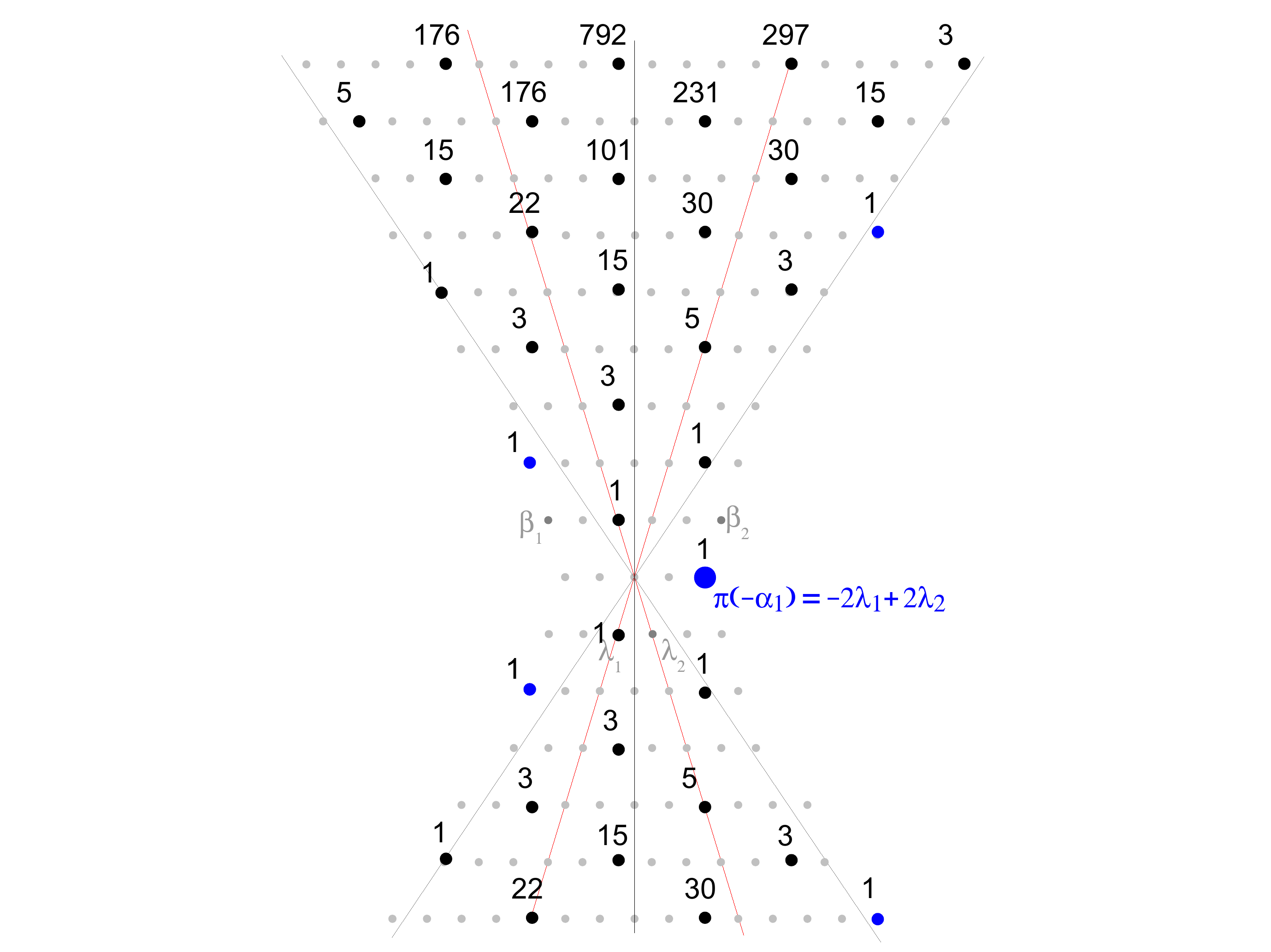} }
    \hfill
    \subfloat[Level 2 \label{subfig-2:level2}]{%
 \includegraphics[trim = 5.5cm 0cm 5.5cm 0cm, clip=true, width=3 in]{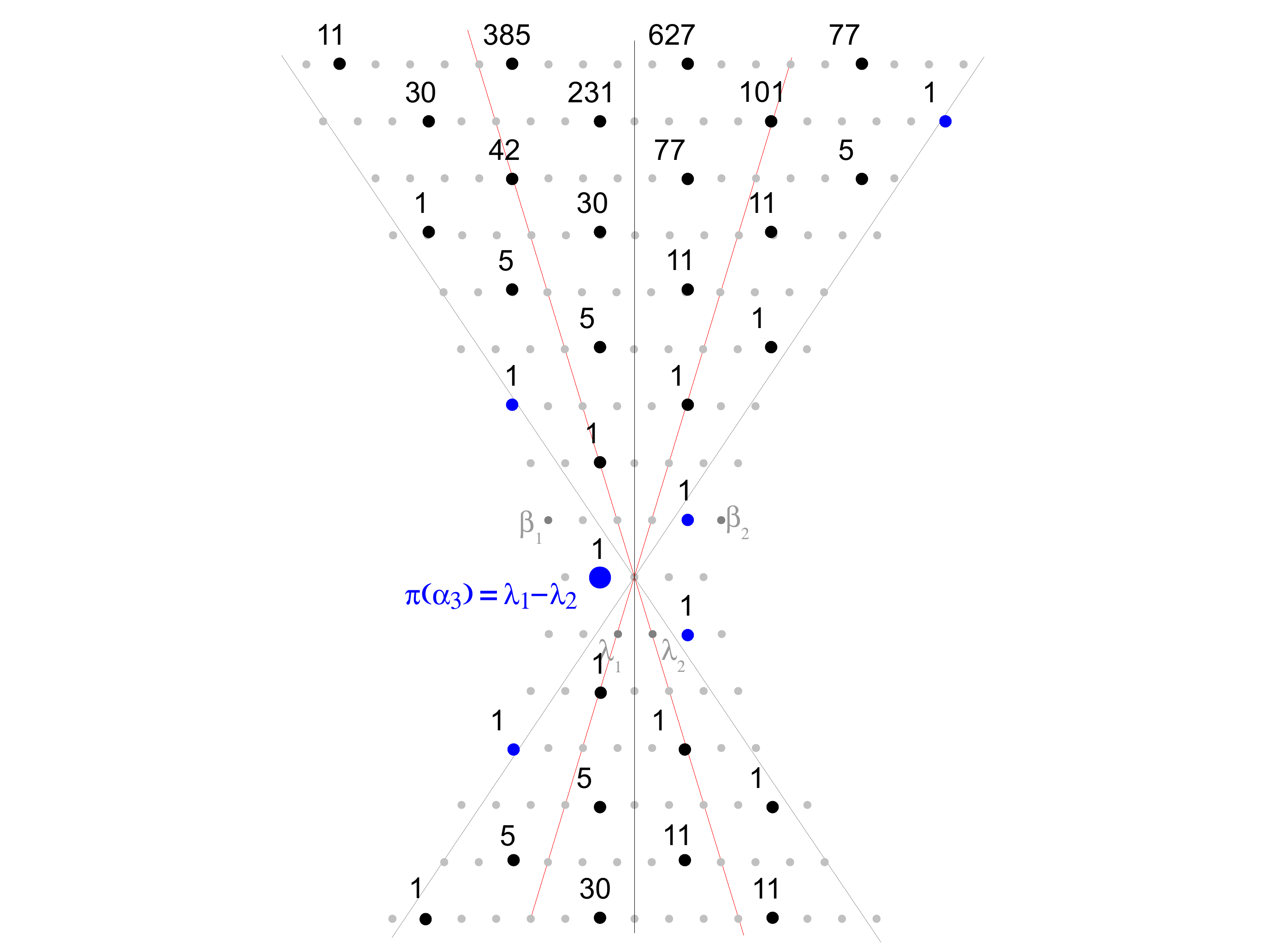} }
 \hfill
      \caption{Partial weight diagrams for $\Fib(m), m=1,2$. The multiplicities shown are $Mult_m(\mu)=\dim_\FF(\FF_\mu)$  for weights $\mu$ such that $-9 \leq wt(\mu)\leq 6$, and are upper bounds for the inner multiplicities of the irreducible non-standard module $V^{\Lambda_m}$.}
    \label{fig:levels1and2}
  \end{figure}

\section{Inner multiplicities of the non-standard $\Fib$-modules on Levels 1, 2}\label{sec:level1}
Figure \ref{fig:levels1and2} shows the weight diagrams for levels 1 and 2, for weights $\mu$ such that $-9 \leq wt(\mu)\leq 6$ (cf. also Figure \ref{fig:slices}). Both diagrams include weight multiplicities in $\FF$, computed by Kac using the Kac-Peterson recursion (see Appendix \ref{appendix:B}) and listed in Table $H_3$ in Ch. 11 of \cite{K2}. Recall that since $\nu(\Fib(m))=\Fib(-m)$, we only consider $m>0$. We know from Proposition \ref{onlyone} that there is one non-standard quotient module $V(m)\Big/U(m)$ on levels $m=\pm1, \pm2$. Assuming $U(m)=\{0\}$ for each of these levels, we have 
$$V^{\Lambda_1}=\cU(\Fib)\cdot f_1, \quad V^{\Lambda_{-1}}=\cU(\Fib)\cdot e_1 , \quad V^{\Lambda_2}=\cU(\Fib)\cdot e_3, \quad \text{ and } \quad V^{\Lambda_{-2}}=\cU(\Fib)\cdot f_3,$$
and by Proposition \ref{Y(m)} we have that
$$Y(\pm1)=\Fib(\pm1)\Big/V^{\Lambda_{\pm1}} \quad \text{ and } \quad  Y(\pm2)=\Fib(\pm1)\Big/V^{\Lambda_{\pm2}}$$
are completely reducible. Using the same method prescribed in Section \ref{sec:low}, we can locate extremal weights for $\Fib$ on levels $\pm 1$ and $ \pm 2$, and determine their outer multiplicities. However, this procedure assumes knowledge of the inner multiplicities for $V^{\Lambda_{\pm m}}$, $|m|=1,2$, which we do not yet have. The Racah-Spesier recursion can only be applied to standard modules, and although the Kac-Peterson recursion works for the adjoint representation of any KM algebra (which is non-standard) there is no reason to believe it can apply to any nonstandard module.

On page \pageref{basis} in Section \ref{sec:kacmoody}, we gave a brief outline for how one may determine weight multiplicities of an irreducible highest-weight $\LL$-module $V^\lambda$ by recursively computing bases for its weight spaces. This method, in principal, will work for any irreducible module, including non-standard ones. We introduce the following definitions which will be used in describing the method.

\begin{definition}\label{()}
Let $m=1,2$. If $\mu=\Lambda_m+n_1\beta_2+n_2\beta_2\in P(V^{\Lambda_m})$, then $\mu$ is uniquely determined by the coefficients of $\beta_1, \beta_2$, so we may write $\mu=(n_1,n_2)_m.$
We suppress the subscript if there is no ambiguity as to the level in question.
\end{definition}

As in Chapter \ref{ch:decomp0} we utilize $\psi$-symmetry and consider only negative weights $P(V^{\Lambda_m}_-)$, which has a total order similar to that of $P(Y(0))_-$. 
\begin{definition}
For $\lambda=(n_1, n_2)$ and $\mu=(m_1, m_2)$,  define $\lambda\succ\mu$, to be true if and only if either 
\begin{itemize}
\item $ht(\lambda-\mu)>0$, or
\item $ht(\lambda-\mu)=0$ and $n_1>m_1$.
\end{itemize}
\end{definition}

\begin{definition}\label{def:121} Let $V^\lambda = \mathcal{U}(\Fib) v^\lambda$ be a non-standard $\Fib$-module with generating vector $v^\lambda\in V^\lambda_\lambda$. The notation $[i_1 i_2 \cdots i_n]_\lambda$ stands for $E_{i_1}E_{i_2}\cdots E_{i_n}v^\lambda = E_{i_1}\cdot(E_{i_2}\cdot(\cdots (E_{i_n}\cdot v^\lambda)\cdots ))$, and $v^\lambda$ is denoted by $[]_\lambda$. (The $\lambda$ in the subscript will be suppressed unless more than one projected level is being discussed.)
\end{definition}

We now elaborate on the procedure for determining inner multiplicities of an irreducible module $V^{\Lambda_m}$, using the example for $m=1$. By Proposition \ref{onlyone} we know that $P(V^{\Lambda_1}) =P(\Fib(1))$. Figure \ref{subfig-1:nonstd1} shows the partial weight diagram, labeled with computed inner multiplicities. 
 The lowest weight in the total order $\prec$ is $\Lambda_1=\pi(-\alpha_1)$. Since $-\alpha_1$ is a real root, $Mult_{\Lambda_1}(\Lambda_1) = 1$, so $V^{\Lambda_1}_{\Lambda_1}$ has basis consisting of the single vector $v_{\Lambda_1} = f_1$.  

In general, let $\Lambda_1\prec\mu =(n_1,n_2)\in P(V^{\Lambda_1})$. Assume we have previously determined a basis $\cB_{\mu-\beta_1}$ for $V^{\Lambda_1}_{\mu-\beta_1}$ and a basis $\cB_{\mu-\beta_2}$ for $V^{\Lambda_1}_{\mu-\beta_2}$. If for some $i=1$ or $2$, $\mu-\beta_i\notin P(V^{\Lambda_1})$, then $\cB_{\mu-\beta_i}=\emptyset$. Obtain a spanning set for $V^{\Lambda_1}_{\mu}$,
$$\cS_\mu=\cS_{(n_1,n_2)} = \Big(E_1\cdot \cB_{\mu-\beta_1} \Big)\bigcup \Big(E_2\cdot \cB_{\mu-\beta_2}\Big) = \{v_1, \ldots v_k\}$$
of size $k=|\cB_{\mu-\beta_1}|+|\cB_{\mu-\beta_2}|$. 
Determine if there are any linear dependence relations on the vectors in $\cS_\mu$  by solving the homogeneous system of linear equations
\begin{equation}\label{eq:basis}
F_i \cdot \sum_{j=1}^k c_jv_j = \sum_{j=1}^k c_j(F_i\cdot v_j) = 0 \qquad \text{ for } i=1,2.
\end{equation}
Since $V^{\Lambda_1}_\mu$ cannot contain any lowest weight vectors, any nontrivial solutions will yield dependence relations on the vectors in $\cS_\mu$. Then choose vectors to delete from the spanning set to obtain a basis $\cB_\mu$ for $V^\lambda_\mu$. Finally, we have $Mult_\lambda(\mu)=|\cB(\mu)|$.

\begin{rem}\label{table}
If $v_j \in \cB_{\mu-\beta_j}$ for $j=1,2$, then for $i=1,2$ the Jacobi identity gives us
\begin{align*}
F_i\cdot (E_j\cdot v_j) &= E_j \cdot (F_i \cdot v_j )- [E_j, F_i ] \cdot v_j =  E_j \cdot (F_i \cdot v_j) - \delta_{ij}H_j \cdot v_j \\
&= E_j \cdot (F_i \cdot v_j) - \delta_{ij}(\mu-\beta_j)(H_j) v_j.
 \end{align*}
Thus we observe that for $i,j=1,2,$ \ $F_i \cdot (E_j \cdot v_j)$ is determined by $(\mu-\beta_j)(H_j)=\mu(\beta_j)-2$ and our recursive knowledge of $F_i\cdot v_j$ for $v_j\in \cB_{\mu-\beta_j}$. 
 \end{rem}

Remark \ref{table} reveals a useful recursive approach to solving the system (\eqref{eq:basis}), which involve complicated multibracket computations. In Table \ref{tab:tab1} of Appendix \ref{appendix:C}, we present the following data: \label{explain}
\begin{itemize} 
\item The first column shows $\mu=(n_1, n_2)\in P(\Lambda_1)$. The table is sorted by this first column, ordered by $\prec$.
\item The second column lists, for each $\mu\in P(\Lambda_1)$, an \textit{ordered} spanning set 
\begin{equation}\label{eq:spanning}\cS_\mu = (E_2\cdot u_1, \ldots, E_2\cdot u_m, \ E_1\cdot v_1, \ldots,  E_1\cdot v_n)\end{equation}
where $\cB_{\mu-\beta_2}=(u_1, \ldots, u_m)$ and $\cB_{\mu-\beta_1}=(v_1, \ldots, v_n)$ are ordered bases. (There is some justification for this ordering of the set $\cS_\mu$, which will be discussed shortly.) The vectors are written using the shorthand notation from Definition \ref{def:121}. 
\item The third column contains a `*' if there is a dependence relation found on $\cS_\mu$. The vectors deleted from $S_\mu$ to form the basis $\cB_\mu$ are then shown in red, and their coordinates with respect to $\cB_\mu$ are given.
\item The fourth and fifth columns show, for each $u\in \cS_\mu$ in the second column, the result of the computation $F_1\cdot u$ (in column 4) and $F_2\cdot u$ (in column 5), written as coordinate vectors $(c_1, c_2, \ldots , c_k)_{\mu-\beta_i}$ with respect to the basis $\cB_{\mu-\beta_i}$ (so $k=n$ if $i=1$ and $k=m$ if $i=2$).
\item For $u\in \cS_\mu$, the sixth column shows $\mu(H_1)$, and the seventh column shows $\mu(H_2)$.
\end{itemize}
 
We now demonstrate the recursive algorithm for determining bases for weight spaces in $V^{\Lambda_1}$ by showing how the first several rows of Table \ref{tab:tab1} were computed.  First, we have
$$\cS_{(0,0)}=\{v_{\Lambda_1} = []\}=\cB_{(0,0)},$$
and $Mult_{\Lambda_1}(\Lambda_1)=1$. Furthermore, the weight diagram shows that $F_1\cdot[]=0,$ and 
$$\Lambda_1(H_1) = (-2\lambda_1+ 2\lambda_2)(H_1)=-2, \quad \text{and} \quad \Lambda_1(H_2) = (-2\lambda_1+ 2\lambda_2)(H_2)=2.$$
($F_2\cdot[]$ will not be needed for the recursion, as we will see.) Since $V^{\Lambda_1}_{(0,0)}$ is a one-dimensional space there are no dependence relations, and we have completed row 1 of Table \ref{tab:tab1}.
For row 2, we have
$$\cS_{(1,0)}=\{E_1\cdot v_{\Lambda_1}\}=\{[1]\}=\cB_{(1,0)}$$
and $Mult_{\Lambda_1}((1,0))=1$, since $E_2\cdot v_{\Lambda_1}=0$ according to the weight diagram. Then using Remark \ref{table} and the data in row 1 of Table \ref{tab:tab1}, we have
$$F_1E_1\cdot v_{\Lambda_1} = (E_1F_1-H_1)\cdot v_{\Lambda_1} = -H_1\cdot v_{\Lambda_1} = -\Lambda_1(H_1)v_{\Lambda_1}=2v_{\Lambda_1}.$$
We write this vector with respect to the basis $\cB_{(0,0)}=(v_{\Lambda_1})$, entering it into the table as $(2)_{(0,0)}.$
Note that the weight diagram also reveals that $F_2E_2 \cdot  v_{\Lambda_1} = 0$, confirming there was no need to compute $F_2\cdot[]$. We complete row 2 of Table \ref{tab:tab1} by computing
$$(\beta_1+\Lambda_1)(H_1) = \beta_1(H_1)+\Lambda_1(H_1) = 0 \quad \text{and} \quad (\beta_1+\Lambda_1)(H_2) = \beta_1(H_2)+\Lambda_1(H_2) = -1.$$
We then have 
$$\cS_{(2,0)}=\{E_1E_1\cdot v_{\Lambda_1}\} = \{[11]\} = \cB_{(2,0)} \quad \text{ and } \quad \cS_{(1,1)}=\{E_2E_1\cdot v_{\Lambda_1}\}=\{[21]\} = \cB_{(1,1)},$$
so $Mult_{\Lambda_1}((2,0))=Mult_{\Lambda_1}((1,1))=1$. For $\cB_{(2,0)}$ we have
$$F_1[11] = F_1E_1[1]=(E_1F_1-H_1)[1] = E_1(2[]) - (0)[1] = 2[1] =(2)_{(1,0)},$$
$$F_2[11]= 0 \text{ (immediately follows from the weight diagram)},$$
$$(2\beta_1+\Lambda_1)(H_1) = 2\beta_1(H_1)+\Lambda_1(H_1) = 2, \text{ and} $$
$$(2\beta_1+\Lambda_1)(H_2) = 2\beta_1(H_2)+\Lambda_1(H_2) = -4.$$
For $\cB_{(1,1)}$ we have
$$F_1[21] = 0 \text{ (immediately follows from the weight diagram)},$$
$$F_2[21]= F_2E_2[1]=(E_2F_2-H_2)[1] = E_2(0) - (-1)[1] = [1] =(1)_{(1,0)},$$
$$\beta_2(H_1)+(\beta_1+\Lambda_1)(H_1) = -3, \text{ and}$$
$$\beta_2(H_2)+(\beta_1+\Lambda_1)(H_2) =1.$$
Next we have $$\cS_{(2,1)}=\Big(E_2\cdot \cB_{(2,0)} \Big)\bigcup \Big(E_1\cdot \cB_{(1,1)}\Big) = \{[211], \ [121] \},$$
and
$$F_1[211] = E_2F_1[11] = E_2(2[1]) = 2[21] = (2)_{(1,1)}$$
$$F_2[211]=(E_2F_2-H_2)[11] = E_2(0) - (-4)[11] = (4)_{(2,0)},$$
and
$$F_1[121] = (E_1F_1-H_1)[21] = E_1(0) -(-3)[21]= 3[21] = (3)_{(1,1)}$$
$$F_2[121]=(E_1F_2)[21] = E_1([1]) = (1)_{(2,0)},$$
and $(2\beta_1+\beta_2)(H_1)=-1$, $(2\beta_1+\beta_2)(H_2)=-2$ for both vectors. Then, the solution to the system of equations
$$c_1 F_1[211] + c_2 F_1[121] = 0,$$
$$c_1 F_2[211] + c_2 F_2[121] = 0,$$
is exactly the null space of the matrix $\bm 2 & 3 \\ 4 & 1 \ebm$, which is trivial. No dependence relations exist, so $\cB_{(2,1)}=\cS_{(2,1)}$ and $Mult_{\Lambda_1}((2,1))=2$. Observe that this matrix is the transpose of the matrix below formed from the cells in Table \ref{tab:tab1} corresponding to $F_i\cdot \cS_{(2,1)}$ for $i=1,2$: 
$$A(\mu)=A(2,1)=\bm (F_1[211])_{(1,1)} & (F_2[211])_{(2,0)} \\  (F_1[121])_{(1,1)} & (F_2[211])_{(2,0)}  \ebm =  \bm 2 & 3 \\ 4 & 1 \ebm^T,$$
and linear dependence relations on $V^{\Lambda_1}_{(2,1)}$ are defined by basis vectors in $Null(A(2,1)^T)$. In fact, this generalizes for all $\mu=(n_1,n_2)\in P(V^{\Lambda_1})$, and it the reason we choose a consistent ordering for the spanning sets $\cS_\mu$. In general, we have the block-form $(m+n)\times (m+n)$ matrix (recall from (\eqref{eq:spanning}) that $m=\dim V^{\Lambda_1}_{\mu-\beta_2}, \ n=\dim V^{\Lambda_1}_{\mu-\beta_1}$),
\begin{align*}
A_\mu &=\bm \Big(F_1(E_2\cB_{\mu-\beta_2})\Big)_{\mu-\beta_1}^T & \Big(F_2(E_2\cB_{\mu-\beta_2})\Big)_{\mu-\beta_2}^T \\ 
\Big(F_1(E_1\cB_{\mu-\beta_1})\Big)_{\mu-\beta_1}^T & \Big(F_2(E_1\cB_{\mu-\beta_1})\Big)_{\mu-\beta_2}^T \ebm \\
&= \bm \Big((E_2F_1)\cB_{\mu-\beta_2}\Big)_{\mu-\beta_1}^T & \Big((E_2F_2-H_2)\cB_{\mu-\beta_2} \Big)_{\mu-\beta_2}^T \\ 
\Big((E_1F_1-H_1)\cB_{\mu-\beta_1}\Big)_{\mu-\beta_1}^T & \Big((E_1F_2)\cB_{\mu-\beta_1}\Big)_{\mu-\beta_2}^T \ebm, 
\end{align*}
where the rows of each block are coordinates of vectors  $F_i \cdot (E_j \cdot \cB_{\mu-\beta_j})$ for $i,j=1,2$ with respect to the basis $\cB_{\mu-\beta_i}$. Then 
\begin{align*}
A_\mu^T &= \bm \Big((E_2F_1)\cB_{\mu-\beta_2}\Big)_{\mu-\beta_1} & \Big((E_1F_1-H_1)\cB_{\mu-\beta_1}\Big)_{\mu-\beta_1} \\
 \Big((E_2F_2-H_2)\cB_{\mu-\beta_2} \Big)_{\mu-\beta_2} & \Big((E_1F_2)\cB_{\mu-\beta_1}\Big)_{\mu-\beta_2} \ebm .
\end{align*}

The author has found additional interesting recursive patterns relating the blocks of $A_\mu^T$ to the blocks of $A_{\mu-\beta_i}^T$ for each $i=1,2$, that further reduced computation time. These recursions can then be used to automate the algorithm described above using Mathematica. 

If a dependence relation is found on $\cS_\mu$, then in column 2 of Table \ref{tab:tab1} we color-code in red all vectors deleted from $\cS_\mu$ in the formation of basis $\cB_\mu$, and indicate the associated dependence relations on $\cS_\mu$ by writing these deleted vectors as coordinate vectors with respect to $\cB_\mu$. The first example encountered in the table is for $\mu=(4,1)$, where
$$\cS_{(4,1)}=\{[11211],[11121]\}.$$
Since $\mu-\beta_2\notin P(V^{\Lambda_1}_+)$, we have $(\cB_{(4,0)})_{3,1}=\emptyset$, 
so 
$$A(4,1)^T = \bm  0 & 0 \\ 2 & 3 \ebm ,$$
which gives us the linear dependence relation $-\frac{3}{2}[11211]+[11121]=0$. We then let $\cB_{(4,1)}=\cS_{(4,1)}-\{[11211]\}$, and we write $[11211]=\frac{2}{3}[11121]$ in the second column.

  \begin{figure}[!ht]
    \subfloat[Level 1 \label{subfig-1:nonstd1}]{%
    \includegraphics[trim = 5.5cm 0cm 5.5cm 0cm, clip=true, width=3 in]{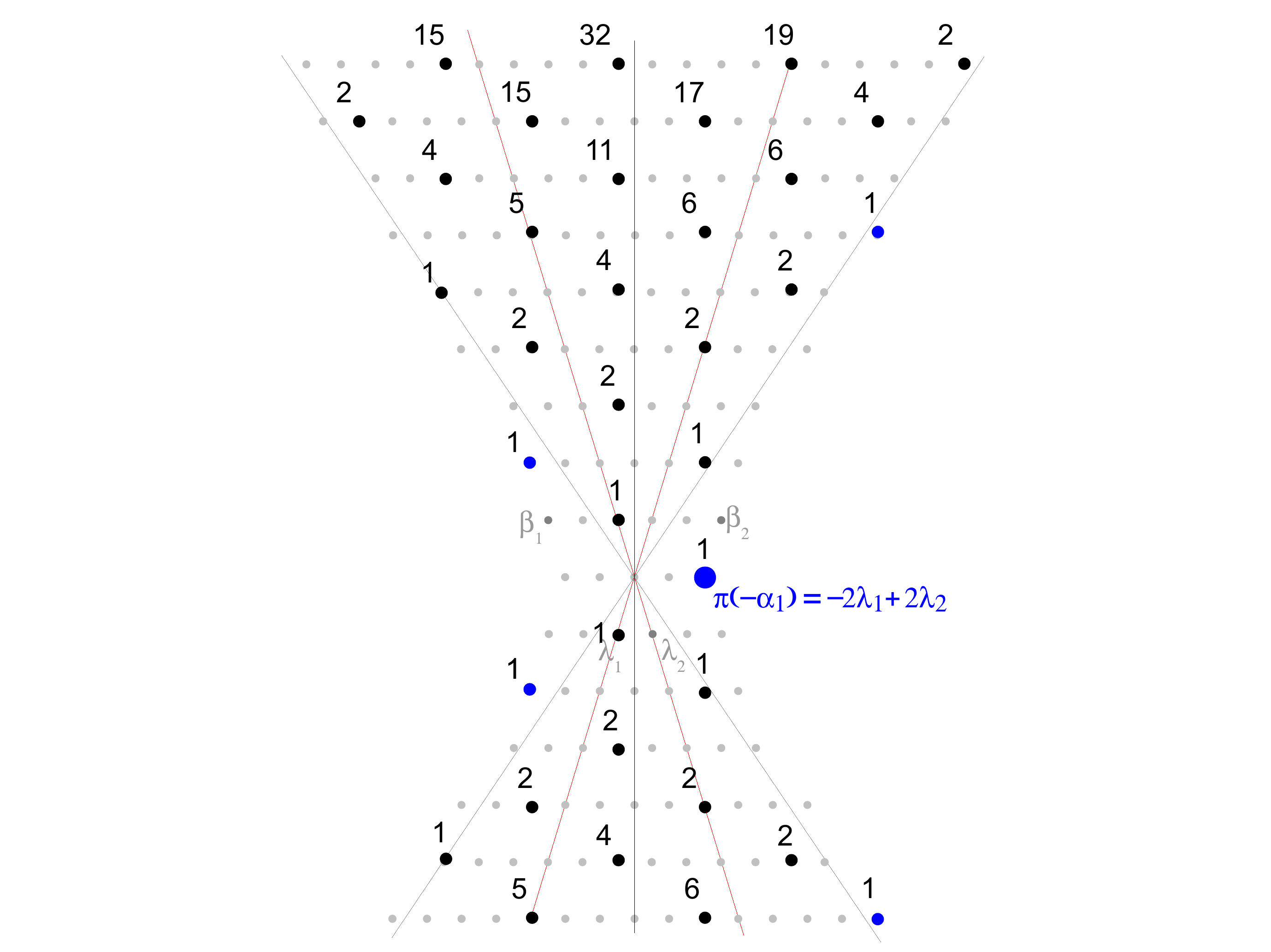} }
    \hfill
    \subfloat[Level 2 \label{subfig-2:nonstd2}]{%
 \includegraphics[trim = 5.5cm 0cm 5.5cm 0cm, clip=true, width=3 in]{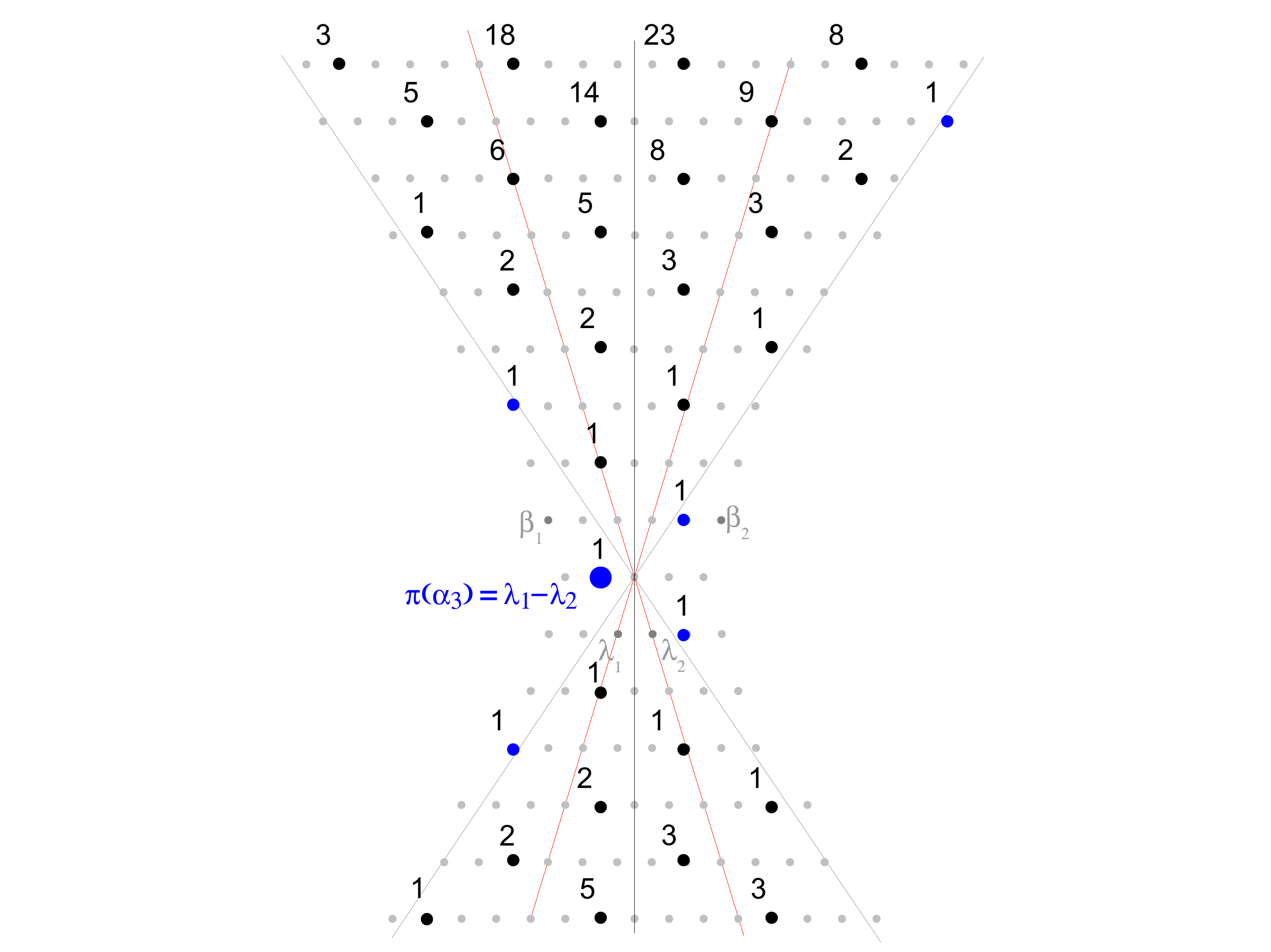} }
 \hfill
      \caption{Partial weight diagrams for non-standard $\Fib$-modules $V^{\Lambda_m}$ for $m=1$ and $2$. The inner multiplicities shown are $Mult_{\Lambda_m}(\mu)=\dim_{V^{\Lambda_1}}(V^{\Lambda_1}_\mu)$ for weights $\mu$ such that $-9\leq wt(\mu)\leq 6$ and were calculated using the recursive algorithm presented in Section \ref{sec:level1} (cf. Tables \ref{tab:tab1} and  \ref{tab:tab2}).}
    \label{fig:nonstdwts}
  \end{figure}

This algorithm was repeated for all $\mu\in P(V^{\Lambda_1})$  shown in Figure \ref{subfig-1:nonstd1}, though in principle it can be continued indefinitely to gain more inner multiplicity data. The algorithm was then applied to the non-standard module $V^{\Lambda_2}$, and the results are recorded in Table \ref{tab:tab2}. The data in Tables \ref{tab:tab1} and \ref{tab:tab2} can then be used to determine inner multiplicities  since $Mult_{\Lambda_m}(\mu)=\dim_{V^{\Lambda_m}}(V^{\Lambda_m}_\mu) = |\cB_\mu|$. Figure \ref{fig:nonstdwts} shows the resulting weight diagrams for non-standard modules on levels 1 and 2,  labeled with inner multiplicities. 
  
We observe that the weights in Figure  \ref{fig:nonstdwts}  do not follow the Kac-Peterson recursion (cf. Appendix \ref{appendix:A}), but instead follow a Racah-Speiser recursion (cf. Figure \ref{fig:racah}). 

\begin{conjecture}
For $m=\pm 1, \pm 2$, the weights of $P(V^{\Lambda_m}_-)$ follow the Racah-Speiser recursion
$$Mult_\lambda(\mu) = -\sum_{1 \neq w \in W_\Fib} \det(w) \ Mult_\lambda\Big(\mu +(w\rho - \rho)\Big),$$  
and the weights of $P(V^{\Lambda_m}_+)$ follow the Racah-Speiser recursion
$$Mult_\lambda(\mu) = -\sum_{1 \neq w \in W_\Fib} \det(w) \ Mult_\lambda\Big(\mu -(w\rho - \rho)\Big),$$  
where $\rho=\lambda_1+\lambda_2$.
\end{conjecture}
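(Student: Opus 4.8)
The plan is to reformulate both recursions as a single identity about an alternating ``Racah--Speiser defect'' of $V^{\Lambda_m}$, to reduce it --- via the decomposition of the whole level $\Fib(m)$ --- to an identity for the $\FF$-root multiplicities, and to isolate the one input that does not follow from what is already proved.

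For a $\HH_\Fib$-diagonalizable $\Fib$-module $V$ with $W_\Fib$-invariant weight multiplicities, put
$$D_V(\mu)=\sum_{w\in W_\Fib}\det(w)\dim V_{\mu+w\rho-\rho},\qquad D^{\mathrm{op}}_V(\mu)=\sum_{w\in W_\Fib}\det(w)\dim V_{\mu-w\rho+\rho},\qquad\rho=\lambda_1+\lambda_2.$$
The conjecture is the pair of statements $D_{V^{\Lambda_m}}(\mu)=0$ for $\mu\in P(V^{\Lambda_m}_-)$ and $D^{\mathrm{op}}_{V^{\Lambda_m}}(\mu)=0$ for $\mu\in P(V^{\Lambda_m}_+)$ --- exactly the identities that (by Section~\ref{sec:inner}) hold for standard lowest- and highest-weight $\Fib$-modules away from their extreme weight. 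The first reduction I would make is that these two halves are interchanged by the involution $\psi=w_1w_3\nu$ of Section~\ref{sec:symcos}: acting on the $\Fib$-plane by the matrix rule $\left[\begin{smallmatrix}a&b\\ b&c\end{smallmatrix}\right]\mapsto\left[\begin{smallmatrix}-c&b\\ b&-a\end{smallmatrix}\right]$ computed there, $\psi$ is linear with $\psi(\lambda_1)=-\lambda_2$, $\psi(\lambda_2)=-\lambda_1$, hence $\psi(\rho)=-\rho$, while conjugation by $w_1w_3$ exchanges the simple reflections $r_1\leftrightarrow r_2$ (a short computation with the Coxeter relations among $w_1,w_2,w_3$), so $\psi$ normalizes $W_\Fib$ and preserves $\det$; combined with $Mult_{\Lambda_m}(\mu)=Mult_{\Lambda_m}(\psi(\mu))$ this gives $D^{\mathrm{op}}_{V^{\Lambda_m}}(\mu)=D_{V^{\Lambda_m}}(\psi(\mu))$. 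So it is enough to treat the first half, for $|m|=1,2$ and $m>0$ (Remark~\ref{contra} covers $m<0$).

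Next, from $\Fib(m)=V^{\Lambda_m}\oplus Y(m)_-\oplus Y(m)_+$ (Proposition~\ref{Y(m)}, under the standing assumption $U(m)=\{0\}$) we get $D_{\Fib(m)}=D_{V^{\Lambda_m}}+D_{Y(m)_-}+D_{Y(m)_+}$. Each lowest-weight summand $V^\lambda$ of $Y(m)_-$ contributes cleanly: the Section~\ref{sec:inner} recursion says $D_{V^\lambda}=\delta_\lambda$, supported at the single anti-dominant weight $\lambda\in P^-_\Fib$, which lies strictly inside $LC_{\Fib,+}$ with $||\lambda||^2<0$. Thus, away from that finite set of special deep weights, $D_{V^{\Lambda_m}}=D_{\Fib(m)}-D_{Y(m)_+}$, and the conjecture (negative half) becomes the assertion that $D_{\Fib(m)}(\mu)=D_{Y(m)_+}(\mu)$ for $\mu\in P(V^{\Lambda_m}_-)$ not of lowest-weight type, together with the correct source values on $\pi(\Delta_m^{re})$; for the positive half one argues symmetrically with $D^{\mathrm{op}}$ and the highest-weight summands of $Y(m)_+$. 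Everything is then driven by two inputs: the multiplicities $M_m(\lambda)$ of the standard summands (which feed $D_{Y(m)_\pm}$ and are themselves produced by the algorithm of Section~\ref{sec:outer} from $D_{\Fib(m)}$) and the alternating sum $D_{\Fib(m)}$ of the $\FF$-root multiplicities $Mult_m(\mu)=\dim_\FF\FF_\beta$ over a single level.

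The main obstacle is controlling $D_{\Fib(m)}$ --- and, hand in hand with it, the convergence of these alternating sums in a hyperbolic Weyl group, since the orbit $W_\Fib\rho$ is asymptotic to directions along which the weight supports of $V^{\Lambda_m}$ and of the $Y(m)_+$ summands are unbounded, so the series must be read off in a region where it truncates. The multiplicities $Mult_m(\mu)$ obey the Kac--Peterson recursion attached to $(W_\FF,\rho_\FF)$, not a Racah--Speiser recursion attached to $(W_\Fib,\rho_\Fib)$, and the bridge between the two is exactly what is missing; this is why level $0$, whose non-standard piece is the adjoint $\Fib$, fails the conjecture. The cleanest route I can see is to obtain a finite alternating-sum character identity expressing $Ch(V^{\Lambda_m})$ as a $\Z$-combination of Weyl--Kac characters of standard $\Fib$-modules --- for instance from a two-sided BGG-type resolution of the integrable module $V^{\Lambda_m}$ by highest- and lowest-weight modules, or extracted from the vertex-algebra and Schur-polynomial description of $V^{\Lambda_m}$ in Chapter~\ref{ch:vertex} --- because linearity of $D$ would then force $D_{V^{\Lambda_m}}$ (and $D^{\mathrm{op}}_{V^{\Lambda_m}}$) to be a finite combination of $\delta$-functions, hence to vanish off the source set, which is precisely both recursions at once, while the impossibility of such an identity for the adjoint would simultaneously explain the numerically observed dichotomy between levels $|m|=1,2$ and level $0$.
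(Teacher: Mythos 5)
You should first be clear about what the paper actually establishes here: nothing. The statement is labelled a conjecture, and the only support the paper offers is empirical --- the inner multiplicities of $V^{\Lambda_m}$ computed by the multibracket algorithm of Chapter \ref{ch:nonstd} (Tables \ref{tab:tab1} and \ref{tab:tab2}, covering weights with $-9\leq wt(\mu)\leq 6$) agree with the values the Racah--Speiser recursion would predict, and visibly disagree with Kac--Peterson. So there is no proof in the paper to match yours against; the question is whether your proposal supplies one, and it does not. Your preliminary reductions are sound: $\psi(\rho)=-\rho$ and conjugation by $w_1w_3$ does swap $r_1\leftrightarrow r_2$ (via $w_2w_3w_2=w_3w_2w_3$), so the two halves of the conjecture are indeed exchanged by $\psi$; and the linearity $D_{\Fib(m)}=D_{V^{\Lambda_m}}+D_{Y(m)_-}+D_{Y(m)_+}$ follows from Proposition \ref{Y(m)} together with the paper's standing (and itself unproved) assumption $U(m)=\{0\}$. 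But you then defer the entire content of the statement to an input you do not have: a finite $\Z$-linear identity expressing $Ch(V^{\Lambda_m})$ in Weyl--Kac characters of standard modules, e.g.\ via a two-sided BGG-type resolution. No such resolution is known for these non-standard integrable modules --- their failure to lie in $\cO$ or $\cO^{op}$ is exactly what blocks the usual construction --- and nothing in the paper (including Chapter \ref{ch:vertex}, where $\pi_\FF$ is not even surjective onto $\ovP$) produces one. What you have written is a reduction with its decisive step missing, which leaves the conjecture exactly as open as the paper does.

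Two subsidiary steps also need repair before even the reduction is rigorous. First, for a standard lowest-weight summand $V^\lambda$ the alternating sum $D_{V^\lambda}$ is not ``$\delta_\lambda$'': by the Weyl--Kac formula and the denominator identity it equals $\sum_{w\in W_\Fib}\det(w)\,\delta_{\mu,\,w(\lambda+\rho)-\rho}$, supported on an entire shifted $W_\Fib$-orbit, which is infinite since $W_\Fib$ is hyperbolic. Your phrase ``away from that finite set of special deep weights'' is therefore wrong, and because the weights $\mu\in P(V^{\Lambda_m}_-)$ at which you must evaluate $D_{V^{\Lambda_m}}$ form a $W_\Fib$-invariant set, you cannot avoid tracking which of these orbit points (over all the infinitely many standard summands of $Y(m)_\pm$, with their outer multiplicities $M_m(\lambda)$) land inside $P(V^{\Lambda_m})$. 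Second, the sums defining $D_{\Fib(m)}$ and $D_{Y(m)_\pm}$ are doubly infinite (over $W_\Fib$ and over summands); you flag the truncation problem but do not resolve it, and without a locally finite rearrangement the identity $D_{V^{\Lambda_m}}=D_{\Fib(m)}-D_{Y(m)_-}-D_{Y(m)_+}$ is only formal. Neither issue is necessarily fatal, but both must be settled, and even then the main gap --- the character identity for $V^{\Lambda_m}$ --- remains.
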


If this conjecture is true, then the non-standard modules on levels $\pm1, \pm2$ have more in common with highest- and lowest-weight modules than with the non-standard module (adjoint representation) on level 0. 

\section{Outer multiplicities of $\Fib(\pm1)$ and $\Fib(\pm2)$}\label{sec:outer1}
\vspace{-12pt}
\begin{figure}[!ht]
    \subfloat[Partial weight diagram for \newline $Y(1) = \Fib(1)\Big/ V^{\Lambda_1}$. \label{subfig-1:y1}]{%
      \includegraphics[trim = 6.5cm 1.7cm 6.5cm 0cm, clip=true, width=1.9 in]{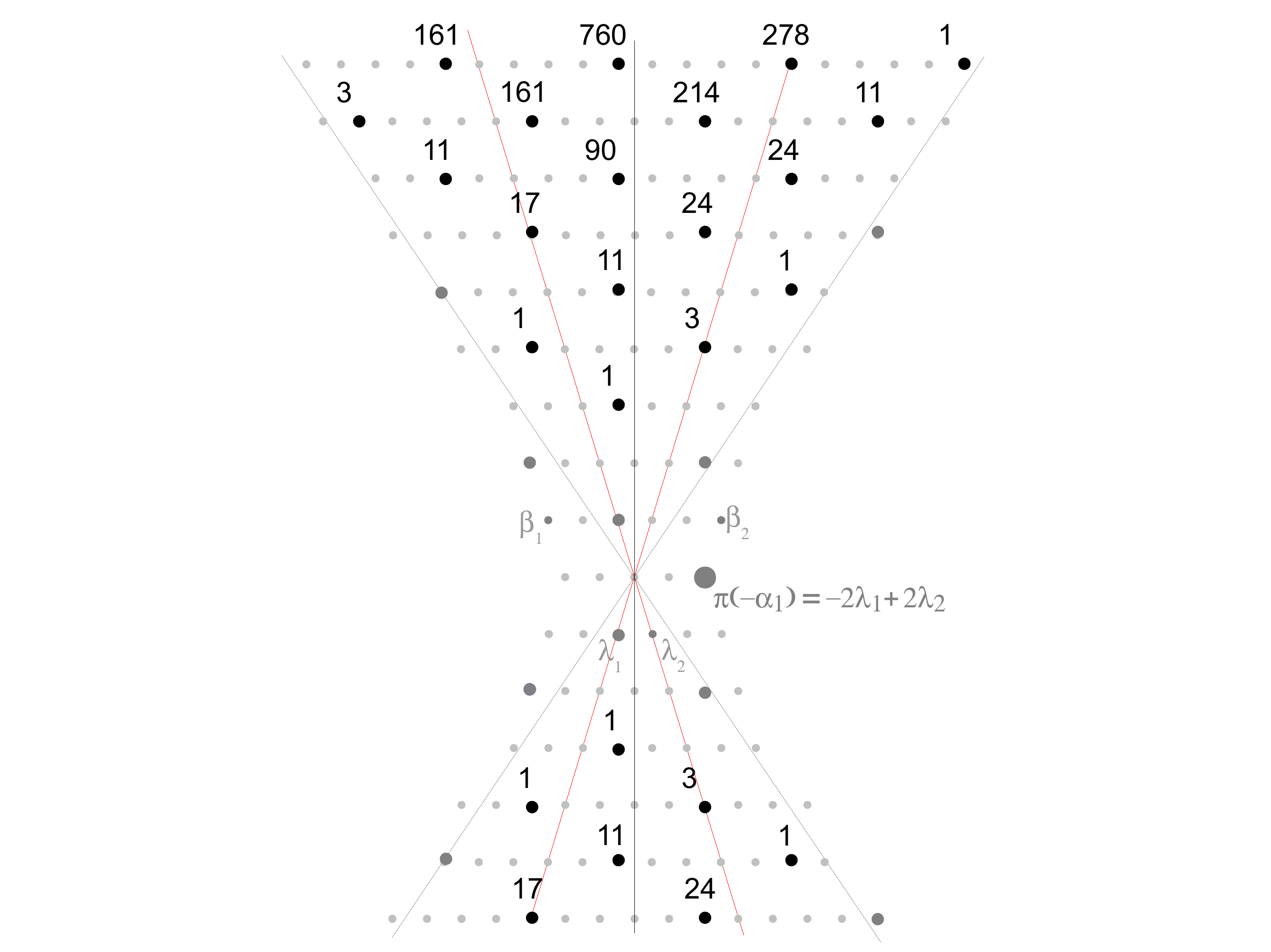}
    }
    \hfill
    \subfloat[Partial weight diagram for the irreducible module $V^{(2,1)}$. \label{subfig-2:21}]{%
      \includegraphics[trim = 6.5cm 1.7cm 6.5cm 0cm, clip=true, width=1.9 in]{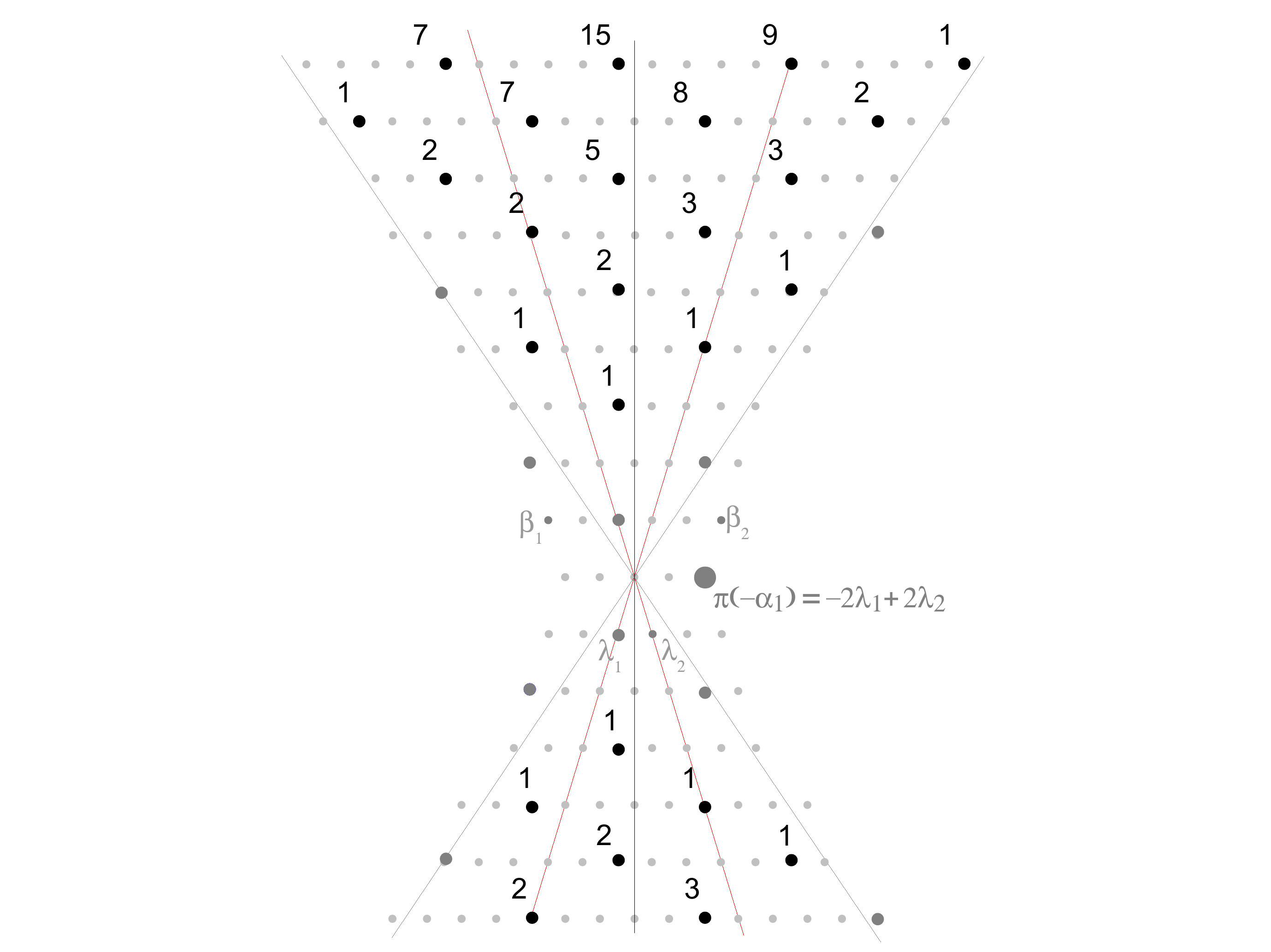}
    }
        \hfill
    \subfloat[Partial weight diagram for  $Y^1(1)=$ $\Fib(1)\Big/\Big(V^{\Lambda_{1}}\oplus V^{(2,1)}\oplus V^{\psi(2,1)}\Big).$  \label{subfig-3:y1-21}]{%
      \includegraphics[trim = 6.5cm 1.7cm 6.5cm 0cm, clip=true, width=1.9 in]{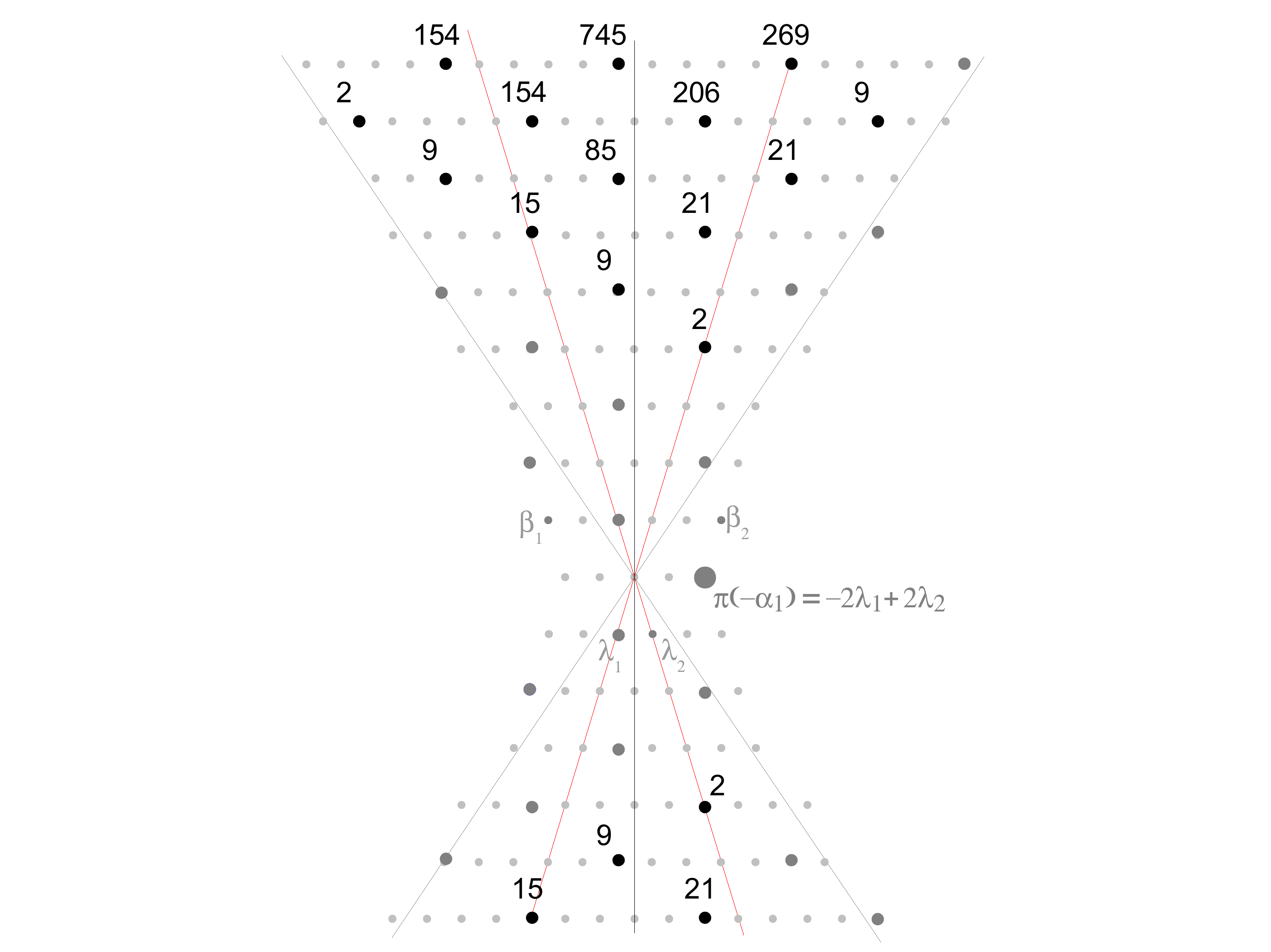}
    }

    \caption{Partial weight diagrams for three $\Fib$-modules on level 1 labeled with their multiplicities. Notation $(n_1,n_2)$ refers to weight $\Lambda_1+n_1\beta_1+n_2\beta_2$, since these diagrams are all in $\Fib(1)$.}
    \label{fig:3moduleslevel1}
  \end{figure}
We may now use the method described in Section \ref{sec:low} to locate extremal weights for $\Fib$ on levels $\pm1, \pm2$, and determine their outer multiplicities. For example, Figure \ref{subfig-1:y1} shows a portion of the weight diagram of the (completely reducible) quotient module $Y(1)=\Fib\Big/V^{\Lambda_{1}},$ with multiplicities $Mult_{Y(1)}(\mu)=Mult_1(\mu)-Mult_{\Lambda_1}(\mu).$  We observe from this diagram that $(2,1)$ is a lowest weight for $\Fib$, and using Racah-Speiser we find the labeled weight diagram for $V^{(2,1)}$ shown in Figure \ref{subfig-2:21}. These two diagrams then give us the labeled weight diagram for the quotient module  
$$Y^1(1)=\Fib(1)\Big/\Big(V^{\Lambda_{1}}\oplus V^{(2,1)}\oplus V^{\psi(2,1)}\Big)$$
shown in Figure \ref{subfig-3:y1-21}. We then observe that $M_1((2,2))=2$. 

Continuing in this way gives some data on outer multiplicities for level 1, presented in Table \ref{tab:outer1}. The table is ordered by increasing outer multiplicity $M_1(\lambda)$.
  
\begin {table}[!ht]
\hfill
    \subfloat[Outer multiplicity data for level 1.\label{tab:outer1}]{%
 \begin{tabular}{ | c | c |}           
 \hline
   $\lambda=(n_1,n_2)_1$ & $M_1(\lambda)$ \\
\hline
$(2,1)$ & 1 \\
$(2,2)$ & 2 \\
$(4,2)$ & 6 \\
$(3,2)$ & 7 \\
$(3,3)$ & 12 \\
$(4,3)$ & 49 \\
$(4,5)$ & 54 \\
$(5,3)$ & 67 \\
$(4,4)$ & 100 \\
$(5,4)$ & 385 \\
  \hline 
\end{tabular}}%
\hfill
    \subfloat[Outer multiplicity data for level 2.\label{tab:outer2}]{%
\begin{tabular}{ | c | c |}           
 \hline
   $\lambda=(n_1,n_2)_2$ & $M_2(\lambda)$ \\
\hline
$(2,2)$ & 3 \\
$(2,3)$ & 5 \\
$(3,3)$ & 14 \\
$(4,3)$ & 16 \\
$(3,5)$ & 20 \\
$(3,4)$ & 36 \\
$(4,4)$ & 107 \\
$(4,5)$ & 295 \\
  \hline 
\end{tabular}
}%
\hspace{80pt}

    \caption{The sequence of outer multiplicities of irreducible LW $\Fib$-modules in levels 1 and 2, where notation $(n_1,n_2)_m$ is as defined in Definition \ref{()}.}
    \label{fig:outer1and2}
\end{table}
\begin{figure}[!ht]
    \subfloat[Partial weight diagram for \newline $Y(2) = \Fib(2)\Big/ V^{\Lambda_2}$. \label{subfig-1:y2}]{%
      \includegraphics[trim = 6.5cm 1.6cm 6.5cm 0cm, clip=true, width=1.9 in]{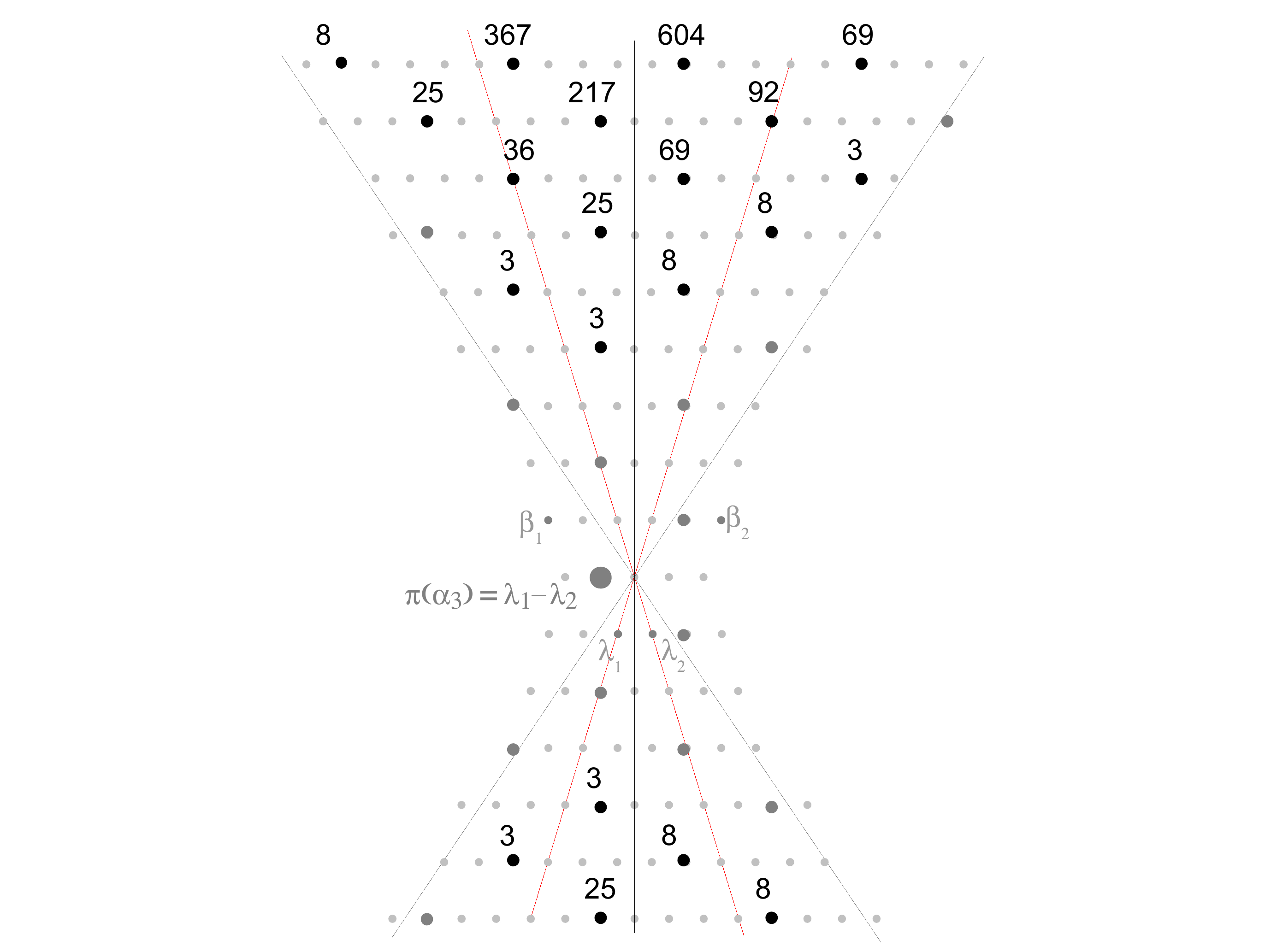}
    }
    \hfill
    \subfloat[Partial weight diagram for the irreducible module $V^{(2,2)}$. \label{subfig-2:22}]{%
      \includegraphics[trim = 6.5cm 1.6cm 6.5cm 0cm, clip=true, width=1.9 in]{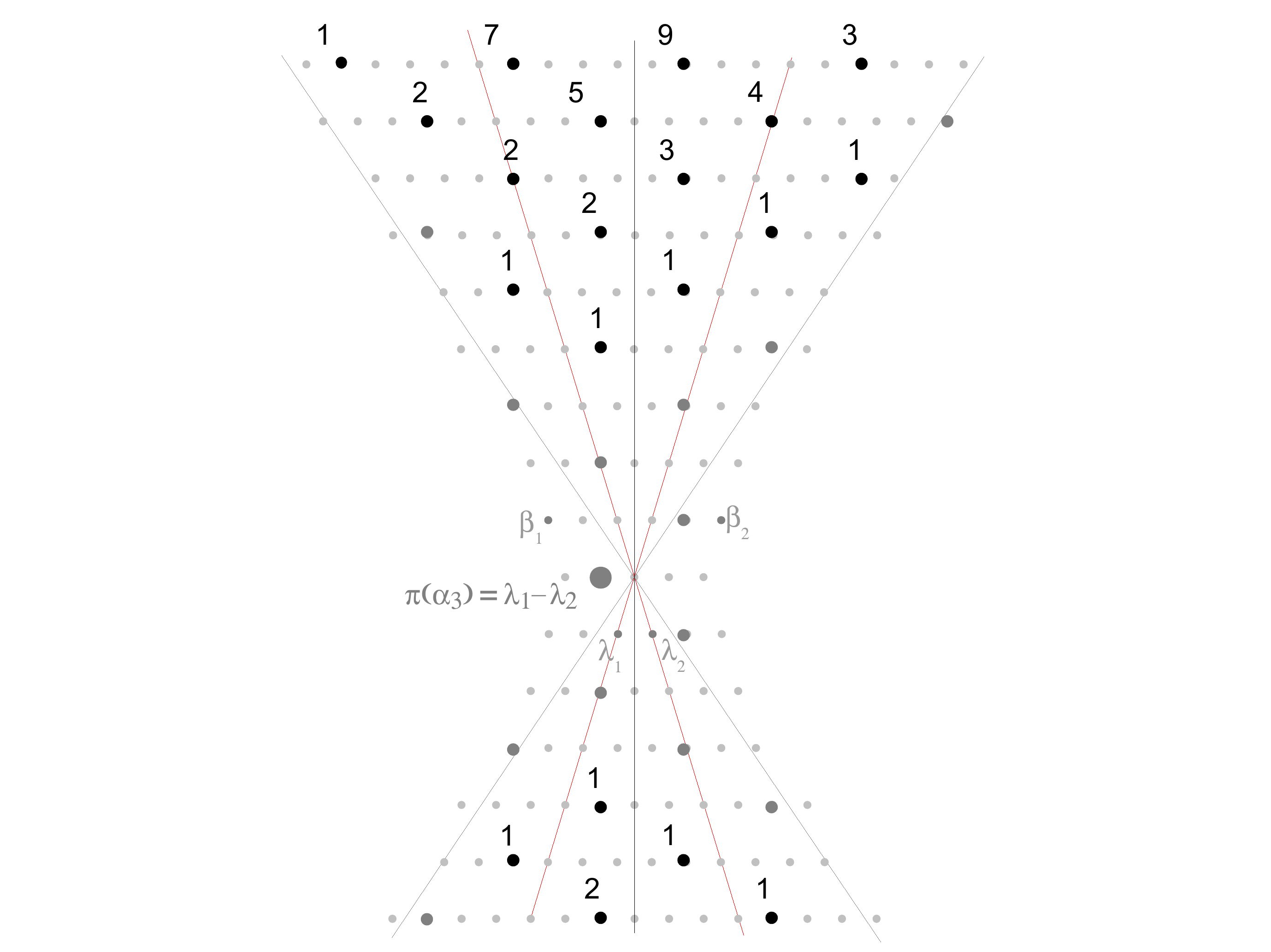}
    }
        \hfill
    \subfloat[Partial weight diagram for  $Y^1(2)=$ $\Fib(2)\Big/\Big(V^{\Lambda_{2}}\oplus V^{(2,2)}\oplus V^{\psi(2,2)}\Big).$  \label{subfig-3:y2-22}]{%
      \includegraphics[trim = 6.5cm 1.6cm 6.5cm 0cm, clip=true, width=1.9 in]{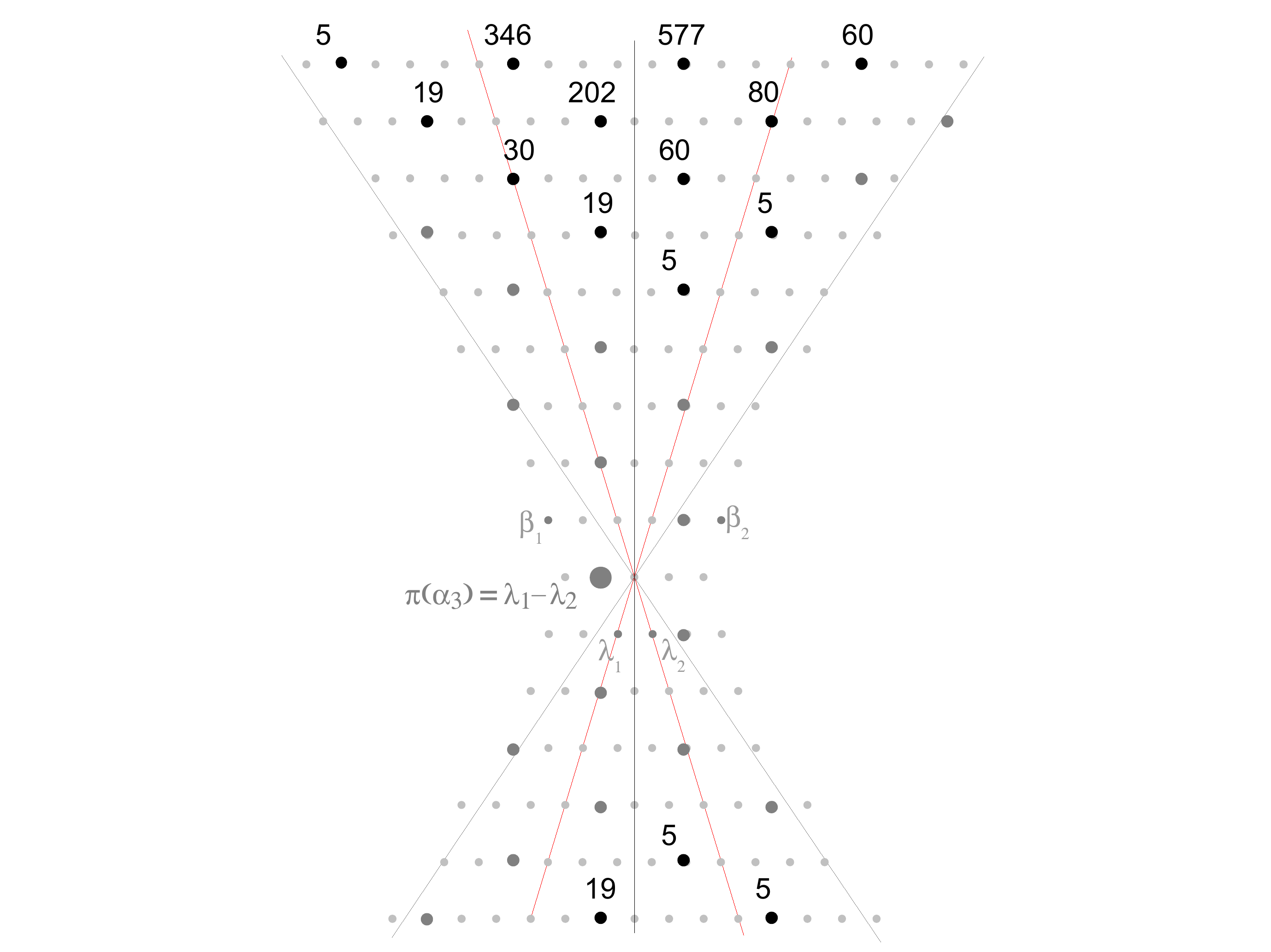}
    }
    \caption{Partial weight diagrams for three $\Fib$-modules on level 2 labeled with their multiplicities. Notation $(n_1,n_2)$ refers to weight $\Lambda_2+n_1\beta_1+n_2\beta_2$, since these diagrams are all in $\Fib(2)$.}
    \label{fig:3moduleslevel2}
  \end{figure}

Similarly, for level 2 we have Figure \ref{fig:3moduleslevel2}, which shows portions of the weight diagrams for the (completely reducible) quotient module $Y(2)=\Fib\Big/V^{\Lambda_{2}},$
the irreducible lowest-weight module $V^{(2,2)}$ (which has inner multiplicity 5), and the (completely reducible) quotient module 
$$Y^1(2)=\Fib(2)\Big/\Big(V^{\Lambda_{1}}\oplus V^{(2,2)}\oplus V^{\psi(2,2)}\Big).$$
We then observe that $(2,2)$ is a lowest weight for $\Fib$ with outer multiplicity $M_2((2,2))=3$. As before, we may continue the procedure indefinitely, obtaining more data on extremal vectors and their outer multiplicities. Table \ref{tab:outer2} presents the sequence of outer multiplicities for extremal weights on level 2, ordered by increasing outer multiplicity $M_2(\lambda)$.

\chapter{The Vertex algebra approach}\label{ch:vertex}

We observe that the algorithmic approach to finding extremal vectors for $\Fib$ described in Chapters \ref{ch:decomp0} and \ref{ch:nonstd} is time-intensive, and the sequences of outer multiplicities produced do not appear to follow any recognizeable pattern. We therefore seek an alternative approach using the theory of vertex algebras, which may give some insight into the decomposition of $\FF$ with respect to $\Fib$. 

\section{Definitions and the vertex algebra $V_\Fib$}\label{sec:vfib} 
In \cite{B} Borcherds gave a prescription for constructing a vertex algebra from any lattice, including indefinite lattices. In this section, we apply Borcherds' method to the indefinite root lattice $Q_\Fib=\Z\beta_1\oplus\Z\beta_2$, and in Section \ref{sec:VF} we apply it to $Q_\FF=\Z \alpha_1 \oplus\Z\alpha_2\oplus \Z\alpha_3$. 

Consider the Fock space $S(\hat{\HH}^-_\Fib)$, the algebra of symmetric polynomials in the commuting variables $\{\beta_i(-m), \mid i=1,2, \ 0< m\in \Z \}$ (see \cite{FLM}), which is a representation of the infinite-dimensional Heisenberg algebra with basis $\{ \mathbbm{1}, \beta_i(m) \ | \ i=1,2, \ m\in\Z \}$ and relations
$$[\beta_i(m),\beta_j(n)]=m(\beta_i,\beta_j)\delta_{m,-n}\mathbbm{1} \quad \text{and} \quad [\mathbbm{1},\beta_i(m)]=0.$$
We refer to $m$ in the operator $\beta_i(m)$ as the \textit{mode number} of $\beta_i(m)$. We define
$$V_{\Fib}=S(\hat{\HH}_\Fib^-)\otimes \C[Q_\Fib],$$
where $\C[Q_\Fib]$ is the group algebra of $Q_\Fib$ with basis $\{e^\beta \ | \ \beta\in Q_\Fib\}$ and multiplication given by $e^\beta e^\gamma = e^{\beta+\gamma}.$  Note that $\beta_i(0)$ is central since $[\beta_i(0),\beta_j(n)]=0(\beta_i,\beta_j)\delta_{0,-n}\mathbbm{1}=0,$ and there is a diagonal action of $\beta_i(0)$ on $\C[Q_\Fib]$ by 
\begin{equation}\label{eq:beta(0)}\beta_i(0)e^\lambda=(\beta_i,\lambda)e^\lambda.\end{equation}

For \textit{homogeneous} vectors in $V_\Fib$, that is, vectors of the form $\beta_{i_1}(-m_1)\cdots\beta_{i_r}(-m_r)\mathbbm{1}\otimes e^\beta$ where $0\leq m_i\in\Z$ and $i_j\in \{1,2\}$, $1\leq j\leq r$, define the $\Z$-valued weight function given by
\begin{equation}\label{eq:wt(v)}wt\Big(\beta_{i_1}(-m_1)\cdots\beta_{i_r}(-m_r)\mathbbm{1}\otimes e^\beta\Big) = \sum_{j=1}^r m_j + \frac{(\beta,\beta)}{2}.\end{equation}
As a vector space, $V_\Fib$ is graded by weight:
$$V_\Fib=\bigoplus_{m\in\Z}V_{\Fib, m},$$
where 
$V_{\Fib, m}=Span(\{\vect v\in V_\Fib \ | \ wt(\vect v)=m \}).$ 
There is a compatible grading by $Q_\Fib$,
$$V_\Fib=\bigoplus_{\beta\in Q_\Fib}V_\Fib^\beta,$$
where 
$V_\Fib^\beta=S(\hat{\HH}_\Fib^-)\otimes e^\beta,$
that is, $V_{\Fib,m}^\beta= V_\Fib^\beta \cap V_{\Fib, m}.$ Since the lattice $Q_\Fib$ is indefinite, both $V_\Fib^\beta$ and $V_{\Fib, m}$ are infinite dimensional. However, note that 
$$\dim(V_{\Fib, m}^\beta)=p\Big(m-\frac{(\beta,\beta)}{2}\Big)<\infty,$$
where $p(n)$ is the partition function (\cite{B}).
 
Later we will give the definition of vertex operators $Y(\cdot,z):V_\Fib\rightarrow End(V_\Fib)[[z, z^{-1}]]$ whose components are given by the notations
$$Y(\vect v,z)=\sum_{m\in\Z}\vect v_mz^{-m-1} = \sum_{n\in\Z}Y_n(\vect v) z^{-n-wt(\vect v)},$$
so
$$\vect v_{n+wt(\vect v)-1}=Y_n(\vect v)\in End(V_\Fib).$$
The operator $\vect v_m$ is computed by
\begin{equation}\label{eq:vm}\vect v_m=\oint Y(\vect v,z)z^m \ dz = Res_{z=0}(Y(\vect v,z)z^m),\end{equation}
which is the coefficient of the $z^{-1}$ term, so
$$Y_n(\vect v)=Res_{z=0}(Y(\vect v,z)z^{n+wt(\vect v)-1}).$$

We define the {\it vertex operator} $Y(\vect v,z)$ for the following choices of $\vect v\in V_\Fib$:

For $\vect v=\mathbbm{1}\otimes e^0$ (called the {\it vacuum vector}),
$$Y(\mathbbm{1}\otimes e^0,z)=I_{V_\Fib}.$$
We also have
$$Y\Big(\beta(-1)\mathbbm{1}\otimes e^0,z\Big) = \beta(z) =  \sum_{n\in\Z}\beta (n) z^{-n-1},$$
and for general $m\geq 0$,
$$Y\Big(\beta(-m-1)\mathbbm{1}\otimes e^0,z\Big) = \beta^{(m)}(z) = \frac{1}{m!}\Big(\frac{d}{dz}\Big)^m \sum_{n\in\Z}\beta (n) z^{-n-1} =  \frac{1}{m!}\Big(\frac{d}{dz}\Big)^m \beta(z).$$
$$Y\Big(\mathbbm{1}\otimes e^\beta,z\Big) = \exp\Big(\sum_{k>0} \frac{\beta(-k)}{k} z^k\Big)\exp\Big(\sum_{k>0} \frac{\beta(k)}{-k} z^{-k}\Big)e^\beta z^{\beta(0)}\varepsilon_\beta,$$
where $z^{\beta(0)}$ and $\varepsilon_\beta$ act on $\vect v=w\otimes e^\lambda\in V_\Fib^\lambda$ by
$$z^{\beta(0)}(w\otimes e^\lambda)=z^{(\beta,\lambda)}(w\otimes e^\lambda),$$
$$\varepsilon_\beta (w\otimes e^\lambda) =\varepsilon(\beta,\lambda) (w\otimes e^\lambda),$$
and $\varepsilon(\beta,\lambda)=\pm1$ is a 2-cocycle (which may be chosen bilinear) subject to the conditions
$$\frac{\varepsilon(\beta, \lambda)}{\varepsilon(\lambda, \beta)}=(-1)^{(\beta, \lambda)}$$
for any $\beta, \lambda\in Q_\Fib$. We fix such a bilinear 2-cocycle determined by the matrix of values on the simple roots, 
\begin{equation}\label{Fibcocycle}\Big(\varepsilon(\beta_i,\beta_j)\Big)=\left(\begin{array}{rr}
	-1 & 1  \\
	-1 & -1  \\
	\end{array}\right),\end{equation}
	for $i,j=1,2$. It is clear that for all $n\in\Z$, $\varepsilon(n\beta,\gamma)=\varepsilon(\beta,n\gamma)=\varepsilon(\beta,\gamma)^{n}.$

We sometimes write $e^\beta = \mathbbm{1}\otimes e^\beta.$ 

For $m_{i_1}, \ldots,  m_{i_r} \geq 0,$ and $i_1, \ldots, i_r =1,2$,
$$Y\Big(\beta_{i_1}(-m_{i_1}-1)\cdots \beta_{i_r}(-m_{i_r} -1)\mathbbm{1}\otimes e^\beta,z\Big)= \ :\beta^{(m_1)}_{i_1}(z)\cdots\beta^{(m_r)}_{i_r}(z)Y(\mathbbm{1}\otimes e^\beta,z):,$$
where $: \cdot :$ is the bosonic normal ordering that places Heisenberg operators with positive mode numbers to the right and negative mode numbers to the left. 

It can be shown that $(V_\Fib, Y(\cdot,z), \vect \omega = \vect \omega_\Fib, \mathbbm{1}\otimes e^0)$ is a vertex algebra, where
$$\vect \omega=\frac{1}{2}\sum_{i=1}^2\beta_i(-1)\lambda_i(-1)\mathbbm{1}\otimes e^0$$
(so $wt(\vect \omega)=2$), and $\{\lambda_1,\lambda_2\}$ are the fundamental weights of $\Fib$ (see Chapter \ref{ch:fib}). We call $\vect \omega$ the {\it conformal vector}, and 
$$Y(\vect \omega, z)=\sum_{n\in\Z} L_n z^{-n-2}=\sum_{n\in\Z} Y_n(\vect \omega) z^{-n-2}=\sum_{m\in\Z} \vect \omega_m z^{-m-1},$$ 
where the operators $L_n$ represent the Virasoro algebra with central charge $c_{vir}=\text{rank}(Q_\Fib)=2$
and Lie brackets given by
$$[L_m,L_n] =(m-n)L_{m+n}+ c_{vir} \delta_{m,-n}(m^3-m)/12 \quad \text{and}\quad [c_{vir},L_n]=0.$$
It immediately follows that $L_n=\vect \omega_{n+1}$. The Virasoro operators act only on the Fock space component of each vector. The operator $L_0$ acts diagonalizably on $V_\Fib$, such that for $\vect v\in V_{\Fib,m}$, $L_0\cdot \vect v = m \vect v=wt(\vect v)\vect v.$ We also have that $L_{-1}=\vect \omega_0$ acts as a derivation,
\begin{equation}\label{eq:l-1}[L_{-1},Y(\vect u,z)] = Y(L_{-1}\vect u, z) =\ds\frac{d}{dz} \ Y(\vect u,z).\end{equation}
which will prove useful in Section \ref{sec:LWVinVF}.
\begin{lemma}[\cite{FLM}]\label{flm} Let $V$ be a vertex algebra constructed from root lattice $Q$ with Virasoro operators $L_i$ and vacuum vector $\mathbbm{1}$. Then for $\alpha\in Q$,
\begin{itemize}
\item[(i)]  $[L_m,\alpha(-n)]=n\alpha(m-n)$ for $m,n\in\Z$, 
\item[(ii)] $L_m\mathbbm{1}=0$ for $0\leq m \in \Z$.
\end{itemize}
\end{lemma}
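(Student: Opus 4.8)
The plan is to work directly from the explicit form of the conformal vector and reduce everything to Heisenberg-algebra bookkeeping. Fix a basis $\{u_i\}_{i=1}^{\ell}$ of $Q\otimes_\Z\C$ together with the dual basis $\{u^i\}_{i=1}^{\ell}$ with respect to the symmetric form, so that $(u_i,u^j)=\delta_{ij}$; the conformal vector then has the form $\vect{\omega}=\frac12\sum_i u_i(-1)u^i(-1)\mathbbm{1}$, exactly as in the case $Q=Q_\Fib$ discussed above (where $u_i=\beta_i$, $u^i=\lambda_i$). Using the normal-ordered vertex-operator formula $Y\big(u_i(-1)u^i(-1)\mathbbm{1},z\big)=\ :u_i(z)u^i(z):$, expanding each current $u(z)=\sum_{p\in\Z}u(p)z^{-p-1}$, and comparing with $Y(\vect{\omega},z)=\sum_{n\in\Z}L_nz^{-n-2}$, I would first record the mode formula
$$L_m=\frac12\sum_i\sum_{p\in\Z}\ :u_i(p)\,u^i(m-p):\ .$$
(One could instead deduce (ii) at once from the creativity axiom $Y(\vect{v},z)\mathbbm{1}\in V[[z]]$ applied to $\vect{v}=\vect{\omega}$, and (i) from the standard commutator of $L_m$ with a weight-one primary field — here $u(-1)\mathbbm{1}$, which is primary because $\vect{\omega}$ carries no background-charge term — but I take the hands-on route since the mode formula is needed elsewhere anyway.)

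For part (i), I would fix $\alpha\in Q$ and distribute the bracket $[L_m,\alpha(-n)]$ over the two factors of the quadratic expression for $L_m$. The Heisenberg relations $[u(p),v(q)]=p(u,v)\delta_{p+q,0}\mathbbm{1}$ produce only central terms, so the normal ordering is irrelevant to a commutator with the single mode $\alpha(-n)$, giving
$$[L_m,\alpha(-n)]=\frac12\sum_i\sum_{p\in\Z}\Big([u_i(p),\alpha(-n)]\,u^i(m-p)+u_i(p)\,[u^i(m-p),\alpha(-n)]\Big).$$
The first bracket forces $p=n$ and leaves $\tfrac{n}{2}\sum_i(u_i,\alpha)\,u^i(m-n)$; the second forces $p=m-n$ and leaves $\tfrac{n}{2}\sum_i(u^i,\alpha)\,u_i(m-n)$. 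The single genuine ingredient is the dual-basis resolution identity $\sum_i(u_i,\alpha)u^i=\alpha=\sum_i(u^i,\alpha)u_i$, which collapses each sum to $\tfrac{n}{2}\alpha(m-n)$; adding the two halves yields $[L_m,\alpha(-n)]=n\alpha(m-n)$.

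For part (ii), recall that $\mathbbm{1}=\mathbbm{1}\otimes e^0$ is annihilated by every mode $u(p)$ with $p>0$, and, by \eqref{eq:beta(0)}, also by $u(0)$ since $u(0)(\mathbbm{1}\otimes e^0)=(u,0)(\mathbbm{1}\otimes e^0)=0$. If $m\geq 1$, then in each summand $:u_i(p)u^i(m-p):$ the mode numbers $p$ and $m-p$ cannot both be $\leq 0$, so after normal ordering an annihilation mode stands on the right and the term kills the vacuum. If $m=0$, the same holds for every $p\neq 0$, while the lone $p=0$ summand $\tfrac12\sum_i u_i(0)u^i(0)$ annihilates $\mathbbm{1}\otimes e^0$ by the preceding remark (consistent with $\mathbbm{1}$ having weight $0$, so that $L_0\mathbbm{1}=0$). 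Hence $L_m\mathbbm{1}=0$ for all $m\geq 0$.

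I do not expect a real obstacle here — the content is bookkeeping rather than a new idea. The points needing care are: making sense of the infinite sum defining $L_m$ as an honest operator (the usual truncation argument: on any fixed vector only finitely many modes act nontrivially); tracking the normal-ordering convention consistently when splitting creation from annihilation parts; and, in part (i), recognizing the dual-basis resolution identity, which is exactly what makes all dependence on the Gram matrix $\big((u_i,u_j)\big)$ cancel and hence why the statement holds verbatim for an arbitrary, possibly indefinite, lattice $Q$.
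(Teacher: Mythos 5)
The paper does not prove this lemma at all: it is quoted verbatim from \cite{FLM} as a known fact about lattice vertex algebras, so there is no in-paper argument to compare yours against. Your computation is a correct, self-contained derivation of exactly the standard kind. The mode formula $L_m=\frac12\sum_i\sum_{p}\,{:}u_i(p)u^i(m-p){:}$ is right for $\vect\omega=\frac12\sum_i u_i(-1)u^i(-1)\mathbbm{1}$, and in the paper's setting the pairs $(\beta_i,\lambda_j)=\delta_{ij}$ and $(\alpha_i,\omega_j)=\delta_{ij}$ do form dual bases (the Cartan matrices are symmetric with $\|\alpha_i\|^2=2$), so the resolution identity $\sum_i(u_i,\alpha)u^i=\alpha$ applies; your observation that normal ordering is invisible to a commutator against a single Heisenberg mode, because all such commutators are central, is the right way to dispose of that issue; and the vacuum argument for (ii), including the separate treatment of the $p=0$ term of $L_0$ via $u(0)e^0=(u,0)e^0=0$, is sound. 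The one caveat worth stating explicitly is that the dual-basis resolution requires the form on $Q\otimes_\Z\C$ to be \emph{nondegenerate}, not merely that it may be indefinite; this holds for $Q_\Fib$ and $Q_\FF$ but would fail for, say, an affine root lattice, so the lemma is not quite ``verbatim for an arbitrary lattice.'' What your approach buys over the paper's citation is a proof that works directly from the operators as the paper defines them (the same Heisenberg bookkeeping it uses in Theorem \ref{ovPbasis} and Appendix B), at the cost of a page of computation that \cite{FLM} already contains.
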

If $\vect u,\vect v \in V$ then the weight of vector $\vect u_n(\vect v)$ is given by the formula
$$wt(\vect u_n(\vect v))=wt(\vect u)+wt(\vect v)-n-1,$$
which shows that if $\vect u\in V_1$ and $\vect v\in V_m$, $wt(\vect u_0(\vect v))=1+m-0-1=m,$
so $\vect u_0(V_{\Fib, m})\subset V_{\Fib,m}$. 


\section{The vertex algebra $V_\FF$}\label{sec:VF} 
We now apply Borcherd's method to the root lattice $Q_\FF=\Z\alpha_1\oplus \Z\alpha_2\oplus\Z\alpha_3$. Similar to the previous construction we have a Fock space $S(\hat{\HH}^-_\FF)$, the algebra of symmetric polynomials in the commuting variables $\{\alpha_i(m) \mid 0<m\in \Z, \ 1\leq i \leq 3 \}$, which is a representation of the Heisenberg algebra with basis $\{ \mathbbm{1}, \alpha_i(m)  \mid m\in\Z, \ 1\leq i \leq 3 \}$, and relations
$$[\alpha_i(m),\alpha_j(n)]=m(\alpha_i,\alpha_j)\delta_{m,-n}\mathbbm{1}, \quad \text{ and }\quad [\mathbbm{1},\alpha_i(m)]=0.$$
We define
$$V_{\FF}=S(\hat{\HH}_\FF^-)\otimes \C[Q_\FF].$$
The group algebra $\C[Q_\FF]$, $\Z$-valued weight function $wt$ on homogeneous vectors in $V_\FF$, gradings of the vector space $V_\FF$, vertex operators  $Y(\cdot,z):V_\FF\rightarrow End(V_\FF)[[z, z^{-1}]]$, are also defined as before with a new choice of 2-cocycle (see below).

We thus have the vertex algebra $(V_\FF, Y(\cdot,z), \vect \omega_\FF, \mathbbm{1}\otimes e^0)$ where
$$\vect \omega_\FF=\frac{1}{2}\sum_{i=1}^3\alpha_i(-1)\omega_i(-1)\mathbbm{1}\otimes e^0,$$
is the conformal vector for $V_\FF$, and $\{\omega_1,\omega_2, \omega_3\}$ are the fundamental weights for $\FF$ (see Section \ref{sec:F}). Since $Q_\Fib\subset Q_\FF$, we get $V_\Fib\subset V_\FF$, where compatibility of the vertex algebra structures requires that the choice of 2-cocycle for $\Fib$ be the restriction to $Q_\Fib$ of the 2-cocycle of $\FF$. We fix such a bilinear 2-cocycle, determined by the matrix of values on the simple roots, 
\begin{equation}\Big(\varepsilon(\alpha_i,\alpha_j)\Big)=\left(\begin{array}{rrr}
	-1 & 1 & 1 \\
	1 & -1 & 1 \\
	1 & -1 & -1 \\
	\end{array}\right).\end{equation}\label{Fcocycle}
	Note that the 2-cocycles for $\Fib$ and $\FF$ can be distinguished by their domains, so we use the same notation for both. Using the notation
$$\varepsilon_{ij} = \varepsilon(\alpha_i,\alpha_j),$$
we have
$$\varepsilon(\beta_1,\beta_1)=\varepsilon(2\alpha_1+\alpha_2+\alpha_3, 2\alpha_1+\alpha_2+\alpha_3)=\varepsilon_{11}^4\varepsilon_{12}^2\varepsilon_{13}^2\varepsilon_{21}^2\varepsilon_{22}\varepsilon_{23}\varepsilon_{31}^2\varepsilon_{32}\varepsilon_{33}=-1,$$
$$\varepsilon(\beta_1,\beta_2)=\varepsilon(2\alpha_1+\alpha_2+\alpha_3, \alpha_2)=\varepsilon_{12}^2\varepsilon_{22}\varepsilon_{32}=1,$$
$$\varepsilon(\beta_2,\beta_1)=\varepsilon(\alpha_2, 2\alpha_1+\alpha_2+\alpha_3)=\varepsilon_{21}^2\varepsilon_{22}\varepsilon_{23}=-1,$$
$$\varepsilon(\beta_2,\beta_2)=\varepsilon(\alpha_2, \alpha_2)=\varepsilon_{22}=-1,$$
which shows that the 2-cocycles for $V_\FF$ and $V_\Fib$ are compatible. 

\section{Representation of $\Fib$ acting on $V_\FF$}\label{sec:Fib in VF} 

In \cite{FLM}, it was shown that if $V_L$ is a vertex operator algebra constructed from a positive-definite even lattice $L$, then the subspace of $End(V_L)$ spanned by the operators $\{ \vect u_m \mid  \vect u\in V_L, m\in\Z\}$ is a Lie subalgebra. In \cite{B}, Borcherds showed that this result extends to indefinite lattices, and in both cases, one has the commutator identity
$$[\vect u_m,\vect v_n]=\sum_{i\in \N} {m \choose i} (\vect u_i(\vect v))_{m+n-i}.$$ 
In special cases we have 
$$[\vect u_0,\vect v_n]=(\vect u_0(\vect v))_n \quad \text{and} \quad [\vect u_0,\vect v_0]=(\vect u_0(\vect v))_0,$$
so the space $End(V_L)_0$ of operators spanned by $\{ \vect u_0 \mid  \vect u\in V_L \}$ is a Lie subalgebra of $End(V_L)$, and the space $End(V_L)_n$ spanned by $\{ \vect v_n \mid  \vect v\in V_L\}$ is a module for $End(V_L)_0$. Furthermore, the span of $\{\vect u_0 \mid \vect u\in V_L, \ wt(\vect u)=1 \}$ is a Lie subalgebra of $End(V_L)_0$ since $wt(\vect u_0(\vect v))=wt(\vect u)+wt(\vect v)-0-1=1.$  We focus on the following special weight-1 vectors.
\begin{definition} Let $\LL$ be a KM algebra, $V_\LL$ the vertex algebra constructed from the root lattice of $\LL$, with Virasoro operators $L^\LL_n, n\in \Z$. For $i\in \Z$, the \textbf{physical i-space} of $V_\LL$ is
$$P^\LL_i =\{\vect v\in V_\LL \ | \ L^\LL_0 \vect v = i\vect v,  \ L^\LL_m \vect v=0 \text{ if } m > 0\}.$$
\end{definition}

From now on the reader may assume that $P_n$ means $P^\FF_n$ and $L_n$ means $L^\FF_n$. If a vector $\vect v\in P_1$, then $L_0 \vect v=wt(\vect v)\vect v=\vect v$, so $P_1\subset V_{\FF,1}$. For $wt(\vect v)$ arbitrary and any $n\in \Z$ we have
$$wt(L_n(\vect v))=wt(\vect \omega_{n+1}(\vect v))=wt(\vect \omega)+wt(\vect v)-(n+1)-1 = wt(\vect v)-n,$$
and in particular, if $wt(\vect v)=0$, then $wt(L_{-1}(\vect v))=1$, so $L_{-1}(P_0)\subset P_1$.

By a result of Borcherds, the quotient space $\ovP=P_1\Big/L_{-1}(P_0)$ is a Lie algebra, where the Lie algebra bracket is given by 
\begin{equation}\label{eq:bracket}[\ovect u,\ovect v] = \ovect u_0(\ovect v) \quad \text{for} \quad \ovect u, \ovect v \in \ovP\end{equation}
(\cite{B} $\S 5$). This formula, which defines the adjoint representation of $\ovP$, also defines the action of $\ovP$ on itself as a $\ovP$-module. In general, $V_\FF$ is also a $\ovP$-module with the action $\ovect u\cdot \vect v = \ovect u_0(\vect v)$ for $v\in V_\FF$. Note that if $\vect u\in L_{-1}(P_0)$, then $\ovect u_0 = \ovect 0_0 = 0$. 

Borcherds gives us the Lie algebra representation $\pi_\FF: \FF\rightarrow \ovP$ defined by
$$\pi_\FF(e_i)=\mathbbm{1}\otimes e^{\alpha_i} , \qquad  \pi_\FF(f_i)=-\mathbbm{1}\otimes e^{-\alpha_i} , \qquad \text{and} \qquad \pi_\FF(h_i)=\alpha_i(-1)\mathbbm{1}\otimes e^0$$
for  $i=1,2,3$ (for convenience we sometimes suppress the bars for vectors in $\ovP$). In Appendix \ref{sec:piF}, we verify that these elements indeed satisfy the Serre relations of $\FF$. He also gives us $\pi_\Fib: \Fib\rightarrow \ovP^\Fib = P^\Fib_1\Big/ L^\Fib_{-1}(P^\Fib_0)$ given by 
$$\pi_\Fib(E_i)=\mathbbm{1}\otimes e^{\beta_i}, \quad \pi_\Fib(F_i)=-\mathbbm{1}\otimes e^{-\beta_i}, \quad \text{and} \quad \pi_\Fib(H_i)=\beta_i(-1)\mathbbm{1}\otimes e^0,$$
for $i=1,2$ (we have again suppressed the bar notation, this time for vectors in $\ovP^\Fib$).

We also prove in Appendix \ref{sec:piFib}  that restricting $\pi_\FF$ to $\Fib$ gives a representation $\pi_\FF|_\Fib$ of $\Fib$ acting on $V_\FF$ that is compatible with $\Fib\subset \FF$, i.e., $\pi_\FF|_\Fib=\pi_\Fib$. This is done by showing for $i=1,2,$
$$\pi_\FF(E_i)=\mathbbm{1}\otimes e^{\beta_i}, \quad \pi_\FF(F_i)=-\mathbbm{1}\otimes e^{-\beta_i}, \quad \text{and} \quad \pi_\FF(H_i)=\beta_i(-1)\mathbbm{1}\otimes e^0,$$
where $E_1=\frac{1}{2}e_{1123}, \ E_2=e_2, \ F_1=-\frac{1}{2}f_{1123}, \ F_2=f_2, \ H_1=2h_1+h_2+h_3,$ and $H_2=h_2$. 

The action of $\pi_\FF(\FF)$ on $\ovP$ is defined by the adjoint representation of $\ovP$ and the bracket formula \eqref{eq:bracket}, and is compatible with the action of $\FF$ on itself, since for all $x,y\in\FF$,
$$\pi_\FF(x) \cdot \pi_\FF(y) = [\pi_\FF(x), \pi_\FF(y)]  = \pi_\FF([x,y]) = \pi_\FF(x\cdot y),$$
where the action on the left-hand side is on $\ovP$ and the right-hand side is on $\FF$.

The restricted representation $\pi_\Fib$ makes $\ovP$ also a $\Fib$-module, where the action of $X\in\Fib$ on $\ovect v\in \ovP$ is given by
\begin{equation}\label{eq:action}\pi_\Fib(X) \cdot \ovect v = \big(\pi_\Fib(X)\big)_0(\ovect v).\end{equation}

\section{Finding lowest-weight vectors for $\Fib$ in $\ovP^{-\rho}$} \label{sec:LWVinVF}

We look for lowest-weight vectors for $\Fib$ in $\ovP$, starting with the weight space $\ovP^{-\rho}$ where $-\rho=-\rho_\Fib=\beta_1+\beta_2$. In Theorem \ref{vrho} we found that the space of lowest-weight vectors $Low_{\Fib(0)}(-\rho)$ has dimension 1, and has basis $(v_{-\rho}=-3e_{12123}+2E_{21})$. Then 
$$\ovect x = \pi_\FF(v_{-\rho}) = \pi_\FF(-3e_{12123}+2E_{21}) \in \ovP^{-\rho}$$
is aso a lowest-weight vector for $\Fib$, since for $i=1,2$, 
$$\pi_\Fib(F_i)\cdot \ovect x = [\pi_\Fib(F_i),\ovect x]= [\pi_\FF(F_i),\pi_\FF(v_{-\rho})]= \pi_\FF([F_i, v_{-\rho}]) = \ovect 0.$$

Therefore $\ovect 0 \neq \ovect x \in Low_{\ovP}(-\rho)$, however, we will now show that $\dim Low_{\ovP}(-\rho)>1$. 

\begin{theorem}\label{ovPbasis}A basis for $\ovP^{-\rho}$ is $B_\ovP^{-\rho}=(\ovect p_1, \ \ovect p_2, \ \ovect p_3),$ where
$$\vect p_1=\Big(\alpha_2(-2)+\alpha_2(-1)^2\Big)\vac \otimes e^{-\rho},$$
$$\vect p_2=\Big(-\alpha_1(-1)^2+\alpha_3(-1)^2\Big)\vac \otimes e^{-\rho},  \text{  and}$$
$$\vect p_3=\alpha_1(-1)\alpha_3(-1)\vac \otimes e^{-\rho}.$$
\end{theorem}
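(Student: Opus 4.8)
The plan is to compute the weight space $\ovP^{-\rho}$ directly from its definition $\ovP = P_1\big/L_{-1}(P_0)$, by first pinning down which $Q_\FF$-graded piece of $V_\FF$ can carry $\Fib$-weight $-\rho$, then solving the physical-state equations there as a finite linear-algebra problem, and finally verifying that $\vect p_1,\vect p_2,\vect p_3$ span a complement of $L_{-1}(P_0)^{-\rho}$ in $P_1^{-\rho}$.

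First I would locate the relevant lattice vector. Since $\pi_\FF(H_i)$ acts on $\vect w\ox e^\gamma\in V_\FF^\gamma$ as the operator $\big(\pi_\FF(H_i)\big)_0=\beta_i(0)$, i.e.\ as multiplication by $(\beta_i,\gamma)$, the $\Fib$-weight of such a vector is $\pi(\gamma)$, so requiring $\pi(\gamma)=-\rho$ amounts to $(\beta_1,\gamma)=(\beta_2,\gamma)=-1$. Writing $\gamma=x\alpha_1+y\alpha_2+z\alpha_3$ and using the Cartan matrix of $\FF$, this forces $y=2$ and $2x+z=5$. A vector of $V_\FF^\gamma$ lies in $P_1$ only if $wt(\vect w\ox e^\gamma)=1$, and since $wt$ is the total mode number (which is $\ge 0$) plus $\tfrac12||\gamma||^2$, this needs $||\gamma||^2\le 2$. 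On the line $y=2,\ 2x+z=5$ one has $||\gamma||^2=2(5x^2-20x+19)$, whose value is $\le 2$ only for $x=2$ (the relevant quadratic has roots near $1.37$ and $2.63$), giving the single contributing lattice vector $\gamma=-\rho=\beta_1+\beta_2=2\alpha_1+2\alpha_2+\alpha_3$, with $||\gamma||^2=-2$ and hence total mode number $2$. The crucial by-product is that $-\rho$ pairs trivially with $\alpha_1$ and $\alpha_3$ and pairs with $\alpha_2$ to $-1$.

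Next I would solve the physical conditions on the $9$-dimensional degree-$2$ subspace of $S(\hat{\HH}_\FF^-)\ox e^{-\rho}$, spanned by the monomials $\alpha_i(-2)\vac\ox e^{-\rho}$ and $\alpha_i(-1)\alpha_j(-1)\vac\ox e^{-\rho}$ with $i\le j$. Only $L_1$ and $L_2$ impose conditions, since $L_m$ annihilates degree-$2$ states for $m\ge 3$. Using Lemma \ref{flm}(i), together with $L_m(\vac\ox e^{-\rho})=0$ for $m\ge 1$ and $\alpha_i(0)(\vac\ox e^{-\rho})=(\alpha_i,-\rho)(\vac\ox e^{-\rho})$, each of $L_1$ and $L_2$ applied to a general vector $\vect v=\sum_i c_i\alpha_i(-2)\vac\ox e^{-\rho}+\sum_{i\le j}d_{ij}\alpha_i(-1)\alpha_j(-1)\vac\ox e^{-\rho}$ reduces to an explicit combination of the degree-$1$ monomials $\alpha_k(-1)\vac\ox e^{-\rho}$ (respectively of $\vac\ox e^{-\rho}$). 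Because $-\rho$ pairs nontrivially only with $\alpha_2$, setting $L_1\vect v=L_2\vect v=0$ collapses to the four independent equations $d_{12}=2c_1$, $d_{22}=c_2$, $d_{23}=2c_3$, $d_{11}+d_{33}=2c_1+c_3$, so $\dim P_1^{-\rho}=5$. One then checks directly that $\vect p_1,\vect p_2,\vect p_3$ satisfy these equations and are linearly independent, hence lie in $P_1^{-\rho}$. For the quotient: the $\Fib$-weight-$(-\rho)$ part of $P_0$ consists of $\sum_i c_i\alpha_i(-1)\vac\ox e^{-\rho}$ with $\sum_i c_i(\alpha_i,-\rho)=0$, i.e.\ $c_2=0$, so it is $2$-dimensional with basis $\alpha_1(-1)\vac\ox e^{-\rho},\ \alpha_3(-1)\vac\ox e^{-\rho}$. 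From \eqref{eq:l-1} and the explicit vertex operator $Y(\vac\ox e^{-\rho},z)$ one gets $L_{-1}(\vac\ox e^{-\rho})=\big(2\alpha_1(-1)+2\alpha_2(-1)+\alpha_3(-1)\big)(\vac\ox e^{-\rho})$, so the images under $L_{-1}$ of those two basis vectors are elements of $P_1^{-\rho}$ carrying the distinct monomials $\alpha_1(-2)\vac\ox e^{-\rho}$ and $\alpha_3(-2)\vac\ox e^{-\rho}$, hence linearly independent; thus $L_{-1}$ is injective on $P_0^{-\rho}$ and $\dim L_{-1}(P_0)^{-\rho}=2$. Writing $\vect p_1,\vect p_2,\vect p_3$ and those two images as coordinate vectors in the degree-$2$ monomial basis and checking that all five are linearly independent gives $P_1^{-\rho}=\mathrm{span}(\vect p_1,\vect p_2,\vect p_3)\oplus L_{-1}(P_0)^{-\rho}$, so $\ovect p_1,\ovect p_2,\ovect p_3$ form a basis of $\ovP^{-\rho}$.

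I expect the main obstacle to be the bookkeeping in the $L_1/L_2$ step: keeping the Virasoro action on the nine degree-$2$ monomials straight and correctly reducing the four physical conditions. Everything downstream — the dimension of $P_0^{-\rho}$, the injectivity of $L_{-1}$, and the final rank check — is routine once the translation formula $L_{-1}(\vac\ox e^{-\rho})=(-\rho)(-1)(\vac\ox e^{-\rho})$ is established, and the entire computation hinges on the single arithmetic input from the first step, namely that $-\rho=\beta_1+\beta_2$ pairs nontrivially with only the simple root $\alpha_2$ of $\FF$.
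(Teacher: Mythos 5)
Your proposal is correct and follows essentially the same route as the paper: solve the physical-state conditions $L_1=L_2=0$ on the nine degree-two monomials to get the five-dimensional $P_1^{-\rho}$, identify the two-dimensional image $L_{-1}(P_0)^{-\rho}$ spanned by what the paper calls $\vect q_1,\vect q_2$, and finish with linear algebra; your four reduced equations agree exactly with the paper's system. The only differences are your preliminary check that $-\rho=2\alpha_1+2\alpha_2+\alpha_3$ is the unique lattice vector that can contribute to this weight space (a justification the paper leaves implicit) and a slightly cleaner final step, verifying directly that $\vect p_1,\vect p_2,\vect p_3,\vect q_1,\vect q_2$ are five independent vectors in $P_1^{-\rho}$ rather than reducing the paper's seven-vector list.
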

\begin{proof}
All vectors in $\ovP^{-\rho}$ are weight 1 and have a degree-2 Fock space polynomial, since $||-\rho \ ||^2=-2$ (cf. equation \eqref{eq:wt(v)}). We have the ordered basis for the subspace of degree-2 polynomials in $S(\hat{\HH}^-_\FF)$,
$$S =(u_i \mid 1\leq i \leq 9) = \Big(\alpha_1(-2), \ \alpha_2(-2), \ \alpha_3(-2), \ \alpha_1(-1)^2,  \ \alpha_2(-1)^2, \ \alpha_3(-1)^2,$$
 $$\qquad \qquad \qquad \alpha_1(-1)\alpha_2(-1), \ \alpha_2(-1)\alpha_3(-1), \ \alpha_1(-1)\alpha_3(-1) \Big),$$
which gives us the basis for the 9-dimensional space  $V^{-\rho}_{\FF,1}$,
$$B_{\FF,1}^{-\rho} = \Big( u_i \vac \ox e^{-\rho} \mid \ 1\leq i \leq 9\Big).$$
Let
$$\vect v=\Big(\sum_{i=1}^9 c_i  u_i\Big)\mathbbm{1}\otimes e^{-\rho}, \text{  where  } c_i\in \C$$  
be a general vector in $V_{\FF,1}^{-\rho}$. Then $\vect v\in P_1^{-\rho}$ if and only if $L_m(\vect v)=\vect 0$ for $m=1,2$. Note that $L_0(\vect v)=\vect v$ is immediately true since $wt(\vect v)=1$. We therefore find $P_1^{-\rho}=Ker(L_1)\cap Ker(L_2)$. Using Proposition \ref{flm}, we have for $i=1,2$ and $j=1,2,3,$
\begin{align*}
L_i(\alpha_j(-2)\mathbbm{1}\otimes e^{-\rho}) &= \Big([L_i, \alpha_j(-2)]+\alpha_j(-2)L_1\Big)\mathbbm{1}\otimes e^{-\rho} \\
&=2\alpha_j(i-2)\mathbbm{1}\otimes e^{-\rho}, 
\end{align*}
so
$$L_1(\alpha_j(-2)\mathbbm{1}\otimes e^{-\rho})=2\alpha_j(-1)\mathbbm{1}\otimes e^{-\rho}\quad \text{and}\quad L_2(\alpha_j(-2)\mathbbm{1}\otimes e^{-\rho})=-2\delta_{j2}\mathbbm{1}\otimes e^{-\rho}.$$
For $(j,k)=(1,1), (2,2),(3,3),(1,2),(2,3), (1,3),$ we have
\begin{align*}
L_i(\alpha_j(-1)\alpha_k(-1)\mathbbm{1}\otimes e^{-\rho}) &= \Big([L_i, \alpha_j(-1)]+\alpha_j(-1)L_i\Big)\alpha_k(-1)\mathbbm{1}\otimes e^{-\rho} \\
&=\Big(\alpha_j(i-1)\alpha_k(-1)+ \alpha_j(-1)\alpha_k(i-1)\Big) \mathbbm{1}\otimes e^{-\rho}.
\end{align*}
We now have formulas for computing $L_i(u_t\vac\ox e^{-\rho})$ for $u_t\vac\ox e^{-\rho}\in\cB_{\FF,1}^{-\rho}$, $1\leq t \leq 9$, and $i=1,2$: 
$$L_1\big(\alpha_j(-1)\alpha_k(-1)\mathbbm{1}\otimes e^{-\rho}\big) =-\delta_{j2}\alpha_k(-1)-\delta_{k2}\alpha_j(-1)\mathbbm{1}\otimes e^{-\rho},$$
$$L_1\big(\alpha_j(-1)^2\mathbbm{1}\otimes e^{-\rho}\big) = -2\delta_{j2}\alpha_j(-1)\mathbbm{1}\otimes e^{-\rho},$$
$$L_2\big(\alpha_j(-1)\alpha_k(-1)\mathbbm{1}\otimes e^{-\rho}\big) =(\alpha_j,\alpha_k)\mathbbm{1}\otimes e^{-\rho},\quad \text{and}$$
$$L_2\big(\alpha_j(-1)^2\mathbbm{1}\otimes e^{-\rho}\big) =(\alpha_j,\alpha_j)\mathbbm{1}\otimes e^{-\rho}.$$
Applying these formulas we have
$$L_1(\vect v)=\Big(\alpha_1(-1)(2c_1-c_7) + \alpha_2(-1)(2c_2-2c_5)+\alpha_3(-1)(2c_3-c_8)\Big)\mathbbm{1}\otimes e^{-\rho}=\vect 0$$
and
$$L_2(\vect v)=(-2c_2+2c_4+2c_5+2c_6-2c_7-c_8)\mathbbm{1}\otimes e^{-\rho}=\vect 0.$$
The resulting system of linear equations
\begin{center}
 \systeme{
2c_1-c_7=0,
2c_2-2c_5=0,
2c_3-c_8=0,
-2c_2+2c_4+2c_5+2c_6-2c_7-c_8=0}
 \end{center}
has a $5$-dimensional solution space
$P_1^{-\rho}= Ker(L_1)\cap Ker(L_2)=Span(\vect p_1, \vect p_2, \vect p_3, \vect p_4, \vect p_5)$,
where 
$$\vect p_1=\Big(\alpha_2(-2)+\alpha_2(-1)^2\Big)\vac\ox e^{-\rho}=\bm 0 & 1 & 0 & 0 & 1 & 0 & 0 & 0 & 0 \ebm ^T,$$
$$\vect p_2=\Big(-\alpha_1(-1)^2+\alpha_3(-1)^2\Big)\vac\ox e^{-\rho}=\bm 0 & 0 & 0 & -1 & 0 & 1 & 0 & 0 & 0 \ebm ^T,$$
$$\vect p_3=\alpha_1(-1)\alpha_3(-1)\vac\ox e^{-\rho}=\bm 0 & 0 & 0 & 0 & 0 & 0 & 0 & 0 & 1 \ebm ^T,$$
$$\vect p_4=\Big(\frac{1}{2}\alpha_1(-2)+\alpha_1(-1)^2\Big)\vac\ox e^{-\rho} = \bm \frac{1}{2} & 0 & 0 & 1 & 0 & 0 & 1 & 0 & 0 \ebm ^T,$$
$$\vect p_5=\Big(\frac{1}{2}\alpha_3(-2)+\frac{1}{2}\alpha_1(-1)^2+\alpha_2(-1)\alpha_3(-1)\Big)\vac\ox e^{-\rho}=\bm 0 & 0 & \frac{1}{2}  & \frac{1}{2}  & 0 & 0 & 0 & 1 & 0 \ebm ^T,$$
where vectors are written with respect to the basis $B_{\FF,1}^{-\rho}$.

Let $Q:P_1^{-\rho}\rightarrow \ovP^{-\rho}$ be the quotient map given by $Q(\vect v) = \ovect v$ where $\ovect v$ is the equivalence class of $\vect v$ in the quotient space $\ovP^{-\rho}=P_1^{-\rho}\Big/L_{-1}(P_0^{-\rho})$, so if $\vect v\in L_{-1}(P_0^{-\rho})$ then $Q(\vect v)=\ovect 0$. We find a basis for $Ker(Q)\subset P_1^{-\rho}.$ Let $\vect v=L_{-1}(\vect u)$ where $\vect u\in P_0^{-\rho}$. Then $\vect v_0=0$, so by equations \eqref{eq:vm} and \eqref{eq:l-1},
$$\vect v_0=(L_{-1}(\vect u))_0=\oint Y\Big(L_{-1}(\vect u), z\Big) \ dz = \oint \frac{d}{dz}Y\Big(\vect u, z\Big) \ dz=0.$$
Note that since $wt(\vect u)=0$ and $(-\rho,-\rho)=-2$, the Fock space component of $\vect u$ has degree 1. Writing $\vect u=\alpha(-1)\otimes e^{-\rho}$ we have 
\begin{align*}
0 &= \oint \frac{d}{dz}Y\Big(\alpha(-1)\otimes e^{-\rho}, z\Big) \ dz \\
&= \oint \frac{d}{dz} : \alpha^{(0)}(z)Y\Big(e^{-\rho}, z \Big) : dz \\
&= \oint :\alpha^{(1)}(z)Y\Big(e^{-\rho}, z \Big): - :\alpha^{(0)}(z)\rho^{(0)}(z)Y\Big( e^{-\rho}, z\Big): dz \\
&= \oint Y\Big(\alpha(-2)\mathbbm{1} \otimes  e^{-\rho}, z \Big) - Y\Big(\alpha(-1)\rho(-1) \mathbbm{1} \otimes e^{-\rho}, z\Big) dz \\
&= \oint Y\Big(\big(\alpha(-2)-\alpha(-1)\rho(-1)\big) \mathbbm{1} \otimes  e^{-\rho}, z \Big) dz ,
\end{align*}
which holds if
\begin{equation}\label{eq:kerq}\bigg(\Big(\alpha(-2)-\alpha(-1)\rho(-1)\Big)\mathbbm{1}\otimes e^{-\rho}\bigg)_0=0,\end{equation}
or equivalently, if $\Big(\alpha(-2)-\alpha(-1)\rho(-1)\Big)\mathbbm{1}\otimes e^{-\rho}\in Ker(Q)$.
Furthermore, observe that Lemma \ref{flm} and equation \eqref{eq:beta(0)} give us
$$L_2 (\alpha(-1)\vac\ox e^{-\rho})= ([L_2, \alpha(-1)]+\alpha(-1)L_2)\vac\ox e^{-\rho} = 0$$
and
$$L_1 (\alpha(-1)\vac\ox e^{-\rho})= ([L_1, \alpha(-1)]+\alpha(-1)L_1)\vac\ox e^{-\rho} = \alpha(0)\vac\ox e^{-\rho} = (\alpha, -\rho)\vac\ox e^{-\rho}.$$
Thus if $u=(a\alpha_1(-1)+b\alpha_2(-1)+c\alpha_3(-1))\vac\ox e^{-\rho}\in P_0^{-\rho}$ where $a,b,c\in \C$,  we have
$$L_1(\vect u)= (a\cancel{(\alpha_1, -\rho)}+ b(\alpha_2, -\rho)+ c\cancel{(\alpha_3, -\rho)}) \vac\ox e^{-\rho} = -b \vac\ox e^{-\rho} = \vect 0,$$
hence $b=0$ and $\vect u \in Span(\alpha_1(-1)\vac\ox e^{-\rho}, \alpha_3(-1)\vac\ox e^{-\rho})$. Substituting $\alpha=c_1\alpha_1+c_2\alpha_3$ for some $c_1,c_2\in\C$ and $-\rho=2\alpha_1+2\alpha_2+\alpha_3$ into equation \eqref{eq:kerq} gives us
\begin{align*}
0 &=\bigg(\Big( c_1\alpha_1(-2)+c_2\alpha_3(-1) +\Big( c_1\alpha_1(-1)+c_2\alpha_3(-1) \Big) \\
&\qquad \qquad\cdot (2\alpha_1+2\alpha_2+\alpha_3)(-1)\Big)\mathbbm{1}\otimes e^{-\rho}\bigg)_0\\
&= \bigg(\Big(c_1\Big(\alpha_1(-2) + 2\alpha_1(-1)^2+2\alpha_1(-1)\alpha_2(-1)+\alpha_1(-1)\alpha_3(-1)\Big) \\
& \qquad \qquad + c_2\Big(\alpha_3(-2)+2\alpha_2(-1)\alpha_3(-1)+\alpha_3(-1)^2+2\alpha_1(-1)\alpha_3(-1)\Big)\Big)\mathbbm{1}\otimes e^{-\rho}\bigg)_0. 
\end{align*}
Thus we have $L_{-1}(P_0)^{-\rho} = Ker(Q) = Span (\vect q_1, \vect q_2)$, where 
\begin{align*}
\vect q_1&=\Big(\alpha_1(-2) + 2\alpha_1(-1)^2+2\alpha_1(-1)\alpha_2(-1)+\alpha_1(-1)\alpha_3(-1)\Big)\mathbbm{1}\otimes e^{-\rho}\\
&\quad =\bm 1 & 0 & 0 & 2 & 0 & 0 & 2 & 0 & 1 \ebm ^T,\\
\vect q_2&=\Big(\alpha_3(-2)+2\alpha_2(-1)\alpha_3(-1)+\alpha_3(-1)^2+2\alpha_1(-1)\alpha_3(-1)\Big)\mathbbm{1}\otimes e^{-\rho}\\
&\quad =\bm 0 & 0 & 1 & 0 & 0 & 1 & 0 & 2 & 2 \ebm ^T.
\end{align*}
where vectors are written with respect to the basis $B_{\FF,1}^{-\rho}$. Observe that for each $i=1,2$, $\ovect q_i=\ovect 0$ gives a congruence relation on the vectors in $P_1^{-\rho}$. 

We now compute $\ovP^{-\rho}=Ker(Q)\cap Ker(L_1) \cap Ker(L_2)$ by finding the null space of
$$\bm  \vect q_1 & \vect q_2 & \vect p_1 & \vect p_2 & \vect p_3 &\vect p_4 &\vect p_5 \ebm \ = \  \left[\begin{array}{cccccccc}
       1 & 0  & 0 & 0 & 0 & \frac{1}{2} & 0 \\
       0 & 0  & 1 & 0 & 0 & 0 & 0 \\
       0 & 1  & 0 & 0 & 0 & 0 & \frac{1}{2} \\
       2 & 0  & 0 & -1 & 0 & 1 & \frac{1}{2} \\
       0 & 0  & 1 & 0 & 0 & 0 & 0 \\
       0 & 1  & 0 & 1 & 0 & 0 & 0 \\
       2 & 0  & 0 & 0 & 0 & 1 & 0 \\
       0 & 2  & 0 & 0 & 0 & 0 & 1 \\
       1 & 2  & 0 & 0 & 1 & 0 & 0 \\
                \end{array}\right].$$
The solution gives the dependence relations:
$$\vect p_4=\frac{1}{2}(\vect q_1-\vect p_3) \quad \text{and} \quad \vect p_5=\frac{1}{2}\vect q_2 -\frac{1}{2}\vect p_2-\vect p_3,$$
so  $2\ovect p_4=\ovect p_3$ and $2\ovect p_5=-\ovect p_2-2\ovect p_3$.  Thus $\ovP^{-\rho}$ is 3-dimensional, with basis
$$B_\ovP^{-\rho}=(\ovect p_1, \ \ovect p_2, \ \ovect p_3).$$  
\end{proof}
The theorem demonstrates that $\pi_\FF$ is not surjective, since 
$$Mult_\FF(-\rho)=2 < 3 = Mult_\ovP(-\rho).$$
\begin{theorem}\label{lowbasis}A basis for $Low_\ovP(-\rho)$ is $B_{Low}^{-\rho}=(\ovect x_1, \ \ovect x_2)$, where
$$\ovect x_1 = \ovect p_1 + 2\ovect p_2,$$
$$\ovect x_2 = -\ovect p_1 + 3\ovect p_3.$$
\end{theorem}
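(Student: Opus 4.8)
The plan is to compute the subspace of lowest-weight vectors inside the three-dimensional space $\ovP^{-\rho}$ directly, by imposing the conditions $\pi_\Fib(F_1)\cdot\ovect v=\ovect 0$ and $\pi_\Fib(F_2)\cdot\ovect v=\ovect 0$ on a general element $\ovect v=a\,\ovect p_1+b\,\ovect p_2+c\,\ovect p_3$ of the basis $B_\ovP^{-\rho}$ from Theorem \ref{ovPbasis}. By \eqref{eq:action}, the action of $F_i$ is $\big(\pi_\Fib(F_i)\big)_0(\ovect v)=\big(-\vac\otimes e^{-\beta_i}\big)_0(\ovect v)$, so the first step is to expand each of the three operators $\big(\vac\otimes e^{-\beta_2}\big)_0$ and $\big(\vac\otimes e^{-\beta_1}\big)_0$ acting on $\vect p_1,\vect p_2,\vect p_3$ using the explicit vertex operator formula for $Y(\vac\otimes e^{\beta},z)$ from Section \ref{sec:vfib}: extract the $z^{-1}$-coefficient (i.e. the mode $\big(\cdot\big)_0$) of the product of the exponential "creation" factor $\exp\big(\sum_{k>0}\tfrac{(-\beta_i)(-k)}{k}z^k\big)$, the "annihilation" factor $\exp\big(\sum_{k>0}\tfrac{(-\beta_i)(k)}{-k}z^{-k}\big)$, the shift $e^{-\beta_i}$, the grading operator $z^{(-\beta_i)(0)}$, and the cocycle $\varepsilon_{-\beta_i}$, all applied to a weight-$1$ vector supported on $e^{-\rho}$ with a degree-$2$ Fock polynomial. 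Since $F_i\cdot\ovP^{-\rho}\subseteq\ovP^{-\rho-\beta_i}$, which consists of weight-$1$ vectors with $\|-\rho-\beta_i\|^2=-2(\rho+\beta_i,\rho+\beta_i)$ adjusted Fock degree, the images are again polynomial vectors that can be reduced modulo $L_{-1}(P_0)$ of the appropriate weight space; I would compute a basis for $\ovP^{-\rho-\beta_i}$ (or at least a spanning set and the relevant $L_{-1}(P_0)$ relations) exactly as in the proof of Theorem \ref{ovPbasis}.

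Once the six vectors $\big(\pi_\Fib(F_i)\big)_0(\ovect p_j)$, $i=1,2$, $j=1,2,3$, are written in a fixed basis of $\ovP^{-\rho-\beta_i}$, the condition that $\ovect v$ be a lowest-weight vector becomes a homogeneous linear system in $a,b,c$: the $3\times 2$ matrices of coordinates of $F_1\cdot\ovect p_j$ and $F_2\cdot\ovect p_j$ are assembled, and $Low_\ovP(-\rho)$ is the common null space. I expect this null space to be two-dimensional, and after row reduction the solutions should be spanned by $(a,b,c)=(1,2,0)$ and $(a,b,c)=(-1,0,3)$, which are precisely $\ovect x_1=\ovect p_1+2\ovect p_2$ and $\ovect x_2=-\ovect p_1+3\ovect p_3$ as claimed. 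To close the argument I would also verify directly that $\ovect x_1$ and $\ovect x_2$ are linearly independent (immediate, since $\ovect p_1,\ovect p_2,\ovect p_3$ are a basis and the coordinate vectors $(1,2,0)$, $(-1,0,3)$ are independent) and that each is annihilated by both $F_1$ and $F_2$ under the action \eqref{eq:action}, so that $B_{Low}^{-\rho}=(\ovect x_1,\ovect x_2)$ is indeed a basis of $Low_\ovP(-\rho)$.

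As a sanity check I would reconcile this with Theorem \ref{vrho}: the image $\pi_\FF(v_{-\rho})=\pi_\FF(-3e_{12123}+2E_{21})$ must lie in $Low_\ovP(-\rho)$, so it must be a linear combination of $\ovect x_1$ and $\ovect x_2$; computing $\pi_\FF(v_{-\rho})$ explicitly in the basis $B_\ovP^{-\rho}$ via the iterated action of the operators $\big(\vac\otimes e^{\alpha_i}\big)_0$ on $e^0$ (which is the way the representation $\pi_\FF$ produces root vectors) gives an independent confirmation, and the fact that $\dim Low_\ovP(-\rho)=2>1=\dim Low_{\Fib(0)}(-\rho)$ reflects the non-surjectivity of $\pi_\FF$ already noted after Theorem \ref{ovPbasis} (there, $Mult_\FF(-\rho)=2<3=Mult_\ovP(-\rho)$).

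The main obstacle will be the bookkeeping in the vertex-operator computation of $\big(\vac\otimes e^{-\beta_i}\big)_0$ on the $\vect p_j$: one must correctly track the contributions of the exponential factors to the $z^{-1}$-coefficient (these produce terms like $(-\beta_i)(-2)$, $(-\beta_i)(-1)^2$, and cross-terms $(-\beta_i)(-1)\alpha_k(-1)$ when the annihilation part contracts against a mode of the $\vect p_j$), handle the shift $e^{-\beta_i}$ which moves $e^{-\rho}\mapsto e^{-\rho-\beta_i}$, and apply the 2-cocycle sign $\varepsilon(-\beta_i,-\rho)$ correctly using the values $\varepsilon(\beta_i,\beta_j)$ fixed in \eqref{Fibcocycle} together with bilinearity. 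A secondary obstacle is that the target spaces $\ovP^{-\rho-\beta_i}$ need their own $P_1\cap L_{-1}(P_0)$ analysis before the linear system can be read off; however this is entirely parallel to the already-completed proof of Theorem \ref{ovPbasis}, so it is routine rather than conceptually hard. Everything reduces to finite-dimensional linear algebra over $\C$ once the operator expansions are in hand.
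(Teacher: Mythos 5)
Your proposal follows essentially the same route as the paper's proof: write a general element $a\,\ovect p_1+b\,\ovect p_2+c\,\ovect p_3$ of $\ovP^{-\rho}$, compute $F_i\cdot\ovect p_j$ by extracting the $z^{-1}$-coefficient of $Y(\vac\otimes e^{-\beta_i},z)$ applied to the degree-two Fock polynomials, and solve the resulting homogeneous linear system, whose solution space is indeed spanned by $(1,2,0)$ and $(-1,0,3)$. The only simplification you do not anticipate is that $-\rho-\beta_i=\beta_{3-i}$ is a real root of $\Fib$, so the target space is just $\C\,\vac\otimes e^{\beta_{3-i}}$ (Fock degree $0$, with $P_0^{\beta_{3-i}}=0$), making the separate $L_{-1}(P_0)$ analysis you plan for $\ovP^{-\rho-\beta_i}$ unnecessary; both annihilation conditions collapse to the single scalar equation $6a-3b+2c=0$.
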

\begin{proof}                       
If $\ovect x \in Low_\ovP(-\rho)$ then $F_i \cdot \ovect x =\ovect 0$ for $i=1,2$, so we compute $Ker(F_1)\cap Ker(F_2)\subset \ovP^{-\rho}$.  As before, we find general formulas for $F_i\cdot  u_t\vac\ox e^{-\rho}$ where $u_t\vac\ox e^{-\rho}\in B_{\FF,1}^{-\rho}, \ 1\leq t \leq 9$, to help compute the action of $F_i$ on the basis elements $\ovect p_j\in B_\ovP^{-\rho}$. We also make use of Lemma \ref{vertexlemma} in Appendix \ref{appendix:B}.

For $i=1,2$ and $j=1,2,3$, we have
\begin{align*} 
F_i \cdot &\alpha_j(-2)\mathbbm{1}\otimes e^{\beta_1+\beta_2} =e^{-\beta_i}_0(\alpha_j(-2)\mathbbm{1}\otimes e^{\beta_1+\beta_2}) \\
	&= Res_{z=0}(Y(e^{-\beta_i},z))\alpha_j(-2)\mathbbm{1}\otimes e^{\beta_1+\beta_2} \\
	&= Res_{z=0}\Big(exp\Big(\sum_{k>0} \frac{-\beta_i(-k)}{k} z^k\Big)exp\Big(\sum_{k>0} \frac{-\beta_i(k)}{-k} z^{-k}\Big)e^{-\beta_i}z^{-\beta_i(0)}\varepsilon_{-\beta_i}\Big)\alpha_j(-2)\mathbbm{1}\otimes e^{\beta_1+\beta_2}. 
\end{align*}
Note that equation \eqref{Fibcocycle} gives us 
$$\varepsilon(-\beta_i,\beta_1+\beta_2)=\varepsilon(\beta_i,\beta_1)^{-1}\varepsilon(\beta_i,\beta_2)^{-1}=(-1)^i \quad \text{and}\quad z^{-\beta_i(0)}e^{\beta_1+\beta_2}= z^{(-\beta_i,\beta_1+\beta_2)}=z$$ for $i=1,2$. Also, we observe that any operator in the second formal series above whose mode number or polynomial degree is greater than 2 will annihilate $\alpha_j(-2)\mathbbm{1}$. The surviving terms of this formal series are shown below in the continuation of the computation:
\begin{align*}
F_i \cdot  &\alpha_j(-2)\mathbbm{1}\otimes e^{\beta_1+\beta_2}  = (-1)^iRes_{z=0}\Big( exp\Big(\sum_{k>0} \frac{-\beta_i(-k)}{k} z^k\Big)\Big(I+\frac{1}{1!}\Big(\frac{\beta_i(1)}{1}z^{-1}+\frac{\beta_i(2)}{2}z^{-2}\Big) \\
&\qquad \qquad +\frac{1}{2!}\Big(\frac{\beta_i(1)}{1}z^{-1}\Big)^2 \Big)  z \Big) \alpha_j(-2)\mathbbm{1}\otimes e^{\beta_{3-i}} \\
&= (-1)^iRes_{z=0}\Big(exp\Big(\sum_{k>0} \frac{-\beta_i(-k)}{k} z^k\Big)\Big(\alpha_j(-2)z+\cancel{\beta_i(1)\alpha_j(-2)}+\frac{1}{2}\beta_i(2)\alpha_j(-2)z^{-1} + \\
& \qquad \qquad + \frac{1}{2}\cancel{\beta_i(1)^2\alpha_j(-2)}z^{-1}\Big)\mathbbm{1}\otimes e^{\beta_{3-i}} \\
&= (-1)^iRes_{z=0}\Big(exp\Big(I+h.o.t. \Big)\Big(\alpha_j(-2)z+(\beta_i,\alpha_j)z^{-1} \Big)\mathbbm{1}\otimes e^{\beta_{3-i}} \\
&= (-1)^i (\beta_i,\alpha_j)\mathbbm{1}\otimes e^{\beta_{3-i}}. 
\end{align*}

The remaining basis vectors of $B_{\FF,1}^{-\rho}$ are of the form $\alpha_j(-1)\alpha_k(-1)\mathbbm{1}\otimes e^{\beta_1+\beta_2} $ for $1 \leq j,k \leq 3$, so we compute $F_i \cdot \alpha_j(-1)\alpha_k(-1)\mathbbm{1}\otimes e^{\beta_1+\beta_2} $:
\begin{align*}
F_i \cdot \alpha_j(-1)&\alpha_k(-1)\mathbbm{1}\otimes e^{\beta_1+\beta_2} =e^{-\beta_i}_0\Big(\alpha_j(-1)\alpha_k(-1) \mathbbm{1}\otimes e^{\beta_1+\beta_2}\Big) \\
	&= Res_{z=0}\Big(Y(e^{-\beta_i},z)\Big)\alpha_j(-1)\alpha_k(-1) \mathbbm{1}\otimes e^{\beta_1+\beta_2}   \\	
	&= Res_{z=0}\Big(exp\Big(\sum_{k>0} \frac{-\beta_i(-k)}{k} z^k\Big)exp\Big(\sum_{k>0} \frac{-\beta_i(k)}{-k} z^{-k}\Big)e^{-\beta_i}z^{-\beta_i(0)}\varepsilon_{-\beta_i}\Big) \\
	&\qquad \cdot \alpha_j(-1)\alpha_k(-1) \mathbbm{1}\otimes e^{\beta_1+\beta_2}. 
\end{align*}
Now any operator in the second formal series above whose mode number is greater than 1 or whose polynomial degree is greater than 2 will annihilate $\alpha_j(-1)\alpha_k(-1) \mathbbm{1}$. Continuing the computation,
\begin{align*}		
F_i &\cdot \alpha_j(-1)\alpha_k(-1)\mathbbm{1}\otimes e^{\beta_1+\beta_2} = (-1)^i Res_{z=0}\Big(exp\Big(\sum_{k>0} \frac{-\beta_i(-k)}{k} z^k\Big)\\
&\qquad \qquad \cdot \Big(I+\frac{1}{1!}\Big(\frac{\beta_i(1)}{1}\Big)z^{-1}+\frac{1}{2!}\Big(\frac{\beta_i(1)}{1}z^{-1}\Big)^2 \Big)z\Big) \alpha_j(-1)\alpha_k(-1)\mathbbm{1}\otimes e^{\beta_{3-i}} \\
&= (-1)^i Res_{z=0}\Big( exp\Big(\sum_{k>0} \frac{-\beta_i(-k)}{k} z^k\Big)\Big(\alpha_j(-1)\alpha_k(-1) z+\beta_i(1)\alpha_j(-1)\alpha_k(-1) z^0 \\
&\qquad \qquad +\frac{1}{2}\beta_i(1)^2\alpha_j(-1)\alpha_k(-1) z^{-1}\Big) \Big)\mathbbm{1}\otimes e^{\beta_{3-i}} \\
&= (-1)^i Res_{z=0}\Big(\Big(I+h.o.t. \Big)\Big(\alpha_j(-1)\alpha_k(-1) z+(\beta_i,\alpha_j)\alpha_k(-1) + (\beta_i,\alpha_j)(\beta_i,\alpha_k) z^{-1}\Big)\Big)\\
&\qquad \qquad \cdot \mathbbm{1}\otimes e^{\beta_{3-i}} \\	
 	&= (-1)^i(\beta_i,\alpha_j)(\beta_i,\alpha_k)  \mathbbm{1}\otimes e^{\beta_{3-i}}, 	
\end{align*}
which when $j=k$ gives us $F_i \cdot \alpha_j(-1)^2\mathbbm{1}\otimes e^{\beta_1+\beta_2} =  (-1)^i(\beta_i,\alpha_j)^2  \mathbbm{1}\otimes e^{\beta_{3-i}}.$

Let $\ovect x=a\ovect p_1 + b \ovect p_2 +c\ovect p_3\in Ker(F_1)\cap Ker(F_2)$ where $a,b,c\in\C$. Then
\begin{align*}
\ovect 0 = F_i \cdot \ovect x &=a F_i\cdot \Big(\alpha_2(-2)+\alpha_2(-1)^2\Big)  + b F_i\cdot \Big(-\alpha_1(-1)^2+\alpha_3(-1)^2\Big)\\
&\qquad   + c F_i\cdot \Big(\alpha_1(-1)\alpha_3(-1)\Big)\ox \vac e^{-\rho} \\
&= (-1)^i\Big(a((\beta_i,\alpha_2)+(\beta_i,\alpha_2)^2)+b(-(\beta_i,\alpha_1)^2+(\beta_i,\alpha_3)^2)\\
&\qquad   +c((\beta_i,\alpha_1)(\beta_i,\alpha_3)) \vac \ox e^{-\rho}\Big)\\
&= \left\{\begin{array}{cl}
	(-6a+3b-2c)\vac \ox e^{-\rho} & \text{if } i=1, \\
	(6a-3b+2c)\vac \ox e^{-\rho} & \text{if } i=2. \\
	\end{array}\right.
\end{align*}
Thus we have $Low_\ovP(-\rho) = Span(\ovect x_1, \ \ovect x_2)$, where
$$\ovect x_1 = \ovect p_1 + 2\ovect p_2$$
$$\ovect x_2 = -\ovect p_1 + 3\ovect p_3$$
\end{proof}
Now we show that $v_{-\rho}$ is represented in $Low_\ovP(-\rho)$.

\begin{proposition}\label{operator} $\pi_\FF(v_{-\rho})= \ovect x_2$, where $v_{-\rho}=-3e_{12123}+[E_2,2E_1]$ is from Theorem \ref{vrho}.
\end{proposition}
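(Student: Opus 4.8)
The plan is to compute $\pi_\FF(v_{-\rho})$ directly from the fact that $\pi_\FF$ is a Lie algebra homomorphism (verified in Appendix \ref{sec:piF}) and then reduce the resulting vector modulo $L_{-1}(P_0^{-\rho})$. Since $E_1 = \frac12 e_{1123}$ and $E_2 = e_2$, one has $[E_2, 2E_1] = [e_2, e_{1123}] = e_{21123}$, so $v_{-\rho} = -3\,e_{12123} + e_{21123} = -3\,[e_1,[e_2,[e_1,[e_2,e_3]]]] + [e_2,[e_1,[e_1,[e_2,e_3]]]]$. Applying $\pi_\FF$ and using the homomorphism property together with the bracket formula \eqref{eq:bracket} on $\ovP$ and $\pi_\FF(e_i) = \vac\ox e^{\alpha_i}$, the computation reduces to evaluating two four-fold iterated brackets of the form $\big(\vac\ox e^{\alpha_{i_1}}\big)_0\!\Big(\big(\vac\ox e^{\alpha_{i_2}}\big)_0\big(\vac\ox e^{\alpha_{i_3}}\big)_0\big(\vac\ox e^{\alpha_{i_4}}\big)_0\big(\vac\ox e^{\alpha_3}\big)\Big)$ in $V_\FF$; because each operator $\vect u_0$ annihilates $L_{-1}(P_0)$, one may work with these operators directly on $V_\FF$ and pass to the class in $\ovP$ only at the end.

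Next I would carry out these iterated brackets one step at a time, exactly as in the proofs of Theorems \ref{ovPbasis} and \ref{lowbasis}: at each stage one has a vector $\vect w\ox e^\beta$ with $\vect w$ a Fock-space polynomial, and $\big(\vac\ox e^\alpha\big)_0\big(\vect w\ox e^\beta\big)$ is computed as $Res_{z=0}\,Y\big(\vac\ox e^\alpha,z\big)\big(\vect w\ox e^\beta\big)$ using the explicit vertex operator formula of Section \ref{sec:vfib}, the cocycle values $\varepsilon(\alpha_i,\alpha_j)$ fixed in Section \ref{sec:VF}, and $(\alpha_i,\alpha_j) = a_{ij}$; only finitely many terms of the two exponential series contribute to the coefficient of $z^{-1}$ (cf. Lemma \ref{vertexlemma} in Appendix \ref{appendix:B}). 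Following the partial roots $\alpha_2+\alpha_3 \to \alpha_1+\alpha_2+\alpha_3 \to \cdots \to 2\alpha_1+2\alpha_2+\alpha_3 = -\rho$ along each of the two words, this yields $\pi_\FF(e_{12123})$ and $\pi_\FF(e_{21123})$ as explicit vectors of $V_{\FF,1}^{-\rho}$ written in the basis $B_{\FF,1}^{-\rho}$, and hence $\vect v := \pi_\FF(v_{-\rho}) = -3\,\pi_\FF(e_{12123}) + \pi_\FF(e_{21123})$.

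Finally I would pass to the quotient $\ovP^{-\rho} = P_1^{-\rho}\big/L_{-1}(P_0^{-\rho})$. From the computation opening Section \ref{sec:LWVinVF} we already know that $\pi_\FF(v_{-\rho})$ is a lowest-weight vector for $\Fib$, so by Theorem \ref{lowbasis} its class lies in $\mathrm{Span}(\ovect x_1,\ovect x_2)$; it therefore suffices to reduce $\vect v$ modulo $\mathrm{Span}(\vect q_1,\vect q_2) = L_{-1}(P_0^{-\rho})$ (the spanning set found in Theorem \ref{ovPbasis}), re-express the result in terms of $\ovect p_1,\ovect p_2,\ovect p_3$, and check that it equals $-\ovect p_1 + 3\ovect p_3 = \ovect x_2$ — knowing in advance that the answer is a combination $a\ovect x_1 + b\ovect x_2$ means only the two scalars $a,b$ need to be pinned down. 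The main obstacle is purely computational: the four-fold vertex-operator iterations are long and sign-sensitive, since cocycle factors and residue bookkeeping compound at each step, so essentially all of the work lies in organizing that calculation cleanly and in the final linear reduction; no conceptual difficulty remains once $\pi_\FF$ is known to be a homomorphism and Theorems \ref{ovPbasis} and \ref{lowbasis} are available.
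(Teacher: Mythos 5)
Your proposal is correct and follows the same overall strategy as the paper: apply the homomorphism $\pi_\FF$, evaluate the resulting iterated $(\cdot)_0$-brackets in $V_\FF$ via the explicit vertex operators and cocycle values, and reduce modulo $L_{-1}(P_0^{-\rho})=\mathrm{Span}(\vect q_1,\vect q_2)$ at the end. The one place the paper economizes and you do not: since $[E_2,2E_1]\in\Fib$, the paper invokes the compatibility $\pi_\FF|_\Fib=\pi_\Fib$ (Appendix \ref{sec:piFib}) and computes that term as a \emph{single} bracket $(\vac\ox e^{\beta_2})_0(\vac\ox e^{\beta_1})$ in the $\Fib$-lattice variables, giving $-\tfrac{1}{2}\big(\beta_2(-2)+\beta_2(-1)^2\big)\vac\ox e^{-\rho}=-\tfrac12\ovect p_1$ directly, rather than unwinding $e_{21123}$ as a four-fold $\alpha$-bracket; it likewise reuses the already-computed $\pi_\FF(e_{123})$ from \eqref{eq:useful} so that $\pi_\FF(e_{12123})$ costs only two further bracket steps. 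Your all-$\alpha$ computation will land in the same place but roughly doubles the residue bookkeeping. Your closing observation — that the answer is known in advance to lie in $\mathrm{Span}(\ovect x_1,\ovect x_2)$ by Theorem \ref{lowbasis}, so only two scalars need to be pinned down — is a small refinement the paper does not use (it simply recognizes the computed representative as $\ovect x_2+\vect q_1$), and it provides a useful consistency check on the long calculation.
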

\begin{proof}
Since $\pi_\FF$ is a Lie algebra representation, we have that  $\pi_\FF(v_{-\rho})=-3\pi_\FF(e_{12123})+2\pi_\FF([E_2,E_1]).$
We may use the restriction map $\pi_\Fib$ for the second term, since the vector $[E_2,E_1]\in \Fib$. First we compute
\begin{align*}
\pi_\Fib&([E_2,E_1])=[\pi_\Fib(E_2),\pi_\Fib(E_1)] = (\mathbbm{1}\otimes e^{\beta_2})_0(\mathbbm{1}\otimes e^{\beta_1})\\
&=Res_{z=0}\Big(\exp\Big(\sum_{k>0} \frac{\beta_2(-k)}{k} z^k\Big)\exp\Big(\sum_{k>0} \frac{\beta_2(k)}{-k} z^{-k}\Big)e^{\beta_2} z^{\beta_2(0)}\varepsilon_{\beta_2}\Big)(e^{\beta_1})\\
&=\varepsilon(\beta_2,\beta_1)Res_{z=0}\Big(\exp\Big(\sum_{k>0} \frac{\beta_2(-k)}{k} z^k\Big)(I+ h.o.t.)z^{-3}\mathbbm{1}\otimes e^{\beta_1+\beta_2}\Big)\\
&=-Res_{z=0}\Big(I + \beta_2(-1)z + \frac{1}{2}\Big(\beta_2(-2)+\beta_2(-1)^2\Big)z^2+ h.o.t.\Big)z^{-3}\mathbbm{1}\otimes e^{\beta_1+\beta_2}\Big)\\
&= -\frac{1}{2}\Big(\beta_2(-2)+\beta_2(-1)^2\Big)\mathbbm{1}\otimes e^{\beta_1+\beta_2}.
\end{align*}
Then, we compute $\pi_\FF(e_{12123})$ in steps. Using equation \eqref{eq:useful}, we have 
\begin{align*}
\pi_\FF&(e_{2123}) = e^{\alpha_2}_0(\alpha_1(-1)\mathbbm{1}\otimes e^{\alpha_1+\alpha_2+\alpha_3})\\
&= Res_{z=0}\Big(\exp\Big(\sum_{k>0} \frac{\alpha_2(-k)}{k} z^k\Big)\exp\Big(\sum_{k>0} \frac{\alpha_2(k)}{-k} z^{-k}\Big)e^{\alpha_2} z^{\alpha_2(0)}\varepsilon_{\alpha_2}\Big)\alpha_1(-1)\mathbbm{1}\otimes e^{\alpha_1 +\alpha_2+\alpha_3}\\
&= \varepsilon_{21}\varepsilon_{22}\varepsilon_{23}Res_{z=0}\Big(\exp\Big(\sum_{k>0} \frac{\alpha_2(-k)}{k} z^k \Big)\Big(I - \alpha_2(1)z^{-1} + h.o.t.\Big)z^{-1}\Big)\alpha_1(-1)\mathbbm{1}\otimes e^{\alpha_1+2\alpha_2+\alpha_3}  \\
&= (1)(-1)(1)Res_{z=0}\Big( \Big(I+\alpha_2(-1)z+h.o.t.\Big)\Big(\alpha_1(-1) -(-2)z^{-1}\Big)z^{-1}\Big)\mathbbm{1}\otimes e^{\alpha_1+2\alpha_2+\alpha_3}   \\
&=  -\Big(\alpha_1(-1)+2\alpha_2(-1)\Big)\mathbbm{1}\otimes e^{\alpha_1+2\alpha_2+\alpha_3},  
\end{align*} 
which gives us
\begin{align*}
\pi_\FF(e_{12123}) &= e^{\alpha_1}_0\Big(-(\alpha_1(-1)+2\alpha_2(-1))\mathbbm{1}\otimes e^{\alpha_1+2\alpha_2+\alpha_3}\Big) \\
&= -Res_{z=0}\Big(\exp\Big(\sum_{k>0} \frac{\alpha_1(-k)}{k} z^k\Big)\exp\Big(\sum_{k>0} \frac{\alpha_1(k)}{-k} z^{-k}\Big)e^{\alpha_1} z^{\alpha_1(0)}\varepsilon_{\alpha_1}\Big)\\
& \qquad \qquad \cdot (\alpha_1(-1)+2\alpha_2(-1))\mathbbm{1}\otimes e^{\alpha_1+2\alpha_2+\alpha_3}\\
&= -\varepsilon_{11}(\varepsilon_{12})^2\varepsilon_{13}Res_{z=0}\Big(\exp\Big(\sum_{k>0} \frac{\alpha_1(-k)}{k} z^k \Big)\Big(I - \alpha_1(1)z^{-1} + h.o.t.\Big)z^{-2}\Big)\\
& \qquad \qquad \cdot ((\alpha_1+2\alpha_2)(-1))\mathbbm{1}\otimes e^{2\alpha_1+2\alpha_2+\alpha_3}   \\
&= -(-1)(1)(1)Res_{z=0}\Big( \Big(I+\alpha_1(-1)z+ \frac{1}{2}(\alpha_1(-2)+\alpha_1(-1)^2)z^2  +h.o.t.\Big)\\
& \qquad \qquad \cdot \Big((\alpha_1+2\alpha_2)(-1) -(2+2(-2))z^{-1} \Big)z^{-2}\Big)\mathbbm{1}\otimes e^{2\alpha_1+2\alpha_2+\alpha_3}   \\
&= \Big(\alpha_1(-1)(\alpha_1+2\alpha_2)(-1) +2\Big(\frac{1}{2}(\alpha_1(-2)+\alpha_1(-1)^2)\Big)\Big)\mathbbm{1}\otimes e^{2\alpha_1+2\alpha_2+\alpha_3} \\
&= \Big(2\alpha_1(-1)^2+2\alpha_1(-1)\alpha_2(-1) +\alpha_1(-2)\Big)\mathbbm{1}\otimes e^{2\alpha_1+2\alpha_2+\alpha_3}. \\
&=-\alpha_1(-1)\alpha_3(-1)\mathbbm{1}\otimes e^{\beta_1+\beta_2} + q_1.
\end{align*}       
Thus, we have
\begin{align*}
\pi_\FF(v_{-\rho}) &= -3\pi_\FF(e_{12123})+2\pi_\FF([E_2,E_1]) \\
&= -3\Big(-\alpha_1(-1)\alpha_3(-1)\mathbbm{1}\otimes e^{\beta_1+\beta_2}\Big)+2\Big( -\frac{1}{2}\Big(\beta_2(-2)+\beta_2(-1)^2\Big)\mathbbm{1}\otimes e^{\beta_1+\beta_2}\Big)\\
&= \Big(3\alpha_1(-1)\alpha_3(-1)-\beta_2(-2)-\beta_2(-1)^2\Big)\mathbbm{1}\otimes e^{\beta_1+\beta_2} \\
&= \Big(3\alpha_1(-1)\alpha_3(-1)-\alpha_2(-2)-\alpha_2(-1)^2\Big)\mathbbm{1}\otimes e^{\beta_1+\beta_2} \\
&= \ovect x_2.
\end{align*}
\end{proof}
Theorem \ref{lowbasis} and Proposition \ref{operator} show that $\ovect x_1$ is a lowest-weight vector for $\Fib$ in $\ovP^{-\rho}$, but $\ovect x_1\notin\pi_\FF(\FF)$. Thus, the outer multiplicity data produced by this approach would only show how $\ovP$, not $\FF$, decomposes with respect to $\pi_\FF(\Fib)$. In this sense, the embedding of $\FF$ in $\ovP$ does not help in analyzing how $\FF$ decomposes with respect to $\Fib$. In addition, it is also clear that this type of vertex algebra computation becomes more complex when examining $\pi_\FF(\FF_\mu)\subset \ovP^\mu$ as $|wt(\mu)|$ increases, rendering this approach less useful than anticipated. 

However, Theorem \ref{operator} leads us to conjecture the existence of an algorithm based on the Schur polynomials (\cite {B}) of vectors in $\ovP$ which will allow one to determine which vectors are representations of vectors in $\FF$, and/or which vectors are extremal with respect to $\Fib$. If such ``recognition algorithms'' are found, then a pattern might emerge within the set of extremal vectors for $\Fib$ in $\ovP$ which could shed light on the decomposition of $\FF$ with respect to $\Fib$. We therefore consider this approach still to be valuable, and the research is ongoing.

\begin{appendix}

\chapter{Kac-Peterson formulas}\label{appendix:A}

The method of Kac and Peterson (provided as exercise in \cite{K2}) is a recursive algorithm for determining root multiplicities of the adjoint representation of any KM algebra $\LL$ with simple roots $\beta_i, 1\leq i \leq \ell$. Define for any $\beta\in Q_+$,
$$c_\beta=\sum_{d|\beta} \frac{1}{d}Mult\bigg(\frac{\beta}{d}\bigg).$$
As a simple example, we observe that if $\beta$ is primitive (that is, $\beta = n_1\beta_1 + n_2\beta_2$ where $n_1$ and $n_2$ are coprime), then $c_\beta=Mult(\beta)$, giving the starting values $c_{\beta_i} = Mult(\beta_i)= 1 \text{ for } i=1,2.$ It can be shown that the  $c_\beta$'s follow the recursive relation
$$(\beta | \beta-2\rho) c_\beta = \sum_{\substack{\beta=\beta'+\beta'' \\ \beta',\beta''\in Q_+}}( \beta' | \beta'' ) c_{\beta'}c_{\beta''}.$$
We further observe that $c_\beta$ is defined in such a way that a M\"obius inversion might be possible, which could give a similar expression for multiplicities in terms of $c_\beta$'s. First note that, for any $\beta \in Q_+$, $\beta=n\gamma$, where $n\geq 1$ and $\gamma$ is primitive. Let 
$$F_\gamma(m)=m c_{m\gamma}=\sum_{d|m} \frac{m}{d}Mult\bigg(\frac{m\gamma}{d}\bigg).$$
The above sum can be rewritten to range over the divisors $d'=\frac{m}{d}$ :
$$F_\gamma (m)= \sum_{d'|m} d' Mult\big(d' \gamma \big).$$
If we define  $f_\gamma(d) =  d \ Mult (d\gamma)$,  then we have the M\"obius inversion
$$m \ Mult(m\gamma) = f_\gamma(m) = \sum_{d|m}\mu(d)F_\gamma\bigg(\frac{m}{d}\bigg) = \sum_{d|m}\mu(d)\frac{m}{d}c_{\frac{m\gamma}{d}},$$
which after dividing both sides by $m$ gives
$$Mult(m\gamma) = \sum_{d|m}\mu(d)\frac{1}{d}c_{\frac{m\gamma}{d}}.$$
Thus we have a recursive method for finding the multiplicity of $\beta$ in terms of the $c_\gamma$ where $\gamma | \beta$:
$$Mult(\beta) = Mult(n\gamma) = \sum_{d|n}\mu(d)\frac{1}{d}c_{\frac{\beta}{d}}.$$

\chapter{Supplementary results}\label{appendix:B}
Some of the proofs in Chapters \ref{ch:fib} and \ref{ch:vertex} required lengthy computations which are either shown here in their entirety, or were facilitated through the use of the results and identities below.

\section{Multibracket theorem and identities used in Chapter 2}\label{ch3}
\begin{theorem}\label{multibracket}
Let $e_i, \in \FF_{\alpha_i}$, $f_i \in \FF_{-\alpha_i}$, $e_{j_n \cdots j_1} \in \FF_\alpha$, and $f_{j_n \cdots j_1}\in \FF_{-\alpha}$ where $\alpha=\ds\sum_{k=1}^n \alpha_{j_k}$ and $n \geq 2$. Then
$$[e_i, f_{j_n \cdots j_1}] = \delta_{ij_1}a_{j_1j_2}f_{j_n \cdots j_2} - \ds\sum^n_{m=2}\delta_{ij_m}\Big(\sum^{m-1}_{k=1}a_{j_m j_k}\Big)f_{j_n \cdots \hat{j_m} \cdots j_1},$$
and
$$[f_i, e_{j_n \cdots j_1}] = \delta_{ij_1}a_{j_1j_2}e_{j_n \cdots j_2} - \ds\sum^n_{m=2}\delta_{ij_m}\Big(\sum^{m-1}_{k=1}a_{j_m j_k}\Big)e_{j_n \cdots \hat{j_m} \cdots j_1},$$
where $x_{j_n \cdots \hat{j_m} \cdots j_1}=x_{j_n \cdots \ j_{m+1} j_{m-1} \cdots j_1}$ for $x \in \{e, f\}$.
\end{theorem}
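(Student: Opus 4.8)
\textbf{Proof proposal for Theorem \ref{multibracket}.}

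The plan is to prove both identities simultaneously by induction on the length $n$ of the multibracket, using the fact that $f_{j_n \cdots j_1} = [f_{j_n}, f_{j_{n-1} \cdots j_1}]$ by definition, and then applying the Jacobi identity to move the $e_i$ past the outermost bracket. Concretely, for the base case $n=2$ one computes $[e_i, f_{j_2 j_1}] = [e_i, [f_{j_2}, f_{j_1}]]$ directly via Jacobi: this equals $[[e_i, f_{j_2}], f_{j_1}] + [f_{j_2}, [e_i, f_{j_1}]]$, and then the Serre relation $[e_i, f_j] = \delta_{ij}\alpha_i^\vee$ (or $\delta_{ij}h_i$ in the symmetric case) together with $[h, f_j] = -\alpha_j(h) f_j = -a_{\cdot j}(\cdots) f_j$ collapses everything to $\delta_{ij_1} a_{j_1 j_2} f_{j_2} - \delta_{i j_2} a_{j_2 j_1} f_{j_1}$, matching the claimed formula (note the empty hat-sum convention for $m=2$ gives exactly $a_{j_2 j_1}$).

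For the inductive step, I would write $[e_i, f_{j_n \cdots j_1}] = [e_i, [f_{j_n}, f_{j_{n-1}\cdots j_1}]]$ and apply Jacobi to get
$$[e_i, [f_{j_n}, f_{j_{n-1}\cdots j_1}]] = [[e_i, f_{j_n}], f_{j_{n-1}\cdots j_1}] + [f_{j_n}, [e_i, f_{j_{n-1}\cdots j_1}]].$$
The first term is $\delta_{i j_n}[h_{j_n}, f_{j_{n-1}\cdots j_1}] = -\delta_{i j_n}\bigl(\sum_{k=1}^{n-1} a_{j_n j_k}\bigr) f_{j_{n-1}\cdots j_1}$, using that $f_{j_{n-1}\cdots j_1}$ lies in the root space of $-\sum_{k=1}^{n-1}\alpha_{j_k}$ and that $\alpha_{j_n}$ evaluated on the relevant coroot gives the sum of Cartan integers $\sum_k a_{j_n j_k}$. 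The second term is handled by the induction hypothesis applied to the length-$(n-1)$ multibracket $f_{j_{n-1}\cdots j_1}$, after which one re-brackets with $f_{j_n}$ on the outside; this produces $\delta_{i j_1} a_{j_1 j_2} f_{j_n j_{n-1} \cdots j_2}$ plus $-\sum_{m=2}^{n-1}\delta_{i j_m}(\sum_{k=1}^{m-1} a_{j_m j_k}) f_{j_n \cdots \hat{j_m}\cdots j_1}$. Combining with the first term (which supplies precisely the missing $m=n$ summand) yields the full formula. The proof of the $[f_i, e_{j_n\cdots j_1}]$ identity is entirely parallel — apply $\nu$ (the Cartan involution), or repeat the argument with $e$'s and $f$'s swapped — so I would just note that it follows by the same induction or by applying $\nu$ to the first identity.

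The main obstacle I anticipate is purely bookkeeping: correctly tracking the hat-notation $x_{j_n\cdots\hat{j_m}\cdots j_1}$ through the re-bracketing in the inductive step, and verifying that the index ranges line up so that the term coming from $[[e_i,f_{j_n}],\,\cdot\,]$ fills exactly the $m=n$ slot of the sum while the inductive hypothesis supplies $m=2,\dots,n-1$ and the $\delta_{i j_1}$ term. One must also be careful that when $j_n$ coincides with some earlier $j_k$, the coroot identity $[h_{j_n}, f_{j_{n-1}\cdots j_1}] = -(\sum_{k=1}^{n-1} a_{j_n j_k}) f_{j_{n-1}\cdots j_1}$ still holds — but this is automatic since it only uses that $f_{j_{n-1}\cdots j_1}$ is a weight vector of the stated weight, regardless of multiplicities among the indices. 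No genuinely hard step is expected; the content is the systematic application of Jacobi plus the Serre relations.
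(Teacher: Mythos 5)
Your proposal is correct and follows essentially the same route as the paper's own proof: induction on the multibracket length, with the base case $n=2$ and the inductive step both handled by the Jacobi identity, the first term $[[e_i,f_{j_n}],f_{j_{n-1}\cdots j_1}]$ supplying exactly the $m=n$ summand, and the second identity obtained from the first via the Cartan involution $\nu$. Nothing further is needed.
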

\begin{proof}
We prove the first identity by induction on the length of the multibracket, $n$.  Fix $1\leq i \leq 3$. To show the base case is true let $n=2$.  Then we have
\begin{align*}
[e_i,f_{j_2j_1}] &= [[e_i,f_{j_2}],f_{j_1}] + [f_{j_2},[e_i,f_{j_1}]]
\\ &= \delta_{ij_2}[h_{j_2},f_{j_1}] + \delta_{ij_1}[f_{j_2},h_{j_1}]
\\ &= - \ \delta_{ij_2}a_{j_2j_1}f_{j_1} + \delta_{ij_1}a_{j_1j_2}f_{j_2}.
\end{align*}
Now assume that the statement is true for all $2\leq n\leq N$ for some large integer $N$. Then
\begin{align*}
[e_i,f_{j_{N+1}\cdots j_1}] &= [e_i, [f_{j_{N+1}},f_{j_N \cdots j_1}]]
\\ &= [[e_i,f_{j_{N+1}}],f_{j_N \cdots j_1}] + [f_{j_{N+1}},[e_i,f_{j_N \cdots j_1}]].
\intertext{We now invoke the induction hypothesis for the bracket inside the second term.}
&= \delta_{ij_{N+1}}[h_{j_{N+1}},f_{j_N \cdots j_1}] 
\\ & \qquad + [f_{j_{N+1}}, \delta_{ij_1}a_{j_1j_2}f_{j_N \cdots j_2} - \ds\sum^N_{m=2}\delta_{ij_m}\Big(\sum^{m-1}_{k=1}a_{j_m j_k}\Big)f_{j_N \cdots \hat{j_m} \cdots j_1}]
\\ &= - \ \delta_{ij_{N+1}}\Big(\sum^{N}_{k=1}a_{j_{N+1}j_k}\Big)f_{j_N \cdots j_1} 
\\ & \qquad + \delta_{ij_1}a_{j_1j_2}f_{j_{N+1} \cdots j_2} - \sum^N_{m=2}\delta_{j_m j_m}\Big(\sum^{m-1}_{k=1}a_{ij_k}\Big)f_{j_{N+1} \cdots \hat{j_m} \cdots j_1}.
\intertext{Observe that the first term can be absorbed into the last term's outer sum as the case $m=N+1$ (since $f_{j_N \cdots j_1} = f_{\hat j_{N+1} j_N \cdots j_1}$), giving us}
&= \delta_{ij_1}a_{j_1j_2}f_{j_{N+1} \cdots j_2} - \sum^{N+1}_{m=2}\delta_{j_m j_m}\Big(\sum^{m-1}_{k=1}a_{ij_k}\Big)f_{j_{N+1} \cdots \hat{j_m} \cdots j_1}.
\end{align*}

The second identity follows from the first via the Cartan involution $\nu$. 

\end{proof}
The following identities were obtained using Theorem \ref{multibracket}, and are used in the proofs of Chapter \ref{ch:fib}.
$$[e_1,f_{1123}]= a_{32}(0) - \Big((a_{13}+a_{12})f_{123}+(a_{13}+a_{12}+a_{11})f_{123}\Big)  = 2 f_{123},$$
$$[e_1,f_{123}]= a_{32}(0) - (a_{13}+a_{12})f_{23} = 2 f_{23},$$
$$[f_1,e_{123}]= 2 e_{23}, $$  
$$[e_2,f_{123}]= a_{32}(0) - (a_{23})f_{13} = f_{13} = 0,$$
$$[e_3,f_{123}]= (a_{32})f_{12}-(0) = - f_{12},$$
$$[e_2,f_{23}]= a_{32}(0) - (a_{23})f_3 = f_3,$$
$$[e_3,f_{23}]= a_{32}f_2 - (0) = -f_2,$$
$$[e_2,f_{12}]= a_{21}f_1 = -2 f_1.$$

We also have the following immediate consequence of the theorem. 
\begin{corollary}
Let $1\leq i \leq \ell$, and let $X$ be a multibracket $f_{i_1i_2\ldots i_n}$ (resp. $e_{i_1i_2\ldots i_n}$) such that $i_j \not= i$ for all $1\leq j \leq n$. Then $[e_i,X]=0$ (resp. $[f_i,X]=0$).
\end{corollary}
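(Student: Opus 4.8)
The statement is the immediate corollary to Theorem~\ref{multibracket}: if $X = f_{i_1 i_2 \ldots i_n}$ (respectively $e_{i_1 i_2 \ldots i_n}$) and the index $i$ does not appear among $i_1, \ldots, i_n$, then $[e_i, X] = 0$ (respectively $[f_i, X] = 0$). The plan is to simply read this off from the explicit formula in Theorem~\ref{multibracket}, handling the short cases by hand and then invoking the formula for $n \geq 2$.

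First I would dispose of the degenerate lengths. For $n = 1$ we have $X = f_{i_1}$ with $i_1 \neq i$, so $[e_i, f_{i_1}] = \delta_{i i_1} \alpha_{i_1}^\vee = 0$ directly from the Serre relation $[e_i, f_j] = \delta_{ij}\alpha_i^\vee$. (If one wants to include $n = 0$, i.e.\ $X \in \HH$, that case is vacuous or handled separately since $X$ would not be of the stated form.) Then for $n \geq 2$ I would apply Theorem~\ref{multibracket} verbatim:
\begin{equation*}
[e_i, f_{i_n \cdots i_1}] = \delta_{i i_1} a_{i_1 i_2} f_{i_n \cdots i_2} - \sum_{m=2}^n \delta_{i i_m}\Big(\sum_{k=1}^{m-1} a_{i_m i_k}\Big) f_{i_n \cdots \hat{i_m} \cdots i_1}.
\end{equation*}
Under the hypothesis that $i \notin \{i_1, \ldots, i_n\}$, every Kronecker delta appearing on the right-hand side vanishes: the first term carries $\delta_{i i_1} = 0$, and each summand in the sum carries $\delta_{i i_m} = 0$ for $2 \leq m \leq n$. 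Hence the entire right-hand side is $0$, giving $[e_i, X] = 0$. The statement for $[f_i, e_{i_n \cdots i_1}]$ follows identically from the second identity in Theorem~\ref{multibracket}, or alternatively by applying the Cartan involution $\nu$ to the first case (since $\nu(e_i) = -f_i$, $\nu(f_i) = -e_i$, and $\nu$ sends the $e$-multibracket to the corresponding $f$-multibracket up to sign).

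There is essentially no obstacle here — the corollary is a one-line consequence once Theorem~\ref{multibracket} is in hand, since the theorem's formula is precisely a sum of terms each weighted by a $\delta_{i i_m}$ for some index $i_m$ occurring in the multibracket. The only mild care needed is the bookkeeping for the base cases $n = 0, 1$ where Theorem~\ref{multibracket} (stated for $n \geq 2$) does not directly apply; these are handled by the Serre relations and the fact that $\HH$ commutes with itself. I would keep the writeup to a few sentences along exactly these lines.
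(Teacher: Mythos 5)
Your proposal is correct and matches the paper's intent exactly: the paper offers no separate proof, presenting the corollary as an ``immediate consequence'' of Theorem~\ref{multibracket}, and your argument — that every term on the right-hand side of the theorem's formula carries a factor $\delta_{i i_m}$ for some index $i_m$ occurring in the multibracket, all of which vanish under the hypothesis — is precisely the one-line reading-off the paper has in mind. Your additional care with the $n=1$ base case via the Serre relation $[e_i,f_j]=\delta_{ij}\alpha_i^\vee$ is a harmless and reasonable supplement that the paper leaves implicit.
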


In Chapter \ref{ch:vertex} the following Lemma aided in vertex operator computations.
\begin{lemma}\label{vertexlemma}
Let $\beta, \gamma\in Q_\Fib$. Then for $k\geq 1$ and $n>0$,

\medskip
\quad (i) \ $\beta(n)\gamma(-n)^k\mathbbm{1}=nk(\beta, \gamma) \gamma(-n)^{k-1}\mathbbm{1}$,

\medskip
\quad (ii) \ $\beta(n)^m\gamma(-n)^k\mathbbm{1}=\frac{k!}{(k-m)!}n^m(\beta, \gamma)^m \gamma(-n)^{k-m}\mathbbm{1}$, for $k\geq m>0$.

\end{lemma}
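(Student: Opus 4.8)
The plan is a direct computation from the Heisenberg relations, organized as a double induction. The only structural inputs I need beyond the bracket relations $[\beta(p),\gamma(q)]=p(\beta,\gamma)\delta_{p,-q}\mathbbm{1}$ (obtained by extending the defining relations of $\hat{\HH}_\Fib^-$ bilinearly in $\beta,\gamma\in Q_\Fib$) is the fact that positive-mode operators annihilate the Fock-space vacuum, i.e. $\gamma(p)\mathbbm{1}=0$ for every $p>0$ and $\gamma\in Q_\Fib$; this is immediate from the description of $S(\hat{\HH}_\Fib^-)$ as the algebra of polynomials in the negative modes. I will also adopt the convention $\gamma(-n)^0\mathbbm{1}=\mathbbm{1}$ so that the extreme cases of the formulas read off correctly.

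For part (i) I would induct on $k$. The base case $k=1$ is $\beta(n)\gamma(-n)\mathbbm{1}=[\beta(n),\gamma(-n)]\mathbbm{1}+\gamma(-n)\beta(n)\mathbbm{1}=n(\beta,\gamma)\mathbbm{1}$, using $\beta(n)\mathbbm{1}=0$ since $n>0$. For the inductive step, write $\gamma(-n)^k\mathbbm{1}=\gamma(-n)\,\gamma(-n)^{k-1}\mathbbm{1}$, commute $\beta(n)$ past the leading $\gamma(-n)$ at the cost of the central term $n(\beta,\gamma)\mathbbm{1}$, and apply the inductive hypothesis to $\beta(n)\gamma(-n)^{k-1}\mathbbm{1}$. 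Collecting the two contributions gives the coefficient $n(\beta,\gamma)+n(k-1)(\beta,\gamma)=nk(\beta,\gamma)$, which is exactly the claimed identity.

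For part (ii) I would induct on $m$, with part (i) serving as the base case $m=1$. For the step from $m-1$ to $m$ (to be carried out for every $k\ge m$), I apply $\beta(n)$ once by (i) to obtain $\beta(n)^m\gamma(-n)^k\mathbbm{1}=nk(\beta,\gamma)\,\beta(n)^{m-1}\gamma(-n)^{k-1}\mathbbm{1}$, and then invoke the inductive hypothesis at $(m-1,k-1)$ — legitimate because $k-1\ge m-1$ — to rewrite the right side as $nk(\beta,\gamma)\cdot\tfrac{(k-1)!}{(k-m)!}n^{m-1}(\beta,\gamma)^{m-1}\gamma(-n)^{k-m}\mathbbm{1}=\tfrac{k!}{(k-m)!}n^m(\beta,\gamma)^m\gamma(-n)^{k-m}\mathbbm{1}$, as required.

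There is essentially no genuine obstacle: this is a standard Fock-space identity and the argument is pure bookkeeping. The only points requiring a little care are keeping the range conditions straight ($k\ge m>0$, and that the inductive hypothesis for (ii) is applied at parameters $(m-1,k-1)$ which still satisfy $k-1\ge m-1$) and making sure the vacuum-annihilation property $\gamma(p)\mathbbm{1}=0$ for $p>0$ is the only nontrivial fact being used.
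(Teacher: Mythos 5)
Your proposal is correct and matches the paper's proof in all essentials: part (i) by induction on $k$ using the single commutator $[\beta(n),\gamma(-n)]=n(\beta,\gamma)\mathbbm{1}$ and $\beta(n)\mathbbm{1}=0$, and part (ii) by induction on $m$ with part (i) as the base case and the engine of the inductive step. The only (immaterial) difference is that you peel off the rightmost $\beta(n)$ and invoke the hypothesis at $(m-1,k-1)$, while the paper peels off the leftmost one and invokes it at $(m-1,k)$ before applying (i) once more.
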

\begin{proof} (i) Induct on $k$. Base case: 
$$\beta(n)\gamma(-n)\mathbbm{1}=[\beta(n),\gamma(-n)]\mathbbm{1}+\gamma(-n)\cancel{\beta(n)\mathbbm{1}}=n(\beta, \gamma)\mathbbm{1}.$$
Assume inductive hypothesis for all $k\leq K$ for some $K\in\Z_{>0}$. Then
\begin{align*}
\beta(n)\gamma(-n)^{K+1}\mathbbm{1} &= \Big(\beta(n)\gamma(-n)\Big)\gamma(-n)^K\mathbbm{1} \\
 &= \Big([\beta(n),\gamma(n)]+\gamma(-n)\beta(n)\Big) \gamma^K(-n)\mathbbm{1} \\
 &= n(\beta,\gamma)\gamma(-n)^K\mathbbm{1}+ \gamma(-n) \Big( \beta(n)\gamma(-n)^K\mathbbm{1}\Big)\\
&= n(\beta,\gamma)\gamma(-n)^K\mathbbm{1}+  \gamma(-n)\Big(nK(\beta, \gamma) \gamma(-n)^{K-1} \mathbbm{1} \Big) \\
&= n(K+1)(\beta, \gamma) \gamma(-n)^K\mathbbm{1} .
\end{align*}
The induction hypothesis was applied in the fourth equality. Thus the statement is true for all $k\in\Z$. 

(ii) Induct on $m$ for fixed $k\in\Z_{\geq0}$. Base case: When $m=1$ and $k\geq 0$ we have part $(i)$. Assume that
$$\beta(n)^m\gamma(-n)^k\mathbbm{1}=\frac{k!}{(k-m)!}n^m(\beta, \gamma)^m \gamma(-n)^{k-m}\mathbbm{1}$$
is true for all $m\leq M$ for some $M\in\Z_{>0}$. Then
\begin{align*}
\beta(n)^{M+1}\gamma(-n)^k\mathbbm{1} &= \beta(n)\Big(\beta(n)^M\gamma(-n)^k\mathbbm{1} \Big) \\
&= \beta(n)\Big(\frac{k!}{(k-M)!}n^M(\beta, \gamma)^M \gamma(-n)^{k-M}\mathbbm{1} \Big)\\
&= \frac{k!}{(k-M)!} n^M(\beta, \gamma)^M \Big(\beta(n) \gamma(-n)^{k-M}\mathbbm{1}\Big)\\
&=\frac{k!}{(k-M)!} n^M(\beta, \gamma)^M\Big((k-M)n(\beta,\gamma)\gamma(-n)^{k-M-1}\mathbbm{1} \Big)\\
&=\frac{k!}{(k-(M+1))!} n^{M+1}(\beta, \gamma)^{M+1}\gamma(-n)^{k-(M+1)}\mathbbm{1}. 
\end{align*}
\end{proof}

%
%
%
%
\section{The Lie algebra representation $\pi_\FF: \FF \rightarrow \ovP$}\label{sec:piF}
We now show that the representation $\pi_\FF: \FF \rightarrow \ovP$ from Section \ref{sec:Fib in VF} is a Lie algebra representation (i.e., the representations of the Serre generators satisfy the Serre relations in $\FF$ given in Definition \ref{def:kml}). Note that since we are working with the representation of $\FF$ in $V_\FF$, the 2-cocycle is $\varepsilon_{ij}^\FF$, but we will suppress the superscript for brevity. We verify the first four Serre relations as follows:
\begin{align*} 
[\pi_\FF&(h_i), \pi_\FF(h_j)] =\big(\alpha_i(-1)\mathbbm{1}\otimes e^0\big)_0\big(\alpha_j(-1)\mathbbm{1}\otimes e^0\big) \\
&=Res_{z=0}(\alpha_i(z))\alpha_j(-1)\mathbbm{1}\otimes e^0 =Res_{z=0}(\sum_{n\in\Z}\alpha_i(n)z^{-n-1})\alpha_j(-1)\mathbbm{1}\otimes e^0\\
&= \alpha_i(0)\alpha_j(-1)\mathbbm{1}\otimes e^0 =(\alpha_i,0)\alpha_j(-1)\mathbbm{1}\otimes e^0 =0,
\end{align*}
and
\begin{align*} 
[\pi_\FF&(h_i), \pi_\FF(e_j)] =(\alpha_i(-1)\mathbbm{1}\otimes e^0)_0 (\mathbbm{1}\otimes e^{\alpha_j}) =Res_{z=0}(\alpha_i(z))(\mathbbm{1}\otimes e^{\alpha_j}) \\
&=\alpha_i(0)\mathbbm{1}\otimes e^{\alpha_j} =(\alpha_i,\alpha_j)\mathbbm{1}\otimes e^{\alpha_j} =a_{ji}\mathbbm{1}\otimes e^{\alpha_j}=a_{ji}\pi_\FF(e_j),
\end{align*}
and
\begin{align*} 
[\pi_\FF&(h_i), \pi_\FF(f_j)] =( \alpha_i(-1)\mathbbm{1}\otimes e^0)_0( -\mathbbm{1}\otimes e^{-\alpha_j}) =-Res_{z=0}(\alpha_i(z))(\mathbbm{1}\otimes e^{-\alpha_j}) \\
&=-\alpha_i(0)\mathbbm{1}\otimes e^{-\alpha_j} =-(\alpha_i,-\alpha_j)\mathbbm{1}\otimes e^{-\alpha_j} =a_{ji}\mathbbm{1}\otimes e^{-\alpha_j}=-a_{ji}\pi_\FF(f_j),
\end{align*}
and
\begin{align*} 
[\pi_\FF&(e_i), \pi_\FF(f_j)] =- (\mathbbm{1}\otimes e^{\alpha_i})_0(\mathbbm{1}\otimes e^{-\alpha_j}) = -Y_0\Big(\mathbbm{1}\otimes e^{\alpha_i},z\Big)(\mathbbm{1}\otimes e^{-\alpha_j}) \\
&= -Res_{z=0}\Big(\exp\Big(\sum_{k>0} \frac{\alpha_i(-k)}{k} z^k\Big)\exp\Big(\sum_{k>0} \frac{\alpha_i(k)}{-k} z^{-k}\Big)e^{\alpha_i} z^{\alpha_i(0)}\varepsilon_{\alpha_i}\Big)(\mathbbm{1}\otimes e^{-\alpha_j})\\
&=- \varepsilon(\alpha_i,-\alpha_j) Res_{z=0}\Big(\exp\Big(\sum_{k>0} \frac{\alpha_i(-k)}{k} z^k\Big)\exp\Big(\sum_{k>0} \frac{\alpha_i(k)}{-k} z^{-k}\Big) z^{(\alpha_i,-\alpha_j)}\Big)(\mathbbm{1}\otimes e^{\alpha_i-\alpha_j})\\
&= -\varepsilon_{ij}^{-1} Res_{z=0}\Big(\exp\Big(\sum_{k>0} \frac{\alpha_i(-k)}{k} z^k\Big)\Big(I+h.o.t.\Big) z^{-a_{ji}}\Big)(\mathbbm{1}\otimes e^{\alpha_i-\alpha_j})\\
&= -\varepsilon_{ij}^{-1} Res_{z=0}\Big( I + \alpha_i(-1)z +h.o.t.\Big)z^{-a_{ji}}\Big)(\mathbbm{1}\otimes e^{\alpha_i-\alpha_j})=-\delta_{ij}\varepsilon_{ij}^{-1}\alpha_i(-1)\mathbbm{1}\otimes e^0\\
&=-\delta_{ij}\varepsilon_{ij}^{-1}\pi_\FF(h_i)=\delta_{ij}\pi_\FF(h_i).
\end{align*}
Note in the fourth to last equality above if $a_{ji}\leq0$ then the residue is 0 since there would only be positive powers of $z$ in the resulting series. If $a_{ji}>0$ then $a_{ji}=2$ and $i=j$, so the result is $-\varepsilon_{ii}^{-1}\alpha_i(-1)\mathbbm{1}\otimes e^0=\alpha_i(-1)\mathbbm{1}\otimes e^0$, since we have chosen $\varepsilon_{ii}=-1$ for all $i=1,2,3$.

Lastly we need to check that the Serre relations $(ad_{e_i})^{-a_{ij}+1}(e_j)=0$ and $(ad_{f_i})^{-a_{ij}+1}(f_j)=0$ for all $i,j=1,2,3$ where $i\neq j$ are correctly represented in $\ovP$.  


The next page of calculations will show that $\pi_\FF\big((ad_{e_i})^{-a_{ij}+1}(e_j)\big)=0$. We start with $\pi_\FF\big([e_i, e_j]\big)$ for general $1\leq i,j\leq 3$, then substitute for each of the three cases $a_{ij}=0, -1,$ and $-2$:
\begin{align*} 
[\pi_\FF&(e_i), \pi_\FF(e_j)] =[ \mathbbm{1}\otimes e^{\alpha_i}, \mathbbm{1}\otimes e^{\alpha_j}] = (\mathbbm{1}\otimes e^{\alpha_i})_0(\mathbbm{1}\otimes e^{\alpha_j}) \\
&= Res_{z=0}\Big(\exp\Big(\sum_{k>0} \frac{\alpha_i(-k)}{k} z^k\Big)\exp\Big(\sum_{k>0} \frac{\alpha_i(k)}{-k} z^{-k}\Big)e^{\alpha_i} z^{\alpha_i(0)}\varepsilon_{\alpha_i}\Big)(\mathbbm{1}\otimes e^{\alpha_j})\\
&= \varepsilon_{ij} Res_{z=0}\Big(\exp\Big(\sum_{k>0} \frac{\alpha_i(-k)}{k} z^k\Big)\Big(I + h.o.t.\Big) z^{(\alpha_i,\alpha_j)}\Big)(\mathbbm{1}\otimes e^{\alpha_i+\alpha_j})\\
&= \varepsilon_{ij} Res_{z=0}\Big( I + \alpha_i(-1)z + \frac{1}{2}(\alpha_i(-1)^2+\alpha_i(-2))z^2+h.o.t.\Big)z^{a_{ji}}\Big)(\mathbbm{1}\otimes e^{\alpha_i+\alpha_j})\\
&=\left\{\begin{array}{cl}
	0 & \text{if } a_{ij}\geq 0, \\
	\varepsilon_{ij}\mathbbm{1}\otimes e^{\alpha_2+\alpha_3} & \text{if } a_{ij}=-1, \\
	\varepsilon_{ij}\alpha_i(-1)\mathbbm{1}\otimes e^{\alpha_1+\alpha_2} & \text{if } a_{ij}=-2. \\
	\end{array}\right.
\end{align*}
The first case above verifies the Serre relation for $i=1,j=3$ (and similarly for $i=3, j=1$), where $a_{13}=0$.
Continuing to check the other two cases, we first determine $\pi_\FF((ad_{e_2})^2(e_3))$. We have
\begin{align*} 
[\pi_\FF&(e_2), [\pi_\FF(e_2), \pi_\FF(e_3)]] = (\mathbbm{1}\otimes e^{\alpha_2})_0(-\mathbbm{1}\otimes e^{\alpha_2+\alpha_3}) \\
&= -Res_{z=0}\Big(\exp\Big(\sum_{k>0} \frac{\alpha_2(-k)}{k} z^k\Big)\exp\Big(\sum_{k>0} \frac{\alpha_2(k)}{-k} z^{-k}\Big)e^{\alpha_2} z^{(\alpha_2,\alpha_2+\alpha_3)}\varepsilon_{\alpha_2}\Big)(\mathbbm{1}\otimes e^{\alpha_2+\alpha_3})\\
&= -\varepsilon_{22}\varepsilon_{23}Res_{z=0}\Big(\exp\Big(\sum_{k>0} \frac{\alpha_2(-k)}{k} z^k\Big)\Big(I + h.o.t.\Big) z^1\Big)(\mathbbm{1}\otimes e^{2\alpha_2+\alpha_3})= 0,
\end{align*}
since all of the negative powers of $z$ from the annihilator expansion have coefficients which kill $\mathbbm{1}$, leaving only positive powers of $z$ in the final expansion. Thus the residue is 0, and for $i=2, j=3$ (or for $i=3, j=2$) where $a_{ij}=-1$, the Serre relation $(ad_{e_2})^{-a_{23}+1}(e_3)=[e_2, [e_2,e_3]]=0$
is correctly represented in $\ovP$. Lastly we compute $\pi_\FF((ad_{e_1})^3(e_2))$,
\begin{align*} 
[&\pi_\FF(e_1), \pi_\FF(e_{12})] = (\mathbbm{1}\otimes e^{\alpha_1})_0(\alpha_1(-1)\mathbbm{1}\otimes e^{\alpha_1+\alpha_2}) \\
&= Res_{z=0}\Big(\exp\Big(\sum_{k>0} \frac{\alpha_1(-k)}{k} z^k\Big)\exp\Big(\sum_{k>0} \frac{\alpha_1(k)}{-k} z^{-k}\Big)e^{\alpha_1} z^{(\alpha_1,\alpha_1+\alpha_2)}\varepsilon_{\alpha_1}\Big)(\alpha_1(-1)\mathbbm{1}\otimes e^{\alpha_1+\alpha_2})\\
&= \varepsilon_{11}\varepsilon_{12}Res_{z=0}\Big(\exp\Big(\sum_{k>0} \frac{\alpha_1(-k)}{k} z^k\Big)\Big(I -\alpha_1(1)z^{-1} + h.o.t. \Big) z^0\Big)(\alpha_1(-1)\mathbbm{1}\otimes e^{2\alpha_1+\alpha_2})\\
&=  \varepsilon_{11}\varepsilon_{12}Res_{z=0}\Big(I + h.o.t. \Big)\Big(\alpha_1(-1) -2z^{-1} + h.o.t. \Big)\mathbbm{1}\otimes e^{2\alpha_1+\alpha_2}\\
&= -2\varepsilon_{11}\varepsilon_{12}\mathbbm{1}\otimes e^{2\alpha_1+\alpha_2},
\end{align*}
then
\begin{align*} 
[&\pi_\FF(e_1), \pi_\FF(e_{112})] = (\mathbbm{1}\otimes e^{\alpha_1})_0 (-2\varepsilon_{11}\varepsilon_{12}\mathbbm{1}\otimes e^{2\alpha_1+\alpha_2}) \\
&=-  \varepsilon_{11}\varepsilon_{12}Res_{z=0}\Big(\exp\Big(\sum_{k>0} \frac{\alpha_1(-k)}{k} z^k\Big)\exp\Big(\sum_{k>0} \frac{\alpha_1(k)}{-k} z^{-k}\Big)e^{\alpha_1} z^{(\alpha_1,2\alpha_1+\alpha_2)}\varepsilon_{\alpha_1}\Big)(2\mathbbm{1}\otimes e^{2\alpha_1+\alpha_2})\\
&=- (\varepsilon_{11})^3(\varepsilon_{12})^2Res_{z=0}\Big(\exp\Big(\sum_{k>0} \frac{\alpha_i(-k)}{k} z^k\Big)\Big(I  + h.o.t. \Big) z^2\Big)(2\mathbbm{1}\otimes e^{3\alpha_1+\alpha_2})= 0,
\end{align*}
since the expansion has no negative exponents, so the Serre relation $(ad_{e_1})^{-a_{12}+1}(e_2)=[e_1, [e_1, [e_1,e_2]]=0$
is correctly represented in $\ovP$. 


Using a similar approach, we now show that $\pi_\FF\big((ad_{f_i})^{-a_{ij}+1}(f_j)\big)=0$, starting as we did above with $\pi_\FF\big([f_i, f_j]\big)$ for general $1\leq i,j\leq 3$, then substituting for each of the three cases $a_{ij}=0, -1,$ and $-2$:
\begin{align*} 
[\pi_\FF&(f_i), \ \pi_\FF(f_j)] = (\mathbbm{1}\otimes e^{-\alpha_i})_0(\mathbbm{1}\otimes e^{-\alpha_j}) \\
&= Res_{z=0}\Big(\exp\Big(\sum_{k>0} \frac{-\alpha_i(-k)}{k} z^k\Big)\exp\Big(\sum_{k>0} \frac{-\alpha_i(k)}{-k} z^{-k}\Big)e^{-\alpha_i} z^{-\alpha_i(0)}\varepsilon_{-\alpha_i}\Big)(\mathbbm{1}\otimes e^{-\alpha_j})\\
&= \varepsilon(-\alpha_i,-\alpha_j) Res_{z=0}\Big(\exp\Big(\sum_{k>0} \frac{-\alpha_i(-k)}{k} z^k\Big)\Big(I + h.o.t.\Big) z^{(-\alpha_i,-\alpha_j)}\Big)(\mathbbm{1}\otimes e^{-\alpha_i-\alpha_j})\\
&= \varepsilon_{ij} Res_{z=0}\Big( I  - \alpha_i(-1)z + \frac{1}{2}(\alpha_i(-1)^2-\alpha_i(-2))z^2+h.o.t.\Big)z^{a_{ji}}\Big)(\mathbbm{1}\otimes e^{-\alpha_i-\alpha_j})\\
&= \left\{\begin{array}{cl}
	0 & \text{if } a_{ij}\geq 0, \\
	\varepsilon_{ij}\mathbbm{1}\otimes e^{-\alpha_2-\alpha_3} & \text{if } a_{ij}=-1, \\
	-\varepsilon_{ij}\alpha_i(-1)\mathbbm{1}\otimes e^{-\alpha_1-\alpha_2} & \text{if } a_{ij}=-2. \\
	\end{array}\right.
\end{align*}
As before, the first case shows that the Serre relations $(ad_{f_1})^{-a_{13}+1}(f_3)=[f_1, f_3]=0$ and $(ad_{f_3})^{-a_{31}+1}(f_1)=[f_3, f_1]=0$
are correctly represented in $\ovP$.
Next, we have
\begin{align*} 
[\pi_\FF(f_2), & \pi_\FF(f_{23})] = (\mathbbm{1}\otimes e^{-\alpha_2})_0(-\mathbbm{1}\otimes e^{-\alpha_2-\alpha_3}) \\
&=- Res_{z=0}\Big(\exp\Big(\sum_{k>0} \frac{-\alpha_2(-k)}{k} z^k\Big)\exp\Big(\sum_{k>0} \frac{-\alpha_2(k)}{-k} z^{-k}\Big)e^{-\alpha_2} z^{(-\alpha_2,-\alpha_2-\alpha_3)}\varepsilon_{-\alpha_2}\Big)\\
& \qquad \cdot(\mathbbm{1}\otimes e^{-\alpha_2-\alpha_3})\\
&= -\varepsilon_{22}\varepsilon_{23} Res_{z=0}\Big(\exp\Big(\sum_{k>0} \frac{-\alpha_2(-k)}{k} z^k\Big)\Big(I + h.o.t.\Big) z^1\Big)(\mathbbm{1}\otimes e^{2\alpha_2+\alpha_3}) = 0,
\end{align*}
since all of the negative powers of $z$ from the annihilator expansion have coefficients which kill $\mathbbm{1}$, leaving only positive powers of $z$ in the final expansion. Thus the residue is 0, so the Serre relation 
$(ad_{f_2})^{-a_{23}+1}(f_3)=[f_2, [f_2,f_3]]=0$
is correctly represented in $\ovP$.
Lastly we compute $\pi_\FF((ad_{f_1})^3(f_2))$,
\begin{align*} 
[&\pi_\FF(f_1), \pi_\FF(f_{12})] = (\mathbbm{1}\otimes e^{-\alpha_1})_0(-\alpha_1(-1)\mathbbm{1}\otimes e^{-\alpha_1-\alpha_2}) \\
&= -Res_{z=0}\Big(\exp\Big(\sum_{k>0} \frac{-\alpha_1(-k)}{k} z^k\Big)\exp\Big(\sum_{k>0} \frac{-\alpha_1(k)}{-k} z^{-k}\Big)e^{-\alpha_1} z^{(-\alpha_1,-\alpha_1-\alpha_2)}\varepsilon_{-\alpha_1}\Big) \\
& \qquad \cdot (\alpha_1(-1)\mathbbm{1}\otimes e^{-\alpha_1-\alpha_2})\\
&= -\varepsilon_{11}\varepsilon_{12}Res_{z=0}\Big(\exp\Big(\sum_{k>0} \frac{-\alpha_1(-k)}{k} z^k\Big)\Big(I +\alpha_1(1)z^{-1} + h.o.t. \Big) z^0\Big)(\alpha_1(-1)\mathbbm{1}\otimes e^{-2\alpha_1-\alpha_2})\\
&= -\varepsilon_{11}\varepsilon_{12}Res_{z=0}\Big((I + h.o.t. )(\alpha_1(-1) -2z^{-1} + h.o.t. )\Big)\mathbbm{1}\otimes e^{-2\alpha_1-\alpha_2}= 2\varepsilon_{11}\varepsilon_{12}\mathbbm{1}\otimes e^{-2\alpha_1-\alpha_2},
\end{align*}
then
\begin{align*} 
[\pi_\FF(f_1), &\pi_\FF(f_{112})] = (\mathbbm{1}\otimes e^{-\alpha_1})_0( 2\varepsilon_{11}\varepsilon_{12}\mathbbm{1}\otimes e^{-2\alpha_1-\alpha_2}) \\
&= \varepsilon_{11}\varepsilon_{12}Res_{z=0}\Big(\exp\Big(\sum_{k>0} \frac{-\alpha_1(-k)}{k} z^k\Big)\exp\Big(\sum_{k>0} \frac{-\alpha_1(k)}{-k} z^{-k}\Big)e^{-\alpha_1} z^{(-\alpha_1,-2\alpha_1-\alpha_2)}\varepsilon_{-\alpha_1}\Big)\\
& \qquad \cdot(2\mathbbm{1}\otimes e^{-2\alpha_1-\alpha_2})\\
&= (\varepsilon_{11})^3(\varepsilon_{12})^2Res_{z=0}\Big(\exp\Big(\sum_{k>0} \frac{-\alpha_1(-k)}{k} z^k\Big)\Big(I  + h.o.t. \Big) z^2\Big)(2\mathbbm{1}\otimes e^{3\alpha_1+\alpha_2})= 0,
\end{align*}
since the expansion has no negative exponents, so $(ad_{f_1})^{-a_{12}+1}(f_2))=[f_1, [f_1, [f_1,f_2]]=0$ is correctly represented in $\ovP$.

%
%
%
%
\section{Proving $\pi_\FF|_\Fib = \pi_\Fib$}\label{sec:piFib}
Now we verify that for $i=1,2$, $\pi_\Fib=\pi_\FF|_\Fib$, that is,
$$\pi_\FF(E_i)=\mathbbm{1}\otimes e^{\beta_i}, \quad \pi_\FF(F_i)=-\mathbbm{1}\otimes e^{-\beta_i}, \quad \text{and} \quad \pi_\FF(H_i)=\beta_i(-1)\mathbbm{1}\otimes e^0,$$
where $E_1=\frac{1}{2}e_{1123}, \ E_2=e_2, \ F_1=-\frac{1}{2}f_{1123}, \ F_2=f_2, \ H_1=2h_1+h_2+h_3,$ and $H_2=h_2$. 

First, we have
\begin{align*}
\pi_\FF(H_1)&=\pi_\FF(2h_1+h_2+h_3) = 2\alpha_1(-1)\vac\ox e^0 + \alpha_2(-1)\vac\ox e^0+\alpha_3(-1)\vac\ox e^0 \\
&= (2\alpha_1+\alpha_2+\alpha_3)(-1)\vac\ox e^0 = \beta_1(-1)\vac\ox e^0\\
\intertext{and} 
\pi_\FF(H_2)&=\pi_\FF(h_2) = \alpha_2(-1)\vac\ox e^0= \beta_2(-1)\vac\ox e^0.
\end{align*}

Next, we have $\pi_\FF(E_1) = \pi_\FF\Big(\frac{1}{2}e_{1123}\Big)=\frac{1}{2}[\pi_\FF(e_1),\pi_\FF(e_{123})].$ Since it will prove useful in Proposition \ref{operator}, we first compute $\pi_\FF(e_{123})$, and we recall that $\pi_\FF(e_{23})=\varepsilon_{23}(e^{\alpha_2+\alpha_3})$.
\begin{align*} 
\pi_\FF&(e_{123})= [\pi_\FF(e_1), \pi_\FF(e_{23})] = e^{\alpha_1}_0(\varepsilon_{23}e^{\alpha_2+\alpha_3})\\
&=  Res_{z=0}\Big(\exp\Big(\sum_{k>0} \frac{\alpha_1(-k)}{k} z^k\Big)\exp\Big(\sum_{k>0} \frac{\alpha_1(k)}{-k} z^{-k}\Big)e^{\alpha_1} z^{(\alpha_1,\alpha_2+\alpha_3)}\varepsilon_{\alpha_1}\Big)(\varepsilon_{23}\mathbbm{1}\otimes e^{\alpha_2+\alpha_3})\\
&=  \varepsilon_{12}\varepsilon_{13}\varepsilon_{23}Res_{z=0}\Big(\exp\Big(\sum_{k>0} \frac{\alpha_1(-k)}{k} z^k\Big)\Big(I  + h.o.t. \Big) z^{-2}\Big)(\mathbbm{1}\otimes e^{\alpha_1+\alpha_2+\alpha_3})\\
&=  \varepsilon_{12}\varepsilon_{13}\varepsilon_{23}Res_{z=0}\Big(\Big( I + \alpha_1(-1)z + h.o.t. \Big) z^{-2}\Big)(\mathbbm{1}\otimes e^{\alpha_1+\alpha_2+\alpha_3})\\
&=\varepsilon_{12}\varepsilon_{13}\varepsilon_{23} \alpha_1(-1)\mathbbm{1}\otimes e^{\alpha_1+\alpha_2+\alpha_3}
\end{align*}
Since $\varepsilon_{12}=\varepsilon_{23}=\varepsilon_{13}=1$, we have
\begin{equation}\label{eq:useful}\pi_\FF(e_{123})= \alpha_1(-1)\mathbbm{1}\otimes e^{\alpha_1+\alpha_2+\alpha_3}.\end{equation}
Thus we have
\begin{align*}
\pi_\FF&(E_1) = \frac{1}{2}e^{\alpha_1}_0\Big(\varepsilon_{12}\varepsilon_{13}\varepsilon_{23}\alpha_1(-1)\mathbbm{1}\otimes e^{\alpha_1+\alpha_2+\alpha_3}\Big)\\
&= \frac{1}{2}\varepsilon_{12}\varepsilon_{13}\varepsilon_{23} Res_{z=0}\Big(\exp\Big(\sum_{k>0} \frac{\alpha_1(-k)}{k} z^k\Big)\exp\Big(\sum_{k>0} \frac{\alpha_1(k)}{-k} z^{-k}\Big)e^{\alpha_1} z^{(\alpha_1,\alpha_1+\alpha_2+\alpha_3)}\varepsilon_{\alpha_1}\Big)\\
& \qquad \qquad \cdot \alpha_1(-1)\mathbbm{1}\otimes e^{\alpha_1+\alpha_2+\alpha_3}\\
&= \frac{1}{2}\varepsilon_{11}(\varepsilon_{12})^2(\varepsilon_{13})^2\varepsilon_{23}Res_{z=0}\Big(\exp\Big(\sum_{k>0} \frac{\alpha_1(-k)}{k} z^k\Big)\Big(I - \alpha_1(1)z^{-1}+h.o.t. \Big) z^0\Big) \\ 
&\qquad \qquad \cdot \alpha_1(-1)\mathbbm{1}\otimes e^{2\alpha_1+\alpha_2+\alpha_3}\\
&= \frac{1}{2}\varepsilon_{11}(\varepsilon_{12})^2(\varepsilon_{13})^2\varepsilon_{23} Res_{z=0}\Big(\Big(I + h.o.t. \Big)\Big(I - 2z^{-1}\Big)\Big) \mathbbm{1}\otimes e^{2\alpha_1+\alpha_2+\alpha_3}\\
&=- \varepsilon_{11}\varepsilon_{23} \mathbbm{1}\otimes e^{2\alpha_1+\alpha_2+\alpha_3}.
\end{align*}
Since $\varepsilon_{11}=-1$ and $\varepsilon_{23}=1$ we have
$$\pi_\FF(e_{1123})= \mathbbm{1}\otimes e^{2\alpha_1+\alpha_2+\alpha_3}=\mathbbm{1}\otimes e^{\beta_1}=\pi_\Fib(E_1),$$
and since $E_2=e_2$ and $\beta_2=\alpha_2,$
$$\pi_\FF(E_2)= \mathbbm{1}\otimes e^{\beta_2}.$$
Also,
\begin{align*} 
&\pi_\FF(F_1) = \pi_\FF\Big(-\frac{1}{2}f_{1123}\Big)\\
&= -\frac{1}{2}[\pi_\FF(f_1),[\pi_\FF(f_1), \pi_\FF(f_{23})]]\\
&= -\frac{1}{2} e^{-\alpha_1}_0\Big(Res_{z=0}\Big(\exp\Big(\sum_{k>0} \frac{-\alpha_1(-k)}{k} z^k\Big)\exp\Big(\sum_{k>0} \frac{-\alpha_1(k)}{-k} z^{-k}\Big)e^{-\alpha_1} z^{(-\alpha_1,-\alpha_2-\alpha_3)}\varepsilon_{-\alpha_1}\Big)\\
& \qquad \cdot(\varepsilon_{23}\mathbbm{1}\otimes e^{-\alpha_2-\alpha_3})\\
&= -\frac{1}{2} (\varepsilon_{23})(\varepsilon_{12}\varepsilon_{13})e^{-\alpha_1}_0\Big(Res_{z=0}\Big(\exp\Big(\sum_{k>0} \frac{-\alpha_1(-k)}{k} z^k\Big)\Big(I  + h.o.t. \Big) z^{-2}\Big)(\mathbbm{1}\otimes e^{-\alpha_1-\alpha_2-\alpha_3})\Big)\\
&= -\frac{1}{2} \varepsilon_{23}\varepsilon_{12}\varepsilon_{13}e^{-\alpha_1}_0\Big( Res_{z=0}\Big(\Big( I -\alpha_1(-1)z + h.o.t. \Big) z^{-2}\Big)(\mathbbm{1}\otimes e^{-\alpha_1-\alpha_2-\alpha_3})\\
&= -\frac{1}{2} \varepsilon_{23}\varepsilon_{12}\varepsilon_{13}e^{-\alpha_1}_0\big(- \alpha_1(-1)\mathbbm{1}\otimes e^{-\alpha_1-\alpha_2-\alpha_3}\big)\\
&= \frac{1}{2} \varepsilon_{23}\varepsilon_{12}\varepsilon_{13} Res_{z=0}\Big(\exp\Big(\sum_{k>0} \frac{-\alpha_1(-k)}{k} z^k\Big)\exp\Big(\sum_{k>0} \frac{-\alpha_1(k)}{-k} z^{-k}\Big)e^{-\alpha_1} z^{(-\alpha_1,-\alpha_1-\alpha_2-\alpha_3)}\varepsilon_{-\alpha_1}\Big) \\
& \qquad \cdot ( \alpha_1(-1)\mathbbm{1}\otimes e^{-\alpha_1-\alpha_2-\alpha_3})\\
&= \frac{1}{2} (\varepsilon_{12})^2(\varepsilon_{13})^2\varepsilon_{11}\varepsilon_{23}Res_{z=0}\Big(\exp\Big(\sum_{k>0} \frac{\alpha_1(-k)}{k} z^k\Big)\Big(I + \alpha_1(1)z^{-1}+h.o.t. \Big) z^0\Big) \\
& \qquad \cdot ( \alpha_1(-1)\mathbbm{1}\otimes e^{-2\alpha_1-\alpha_2-\alpha_3})\\
&=\frac{1}{2} (\varepsilon_{12})^2(\varepsilon_{13})^2\varepsilon_{11}\varepsilon_{23} Res_{z=0}\Big(\Big(I + h.o.t. \Big)\Big(I + 2z^{-1}\Big)\Big)( \mathbbm{1}\otimes e^{-2\alpha_1-\alpha_2-\alpha_3})\\
&=  \frac{1}{2} (\varepsilon_{12})^2(\varepsilon_{13})^2\varepsilon_{11}\varepsilon_{23} (2 \mathbbm{1}\otimes e^{-2\alpha_1-\alpha_2-\alpha_3})= \varepsilon_{11}\varepsilon_{23} \mathbbm{1}\otimes e^{-2\alpha_1-\alpha_2-\alpha_3}.
\end{align*}
Thus we have
$$\pi_\FF(F_1)=-\mathbbm{1}\otimes e^{-\beta_1}$$
and since $F_2=f_2$ and $\beta_2=\alpha_2,$
$$\pi_\FF(F_2)=-\mathbbm{1}\otimes e^{-\beta_2}.$$

\chapter{Dimension data for weight spaces in irreducible $\Fib$-modules}\label{appendix:C}
Chapter  \ref{ch:nonstd} details our algorithm for determining the inner multiplicities of non-standard modules on levels $\pm1$ and $\pm2$. For explanation of the notation and organization of the tables, we refer the reader to Remark \ref{table} and the bulleted text on page \pageref{table}.

Tables \ref{tab:tab1} and \ref{tab:tab2} show data for the non-standard modules on levels 1 and 2, respectively. 

Tables \ref{tab:tab3} and \ref{tab:tab4} show data for the adjoint representation and the $-\rho$-module on level 0. Though not referenced within the text, these data are provided to the reader as a demonstration of the claim in Chapter \ref{ch:nonstd} that this algorithmic approach can be used to determine inner multiplicities of any irreducible module, including non-standard ones.

\newpage


\begin{landscape}

\setcounter{table}{0}
\makeatletter 
\renewcommand{\thetable}{C\arabic{table}}

\begin{table}[!ht]
\centering
\caption{Determination of bases of multibrackets in weight spaces $V^{\Lambda_1}_\mu$ where $\mu=n_1\beta_1+n_2\beta_2+\Lambda_1$. \\ See page \pageref{explain} for explanation of notation.}
\label{tab:tab1}

\end{table}

\end{landscape}
\end{appendix}
\addcontentsline{toc}{chapter}{Bibliography}



\end{document}